\numberwithin{equation}{section}
\numberwithin{figure}{section}
\theoremstyle{plain}
\newtheorem{thm}{\protect\theoremname}[section]
\theoremstyle{plain}
\newtheorem{lem}[thm]{\protect\lemmaname}
\theoremstyle{plain}
\newtheorem{prop}[thm]{\protect\propositionname}
\theoremstyle{definition}
\newtheorem{defn}[thm]{\protect\definitionname}
\theoremstyle{remark}
\newtheorem{rem}[thm]{\protect\remarkname}
\theoremstyle{plain}
\newtheorem{cor}[thm]{\protect\corollaryname}
\theoremstyle{plain}
\providecommand{\factname}{Fact}
\theoremstyle{definition}
\theoremstyle{definition}
\newcommand{\dbar}{\bar{\partial}}
\newcommand{\HE}{\rm{Hermitian–Einstein}}
\newcommand{\pdv}{\partial}
\newcommand{\im}{\sqrt{-1}}
\newcommand {\Hom} {\operatorname{Hom}}
\newcommand {\Ind} {\operatorname{Ind}}
\newcommand {\Pro} {\operatorname{Pro}}
\newcommand {\coker} {\operatorname{coker}}
\newcommand {\Coh} {\mathrm{Coh}}
\def\E{\mathcal{E}}
\def\F{\mathcal{F}}
\def\R{\mathbb R}
\def\C{\mathbb C}
\def\Z{\mathbb Z}
\def\Q{\mathcal{Q}}
\def\S{\mathcal{S}}
\def\H{\mathbb H}
\def\N{\mathbb N}
\def\ep{\epsilon}
\newcommand{\<}{\langle}
\def\>{\rangle}
\providecommand{\corollaryname}{Corollary}
\providecommand{\definitionname}{Definition}
\providecommand{\lemmaname}{Lemma}
\providecommand{\propositionname}{Proposition}
\providecommand{\remarkname}{Remark}
\providecommand{\theoremname}{Theorem}
\providecommand{\examplename}{Example}
\providecommand{\problemname}{Problem}
\title[Constructing stable Hilbert bundles ]{Constructing stable Hilbert bundles via Diophantine approximation}
\author{Yucheng Liu} 
\address{College of Mathematics and Statistics, Center of Mathematics, Chongqing University, Chongqing, 401331, China}
\email{noahliu@cqu.edu.cn}
\author{Biao Ma}
\address{Beijing International Center for Mathematical Research, Peking University, Beijing, 100871, China}
\email{biaoma@bicmr.pku.edu.cn}
\begin{document}
\subjclass[2020]{14H60,53C07,81T13}
\begin{abstract}
    On any complex smooth projective curve with positive genus, we construct Hilbert bundles that admit Hermitian--Einstein metrics. Our main constructive step is by investigating the arithmetic property of the upper half plane in Bridgeland's definition of stability conditions
 and its homological countparts. 

    The main analytic ingredient in our proof is a notion called a well-approximating sequence of stable bundles. 
    This notion helps us to apply the Diophantine approximation to Donaldson's functional and bound the $L^\infty$ norm of Hermitian-Einstein metrics.

    We further study the continuous structures, smooth structures, and holomorphic structures on such Hilbert bundles. We hope that this construction can shed some new light on the geometric background of quantum field theory.
\end{abstract}

\maketitle

\tableofcontents
\section{Introduction}

Mumford introduced the classical notion of slope stability of holomorphic vector bundles on a compact Riemann surface in the 1960s (see \cite{Mumfordstability}), and this notion bridges between algebraic geometry and differential geometry. The famous work of Narasimhan--Seshardi \cite{Stableandunitaryvectorbundles} established the relation between stable holomorphic vector bundles and representations of certain Fuchsian groups. Their method is essentially algebraic geometric. Using the result of Narasimhan and Seshadri, Atiyah and Bott established a direct relation between Yang-Mills connections and stable vector bundles on Riemann surfaces (see \cite{TheYMequationsonRiemannsurfaces}). Using this reformulation of Atiyah and Bott, Donaldson reproved the theorem of Narasimhan-Seshadri via the differential geometric method (see \cite{Donaldsoncurve}), by establishing the equivalence between the existence of the Hermitian--Einstein metric and poly-stability. This equivalence, also known as the Kobayashi-Hitchin correspondence, was extended to higher dimensions by the celebrated works of Donaldson \cite{Donaldson85}, Uhlenbeck-Yau \cite{UhlenbeckYau86} and Donaldson \cite{Donaldsoninfinite}. 

In the 2000s, motivated by Douglas's work on D-branes and $\Pi$-stability (see \cite{douglas2002dirichlet}). Bridgeland introduced a general theory of stability conditions, which combines the notion of $t$-structures on triangulated categories and a generalization of Mumford's classical slope stability. An important ingredient in the definition is the upper half plane $\H$. In \cite{Continuumenvelops}, the authors investigate the relationship between the arithmetic property of the upper half plane $\H$ and the homological property of the triangulated category. For example, a Farey triangle in the upper half plane corresponds to distinguished triangles with some special properties in the triangulated category. This leads the authors to consider the following sequence of vector bundles and its colimit object.

For any complex elliptic curve and any irrational number $\theta$, there exists the following sequence of holomorphic maps of holomorphic vector bundles on the elliptic curve:
\begin{align}
\mathcal{E}_{0}\overset{f_{0}}{\to}\mathcal{E}_{1}\overset{f_{1}}{\to}\mathcal{E}_{2}\to\cdots\to\mathcal{E}_{k}\overset{f_{k}}{\to}\cdots.
\end{align}
Here, each $\mathcal{E}_i$ is a slope stable bundle whose slope is the $2i$-th convergent of $\theta$. And each $f_i$ is injective as a bundle map, the colimit object $\mathcal{E}_\infty$ in the category of quasi-coherent sheaves is a vector bundle of infinite rank in the sense of Drinfeld (see \cite{Infinitedimensionalvectorbundles}). In the smooth category, the colimit object $E_\infty$ also exits as a smooth topological vector bundle and is compatible with $\mathcal{E}_\infty$. The intuition is that such an infinite dimensional vector bundle $\E_{\infty}$ should be slope stable with slope $\theta$. 

In this paper, we first generalize the construction in \cite{Continuumenvelops} to any compact Riemann surface with positive genus. In fact, we prove the following result. 

\begin{thm}\label{thm:first result in intro}
    Let $X$ be a compact Riemann surface with genus $g(X)>0$, and $\theta$ be an irrational number. Then there is a sequence of holomorphic maps of holomorphic vector bundles on $X$:
\begin{align}
\mathcal{E}_{0}\overset{f_{0}}{\to}\mathcal{E}_{1}\overset{f_{1}}{\to}\mathcal{E}_{2}\to\cdots\to\mathcal{E}_{k}\overset{f_{k}}{\to}\cdots.\label{eq:seqofhol intro}
\end{align}
Here each $\mathcal{E}_i$ is a slope stable bundle whose slope is the $2i$-th convergent of $\theta$, each $f_i$ is injective as a bundle map, and the colimit object $\mathcal{E}_\infty$ is a simple vector bundle of infinite rank.
\end{thm}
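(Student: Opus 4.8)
The plan is to build the tower \eqref{eq:seqofhol intro} by iterated extensions, reading off all ranks and degrees from the continued fraction expansion of $\theta$. Write $p_k/q_k$ for the $k$-th convergent, so that the even convergents increase to $\theta$ and the odd ones decrease to it, and recall the unimodularity $p_{k-1}q_k-p_kq_{k-1}=(-1)^k$ together with the recursion $q_{k+1}=a_{k+1}q_k+q_{k-1}$. I would let $\mathcal{E}_i$ have rank $q_{2i}$ and degree $p_{2i}$, so its slope is the $2i$-th convergent $\mu_i=p_{2i}/q_{2i}$, and realize $f_i$ as the inclusion in a short exact sequence
\begin{equation}
0\to\mathcal{E}_i\overset{f_i}{\to}\mathcal{E}_{i+1}\to\mathcal{Q}_i\to0.\label{eq:ladder}
\end{equation}
The recursion forces $\mathcal{Q}_i$ to have rank $a_{2i+2}q_{2i+1}$ and degree $a_{2i+2}p_{2i+1}$, hence slope $p_{2i+1}/q_{2i+1}$, the intervening odd convergent. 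The numerology to record first is the chain
\begin{equation}
\mu_i<\mu_{i+1}<\theta<\mu(\mathcal{Q}_i),\nonumber
\end{equation}
which is exactly what makes $\mathcal{E}_i$ eligible to sit as a subbundle of a stable $\mathcal{E}_{i+1}$ and makes $\mathcal{Q}_i$ a legitimate locally free quotient.

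Next I would supply the ingredients. Choose $\mathcal{E}_i$ stable: the moduli space of stable bundles is nonempty in every rank and degree when $g\geq2$, and for $g=1$ one uses $\gcd(p_{2i},q_{2i})=1$. Choose $\mathcal{Q}_i$ semistable of slope $p_{2i+1}/q_{2i+1}$ — a single stable bundle when $g\geq2$, and for $g=1$ a direct sum of $a_{2i+2}$ distinct stable bundles with coprime invariants $(q_{2i+1},p_{2i+1})$. Non-split extensions exist because $\operatorname{Hom}(\mathcal{Q}_i,\mathcal{E}_i)=0$ by the slope inequality, while Riemann--Roch gives
\begin{equation}
\dim\operatorname{Ext}^1(\mathcal{Q}_i,\mathcal{E}_i)=-\chi(\mathcal{Q}_i^{\vee}\otimes\mathcal{E}_i)=a_{2i+2}\bigl(1+q_{2i}q_{2i+1}(g-1)\bigr)>0.\nonumber
\end{equation}
Since $\mathcal{Q}_i$ is locally free, $f_i$ is a subbundle inclusion and therefore injective on each fibre, which is the required injectivity as a bundle map.

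The crux is showing that a generic extension class yields a \emph{stable} $\mathcal{E}_{i+1}$, so that the induction can proceed. I would analyze a hypothetical destabilizing subsheaf $F\subset\mathcal{E}_{i+1}$ with $\mu(F)\geq\mu_{i+1}$ by intersecting with $\mathcal{E}_i$ and projecting to $\mathcal{Q}_i$: stability of $\mathcal{E}_i$ and $\mathcal{Q}_i$ together with $\mu_i<\mu_{i+1}<\mu(\mathcal{Q}_i)$ force the dangerous $F$ to surject onto a subsheaf of $\mathcal{Q}_i$ with proper kernel $G\subsetneq\mathcal{E}_i$, and such an $F$ embeds in $\mathcal{E}_{i+1}$ precisely when the extension class $e_i$ dies under the natural map $\operatorname{Ext}^1(\mathcal{Q}_i,\mathcal{E}_i)\to\operatorname{Ext}^1(\mathcal{Q}_i,\mathcal{E}_i/G)$. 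Each such vanishing is a proper linear condition on $e_i$, and the destabilizing data vary in a bounded family, so a dimension count — feasible because $\dim\operatorname{Ext}^1$ grows like $q_{2i}q_{2i+1}(g-1)$ — shows the bad classes form a proper closed subset when $g\geq2$, whence the generic $e_i$ works. This genericity-implies-stability step is the main obstacle. The genus-one case is degenerate, since there the extension spaces collapse to dimension one; in that regime I would instead invoke the explicit elliptic-curve construction of \cite{Continuumenvelops}, where stability of the unique non-split extension is verified through the classification of semistable sheaves.

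Finally, the directed system $(\mathcal{E}_i,f_i)$ has a colimit $\mathcal{E}_\infty$ in $\operatorname{QCoh}(X)$; as a filtered union of subbundle inclusions with $\operatorname{rank}\mathcal{E}_i=q_{2i}\to\infty$, it is a vector bundle of infinite rank in Drinfeld's sense. For simplicity I would use coherence of each $\mathcal{E}_i$ to write
\begin{equation}
\operatorname{End}(\mathcal{E}_\infty)=\varprojlim_i\operatorname{Hom}(\mathcal{E}_i,\mathcal{E}_\infty)=\varprojlim_i\varinjlim_j\operatorname{Hom}(\mathcal{E}_i,\mathcal{E}_j).\nonumber
\end{equation}
The decisive vanishing is $\operatorname{Hom}(\mathcal{Q}_i,\mathcal{E}_j)=0$ for all $j$, again by $\mu(\mathcal{Q}_i)>\theta>\mu_j$: it makes every transition map in the inverse system injective, so restriction to $\mathcal{E}_0$ embeds $\operatorname{End}(\mathcal{E}_\infty)\hookrightarrow\operatorname{Hom}(\mathcal{E}_0,\mathcal{E}_\infty)$ and an endomorphism is determined by $\phi|_{\mathcal{E}_0}$. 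It then remains to show that the only maps $\mathcal{E}_0\to\mathcal{E}_\infty$ extending to the whole tower are scalar multiples of the structural inclusion; I would do this stage by stage, computing the obstruction $\delta_i\in\operatorname{Ext}^1(\mathcal{Q}_i,\mathcal{E}_\infty)$ to extending a partial map across \eqref{eq:ladder} and showing that the generic choice of extension classes from the previous step forces the extendable maps to form a one-dimensional space. Concluding $\operatorname{End}(\mathcal{E}_\infty)=\mathbb{C}$ shows $\mathcal{E}_\infty$ is simple.
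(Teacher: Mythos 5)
Your numerology is exactly the paper's (ranks and degrees $(q_{2i},p_{2i})$, quotients of slope $p_{2i+1}/q_{2i+1}$, the chain $\mu_i<\mu_{i+1}<\theta<\mu(\mathcal{Q}_i)$, and the Riemann--Roch count $\dim\Ext^1=a_{2i+2}(1+q_{2i}q_{2i+1}(g-1))$, which matches the computation in Proposition \ref{prop:arithmetic stability condition on curves}), and the soft steps (non-splitness, injectivity via locally free quotient, colimit in $\QCoh$, Drinfeld-type bundle) are fine. But the heart of the theorem --- stability of $\E_{i+1}$ --- is exactly the step you leave as a sketch, and as written it is a genuine gap. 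Because you take a single extension per rung, your quotient $\mathcal{Q}_i$ carries the \emph{non-primitive} central charge $a_{2i+2}(-p_{2i+1}+\sqrt{-1}\,q_{2i+1})$, so no lattice rigidity is available and you are pushed into a genericity/dimension count over a bounded family of destabilizing subsheaves, which you acknowledge as ``the main obstacle'' and do not carry out; moreover it degenerates at $g=1$ (where $\dim\Ext^1=a_{2i+2}$), forcing you to outsource that case to \cite{Continuumenvelops}. The paper needs no genericity at all: it factors each $f_i$ through the semiconvergents $\beta_{2i,m}=(p_{2i}+mp_{2i+1})/(q_{2i}+mq_{2i+1})$, $0\leq m\leq a_{2i+2}$. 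Consecutive semiconvergents are Farey neighbors, so at each step the middle charge is primitive and the three charges form a Farey triangle; then Proposition \ref{prop:morphisms in farey triangles}(3) with Remark \ref{Rmk:stable case} (on a curve the abelianizer $\mathcal{A}_0$ vanishes, killing the degenerate case) shows that the cone of \emph{any} nonzero class in $\Ext^1$ is stable: a nontrivial Harder--Narasimhan filtration of the middle term would produce charges strictly inside the Farey parallelogram, which contains no interior lattice points. This arithmetic rigidity replaces your dimension count, treats $g=1$ and $g\geq2$ uniformly, and each $f_{2i}$ is then a composition of injections with stable, hence locally free, cokernels (Proposition \ref{prop:examples on curves}).

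Simplicity is the second incomplete step. Your reduction is correct --- $\Hom(\mathcal{Q}_i,\E_j)=0$ since $\mu(\mathcal{Q}_i)>\theta>\mu(\E_j)$ makes the inverse system of restriction maps injective, so $\End(\E_\infty)\hookrightarrow\Hom(\E_0,\E_\infty)$ --- but the decisive conclusion, that only scalar multiples of the structural inclusion extend across every rung, is again deferred to an unproven claim that generic extension classes force one-dimensionality. The paper does not rely on genericity here either: it imports simplicity from the proof of \cite[Theorem 5.3]{Continuumenvelops} (generalized to positive genus), which exploits stability of all the bundles occurring along the tower; note that Theorem \ref{thm:first result in intro} asserts simplicity with no genericity hypothesis on the classes, in contrast with the rational-slope examples of Section \ref{subsec:final1}, where genericity of the cokernels genuinely is needed. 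So your architecture is sound, but the two load-bearing steps are missing, and both are closed in the paper by Farey-triangle arithmetic rather than by dimension counts.
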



Note that in general, for a fixed $\theta$, these colimit objects have gigantic moduli (see \cite{Continuumenvelops}). Motivated by the examples in Theorem \ref{thm:first result in intro}, we further investigate the slope stability for infinite dimensional vector bundles via the Kobayashi-Hitchin correspondence, i.e., constructing Hermitian--Einstein metrics on such infinite dimensional vector bundles. 

The basic intuition is that rationals are slopes of Hermitian-Einstein vector bundles of finite rank, whereas irrational numbers correspond to slopes of Hermitian-Einstein Hilbert bundles of infinite rank. For a sequence in (\ref{eq:seqofhol intro}), we hope to use the Hermitian-Einstein metrics on $\E_i$ to construct a Hermitian-Einstein metric on $\E_\infty$. Moreover, since the slopes of $\E_i$ are the best Diophantine approximations of $\theta$, we expect that the corresponding Hermitian-Einstein metric of $\E_i$ also has a certain good approximation property. 


The main result of this paper is the following theorem.
\begin{thm}\label{thm:main thm in intro}
    Let $X$ be a compact Riemann surface with genus $g(X)>0$, $\omega$ be a K\"ahler form on $X$, and $\theta$ be an irrational number. Let $\E_\infty$ be a vector bundle constructed in Theorem \ref{thm:first result in intro} and let $E_\infty$ be the underlying smooth topological vector bundle. 
    Then there exists an indecomposable holomorphic separable Hilbert bundle $(\textbf{E},H_\infty)$ with a unitary connection $\textbf{A}$ such that the following statements hold:
    \begin{enumerate}
         \item $H_\infty$ defines a Hermitian metric on $E_\infty$ and is constructed as a limit of a subsequence of Hermitian-Einstein metrics on $\mathcal{E}_k$ as $k\to\infty$. $\textbf{E}$ is given by fiberwise completion of $E_\infty$ with respect to $H_\infty$.\label{enu:1}
        \item   $\textbf{A}$ gives the holomorphic structure on $\textbf{E}$ (equivalently, $\textbf{A}$ is the Chern connection) that makes each $\mathcal{E}_k$ a holomorphic subbundle of $\textbf{E}$.
        \item   $H_\infty$ is a Hermitian-Einstein metric on $\textbf{E}$. \label{enu:4}
    \end{enumerate}

\end{thm}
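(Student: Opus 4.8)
The plan is to produce the Hermitian metric $H_\infty$ as a limit of the finite-rank Hermitian--Einstein metrics and then to push the Hermitian--Einstein equation to the limit. First, by the theorem of Narasimhan--Seshadri and Donaldson, each stable bundle $\E_k$ carries a Hermitian--Einstein metric $H_k$, unique up to a positive scalar, whose Einstein constant $\la_k$ is a fixed topological multiple of the slope $\mu(\E_k)$, i.e. of the $2k$-th convergent of $\theta$. The injective bundle maps $f_k$ realize each $\E_k$ as a holomorphic subbundle of $\E_{k+1}$, hence of the colimit $\E_\infty$; restricting $H_{k+1}$ to $\E_k$ produces a second, a priori non-Einstein, metric on $\E_k$, and the whole argument rests on comparing $H_k$ with these restrictions. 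After fixing the scalar ambiguities compatibly (normalizing, say, on the image of $\E_0$), I would regard the $H_k$ as metrics on the increasing filtration $E_0\subset E_1\subset\cdots\subset E_\infty$ of the underlying smooth bundle.

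The analytic heart is a uniform bound. Using the \emph{well-approximating sequence} property, I would estimate the change of Donaldson's functional along the filtration and convert the Diophantine approximation $|\mu(\E_k)-\theta|=O(q_{2k}^{-2})$ into a summable control on the deviation of $H_{k+1}|_{\E_k}$ from $H_k$. This yields a uniform $L^\infty$ bound for the relative endomorphisms $H_k^{-1}H_{k+1}|_{\E_k}$, and by the ellipticity of the Hermitian--Einstein equation (bootstrapping $L^\infty$ to $C^\infty$ on the fixed compact $X$ for each fixed subbundle $\E_i$) a uniform $C^\infty$ bound on each $\E_i$. An Arzel\`a--Ascoli and diagonal extraction then gives a subsequence $H_{k_j}$ converging in $C^\infty$ on every $\E_i$ to a limiting family $H_\infty$, automatically compatible with the filtration. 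This establishes part (1): setting $\mathbf{E}_x$ to be the Hilbert-space completion of $(E_\infty)_x$ under $H_\infty(x)$ defines a separable Hilbert bundle, with local triviality and smoothness inherited from the uniform estimates.

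Next I would treat the connection and the holomorphic structure. The Chern connections $A_{k_j}$ of $(\E_{k_j},H_{k_j})$ converge, in the same topology, to a unitary connection $\mathbf{A}$ on $\mathbf{E}$ whose $(0,1)$-part $\dbar_{\mathbf{A}}$ restricts on each $\E_i$ to its given $\dbar$-operator; thus each $\E_i$ is a holomorphic subbundle and $\mathbf{A}$ is the Chern connection of $(\mathbf{E},H_\infty)$, giving part (2). For part (3), the curvature operators $\im\La_\om F_{A_{k_j}}=\la_{k_j}\,\mathrm{Id}$ converge, because $\la_{k_j}\to\la_\infty$, the Einstein constant attached to the slope $\theta$, and the convergence is strong enough to control the second-order curvature term; the limit equation $\im\La_\om F_{\mathbf{A}}=\la_\infty\,\mathrm{Id}_{\mathbf{E}}$ then holds as an identity of bounded operators on $\mathbf{E}$, so $H_\infty$ is Hermitian--Einstein. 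Finally, indecomposability follows from the simplicity of $\E_\infty$ proved in Theorem \ref{thm:first result in intro} together with the single-eigenvalue Einstein condition: any $\mathbf{A}$-parallel orthogonal splitting would produce a nonscalar holomorphic endomorphism, contradicting simplicity.

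The main obstacle is the uniform estimate of the second paragraph: the ranks of $\E_k$ grow without bound, so a priori the Hermitian--Einstein metrics can collapse or blow up along the ever-larger subbundles. The role of the Diophantine approximation is precisely to make the incremental error summable, keeping the sequence in a bounded region of the space of metrics; quantifying this through Donaldson's functional, ensuring the resulting bounds are uniform in $k$ while simultaneously controlling the curvature operator in the limit so that it remains a bounded scalar on the infinite-rank bundle, is the crux of the whole construction.
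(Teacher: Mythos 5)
Your outline follows the paper's strategy at the level of headlines, but two of its load-bearing steps are asserted rather than proved, and one of them is asserted in a form that is almost certainly false. First, you claim the well-approximating property converts the Diophantine decay $|\mu(\E_k)-\theta|=O(q_{2k}^{-2})$ into a \emph{summable} control on the deviation of $H_{k+1}|_{\E_k}$ from $H_k$, yielding the uniform $L^\infty$ bound directly. No such incremental estimate is available: if the deviations were summable, the whole sequence of restricted metrics would be Cauchy and converge without extracting subsequences, settling a uniqueness question the paper explicitly leaves open. The actual mechanism is a properness/blow-up argument: one bounds Donaldson's functional $M(m_is_i)\leq Lm_i\,2\pi(\mu(\E_i)-\mu(\S))\,\mathrm{rk}\,\S$ using only $\mathrm{tr}\,s_i=0$ and the $L^{2+2\delta}$ bound on second fundamental forms, then assumes $\|\log h_i\|$ blows up, extracts a weak limit $s_\infty$ with a.e.\ constant eigenvalues, and uses the Uhlenbeck--Yau weakly holomorphic projections to produce a subsheaf filtration whose slopes violate the good-approximation inequality $(\theta-\mu(\S))\mathrm{rk}\,\S<\mathrm{rk}\,\S_0(\mu(\S)-\mu(\S_0))$. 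Without this contradiction argument (and the rank-independent operator-norm bookkeeping that makes the constants uniform as $\mathrm{rk}\,\E_k\to\infty$), your second paragraph has no proof behind it. A further normalization issue is also glossed: the scalar ambiguities must be fixed by an inductive diagonal scheme producing constants $c_{i,m}$ with $|c_{i,m}|<2^{-i-m}$ so that the limits $H_{k,\infty}$ are mutually compatible under restriction; normalizing once on $\E_0$ does not suffice.

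Second, the sentence ``local triviality and smoothness inherited from the uniform estimates'' skips what is arguably the main construction of the paper. $C^\infty$ convergence on each finite-rank $\E_i$ does not give a Hilbert bundle structure on the fiberwise completion: the connection matrix of $H_\infty$ in the splitting $E_\infty\simeq\hat{\oplus}Q_i$ has infinitely many off-diagonal blocks $\gamma_{i,j}$, and one needs (i) a uniform curvature bound $\|F_{H_\infty|_{Q_l}}\|_{\rho,L^\infty}\leq C(\theta,X)$ independent of $l$, (ii) a rank-independent Uhlenbeck gauge-fixing lemma to produce unitary frames with $\|A_j\|_{\rho,C^\alpha}\leq 1$ and hence $C^{1,\alpha}$ transition maps into $\mathbf{U}(l^2)$, (iii) passage to a further subsequence so that $\sum_j\bigl((\theta-\mu(E_j))\mathrm{rk}\,E_j\bigr)^{1/2}<\infty$, which is what makes $\mathbf{A}$ a bounded operator, and (iv) elliptic regularity for Banach-space-valued Hodge systems plus Koszul--Malgrange to get smoothness of $\mathbf{A}$ and the holomorphic structure. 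None of this is routine inheritance, and by Kuiper's theorem the continuous and smooth structures are trivial anyway, so the holomorphic structure is the entire content. Finally, your indecomposability argument is flawed as stated: an orthogonal holomorphic projection of $\mathbf{E}$ need not preserve the dense subbundle $E_\infty$, so it does not yield a nonscalar holomorphic endomorphism of $\E_\infty$. The paper instead restricts a putative splitting to each $\E_i$, uses that each stable $\E_i$ is indecomposable to force $\E_i$ into a single factor, and then uses density of $E_\infty$ in $\mathbf{E}$ to kill the other factor.
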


The definition of a (separable) Hilbert bundle $\textbf{E}$ over $X$ is in the usual sense, which means a set of locally trivializations $\textbf{E}|_U\simeq U\times W$ for a fixed separable Hilbert space $W$ and transition maps satisfy additional continuous/smooth/holomorphic conditions. See Section \ref{section:Holomorphic Hilbert bundles} for more precise definitions.




Theorem \ref{thm:main thm in intro} does not hold for rationals. For example, on flat elliptic curves, if $\theta\in\mathbb{Q}$,  a similar vector bundle of infinite rank can be constructed. However, one cannot construct indecomposable Hermitian-Einstein Hilbert bundle in this case. See Section \ref{subsec:final1}. 

\subsection{well-approximating sequences and Hermitian-Einstein Hilbert bundles}
To explain the main ingredients in our construction and proofs, we start with a brief account of Hermitian-Einstein metrics and slope stability on holomorphic vector bundles.

Let $(X,\omega)$ be a compact K\"ahler manifold of dimension $n$. Let $\mathcal{E}$ be a holomorphic vector bundle and $H$ be a Hermitian metric on $\E$. 

\begin{defn}\label{def:HEmetric}
    A unitary connection $A$ is called a Hermitian-Einstein connection if the curvature $F(A)$ is of $(1,1)$ type and satisfies 
\[\sqrt{-1}\Lambda F(A)=\gamma \text{Id}_\E.\]
where $\Lambda$ is the contraction with respect to $\omega$, $\gamma$ is a constant, and $\text{Id}_\E$ is the identity endomorphism. A Hermitian metric $H'$ is called the Hermitian-Einstein metric if its Chern connection is Hermitian-Einstein.
\end{defn}
Let $\deg\E=\int_X c_1(\E)\wedge\frac{\omega^{n-1}}{(n-1)!}$. The \emph{slope} of $\E$ is defined to be 
\begin{align}
    \mu(\E)=\frac{\deg\E}{\text{rk}\E}. 
\end{align} If $\mathcal{F}$ is a coherent sheaf of rank $r\geq 1$, then define $\deg{\mathcal{F}}=\deg(\det \F)$ where $\det\F$ is the determinant line bundle of $\F$, and define its slope as above.
\begin{defn} \label{def:slopestable}
    A holomorphic vector bundle $\E$ is called \emph{slope stable} if any proper coherent subsheaf $\mathcal{F}$ of $\E$ satisfies
    \[\mu(\F)<\mu(\E/\F).\]
    $\E$ is called \emph{slope poly-stable} if $\E$ is a direct sum of multiple slope stable bundles with the same slope.
\end{defn}

The classic Donaldson-Uhlenbeck-Yau Theorem states the following:
\begin{thm}[\cite{Stableandunitaryvectorbundles,Donaldson85,UhlenbeckYau86,Donaldsoninfinite}]
    A holomorphic vector bundle over a compact K\"ahler manifold admits a Hermitian-Einstein metric if and only if it is slope poly-stable.
\end{thm}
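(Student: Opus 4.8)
The plan is to prove the two implications separately, since they are of very different character: the implication ``Hermitian--Einstein $\Rightarrow$ poly-stable'' is an elementary consequence of Chern--Weil theory, whereas the converse is the deep analytic content of the theorem.

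For the forward direction, suppose $\E$ carries a Hermitian--Einstein metric $H$ with $\im\La F=\ga\,\mathrm{Id}_\E$. Given any proper coherent subsheaf $\F\subset\E$, I would first reduce to the case where $\F$ is saturated, hence a subbundle away from a codimension-two analytic subset, so that the orthogonal projection $\pi$ onto $F$ is a well-defined $L^2_1$ section of $\End(E)$ with $\pi=\pi^*=\pi^2$ and $(\mathrm{Id}-\pi)\dbar\pi=0$. The key is the Chern--Weil identity expressing the degree of $\F$ through $\pi$,
\[
\deg\F=\frac{1}{2\pi}\int_X\mathrm{tr}\big(\pi\cdot\im\La F\big)\frac{\om^{n}}{n!}-\frac{1}{2\pi}\int_X|\dbar\pi|^2\,\frac{\om^{n}}{n!}.
\]
Substituting the Einstein condition turns the first term into $\ga\,\mathrm{rk}(\F)\cdot\mathrm{Vol}(X)/2\pi$, and since the second term is manifestly non-positive we obtain $\mu(\F)\le\mu(\E)$, i.e. semistability. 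Equality forces $\dbar\pi=0$, so $\F$ splits off as a holomorphic and metric direct summand; iterating this decomposition yields the splitting of $\E$ into stable Hermitian--Einstein pieces of equal slope, which is precisely poly-stability.

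For the converse, assume $\E$ is poly-stable; it suffices to treat the stable case and then take direct sums. I would set up the continuity method (following Uhlenbeck--Yau), perturbing the equation to
\[
\im\La F(H_\ep)=\ga\,\mathrm{Id}_\E+\ep\log\big(H_\ep H_0^{-1}\big),\qquad \ep\in(0,1],
\]
where $H_0$ is a fixed background metric. Openness of the set of solvable $\ep$ follows from the implicit function theorem, since the linearization is an invertible elliptic operator for $\ep>0$; closedness for $\ep$ bounded away from $0$ follows from standard elliptic a priori estimates. The crux is to pass to the limit $\ep\to0$, and for this one needs a uniform $C^0$, equivalently $L^\infty$, bound on the endomorphism $\log(H_\ep H_0^{-1})$ together with control on Donaldson's functional. (Donaldson's evolution equation for the metric gives an alternative route, replacing the continuity parameter by time and requiring the same limiting estimate.)

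The main obstacle is exactly this $L^\infty$ estimate, and here stability must enter. I would argue by contradiction: if $\sup_X|\log(H_\ep H_0^{-1})|\to\infty$ along a sequence $\ep\to0$, then after rescaling the normalized endomorphisms converge weakly in $L^2_1$ to a nonzero limit whose spectral projections define weakly holomorphic subbundles. The decisive technical ingredient is the Uhlenbeck--Yau regularity theorem, which guarantees that such an $L^2_1$ projection $\pi$ satisfying $(\mathrm{Id}-\pi)\dbar\pi=0$ is in fact the projection onto a genuine coherent subsheaf $\F\subset\E$. Tracking the Chern--Weil identity above through the limit then shows this $\F$ satisfies $\mu(\F)\ge\mu(\E)$, contradicting stability. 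With the $C^0$ bound in hand, higher-order estimates are routine elliptic bootstrapping, and the limiting metric solves $\im\La F=\ga\,\mathrm{Id}_\E$. I expect the regularity theorem for weakly holomorphic subbundles to be by far the hardest and most delicate step, as it is the point where a purely analytic blow-up is converted into the algebraic-geometric object that stability can rule out.
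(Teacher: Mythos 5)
The paper does not prove this statement at all: it is quoted as the classical Narasimhan--Seshadri/Donaldson--Uhlenbeck--Yau theorem, with the proof delegated to the cited references. Your sketch correctly reproduces the standard argument from exactly those sources --- Chern--Weil degree formula via the $L^2_1$ orthogonal projection for the easy direction, and the Uhlenbeck--Yau perturbed equation $\sqrt{-1}\Lambda F(H_\epsilon)=\gamma\,\mathrm{Id}+\epsilon\log(H_\epsilon H_0^{-1})$ with the weakly holomorphic subbundle regularity theorem destabilizing the blow-up limit for the hard direction --- so there is nothing to compare beyond noting that your outline (with the acknowledged hard steps, notably the Uhlenbeck--Yau regularity theorem, left as cited black boxes) is the proof the paper implicitly invokes.
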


In the literature, the above theorem is sometimes called the (set-theoretical) Kobayashi-Hitchin correspondence. 

Now suppose that we have a sequence of stable holomorphic vector bundles on $X$
\begin{equation}
\mathcal{E}_{0}\xrightarrow[]{\iota_{0}}\mathcal{E}_{1}\xrightarrow[]{\iota_{1}}\mathcal{E}_{2}\xrightarrow[]{\iota_{2}}\mathcal{E}_{3}\to\cdots, \label{eq:-28 intro}
\end{equation} where $\iota_i$ is an injective holomorphic bundle map, and not an isomorphism for any $i\in\mathbb{N}$. Let $\E_{\infty}$ denote the colimit object of such sequence, and denote 
\[
\theta=\mu(\mathcal{E_\infty})=\lim_{i\to\infty}\mu(E_{i}).
\]
Moreover, $\coker(\iota_i)$ is a holomorphic vector bundle for any $i\geq 0$. In the special case where $X$ is a smooth projective variety over $\C$, for any affine open set $\mathrm{U}=\text{Spec}R$
of $X$, $\mathrm{H}^{0}(\mathrm{U},\mathcal{E}_{\infty})$ is a projective $R$-module
(see \cite[Corollary 5.5]{Continuumenvelops}), hence free by \cite[Corollary 4.5]{Bigprojectivemodulesarefree}. Therefore, $\mathcal{E}_{\infty}$
is an infinite rank vector bundle in the sense of Drinfeld (see \cite{Infinitedimensionalvectorbundles}). 

We now introduce the notion of a well-approximating sequence of stable bundles, which is one of the main ingredients in our construction.
\begin{defn}
\label{def:We-say-intro}A locally free subsheaf $\mathcal{S}$ of $\mathcal{E}_\infty$ is a \emph{good approximation} of $(\mathcal{E}_{\infty},\theta)$ if for any proper coherent subsheaf $\mathcal{S}_{0}\subset\mathcal{S}$ with
$1\leq\text{rk}\S_0<\text{rk}\S$, we have
\begin{equation}
(\theta-\mu(\S))\text{rk}\S<\text{rk}\mathcal{S}_{0}(\mu(\S)-\mu(\mathcal{S}_{0})).\label{eq:-30 intro}
\end{equation}
We call sequence
(\ref{eq:-28 intro}) a well-approximating sequence of $(\mathcal{E}_{\infty},\theta)$, if $\E_{i}$ is
a good approximation of $(\mathcal{E}_{\infty},\theta)$ for every $i\gg 0$. 
\end{defn}
A good approximating sequence of stable vector bundles plays a similar role as a sequence of convergents in Diophantine approximation but in the context of coherent sheaves. In particular, if $\mu(\E_i)$ is a convergent of $\theta$, then $\E_i$ is a good approximation for $(\E_\infty,\theta)$. Therefore, Theorem \ref{thm:main thm in intro} is a direct consequence of the following theorem.

\begin{thm}
  \label{thm:main2}  Notations as above. Suppose that $X$ is a compact Riemann surface. If (\ref{eq:-28 intro}) is a well-approximating sequence of $(\mathcal{E}_\infty,\theta)$, then there exists an indecomposable holomorphic Hilbert bundle $(\textbf{E},H_\infty)$ with a unitary connection $\textbf{A}$ such that (\ref{enu:1})-(\ref{enu:4}) in Theorem \ref{thm:main thm in intro} hold.
\end{thm}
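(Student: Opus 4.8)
The plan is to obtain $H_\infty$ as a limit of the Hermitian--Einstein metrics produced by the Donaldson--Uhlenbeck--Yau theorem at each finite stage, with the good-approximation hypothesis supplying exactly the uniform analytic control needed to prevent the estimates from degenerating as the rank grows. First I would use stability of each $\mathcal{E}_k$ to fix a Hermitian--Einstein metric $H_k$ with $\sqrt{-1}\Lambda F(H_k)=\gamma_k\operatorname{Id}_{\mathcal{E}_k}$, unique up to scale, and normalize the scales so that the Einstein constants $\gamma_k$ converge (to $\gamma_\infty$ proportional to $\theta$) and so that the restrictions of $H_k$ to the fixed reference stage $\mathcal{E}_0$ stay uniformly comparable. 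For $j$ fixed and $k\ge j$, the composite inclusion $\mathcal{E}_j\hookrightarrow\mathcal{E}_k$ lets me restrict $H_k$ to a Hermitian metric $H_k^{(j)}$ on $\mathcal{E}_j$, and the analytic goal is to show that, along a \emph{single} subsequence in $k$, each $H_k^{(j)}$ converges in $C^\infty$ on $\mathcal{E}_j$, compatibly in $j$.

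The heart of the matter is a uniform $C^0$ estimate. By the Gauss--Codazzi (Chern--Weil) identity for the holomorphic subbundle $\mathcal{E}_j\subset(\mathcal{E}_k,H_k)$, the second fundamental form $\beta_{jk}$ satisfies $\|\beta_{jk}\|_{L^2}^2 = c\,\operatorname{rk}(\mathcal{E}_j)\,(\mu(\mathcal{E}_k)-\mu(\mathcal{E}_j))$ for a positive constant $c$, which stays bounded as $k\to\infty$ since $\mu(\mathcal{E}_k)\to\theta$; thus $H_k^{(j)}$ solves $\sqrt{-1}\Lambda F(H_k^{(j)})=\gamma_k\operatorname{Id}_{\mathcal{E}_j}-\beta_{jk}\beta_{jk}^{*}$ with an $L^2$-controlled defect. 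To upgrade $L^2$ control of the defect to a two-sided $C^0$ bound on $H_k^{(j)}$ relative to a fixed reference metric, I would run Donaldson's functional $M(\cdot,\cdot)$ on the finite-rank bundles together with Simpson's sup-estimate $\sup|s|\le C_1\|s\|_{L^1}+C_2$ and the stability lower bound $M(K,H)\ge\varepsilon\|s\|_{L^1}-C$. The decisive point is that the constants $\varepsilon,C_1,C_2$ are governed by the stability gaps $\mu(\mathcal{E}_k)-\mu(\mathcal{S}_0)$, and the good-approximation inequality (\ref{eq:-30 intro}) supplies precisely the lower bound $\mu(\mathcal{E}_k)-\mu(\mathcal{S}_0)>(\theta-\mu(\mathcal{E}_k))\operatorname{rk}(\mathcal{E}_k)/\operatorname{rk}(\mathcal{S}_0)$ that keeps these constants uniformly controlled as the rank grows. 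I also expect to bound $\|\log((H_k^{(j)})^{-1}H_{k+1}^{(j)})\|_{L^\infty}$ by a summable sequence, exploiting that $\|\beta_{k,k+1}\|_{L^2}^2$ is a multiple of $\operatorname{rk}(\mathcal{E}_k)(\mu(\mathcal{E}_{k+1})-\mu(\mathcal{E}_k))$ and is summable for a well-approximating sequence.

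With uniform $C^0$ bounds in hand, the structure equations for $\beta_{jk}$ together with elliptic bootstrapping for the (perturbed) Hermitian--Einstein equation on each fixed $\mathcal{E}_j$ should yield uniform $C^\infty_{\mathrm{loc}}$ bounds; Arzel\`a--Ascoli then gives subsequential $C^\infty$ convergence $H_k^{(j)}\to H_\infty^{(j)}$, and a diagonal argument over $j$ produces one subsequence valid for all $j$ simultaneously. The restriction-compatibility $H_\infty^{(j')}|_{\mathcal{E}_j}=H_\infty^{(j)}$ lets the $H_\infty^{(j)}$ assemble into a single Hermitian metric $H_\infty$ on the colimit $E_\infty$. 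I would then define $\mathbf{E}$ as the fiberwise completion of $E_\infty$ with respect to $H_\infty$ and verify, using the definitions in Section \ref{section:Holomorphic Hilbert bundles}, that the transition data are continuous, smooth, and holomorphic, so that $\mathbf{E}$ is a genuine separable holomorphic Hilbert bundle in which each $\mathcal{E}_k$ is a holomorphic subbundle. The connection $\mathbf{A}$ arises as the limit of the Chern connections $A_k$ of $(\mathcal{E}_k,H_k)$, and I would check it is the Chern connection of $(\mathbf{E},H_\infty)$, hence induces the holomorphic structure.

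To conclude the Hermitian--Einstein property of $(\mathbf{E},H_\infty)$ I would test the curvature against a dense family of holomorphic sections rather than argue fiberwise from the subbundle curvatures. For $v,w$ holomorphic and supported in a fixed stage $\mathcal{E}_j$ and any $k\ge j$, the \emph{full} finite-rank identity $\sqrt{-1}\Lambda F(A_k)=\gamma_k\operatorname{Id}_{\mathcal{E}_k}$ gives $\langle\sqrt{-1}\Lambda F(A_k)v,w\rangle_{H_k}=\gamma_k\langle v,w\rangle_{H_k}$; passing to the limit via the $C^\infty_{\mathrm{loc}}$ convergence above yields $\langle\sqrt{-1}\Lambda F(\mathbf{A})v,w\rangle_{H_\infty}=\gamma_\infty\langle v,w\rangle_{H_\infty}$, and density forces $\sqrt{-1}\Lambda F(\mathbf{A})=\gamma_\infty\operatorname{Id}_{\mathbf{E}}$. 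Indecomposability follows from the simplicity of $\mathcal{E}_\infty$ established in Theorem \ref{thm:first result in intro}: a holomorphic splitting of $\mathbf{E}$ would restrict to a nontrivial idempotent in $\operatorname{End}(\mathcal{E}_\infty)=\mathbb{C}$, which is impossible. The step I expect to be the main obstacle is the uniform $C^0$ estimate of the second paragraph: the standard Simpson--Donaldson constants depend on both the rank and the stability gap, both of which degenerate along the sequence, and the entire force of the Diophantine/good-approximation hypothesis must be marshalled to keep these constants uniformly bounded and the successive metric comparisons summable.
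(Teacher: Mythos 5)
Your skeleton (restrict the Hermitian--Einstein metrics, get a uniform $C^0$ bound via Donaldson's functional, diagonalize, complete, then check the Hermitian--Einstein property and indecomposability) matches the paper, but your central step --- the uniform $C^0$ estimate --- is set up in a way that fails, and you concede as much in your closing sentence. You propose to run the properness/sup-estimate machinery \emph{on the growing bundles} $\mathcal{E}_k$ and to use the good-approximation inequality for $\mathcal{E}_k$, i.e. $\mu(\mathcal{E}_k)-\mu(\mathcal{S}_0)>(\theta-\mu(\mathcal{E}_k))\operatorname{rk}(\mathcal{E}_k)/\operatorname{rk}(\mathcal{S}_0)$, to keep the constants uniform. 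But the right-hand side tends to $0$, while the normalized stability gaps of $\mathcal{E}_k$ themselves shrink (at best like $1/q_k$) and the ranks blow up, so the Simpson--Donaldson constants genuinely degenerate along your route: the inequality you quote cannot rescue them. The paper's resolution is to \emph{invert the roles}: fix one finite stage $\mathcal{S}=\mathcal{E}_j$ (itself a good approximation), restrict the metrics $H_k|_{\mathcal{S}}$, and run Donaldson's functional on the \emph{fixed} bundle $\mathcal{S}$ against its fixed Hermitian--Einstein reference. Convexity plus the rank-independent pointwise bound $\sup_X|\beta|^2_H\le C\|\beta\|^2_{L^2}$ (Theorem \ref{thm:Assume-the-same}, via the Weitzenb\"ock formula on curves, sharpened through Corollary \ref{cor: an easy lemma}) gives $M(m_is_i)\le Lm_i\,2\pi(\mu(\mathcal{E}_i)-\mu(\mathcal{S}))\operatorname{rk}\mathcal{S}$; if the $C^0$ bound failed, a weak $W^{1,2}$ limit $s_\infty$ with constant eigenvalues and Uhlenbeck--Yau weakly holomorphic projections yield a filtration of $\mathcal{S}$ forcing $L(\theta-\mu(\mathcal{S}))\operatorname{rk}\mathcal{S}\ge\min_i \operatorname{rk}\mathcal{S}_i(\mu(\mathcal{S})-\mu(\mathcal{S}_i))$, contradicting (\ref{eq:-30 intro}) \emph{for the fixed} $\mathcal{S}$. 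All constants live on $\mathcal{S}$; the only quantity that must be small is the Diophantine defect $(\theta-\mu(\mathcal{S}))\operatorname{rk}\mathcal{S}$. Relatedly, your hoped-for summable bound on $\|\log((H_k^{(j)})^{-1}H_{k+1}^{(j)})\|_{L^\infty}$ is unsupported; the paper gets restriction-compatibility instead by a diagonal renormalization with constants $|c_{i,m}|<2^{-i-m}$ (Theorem \ref{thm:For-each-,}).

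The second gap is that ``verify the transition data are continuous, smooth, and holomorphic'' hides the other rank-free analytic input the paper needs. With infinitely many quotient blocks, local trivializations come from the uniform curvature bound $\|F_{H_\infty|_{Q_l}}\|_{\rho,L^\infty}\le C(\theta,X)$ (Corollary \ref{cor:bound curvature}) fed into a rank-independent Uhlenbeck gauge-fixing theorem (Lemma \ref{lem:Let--be-4}, proved for Hilbert bundles in the appendix), producing unitary frames with uniformly $C^{\alpha}$-bounded connection forms; boundedness of $\mathbf{A}$ as an $L(l^2)$-valued form then requires passing to a subsequence so that $\sum_j\{(\theta-\mu(E_j))\operatorname{rk}E_j\}^{1/2}<\infty$ as in (\ref{eq:l2summable}), together with the tail estimate (\ref{eq:-49}); smoothness of $\mathbf{A}$ comes from elliptic regularity of an operator-valued Hodge system, and only then does Koszul--Malgrange apply. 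Your limiting argument for the Hermitian--Einstein equation silently needs the same tail bounds, since $\mathbf{F}$ involves the infinite sums $\Gamma^{\dagger}\wedge\Gamma$, whose convergence is exactly Corollary \ref{cor:For-any-fixed}. Finally, your indecomposability argument via $\operatorname{End}(\mathcal{E}_\infty)=\mathbb{C}$ is circular at the stated generality: Theorem \ref{thm:main2} concerns arbitrary well-approximating sequences, for which simplicity of $\mathcal{E}_\infty$ is not given; the paper argues instead from stability of each finite stage $\mathcal{E}_i$ plus fiberwise density of $E_\infty$ in $\mathbf{E}$.
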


We briefly explain the proof of Theorem \ref{thm:main2}. If $\S$ is a good approximation of $(\E_{\infty},\theta)$, we use Donaldson's functional to prove that the sequence of metrics $\{H_i|_{\S}\}$ converges in a suitable norm, where $H_i$ is the Hermitian--Einstein metric on $\E_i$ with a suitable scaling. We are inspired by Donaldson's observation that slope stability implies the properness of Donaldson's functional \cite{Donaldson85}. Instead of applying stability assumption on a fixed vector bundle, our argument hinges on Diophantine approximation on Donaldson's functional for a sequence of vector bundles.

For a well-approximating sequence, let $H_{i,\infty}$ denote the convergent metric on $\E_i$ from the sequence $\{H_j|_{\E_i}\}_{j\geq i}$. We use a diagonal argument to normalize the metrics $\{H_{i,\infty}\}$ to be compatible with restrictions. 

An intermediate step in the proof is to construct a colimit object of (\ref{eq:-28 intro}) in the smooth category. We use convenient calculus (see \cite{kriegl1997convenient}) to construct a smooth topological vector bundle $E_\infty$. Then $\{H_{i,\infty}\}$  gives us a metric $H_\infty$ on $E_{\infty}$. We prove that such a metric is Hermitian--Einstein in some sense.

We complete the infinite dimensional vector bundle $E_{\infty}$ with respect to such a metric $H_\infty$. To construct a Hilbert bundle structure, a key ingredient is Uhlenbeck's gauge fixing theorem \cite{UhlenbeckLp}. A version of Uhlenbeck's theorem that is independent of rank of bundles is proved to construct continuous and smooth local trivializations. We then use the Koszul-Malgrange theorem \cite{KoszulMalgrange58} to obtain a holomorphic structure. 

Many of our arguments can be applied to K\"ahler manifolds of higher dimensions. Hopefully, we can generalize part of our construction to higher dimensions in future research. The main problem in higher dimensions is that the total curvature tensor $F$ of a Hermitian-Einstein connection has no \emph{a prior} bound.  Without such bound, we cannot directly apply Bochner-Lichnerowicz-Weitzenb\"ock formula to bound Donaldson's functional. Also, we cannot use Uhlenbeck's gauge fixing lemma, and it will be troublesome to find a smooth trivialization of Hilbert bundles. Additional technical input is needed in higher dimensions to overcome this problem. For instance, our construction should work for a class of stable bundles with a uniform total curvature bound.

A natural question is the uniqueness of the Hermitian-Einstein Hilbert bundle in our construction. For a given sequence (\ref{eq:-28 intro}), our current construction relies on repeatedly choosing subsequences. The question remains whether the resulting bundle is independent of such choices. We hope to address this question and study some related uniqueness problems in the future. 




There are some interesting consequences; for example, monodromy representations of these projectively flat holomorphic Hilbert vector bundles should provide us with irreducible projective unitary representations of the fundamental group of the Riemann surface $X$ on a separable Hilbert space. These representations are of physics interest (see \cite{Bargmann}), as two pure states up to a scalar are physically equivalent and the evolution operators in Schr\"odinger picture are unitary. Intuitively, one should view the parallel transport of vectors in these Hilbert bundles as Schr\"odinger evolution of wave functions, whose Hamiltonian corresponds to the connection 1-form. We will investigate them in a separate paper. 

In the special case where $X$ is a complex elliptic curve, there are three possible ways to go from our construction to the noncommutative tori (see Section \ref{subsection: a maze to noncommutative tori} for more details). In fact, our construction is partly motivated by Polishchuk's result (see \cite{Classficationofholomorphicvectorbundles}), which relates stability conditions on complex elliptic curves to noncommutative tori. We call it the categorical way to noncommutative tori. Our construction provides two other ways to noncommutative tori, one is via monodromy representation of the stable Hilbert bundles, and the other way is via the homological mirror symmetry counterparts of the infinite dimensional vector bundles $\E_{\infty},$ which should be nonclosed Langrangians in a torus. Given the same initial data, these three ways should give us the noncommutative tori which are Morita equivalent.

Since projective unitary representations and noncommutative tori have deep roots in quantum physics, we hope that our construction can shed some new light on the geometric background of quantum field theory.
\subsection{Inverse limits and rigged Hilbert bundles}

There is a parallel construction for an inverse system of vector bundles. Suppose that we have a sequence of stable  holomorphic vector bundles
\begin{align}
\cdots\to\mathcal{E}_{k}\overset{f_{k}}{\to}\cdots\to \mathcal{E}_{2}\overset{f_{2}}{\to}\mathcal{E}_{1}\overset{f_{1}}{\to}\mathcal{E}_0.\label{eq:inverseseqofhol}
\end{align}
Here, each $f_k$ is surjective as a bundle map and not an isomorphism. Let $\mathcal{E}_{\infty}:=\varprojlim\mathcal{E}_{i}$ be the inverse limit formed in the category of quasi-coherent sheaves. We denote $\theta=\lim_{i\to\infty}\mu(\mathcal{E}_{i})$. Since $\mu(\mathcal{E}_{i})$ is a decreasing sequence, $\theta\in[-\infty,\infty)$ exists. 

If  $\theta\not=-\infty$, we take the dual bundle $\E_i^\vee$ of $\E_i$, then we have a direct system of bundles 
\begin{align}
\mathcal{E}^\vee_{0}\overset{f_{1}^*}{\to}\mathcal{E}^\vee_{1}\overset{f_{2}^*}{\to}\mathcal{E}^\vee_{2}\to\cdots\overset{f_{k}^*}{\to}\mathcal{E}^\vee_{k}{\to}\cdots.\label{eq:dualsequ}
\end{align}
Each $\E_i^\vee$ is also stable with slope equal to $-\mu(\E_i)$, and each $f_i^*$ is injective. So we return to the case considered before. 

Suppose that (\ref{eq:dualsequ}) is a well-approximating sequence (we then also call (\ref{eq:inverseseqofhol}) a well-approximating sequence). By Theorem \ref{thm:main2}, there exists a holomorphic Hermitian Einstein Hilbert bundle $\textbf{E}^\vee$ together with holomorphic injection maps $\iota_i:\E_i^\vee\to\textbf{E}^\vee$ such that the following diagram commutes. 
\begin{align}
\xymatrix{ &  & \mathbf{} & \mathbf{E}^\vee\\
\mathcal{E}^\vee_{0}\ar[r]_{f_1^*}\ar[rrru]^{\iota_{0}} & \mathcal{E}^\vee_{1}\ar[r]_{f_2^*}\ar[rru]_{\iota_{1}} & \mathcal{E}^\vee_{2}\ar[r]_{f_3^*}\ar[ru]_{\iota_{2}} & \mathcal{E}^\vee_{3}\ar[r]\ar[u]_{\iota_{3}} & \cdots\ar[r]_{f_k^*} & \mathcal{E}^\vee_{k}\ar[llu]^{\iota_{k}}\ar[r] & \cdots
}
\end{align}
We take the complex linear dual $\textbf{E}$ of $\textbf{E}^\vee$. Then $\textbf{E}$ is also a Hermitian-Einstein Hilbert bundle on $X$ together with holomorphic surjective bundle maps $\iota_k^*:\textbf{E}\to\E_i.$
\begin{align}
 \xymatrix{ &  & \mathbf{} & \mathbf{E}\ar[llld]_{\iota_{0}^{*}}\ar[lld]^{\iota_{1}^{*}}\ar[ld]^{\iota_{2}^{*}}\ar[d]^{\iota_{3}^{*}}\ar[rrd]^{\iota_{k}^{*}}\\
\mathcal{E}_{0} & \mathcal{E}_{1}\ar[l]^{f_{1}} & \mathcal{E}_{2}\ar[l]^{f_{2}} & \mathcal{E}_{3}\ar[l]^{f_{3}} & \ar[l]\cdots & \mathcal{E}_{k}\ar[l]^{f_{k}} & \ar[l]\cdots
}
\end{align}

The above constructions fit nicely in the language of rigged Hilbert spaces. A rigged Hilbert space or Gel'fand triple is a triple \[V\overset{i}{\to}H\overset{i^\times}{\to}V^\times.\] Here $V$ is a topological vector space, $H$ is a Hilbert space and $V^\times$ is the conjugate dual of $V$. The map $i$ is a continuous injective map with dense image and $i^\times$ is the conjugate adjoint of $i$. Roughly speaking, since the conjugate $\overline{\textbf{E}}$ and $\textbf{E}^\vee$ are isometric, we should expect 2 rigged Hilbert bundles or Gel'fand triples: $\E^\vee_\infty\to\textbf{E}^\vee\to\overline{\E_\infty}$ and $\overline{\E^\vee_\infty}\to\textbf{E}\to\E_\infty$. Also note that for sequence (\ref{eq:seqofhol intro}), the dual sequence is a sequence constructed via the odd convergents of $-\theta$ (see \cite[Section 4]{Continuumenvelops} for more details of these inverse limit objects). Since the construction of rigged Hilbert bundles is certainly an interesting topic itself, we will leave the detailed discussion for later work.

\subsection{Comparison with  Hilbert bundles in the literature} There is a plethora of papers that study Hilbert bundles. We can only discuss a few of them due to the authors' ignorance. 

In \cite{Directimages}, the authors provide a very general framework for studying Hilbert bundles. This framework is very suitable for the results in \cite{FlatconnectionsandGQ} and \cite{GeometricquantizationofCSgaugetheory}. Our construction is different from their framework in the following two aspects.

The first aspect is that the stability of vector bundle plays a vital role in our construction, while \cite{Directimages} provides a much more general framework of the fields of Hilbert spaces. Therefore, our Hilbert bundles are special cases of the fields of Hilbert spaces in \cite{Directimages}, and enjoy better properties.

The second aspect is the difference of the setups. In \cite{Directimages}, the authors consider a family of Riemannian manifolds, and they associate a Hilbert space for each member in this family. They get a field of Hilbert spaces by putting these Hilbert spaces together. In this paper, we construct a family of Hilbert bundles for a given compact Riemann surface. Although it is very interesting to combine these two ideas, we will not pursue this direction in this paper.

We should mention a particular family of holomorphic Hilbert bundles called Bergman bundles that fits the framework in \cite{Directimages}. Given a holomorphic fibration $\pi:Y\to X$ between two K\"ahler manifolds and a Hermitian holomorphic vector bundle $\E$ over $Y$, one considers a holomorphic Hilbert vector bundle $\textbf{E}$ over $X$ whose fiber at $x$ is the $L^2$ holomorphic sections of $\E|_{\pi^{-1}(x)}$. Note that $\textbf{E}$ can be viewed as the direct image of $\E$ under $\pi$. If $\pi$ is not proper, then $\textbf{E}$ in general has infinite rank. There have been many important applications and a huge amount of studies of Bergman bundles in several complex variables and complex geometry. See \cite{berndtsson2009curvature,berndtsson2011,guan2015solution,Berndtsson2016,Wang2017AIF,demailly2020bergman} for a rather incomplete list. 

For finite rank holomorphic bundles over K\"ahler manifolds with boundaries, Donaldson \cite{Donaldson1992BoundaryVP} proved the existence of Hermitian-Einstein metrics with Einstein factor $0$ and prescribed boundary values. It amounts to the solvability of a Dirichlet boundary value problem. In \cite{Wu20Dirichlet}, K.-R. Wu generalizes Donaldson's results to holomorphic Hilbert bundles over Riemann surfaces with boundary. In contrast, our work focuses on the construction of holomorphic Hilbert bundles whose bundle structures are not given. Moreover, there is no obstruction to solvability of boundary value problems while, for closed Riemann surfaces, one has to consider stability conditions.
\\ \hspace*{\fill} \\
\noindent\textbf{Outline of the paper.} In Section \ref{Section:Arithmetic stability conditions and Lagrange numbers}, we slightly generalize some constructions and results in \cite{Continuumenvelops} and introduce the notion of Lagrange numbers in parity. In Section \ref{Section:Estimates for second fundamental forms}, we estimate the second fundamental form from a short exact sequence of holomorphic bundles. In Section \ref{section:uniform} and Section \ref{Section:Convergence of metrics}, we use well-approximating sequences to show the convergence of Hermitian--Einstein metrics to a sequence of metrics $H_\infty=\{H_{i,\infty}\}$.  We also show that the sequence (\ref{eq:seqofhol intro}) has a well-approximating subsequence under the assumption of Theorem \ref{thm:main thm in intro}.  In Section \ref{sec:smoothtopo}, we construct the underlying smooth vector bundle $E_\infty$ of $\E_{\infty}$. We show that $H_{\infty}$  is a Hermitian metric on $E_{\infty}$. In  Section \ref{Section:Curvature convergence and gauge fixing},  we prove that the metric $H_{\infty}$ is Hermitian--Einstein on $E_\infty$ in some sense. In Section \ref{section:Holomorphic Hilbert bundles}, we study the continuous, smooth and holomorphic structures on the completion of $\E_{\infty}$ ($E_\infty$)  with respect to the metric $H_{\infty}$. We prove that its completion is an indecomposable projectively flat vector bundle. This finishes the proof of Theorem \ref{thm:main thm in intro} and Theorem \ref{thm:main2}. In Section \ref{section:final remarks}, we discuss some related topics, such as counterexamples for rational $\theta$ and triple ways to noncommutative tori from our construction. In the appendix, we include some discussions of function spaces with values in Banach spaces and a proof of Uhlenbeck's gauge fixing theorem for Hilbert bundles.
 \\ \hspace*{\fill} \\
\noindent\textbf{Notation and Conventions.} We use $\mathbb{H}$ to denote the complex upper half plane. For a number $r$ in $\mathbb{Q}_{\infty}\coloneqq \mathbb{Q}\cup\{\infty\}$, we always write it in the standard form $r=\frac{p}{q}$, where $\gcd(p,q)=1$ and $q\geq0$.  For such an expression, the complex number $-p+\sqrt{-1}\cdot q$ is a primitive integral vector in $\mathbb{H}\cup \mathbb{R}_{< 0} $. We use the numbers in $\mathbb{Q}_{\infty}$ and the primitive integral vectors in $\mathbb{H}\cup \mathbb{R}_{< 0} $ interchangeably.

As we frequently switch our point of view between algebraic geometry and complex differential geometry. To emphasize the difference, we usually use letters in calligraphic font such as $\mathcal{E},\mathcal{F},\mathcal{G}$ to denote holomorphic vector bundles (and coherent sheaves in occasions), and use $E,F,G$ to denote the associated smooth vector bundles. We use $\text{rk}E$ to denote the rank of a finite rank vector bundle $E$.

As any closed Riemann surface is algebraic, we use compact Riemann surfaces and smooth complex projective curves as synonyms unless topological issues are involved. We denote $\omega$ a fixed K\"{a}hler form and always assume $\text{vol}(X,\omega)=1$. 

Let $V$ be a Banach space. Let $M$ be a Riemannian manifold.  We denote $\Omega^p(V)=\Lambda^p T^*M\otimes V$. Fix an orthonormal frame $\{\eta_i\}$ for $\Lambda^p T^*M$. For any $\alpha\in \Omega^p(V)$, we can write $\alpha =\sum_i \eta_i\otimes v_i$. Then, we put a norm on $\Omega^p(V)$ by taking $|\alpha|_{\Omega^p(V)}=\sum_i|v_i|_V$. Similar construction extends to Banach bundles over $M$.

We use various function spaces with values in Banach spaces such as $L^p$ spaces, Sobolev spaces, and H\"older spaces. For their definitions, see Appendix \ref{subsec:norms}.  
\hspace*{\fill} \\

\noindent\textbf{Acknowledgments.} We would like to thank Hao Fang, Chunyi Li, Hanfeng Li, Fei Si, Junrong Yan, Chuanjing Zhang, Lin Zhang, Runlin Zhang for helpful discussions. Yucheng Liu is financially supported by NSFC grant 12201011; Biao Ma is supported by NSFC grant 12471052.

\section{Arithmetic stability conditions}\label{Section:Arithmetic stability conditions and Lagrange numbers}
\subsection{Arithmetic stability conditions} In this subsection, we recall and generalize some constructions in \cite{Continuumenvelops}, and all categories in this subsection are assumed to be $k$ linear over a field $k$.
Firstly, let us recall what is a (weak) stability condition on a triangulated category. We need the following notion of (weak) stability functions to define a (weak) stability condition.

\begin{defn} \label{defn:weak stability function}
			Let $\mathcal{A}$ be an abelian category. We call a group homomorphism $Z:K_0(\mathcal{A})\rightarrow \mathbb{C}$ a \textit{weak stability function} on $\mathcal{A}$ if, for $E\in \mathcal{A}$, we have $Im(Z(E))\geq0$,with $Im(Z(E))=0 \implies  Re(Z(E))\leq0$. 
			Moreover, if for $E\neq 0,$ $Im(Z(E))=0\implies Re(Z(E))< 0$, we say that $Z$ is a \textit{stability function}.
		\end{defn}
		\begin{rem}
		    For any object $E\in\mathcal{A}$, we also call $Z(E)$ the central charge of $E$. 
		\end{rem}
		\begin{defn}\label{defn:weak stability condition}
			A \textit{(weak) stability condition} on a triangulated category $\mathcal{D}$ is a pair $\sigma=(\mathcal{A},Z)$ consisting of the heart of a bounded t-structure $\mathcal{A}\subset\mathcal{D}$ and a (weak) stability function $Z:K_0(A)\rightarrow \mathbb{C}$ such that the following is satisfied:
			
			 The function $Z$ allows us to define a slope for any object $E$ in the heart $\mathcal{A}$ by
			
			$$\mu_{\sigma}(E):=\begin{cases} -\frac{Re(Z(E))}{Im(Z(E))}\ &\text{if} \  Im(Z(E))> 0,\\ +\infty &\text{otherwise.} \end{cases}$$

			The slope function gives a notion of stability: A nonzero object $E\in \mathcal{A}$ is $\sigma$ semistable if for every proper subobject $F$, we have $\mu_{\sigma}(F)\leq  \mu_{\sigma}(E/F)$. Moreover, if $\mu_{\sigma}(F)<  \mu_{\sigma}(E/F)$ holds for any proper subobject $F\subset E$, we say that $E$ is $\sigma$ stable.
			
			We require any nonzero object $E$ of $\mathcal{A}$ to have a Harder--Narasimhan filtration in $\sigma$ semistable ones, i.e., there exists a unique filtration $$0=E_0\subset E_1 \subset E_2\subset \cdots \subset E_{m-1} \subset E_m=E$$ such that $E_i/E_{i-1}$ is  $\sigma$ semistable and $\mu_{\sigma}(E_i/E_{i-1})>\mu_{\sigma}(E_{i+1}/E_i)$ for any $1\leq i\leq m$. The quotient $E_{i}/E_{i-1}$ is called the $i$-th HN factor of $E$.
			
		\end{defn}
\begin{rem}\label{remark:main example}
    For any smooth projective variety $X$ in dimension $n$ over an algebraically closed field $k$, let $\mathrm{H}$ be an ample divisor on $X$, the pair $(\mathrm{CohX},-c_1\cdot \mathrm{H}^{n-1}+\sqrt{-1}\cdot \mathrm{rk})$ is a weak stability condition on $D^b(X)$. And it is a stability condition when $n=1$.
\end{rem}

\begin{rem}
    In fact, the usual definition of stability condition contains a technical condition called the support property, introduced in \cite{kontsevich2008stability}. This condition is essential to the deformation theorem of stability conditions. However, it will not play a role in this paper, so we do not include it in the definition.
\end{rem}
\begin{rem}
    Given a weak stability condition $\sigma=(\mathcal{A},Z)$ on a triangulated category $\mathcal{D}$, one can construct full subcategories $\mathcal{P}_{\sigma}(\phi)$ for any $\phi\in\mathbb{R}$ in the following way. For any $\phi\in(0,1]$, let $$\mathcal{P}_{\sigma}(\phi)\coloneqq \{E\in\mathcal{A}| \text{$E$ is semistable of slope $-\cot(\pi\phi)$}\}.$$ More generally, for any $\phi\in(n,n+1]$, we let $\mathcal{P}_{\sigma}(\phi)\coloneqq \mathcal{P}_{\sigma}(\phi-n)[n]$, by conevention the zero object is in $\mathcal{P}(\phi)$ for any $\phi\in\R$. In fact, these full subcategories form a slicing of $\mathcal{D}$ (see \cite[Definition 3.3]{bridgeland2007stability} for the definition of slicing). Moreover, there is an equivalent way to define the stability condition in terms of slicing and central charge (see \cite[Proposition 5.3]{bridgeland2007stability}).
\end{rem}

Note that there is a distinguished subcategory $\mathcal{A}_0$ for any weak stability condition $(\mathcal{A},Z)$. Here $\mathcal{A}_0$ is the full subcategory of $\mathcal{A}$ consisting of all objects whose central charges are $0$. By \cite[lemma 4.11]{Filtrationsandtorsionpairs}, we know that $\mathcal{A}_0$ is an abelian subcategory of $\mathcal{A}$.
\begin{defn}\label{Defn:quasisemistability}
		We call $\mathcal{A}_0$ the \textit{abelianizer} of the weak stability condition $\sigma=(\mathcal{A},Z)$. If there exists a short exact sequence $$0\rightarrow Q'\rightarrow E\rightarrow K'\rightarrow 0,$$ where $Q'\in\mathcal{A}_0$ and $K'\in\mathcal{P}_{\sigma}(\phi)$, we say that $E$ is \textit{quasi-semistable}.
	\end{defn}

    \begin{rem}
    By \cite[Proposition 4.16]{Filtrationsandtorsionpairs}, we know that the full subcategory $$\mathcal{A}_{\sigma}(\phi)\coloneqq\{E\in\mathcal{A}\ |E\ \text{is quasi-semistable and $Z(E)\in exp(i\pi \phi)\cdot \mathbb{R}_{\geq 0}$}\}$$  is an abelian subcategory.
 \end{rem}
\begin{defn}
    A triple $$(\frac{p_1}{q_1},\frac{p_2}{q_2},\frac{p_3}{q_3})$$ of numbers in $\mathbb{Q}$ is called a \textit{Farey triangle} if the following conditions are met. \begin{enumerate}
        \item we have $\frac{p_1}{q_1}<\frac{p_2}{q_2}<\frac{p_3}{q_3}$, and $p_2=p_1+p_3, \ q_2=q_1+q_3;$
        \item $p_2q_1-q_2p_1=p_3q_2-p_2q_3=1.$
    \end{enumerate}
\end{defn}

\begin{rem}
    For two numbers $\frac{p_1}{q_1},\frac{p_2}{q_2}$ in $\mathbb{Q}$, we say these two numbers are connected by a Farey geodesic if $p_2q_1-q_2p_1=1$. One can consult \cite[Section 3]{Continuumenvelops} for a quick review of the geometric picture of the Farey geodesics and the Farey triangles.
\end{rem}

The following definition is taken from \cite[Definition 5. 36]{Continuumenvelops}.
\begin{defn}\label{Defn: arithmetic stability condition}
    A weak stability condition $\sigma=(\mathcal{A},Z)$ is called an \textit{arithmetic weak stability condition} if the following conditions are satisfied. \begin{enumerate}
        
        \item The image of the central charge $Z$ lies in $\mathbb{Z}+\sqrt{-1}\cdot\mathbb{Z}\subset\mathbb{C}$.
        \item For any primitive integral vector $(p,q)\in\mathbb{H}\cup\mathbb{R}_{<0}$, there exists a stable object $E$ with $Z(E)=-p+\sqrt{-1}\cdot q$.
        \item For any two primitive integral vectors $(p_1,q_1), (p_2,q_2)\in\mathbb{H}\cup\mathbb{R}_{<0}$ that are connected by a Farey geodesic and $\frac{p_1}{q_1}<\frac{p_2}{q_2}$, and a given stable object $E$ with $Z(E)=-p_1+\sqrt{-1}\cdot q_1$, there exists a stable object $F$ with $Z(F)=-p_2+\sqrt{-1}\cdot q_2$ and $\Hom(E,F)\neq 0$.
    \end{enumerate}
\end{defn}

In the following proposition,  when we say that a morphism $f$ is an injection or a surjection,  we implicitly mean that $f$ is an injection or a surjection in $\mathcal{A}$. 
    \begin{prop}\label{prop:morphisms in farey triangles}
        Let $\sigma=(\mathcal{A},Z)$ be a weak arithmetic stability condition on a triangulated category $\mathcal{D}$, a triple $$(\frac{p_1}{q_1},\frac{p_2}{q_2},\frac{p_3}{q_3})$$ be any Farey triangle, and $E_1,E_2,E_3$ be stable objects in $\mathcal{A}$ with $Z(E_i)=-p_i+\sqrt{-1}\cdot q_i$ for $i=1,2,3$. The following statements are true. \begin{enumerate}
            \item Any nontrivial morphism $f:E_1\rightarrow E_2$ is an injection, and $\coker(f)$ is quasi-semistable with central charge $-p_3+\sqrt{-1}\cdot q_3$.
            \item For any nontrivial morphism $f:E_2\rightarrow E_3$, we have that $\ker(f)$ is stable with central charge $-p_1+\sqrt{-1}\cdot q_1$, and $Z(\coker(f))=0$.
            \item For any nontrivial morphism $f:E_3\rightarrow E_1[1]$, we have that  either $\mathrm{Cone}(f)$ is stable, or there exists  a nonzero object $G\in\mathcal{A}_0$ such that we have the following  short exact sequences in $\mathcal{A}$,

            $$0\rightarrow E_1\rightarrow E_1'\rightarrow G\rightarrow 0$$
            
            $$0\rightarrow E_3'\rightarrow E_3\rightarrow G\rightarrow 0$$ 
            
            $$0\rightarrow E_3'\rightarrow \mathrm{Cone}(f)[-1]\rightarrow E_1'\rightarrow 0,$$ and $E_1', E_3'$ are stable objects.
        \end{enumerate}
    \end{prop}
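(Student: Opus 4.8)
The plan is to work entirely inside the abelian category $\mathcal{A}$ using the slope and central charge machinery, and to exploit the defining numerical relations of a Farey triangle together with the stability of the $E_i$. I first record the key numerical facts: since $Z(E_i) = -p_i + \sqrt{-1}\,q_i$, the slope of $E_i$ is $\mu_\sigma(E_i) = p_i/q_i$, and the Farey conditions give $p_2 = p_1 + p_3$, $q_2 = q_1 + q_3$, so $Z(E_2) = Z(E_1) + Z(E_3)$ in $K_0(\mathcal{A})$, with $p_1/q_1 < p_2/q_2 < p_3/q_3$. The unimodularity conditions $p_2 q_1 - q_2 p_1 = 1$ etc.\ are what force the various cokernels to have central charge \emph{exactly} $-p_3 + \sqrt{-1}\,q_3$ or $0$, rather than merely having the right slope.

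For parts (1) and (2), I would argue as follows. In part (1), a nontrivial $f\colon E_1 \to E_2$ between stable objects of slopes $p_1/q_1 < p_2/q_2$ must be injective: if $\ker f \neq 0$ it is a proper subobject of the stable $E_1$, forcing $\mu_\sigma(\ker f) < \mu_\sigma(E_1)$, while the image $\mathrm{im}(f)\subset E_2$ being a proper subobject would give $\mu_\sigma(\mathrm{im} f) < \mu_\sigma(E_2)$; combining these with the additivity of $Z$ and the slope inequalities should produce a contradiction unless $f$ is injective, after which $Z(\coker f) = Z(E_2) - Z(E_1) = -p_3 + \sqrt{-1}\,q_3$ is immediate. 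Quasi-semistability of $\coker f$ then follows because its central charge lies on the ray $\exp(i\pi\phi_3)\cdot\mathbb{R}_{\geq 0}$ and one shows, using that $E_1,E_2$ are stable of strictly smaller slopes, that every HN factor of $\coker f$ has slope $\geq p_3/q_3$ while the total slope is $p_3/q_3$, forcing it into $\mathcal{A}_\sigma(\phi_3)$ up to the $\mathcal{A}_0$-part. Part (2) is dual: for $f\colon E_2 \to E_3$ with $p_2/q_2 < p_3/q_3$, the subobject $\ker f \subset E_2$ has central charge $Z(E_2) - Z(\mathrm{im} f)$, and I would check that $Z(\coker f) = 0$ (i.e.\ $\coker f \in \mathcal{A}_0$) by showing $\mathrm{im}(f) = E_3$ via stability, whence $Z(\ker f) = Z(E_2) - Z(E_3) = -p_1 + \sqrt{-1}\,q_1$; stability of $\ker f$ then comes from the uniqueness/rigidity of stable objects realizing a primitive class of minimal slope in this configuration.

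For part (3), I would translate the morphism $f\colon E_3 \to E_1[1]$ into the octahedral/cone language. Setting $C = \mathrm{Cone}(f)[-1]$, the triangle $E_1 \to C \to E_3 \xrightarrow{f} E_1[1]$ shows $C$ sits in a short exact sequence $0 \to E_1 \to C \to E_3 \to 0$ in $\mathcal{A}$ (after checking $C \in \mathcal{A}$, which follows because $E_1$ and $E_3$ lie in $\mathcal{A}$ and the t-structure is bounded), and $Z(C) = Z(E_1) + Z(E_3) = Z(E_2)$. If $C$ is stable we are in the first case. Otherwise $C$ is strictly semistable of slope $p_2/q_2$, and I would take its maximal destabilizing or Jordan--Hölder structure: the key point is that any stable factor must have slope $p_2/q_2$ and central charge proportional to $-p_2 + \sqrt{-1}\,q_2$, so the discrepancy is absorbed by an object $G \in \mathcal{A}_0$. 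I would then build the object $G$ as the relevant sub/quotient and extract the stable objects $E_1', E_3'$ fitting into the three displayed short exact sequences by chasing the commutative diagram produced from $G$, using additivity of $Z$ to verify $Z(E_1') = Z(E_1)$ and $Z(E_3') = Z(E_3)$ and the stability hypothesis (Definition \ref{Defn: arithmetic stability condition}(2)) to identify $E_1', E_3'$ as the stable objects with those primitive central charges.

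The main obstacle I anticipate is part (3): unlike parts (1) and (2), which are essentially slope-comparison arguments, here I must produce the object $G \in \mathcal{A}_0$ and the three interlocking exact sequences simultaneously, and show the outer terms $E_1', E_3'$ are genuinely \emph{stable} rather than merely semistable of the right central charge. The delicate step is disentangling the $\mathcal{A}_0$-contribution from the honestly stable part when $C$ fails to be stable; I expect to need the structure of the abelianizer $\mathcal{A}_0$ (an abelian subcategory, by the cited \cite[Lemma 4.11]{Filtrationsandtorsionpairs}) together with a careful application of the octahedral axiom to realize all three sequences as faces of a single octahedron, so that commutativity and the central-charge bookkeeping are automatic. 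Verifying that no subobject of $E_1'$ or quotient of $E_3'$ violates stability — given that their central charges are primitive and minimal/maximal in the triangle — is where I expect the real work to lie.
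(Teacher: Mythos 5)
The decisive step of part (3) in your proposal is wrong. You assert that if $C=\mathrm{Cone}(f)[-1]$ fails to be stable then it is ``strictly semistable of slope $p_2/q_2$'' with every stable factor of central charge proportional to $-p_2+\sqrt{-1}\,q_2$, the discrepancy being absorbed into $\mathcal{A}_0$. This contradicts the very statement you are proving: in the second alternative the outer terms have $Z(E_1')=-p_1+\sqrt{-1}\,q_1$ and $Z(E_3')=-p_3+\sqrt{-1}\,q_3$, which are not proportional to $Z(E_2)$; correspondingly $C$ is destabilized by a subobject of slope $p_3/q_3>p_2/q_2$, i.e.\ $C$ is \emph{unstable}, not strictly semistable, and no Jordan--H\"older analysis at slope $p_2/q_2$ can produce the three displayed sequences. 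The paper's actual mechanism, absent from your sketch of (3), is a Harder--Narasimhan analysis: take the HN filtration $E_{2,1}\subset\cdots\subset E_{2,n}=C$, apply $\Hom(E_{2,1},-)$ and $\Hom(-,Q_n)$ to $0\to E_1\to C\to E_3\to 0$ to get $\mu_\sigma(E_{2,1})\leq p_3/q_3$ and $\mu_\sigma(Q_n)\geq p_1/q_1$, and then use the lattice geometry of the Farey triangle --- the parallelogram with vertices $0$, $Z(E_1)$, $Z(E_2)$, $Z(E_3)$ is unimodular and so contains no other integral points --- to force $n\leq 2$, and, when $n=2$, to pin the factors to $Z(E_{2,1})=-p_3+\sqrt{-1}\,q_3$ and $Z(Q_2)=-p_1+\sqrt{-1}\,q_1$, so that $E_3'=E_{2,1}$ and $E_1'=Q_2$. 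You do gesture at unimodularity for parts (1)--(2), but in (3) you never apply the no-interior-lattice-point bound to the HN polygon of $C$, and without it you have no control over the HN factors at all.

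Relatedly, $G$ is not a JH discrepancy: in the paper it arises from comparing the two filtrations $E_1\subset C$ and $E_{2,1}\subset C$. One examines the composite $e:E_1\to C\to Q_2$ (resp.\ $g:E_{2,1}\to C\to E_3$); if $e\neq 0$, semistability plus primitivity of $Z(E_1)=Z(Q_2)$ forces $\mathrm{im}(e)\simeq E_1$ and $G:=\coker(e)\in\mathcal{A}_0$, nonzero since $e$ an isomorphism would split the triangle, contradicting $f\neq 0$; the octahedral axiom then assembles exactly the three short exact sequences, and if both composites vanish one gets a nonzero map from the semistable $E_{2,1}$ of slope $p_3/q_3$ to the stable $E_1$ of slope $p_1/q_1$, a contradiction. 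Two further points: in part (2) your plan to show $\mathrm{im}(f)=E_3$ ``via stability'' fails in the weak setting --- $\coker(f)$ can be a nonzero object of $\mathcal{A}_0$ (slope $+\infty$ quotients do not violate stability of $E_3$), which is precisely why the statement only asserts $Z(\coker(f))=0$ and why $\coker(f)=0$ holds only for genuine stability conditions (Remark \ref{Rmk:stable case}); and appealing to ``uniqueness/rigidity of stable objects realizing a primitive class'' is not a valid argument --- stable objects with a fixed primitive charge form large moduli --- so stability of $\ker(f)$ must again come from the Farey lattice estimate on charges of subobjects, not from any rigidity.
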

\begin{proof}
    The proof is similar to the proof of \cite[Theorem 4.3]{Continuumenvelops}. We include the proof of (3) for the reader's convenience.

    Firstly, letting $E_2$ denote $\mathrm{Cone}(f)[-1]$, one can see that $E_2\in\mathcal{A}$. Suppose that $$E_{2,1}\subset E_{2,2}\subset \cdots \subset E_{2,n}=E_2$$ is the Harder--Narasimhan filtration of $E_2$, and let $Q_i$ denotes the $i$-th Harder--Narasimhan factor of $E_2$. Applying the functors $\Hom(E_{2,1},-)$ and $\Hom(-,Q_n)$ to the short exact sequence $$0\rightarrow E_1\rightarrow E_2\rightarrow E_3\rightarrow 0,$$ one gets $\mu_{\sigma}(E_{2,1})\leq \frac{p_3}{q_3}$ and $\mu_{\sigma}(Q_n)\geq \frac{p_1}{q_1}$.

    As the triple $$(\frac{p_1}{q_1},\frac{p_2}{q_2},\frac{p_3}{q_3})$$ is a Farey triangle, we know that the parallelogram in the complex plane with four vertices $0, -p_1+\sqrt{-1}\cdot q_1,-p_2+\sqrt{-1}\cdot q_2,-p_3+\sqrt{-1}\cdot q_3$ has no integral points in its interior. Hence, we see that either $n=1$, i.e., the object $\mathrm{Cone}(f)[-1]$ is semistable, hence stable by the primitivity of $(p_2,q_2)$, or $n=2$ and $Z(E_1')=-p_1+\sqrt{-1}\cdot q_1$, $Z(E_3')=-p_3+\sqrt{-1}\cdot q_3$. In the latter case, we have the following diagram. \[
    \begin{tikzcd}
          0\arrow[r] & E_1 \arrow[r] & E_2\arrow[r] \arrow[d,"="] & E_3\arrow[r] & 0 \\ 0\arrow[r]& E_{2,1} \arrow[r] & E_2\arrow[r] & Q_2\arrow[r] & 0
    \end{tikzcd} \]

    If the composed map $e:E_1\rightarrow Q_2$ in the diagram is nonzero, considering its image, by the fact that $E_1, Q_2$ are semistable objects and $Z(E_1)=Z(Q_2)$ is a primitive integral vector, we find that $\mathrm{im}(e)$ is isomorphic to $E_1$ and $Z(\coker(e))=0$. If $\coker(e)$ is the zero object, then the morphism $e$ is an isomorphism, and we get a splitting of the short exact sequence, which contradicts the assumption $f\neq 0$. Hence $\coker(e)\neq 0$.

    By octahedral axiom of triangulated category, we have the following commutative diagram. \[\begin{tikzcd}
        & E_{2,1} \arrow[d]\arrow[r,"\simeq"] & E_{2,1}\arrow[d] \\ E_1 \arrow[d,"="]\arrow[r]& E_2\arrow[r]\arrow[d] & E_3 \arrow[d]\\ E_1\arrow[r] &Q_2\arrow[r] & \coker(e)
    \end{tikzcd}\]
with each row and column being a short exact sequence in $\mathcal{A}$. Hence, we get the second case of (3) by setting $E_3'=E_{2,1}, E_1'=Q_2$.

    We get the same result if the composed map $g:E_{2,1}\rightarrow E_3$ is nonzero. Hence, we can assume that the composed maps $f:E_1\rightarrow Q_2$ and $g:E_{2,1}\rightarrow E_3$ are $0$. Hence we have the following commutative diagram \[\begin{tikzcd}
          0\arrow[r] & E_1 \arrow[r] & E_2\arrow[r]  & E_3\arrow[r] & 0 \\ 0\arrow[r]& E_{2,1} \arrow[u]\arrow[r] & E_2\arrow[r]\arrow[u,"="] & Q_2\arrow[r] \arrow[u] & 0
    \end{tikzcd}\]  which is impossible as $E_{2,1}$ is a semistable object with a slope greater than the slope of the stable object $E_1$.
    \end{proof}
\begin{rem}\label{Rmk:stable case}
   This proposition is a slight generalization of \cite[Theorem 4.3]{Continuumenvelops}. In fact, if $\sigma$ is a stability condition, the results can be strengthened in the following way. In (1), $\coker(f)$ is a stable object. In (2), we see that $\coker(f)=0$. In (3), only the first possibility could happen.
\end{rem}

\begin{cor}
    Let $\sigma=(\mathcal{A},Z)$ be a weak arithmetic stability condition on a triangulated category $\mathcal{D}$, and a triple  $$(\frac{p_1}{q_1},\frac{p_2}{q_2},\frac{p_3}{q_3})$$ be any Farey triangle, let $E_{m,n}$ be a semistable object with $$Z(E_{m,n})=-(mp_1+np_3)+\sqrt{-1}\cdot (mq_1+nq_3), $$ where $m,n$ are two nonnegative integers. The following statements hold.
    \begin{enumerate}
        \item Any nontrivial morphism $f:E_{1,0}\rightarrow E_{1,n}$ is an injection for any $n\geq 1$, and $\coker(f)$ is quasi-semistable with central charge $-np_3+\sqrt{-1}\cdot nq_3$.
        \item For any $m\geq 1$, and any nontrivial morphism $f:E_{m,1}\rightarrow E_{0,1}$, we see that $\ker(f)$ is semistable with central charge $-mp_1+\sqrt{-1}\cdot mq_1$, and $Z(\coker(f))=0$.
        \item For any nontrivial morphism $f:E_{0,n}\rightarrow E_{m,0}[1]$, with positive integers $m,n$ satisfying $(m-1)(n-1)=0$, we have that either $\mathrm{Cone}(f)$ is semistable,  or  the following  short exact sequences in $\mathcal{A}$ is the Harder-Narasimhan filtration of $\mathrm{Cone}(f)[-1]$,

            $$0\rightarrow E_{m_1,n_1}'\rightarrow \mathrm{Cone}(f)[-1]\rightarrow E_{m_3,n_3}'\rightarrow 0,$$ where $m_1,n_1,m_3,n_3$ are nonnegative integers with $m_1+m_3=m,n_1+n_3=n$, and $E_{m_1,n_1}', E_{m_3,n_3}'$ are semistable objects with central charge equal to $$Z(E_{m_1,n_1}), \ Z(E_{m_3,n_3})$$respectively.
    \end{enumerate}
\end{cor}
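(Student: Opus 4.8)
My plan is to deduce all three parts from Proposition~\ref{prop:morphisms in farey triangles} by transporting each statement into the charge lattice $\mathbb{Z}+\sqrt{-1}\,\mathbb{Z}$ and reading off the conclusion from the positions of lattice points. Write $v_1=-p_1+\sqrt{-1}\,q_1$ and $v_3=-p_3+\sqrt{-1}\,q_3$, so that $Z(E_{m,n})=mv_1+nv_3$. The Farey conditions give $p_3q_1-p_1q_3=1$, hence $(v_1,v_3)$ is a unimodular basis of the lattice and $mv_1+nv_3$ is primitive exactly when $\gcd(m,n)=1$. The first fact I would record is that a semistable object with primitive central charge is automatically stable: its Jordan--H\"older factors are stable of the same phase, so each carries a positive integer multiple of the primitive vector $mv_1+nv_3$, and these multiples sum to $mv_1+nv_3$, forcing a single factor. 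In particular $E_{1,0}$, $E_{0,1}$, $E_{1,n}$, $E_{m,1}$ are all stable, while $E_{0,n}$, $E_{m,0}$ and the relevant kernels and cokernels need only be semistable.

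For Part (1), given a nontrivial $f:E_{1,0}\to E_{1,n}$ I would bound the slope of $\mathrm{im}(f)$ between $\mu_\sigma(E_{1,0})$ and $\mu_\sigma(E_{1,n})$, using that $\mathrm{im}(f)$ is a quotient of the semistable $E_{1,0}$ and a subobject of the semistable $E_{1,n}$. Writing $Z(\mathrm{im} f)$ as a lattice point and combining this slope bound with the requirement that $Z(\ker f)$ and $Z(\coker f)$ have nonnegative imaginary part, the absence of interior lattice points in the parallelogram spanned by $v_1$ and $nv_3$ pins $Z(\mathrm{im} f)=v_1$; stability of $E_{1,0}$ then upgrades $Z(\ker f)=0$ to $\ker f=0$, giving injectivity, and $\coker(f)$ has central charge $nv_3$ and is quasi-semistable by the argument of the proposition. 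Part (2) is the dual situation for $f:E_{m,1}\to E_{0,1}$: stability of the primitive target $E_{0,1}$ together with the slope bounds forces $Z(\mathrm{im} f)=v_3$, whence $Z(\coker f)=0$ and $Z(\ker f)=mv_1$, and semistability of $\ker f$ follows exactly as in part (2) of the proposition.

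For Part (3), I would set $C=\mathrm{Cone}(f)[-1]$, which lies in $\mathcal{A}$ and fits into a short exact sequence $0\to E_{m,0}\to C\to E_{0,n}\to 0$ with $Z(C)=mv_1+nv_3$, and then analyze its Harder--Narasimhan filtration $0=C_0\subset\cdots\subset C_\ell=C$. Each factor is semistable with slope in $[p_1/q_1,\,p_3/q_3]$, so its central charge has nonnegative coordinates in the basis $(v_1,v_3)$, and the factor charges sum to $(m,n)$. If, say, $m=1$, then at most one factor can have a positive $v_1$-coordinate, while the remaining factors are positive multiples of $v_3$, all of slope $p_3/q_3$; since the factor slopes are pairwise distinct, at most one such multiple can occur, so $\ell\le 2$. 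The symmetric count handles $n=1$, and these two cases are exactly the content of $(m-1)(n-1)=0$; geometrically this is the statement, used already in the proposition, that the parallelogram spanned by $mv_1$ and $nv_3$ has no interior lattice point to carry an intermediate-slope factor. Hence either $C$ is semistable, or its HN filtration is a single short exact sequence $0\to E'_{m_1,n_1}\to C\to E'_{m_3,n_3}\to 0$ with semistable factors of central charges $Z(E_{m_1,n_1})$, $Z(E_{m_3,n_3})$ and $m_1+m_3=m$, $n_1+n_3=n$.

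The step I expect to demand the most care is Part (3): confirming that the degeneracy $(m-1)(n-1)=0$ genuinely collapses the Harder--Narasimhan filtration to length at most two and that the two factors carry exactly the asserted central charges. When both $m,n\ge 2$ the filtration really can lengthen: for $m=n=2$ one has $Z(C)=2(v_1+v_3)$, and the three charges $v_3$, $v_1+v_3$, $v_1$ form a strictly decreasing chain of slopes, so a three-step filtration with factors $E_{0,1}$, $E_{1,1}$, $E_{1,0}$ becomes possible. This is precisely why the hypothesis is imposed. The remaining work is the octahedral-axiom bookkeeping that identifies the two graded pieces with the stated subquotients, which proceeds exactly as in the proof of Proposition~\ref{prop:morphisms in farey triangles}.
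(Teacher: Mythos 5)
Your proposal is correct and takes essentially the same route as the paper, whose proof is a one-line deferral to the method of Proposition \ref{prop:morphisms in farey triangles} (and \cite[Proposition 4.10]{Continuumenvelops}): namely the lattice-point/Harder--Narasimhan slope analysis in the unimodular basis $(v_1,v_3)$ that you carry out, including the key observation that the hypothesis $(m-1)(n-1)=0$ caps the HN length at two (your $m=n=2$ counterexample with factors of charges $v_3$, $v_1+v_3$, $v_1$ is exactly why the hypothesis is imposed). One minor caveat: your opening claim that a semistable object with primitive charge is stable should be proved by the direct equality-case analysis (a subobject realizing equality of slopes would force a non-integral lattice point on the ray through the primitive vector) rather than via Jordan--H\"older factors, whose existence is not guaranteed for weak stability conditions --- though in fact nothing in the corollary requires stability, since in each part the kernel is pinned to have central charge zero and is then killed by semistability of the finite-slope source alone.
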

\begin{proof}
    The proof is similar to the previous proposition, one can consult \cite[Proposition 4.10]{Continuumenvelops} for the details.
\end{proof}
\begin{rem}
    Note that this result can be used to prove the fundamental lemma in p-adic Hodge theory (see \cite{Continuumenvelops}).

\end{rem}
Let $\theta$ be an irrational number, and let $[a_0;a_1,a_2,\cdots]$ denote the continued fraction representation of $\theta$ \[
\theta =
a_0+\cfrac{1}{a_1+\cfrac{1}{a_2+\cfrac{1}{a_3+\cdots}}}.
\] We call the rational number

    $$\beta_i\coloneqq [a_0; a_1,\cdots, a_i]=\frac{p_i}{q_i}, \ \text{where $p_i, q_i$ are coprime and $q_i>0$},$$
the $i$-th convergent of $\theta$. We also have the notion of semiconvergents, i.e. any rational number in the  form $$\beta_{i,m}=\frac{p_{i,m}}{q_{i,m}}\coloneqq \frac{p_{i}+mp_{i+1}}{q_{i}+mq_{i+1}}$$ where $m$ is an integer such that $0\leq m\leq a_{i+2}$, is called a semiconvergent of $\theta$. We know that $\beta_{i,0}=\beta_i, \ \beta_{i,a_{i+2}}=\beta_{i+2}$, and $p_iq_{i+1}-p_{i+1}q_i=(-1)^{i+1}$ (see \cite[Chapter 1]{LangbookonDiophantineapproximations} for these basic facts). 

\begin{defn}\label{definition: colimit and limit objects}
    Let $\sigma=(\mathcal{A},Z)$ be a weak arithmetic stability condition on a triangulated category $\mathcal{D}$, and $\theta=[a_0;a_1,a_2\cdots ]$, $\lambda_i=\frac{p_i}{q_i}$, $\lambda_{i,m}=\frac{p_{i,m}}{q_{i,m}}$ be as above. Let $E_{i}$ be a stable object with $Z(E_{i})=-p_{i}+\sqrt{-1}\cdot q_{i}$, and $f_{2i}:E_{2i}\rightarrow E_{2i+2}$ be a nontrivial morphism, we use $E(\theta^-,\{f_{2i}\})$ to denote the colimit of the following sequence \begin{equation}
         E_0\xrightarrow{f_0} E_2\xrightarrow{f_2} E_4\rightarrow \cdots \end{equation} in $\Ind(\mathcal{A})$, the Ind-completion of $\mathcal{A}$.

    Similarly, letting $g_{2i+1}:E_{2i+1}\rightarrow E_{2i-1}$ be a nonzero morphism, we use $E(\theta^+,\{g_{2i+1}])$ to denote the limit of the following sequence \begin{equation}
        \cdots \rightarrow E_5\xrightarrow{g_5}E_3\xrightarrow{g_3} E_1 \end{equation}in $\Pro(\mathcal{A})$, the Pro-completion of $\mathcal{A}$. 
\end{defn}



We shall show some examples of arithmetic stability conditions. 

\begin{prop}\label{prop:arithmetic stability condition on curves}
    Let $X$ be any complex smooth projective curve with its genus $g(X)>0$, the stability condition $(\Coh X,-\deg+\sqrt{-1}\cdot \text{rk})$ is an arithmetic stability condition.
\end{prop}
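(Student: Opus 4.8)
The plan is to check the three conditions of Definition~\ref{Defn: arithmetic stability condition} for $\sigma=(\Coh X,\,-\deg+\im\cdot\mathrm{rk})$. The induced slope is $\mu_\sigma(\E)=\deg\E/\mathrm{rk}\,\E$, the usual Mumford slope, so $\sigma$-stability is ordinary slope stability; a stable object with central charge $-p+\im\,q$ is a stable bundle of rank $q$ and degree $p$ when $q\ge 1$, and a length-one skyscraper sheaf when $q=0$. Condition~(1) is immediate, since $\deg$ and $\mathrm{rk}$ are integer-valued on $\Coh X$, so $Z(\Coh X)\subseteq\Z+\im\,\Z$.

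For condition~(2) I would invoke the existence of stable bundles of coprime invariants. If $q=0$ then $p=1$ and a skyscraper of length one works; if $q=1$ any line bundle of degree $p$ is stable. For $q\ge 2$ with $\gcd(p,q)=1$, stable bundles of rank $q$ and degree $p$ exist on every curve of genus $g\ge1$: for $g=1$ this is Atiyah's classification (for coprime invariants the indecomposable bundles are exactly the stable ones), and for $g\ge2$ it is the nonemptiness of the moduli space $M(q,p)$ of dimension $q^2(g-1)+1$ (Narasimhan--Seshadri \cite{Stableandunitaryvectorbundles} and Seshadri). This is exactly where $g>0$ is used: on $\P^{1}$ every bundle splits and there is no stable bundle of rank $\ge2$.

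Condition~(3) is the substantive one, and I would establish it by realizing $F$ as an extension, which also reproves (2) by induction on $q_2$ along the Farey tree. Given the Farey edge $p_2q_1-p_1q_2=1$ with $\tfrac{p_1}{q_1}<\tfrac{p_2}{q_2}$, set $(p_3,q_3)=(p_2-p_1,q_2-q_1)$, so that $(\tfrac{p_1}{q_1},\tfrac{p_2}{q_2},\tfrac{p_3}{q_3})$ is a Farey triangle with $\tfrac{p_2}{q_2}$ its middle vertex and $\mu(E)<\tfrac{p_3}{q_3}$. Choosing by (2) a stable $G$ of type $(p_3,q_3)$ and forming
\[
0\to E\to F\to G\to 0,
\]
any such $F$ has type $(p_2,q_2)$ and contains $E$, so $\Hom(E,F)\neq0$ automatically. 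Since $\Hom(G,E)=0$ by the slope inequality, Riemann--Roch gives $\dim\Ext^1(G,E)=1+q_1q_3(g-1)>0$, so nonsplit extensions exist. It remains to pick the class making $F$ stable; as $(p_2,q_2)$ is primitive, semistability equals stability, and I would show a generic extension is stable by the standard dimension count: for each numerical type of a potential destabilizing subbundle $S\subset F$ (finitely many, since $\mu(F)<\mu(S)\le\mu(G)$ and $\mathrm{rk}\,S<q_2$) the classes admitting such an $S$ form a proper subvariety of $\Ext^1(G,E)$, so a generic class avoids them all. For $g=1$ the argument collapses: Serre duality (with $K_X\cong\mathcal{O}_X$) and $\mu(E)<\mu(F)$ give $\Ext^1(E,F)=\Hom(F,E)^\vee=0$, whence $\Hom(E,F)=\chi(E,F)=p_2q_1-p_1q_2=1$ for \emph{every} stable $F$ of type $(p_2,q_2)$, recovering the elliptic-curve case of \cite{Continuumenvelops}.

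The main obstacle is the stability of the extension $F$ when $g\ge2$: one has to enumerate the finitely many destabilizing types and verify that each corresponding locus is proper in $\Ext^1(G,E)$, which relies on the growth $\dim\Ext^1(G,E)=1+q_1q_3(g-1)$ in $g$. This dimension count is the one genuinely curve-geometric input; everything else is bookkeeping with the Farey relations. Note that the extension always exhibits $E$ as a subobject of $F$, which is precisely the rank-increasing configuration used to build the direct systems of Definition~\ref{definition: colimit and limit objects}.
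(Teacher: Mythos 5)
Your architecture matches the paper's proof closely: conditions (1) and (2) are handled the same way (integrality is trivial; Atiyah for $g=1$, nonemptiness of the moduli of stable bundles for $g\geq 2$, skyscrapers for the vector $(1,0)$); your $g=1$ argument via Serre duality and $\chi(E,F)=p_2q_1-p_1q_2=1$ is exactly the paper's Riemann--Roch shortcut; and for $g\geq2$ both you and the paper realize $F$ as an extension $0\to E\to F\to G\to 0$ with $G$ stable of the third-vertex type $(p_3,q_3)$, using the same count $\dim\Ext^1(G,E)=1+q_1q_3(g-1)>0$ from $\Hom(G,E)=0$ plus Riemann--Roch. The real divergence is how stability of $F$ is obtained, and here you have a gap. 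The paper needs no genericity at all: by Proposition \ref{prop:morphisms in farey triangles}(3) together with Remark \ref{Rmk:stable case}, for a genuine stability condition \emph{every} nonzero class $f\in\Hom(G,E[1])$ has stable cone --- unimodularity of the Farey triangle means the parallelogram spanned by $-p_1+\sqrt{-1}\,q_1$ and $-p_3+\sqrt{-1}\,q_3$ has no interior lattice points, so a nontrivial HN filtration of the extension is forced to have factors of charge exactly $-p_3+\sqrt{-1}\,q_3$ and $-p_1+\sqrt{-1}\,q_1$, which leads either to a splitting of the extension or to a map contradicting stability of $E$. Your generic-extension claim is in fact true (the destabilizing locus in $\Ext^1(G,E)$ is just $\{0\}$), but your proposed proof of it --- that the classes admitting a destabilizing $S$ of each numerical type form a proper subvariety, ``by the standard dimension count'' --- is precisely the crux and is left unexecuted; making it rigorous requires bounding the families of destabilizing subsheaves (a Quot-scheme estimate), which is much heavier than the lattice-point argument and is not mere bookkeeping, especially when $\dim\Ext^1(G,E)$ is small (e.g. $g=2$, $q_1q_3=1$). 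The gap closes immediately once you invoke, or reprove, Proposition \ref{prop:morphisms in farey triangles}(3).

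A second, more substantive omission: your construction silently assumes $q_2>q_1$. If $q_2\leq q_1$, the vector $(p_3,q_3)=(p_2-p_1,q_2-q_1)$ has $q_3\leq 0$, so (except in the degenerate case $q_1=q_2=1$, where $G$ is a skyscraper and the Hecke-modification extension still works) there is no stable $G$ and no extension containing $E$; instead $E$ sits at the \emph{middle} vertex of the adjacent Farey triangle, and by Proposition \ref{prop:morphisms in farey triangles}(2) any nonzero map $E\to F$ must be a surjection with stable kernel of type $(p_1-p_2,q_1-q_2)$. Producing such an $F$ for a \emph{given} $E$ is a genuinely different problem: the relevant Euler characteristic flips to $\chi(E,F)=1+q_1q_2(1-g)\leq0$ for $g\geq2$, so nonvanishing of $\Hom(E,F)$ cannot be read off as before, and existence amounts to $E$ containing a subsheaf with Segre-type invariant $q_1p_2-q_2p_1=1$, which maximal-subbundle bounds obstruct for generic $E$ when the genus grows (for instance, a generic stable bundle of rank $3$ and degree $1$ on a genus-$3$ curve has maximal line-subbundle degree $-1$, hence admits no nonzero map to any sheaf of type $(1,2)$, even though $(1,3)$ and $(1,2)$ are joined by a Farey geodesic). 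The paper at least flags this case (``the other case $q_2<q_1$ can be dealt with similarly''), though it also omits the details; your write-up does not acknowledge the case at all, and your extension construction genuinely does not reach it, so as written your proof only verifies condition (3) of Definition \ref{Defn: arithmetic stability condition} for $q_2\geq q_1$.
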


\begin{proof}
    Given a complex projective curve $X$ with genus $g(X)\geq 1$, we need to show that the pair $\sigma=(\Coh \mathrm{X},-\deg+\sqrt{-1}\cdot \text{rk})$ satisfies three conditions in Definition \ref{Defn: arithmetic stability condition}. The first condition holds trivially for $\sigma$. 

 For condition (2), by \cite{Atiyahclassification} and \cite{Stableandunitaryvectorbundles}, we know that the moduli space of stable vector bundles with rank $q>0$ degree $p$ is nonempty for any primitive integral vector $(p,q)\in\mathbb{H}$. For the vector $(1,0)$, one can just take the skyscraper sheaves. Hence $\sigma$ satisfies condition (2).

    For condition (3), if $g(X)=1$, one can take $F$ in condition (3) to be any stable coherent sheaf with $Z(F)=-p_2+\sqrt{-1}q_2$. Then condition (3) follows directly from a calculation of Riemann--Roch (see also \cite{Differentheartsonellipticcurves}). 
    
    If $g(X)\geq 2$, for any two primitive integral vectors $$(p_1,q_1), (p_2,q_2)\in\mathbb{H}\cup\mathbb{R}_{<0}$$ which are connected by a Farey geodesic and $\frac{p_1}{q_1}<\frac{p_2}{q_2}$, and a given stable object $E$ with $Z(E)=-p_1+\sqrt{-1}\cdot q_1$. One can assume that $q_2\geq q_1$ (the other case $q_2<q_1$ can be dealt with similarly), and let $(p_3,q_3)$ be the third vertex in a Farey triangle such that $$p_3+p_1=p_2, \ q_3+q_1=q_2.$$ By the proof of condition (2), we know that there exists a stable object $G$ such that $Z(G)=-p_3+\sqrt{-1}\cdot q_3$. Next, we show that for any such $G$, the space $\Hom(G,E[1])$ is nonzero. Assume this for now, one can take a nonzero morphism $f$ in $\Hom(G,E[1])$, the object $\mathrm{Cone}(f)[-1]$ is stable (see Remark \ref{Rmk:stable case}) and satisfies condition (3).

Indeed, the dimension $h^{1}(G,E)$ of $\Hom(G,E[1])$ can be computed in the following way. 
    $$0-h^1(G,E)=h^0(G,E)-h^1(G,E)=-1+q_1q_3(1-g(X)),$$ here the first equality holds because $E,G$ are stable objects and $\frac{p_3}{q_3}>\frac{p_1}{q_1}$, the second equality holds by the Riemann--Roch theorem and the fact $\frac{p_1}{q_1}-\frac{p_3}{q_3}=\frac{-1}{q_1q_3}$. As $g(X)\geq 2$, this implies that $h^1(G,E)$ is strictly positive. Hence, the proof is complete.
\end{proof}

\section{Estimates for second fundamental forms}\label{Section:Estimates for second fundamental forms}

In the rest of this paper, $X$ denotes a closed Riemann surface with positive genus unless otherwise is specified. We normalize the K\"ahler metric $\omega$ on $X$ with $\text{vol}(X,\omega)=1$. 

In this section, we consider the second fundamental forms of subbundles of a Hermitian-Einstein bundle. 

Let $H$ be a Hermitian metric on a holomorphic vector bundle $\E$.  Let $\nabla=\pdv_{H}+\dbar$
be the Chern connection of $(\E,H)$. Then under a local holomorphic basis of $\E$, $H$ is represented by a Hermitian matrix and we have the following expressions of Chern connection and curvature:
 \[\pdv_H=\pdv+(\pdv H)H^{-1},\ F_H=\dbar(\pdv HH^{-1}).\] Suppose that 
\begin{align}
    0\to \S\to \E\to \Q\to0\label{eq:-3}
\end{align}
is a short exact sequence of holomorphic bundle morphisms.  Let $\pi$ be the orthogonal
projection to $\S$ with respect to $H$. Then, $\pi$ provides a smooth splitting  of (\ref{eq:-3}). Under local frames of $\S$ and $\Q$, we may write the connection using this splitting as
\begin{align}
    \nabla=\left[\begin{array}{cc}
\nabla_{\S} & -\beta^{\dagger}\\
\beta & \nabla_{\Q}
\end{array}\right].
\end{align}
Here $\nabla_S$ and $\nabla_Q$ are  Chern connections of the induced metrics on $S$  and $Q$ respectively.  $\beta=(1-\pi)\pdv_{H}\pi$ is the second fundamental form of $S$,
and $\beta^{\dagger}=\pi\dbar(1-\pi)$ is the adjoint of $\beta$ with respect to $H$. 
The curvature of $(\E,H)$ can be written as 
\begin{equation}
\label{eq:curvatureSES} F_{H}=\left[\begin{array}{cc}
F_{\S}-\beta^{\dagger}\wedge\beta & -\pdv_{H}\beta^{\dagger}\\
\dbar\beta & F_{\Q}-\beta\wedge\beta^{\dagger}
\end{array}\right].
\end{equation}
If $H$ is a Hermitian-Einstein metric on $\E$. Then from (\ref{eq:curvatureSES}),
\begin{align}
2\pi\mu(\E)\text{Id}_{\E} 
 & =\im\left[\begin{array}{cc}
\Lambda F_{\S}-\Lambda\beta^{\dagger}\wedge\beta & -\Lambda\pdv_{H}\beta^{\dagger}\\
\Lambda\dbar\beta & \Lambda F_{\Q}-\Lambda\beta\wedge\beta^{\dagger}
\end{array}\right].\label{eq:-2}
\end{align}
Let $\xi\in A^{0,1}(\text{End}(E))$ and $\xi^\dagger$ the adjoint of $\xi$ with respect to $H$. Define the pointwise norm of $\xi$ by 
\[
|\xi|_{H}^{2}:=-\text{tr}_{E}\sqrt{-1}\Lambda\left(\xi\wedge\xi^{\dagger}\right).
\]
By Chern-Weil theory, one have the following lemma.
\begin{lem}
\label{lem:If--is}If $H$ is Hermitian-Einstein, we have
\[
\int_{X}|\beta^\dagger|^2_H\omega=\int_{X}|\dbar\pi|_{H}^{2}=2\pi(\mu(\E)-\mu(\S))\text{rk}\S.
\]
\end{lem}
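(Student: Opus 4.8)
The plan is to read the identity directly off the top-left block of the Hermitian--Einstein equation (\ref{eq:-2}) after taking a fiberwise trace and integrating; in other words, it is a Chern--Weil computation restricted to the subbundle $\S$, with no analytic input beyond the algebraic curvature decomposition (\ref{eq:curvatureSES}).

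First I would establish the pointwise equality $|\beta^\dagger|_H^2=|\dbar\pi|_H^2$, which gives the first claimed equality after integration. Since $\pi^2=\pi$ and $\pi^\dagger=\pi$, differentiating $\pi^2=\pi$ with the induced operator $\dbar$ on $\text{End}(E)$ yields $(\dbar\pi)\pi+\pi(\dbar\pi)=\dbar\pi$, and pre/post-composing with $\pi$ and $1-\pi$ forces $\pi(\dbar\pi)\pi=0$ and $(1-\pi)(\dbar\pi)(1-\pi)=0$; thus $\dbar\pi$ is purely off-diagonal. Because $\S$ is a \emph{holomorphic} subbundle, $\dbar$ preserves $C^\infty(\S)$, so $(1-\pi)\dbar(\pi s)=0$ for every section $s$, which forces the lower-left block $(1-\pi)(\dbar\pi)\pi=0$. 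Hence only $\pi(\dbar\pi)(1-\pi)$ survives, and comparing with $\beta^\dagger=\pi\dbar(1-\pi)=-\pi(\dbar\pi)(1-\pi)=-\dbar\pi$ gives $\dbar\pi=-\beta^\dagger$. Then $(\dbar\pi)^\dagger=-\beta$ and $\dbar\pi\wedge(\dbar\pi)^\dagger=\beta^\dagger\wedge\beta$, so by the definition of the pointwise norm $|\dbar\pi|_H^2=|\beta^\dagger|_H^2$.

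Next I would extract the upper-left block of (\ref{eq:-2}), namely $2\pi\mu(\E)\text{Id}_\S=\im\Lambda F_\S-\im\Lambda\beta^\dagger\wedge\beta$, take the fiber trace over $\S$, and integrate against $\omega$. Using $\text{vol}(X,\omega)=1$, the left-hand side integrates to $2\pi\mu(\E)\,\text{rk}\S$. The term $\int_X\text{tr}_\S(\im\Lambda F_\S)\,\omega$ equals $2\pi\deg\S=2\pi\mu(\S)\,\text{rk}\S$ by Chern--Weil applied to $(\S,H|_\S)$ on the curve. Finally, since $\beta^\dagger\wedge\beta$ factors through $\S$ and annihilates $\Q$, we have $\text{tr}_\S(\im\Lambda\beta^\dagger\wedge\beta)=\text{tr}_E(\im\Lambda\beta^\dagger\wedge\beta)=-|\beta^\dagger|_H^2$, so the cross-term contributes $+\int_X|\beta^\dagger|_H^2\,\omega$. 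Combining, $2\pi\mu(\E)\,\text{rk}\S=2\pi\mu(\S)\,\text{rk}\S+\int_X|\beta^\dagger|_H^2\,\omega$, which rearranges to $\int_X|\beta^\dagger|_H^2\,\omega=2\pi(\mu(\E)-\mu(\S))\,\text{rk}\S$.

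I do not expect a genuine obstacle here; the work is entirely in sign and normalization bookkeeping. The two points to handle carefully are the Chern--Weil normalization $\int_X\text{tr}(\im\Lambda F)\,\omega=2\pi\deg$ on a unit-volume Riemann surface (where $(\Lambda\alpha)\,\omega=\alpha$ for a $2$-form $\alpha$), and tracking signs so that the curvature cross-term $-\im\Lambda\beta^\dagger\wedge\beta$ reproduces $+|\beta^\dagger|_H^2$ against the stated convention $|\xi|_H^2=-\text{tr}_E\im\Lambda(\xi\wedge\xi^\dagger)$. Everything else reduces to the block decomposition already recorded in (\ref{eq:curvatureSES}) and the holomorphicity of $\S$.
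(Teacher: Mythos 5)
Your proposal is correct and is precisely the Chern--Weil computation the paper itself invokes: the lemma is stated there without a written proof beyond the remark ``By Chern-Weil theory,'' and your argument fills in exactly that route (the identity $\dbar\pi=-\beta^{\dagger}$ from holomorphicity of $\S$, the trace of the top-left block of (\ref{eq:-2}), and the normalization $\int_{X}\text{tr}(\sqrt{-1}\Lambda F)\,\omega=2\pi\deg$ on the unit-volume surface). All sign and normalization bookkeeping checks out against the paper's conventions in (\ref{eq:curvatureSES}) and the definition $|\xi|_{H}^{2}=-\text{tr}_{E}\sqrt{-1}\Lambda(\xi\wedge\xi^{\dagger})$.
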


The main goal of this section is to derive estimates of $\beta$ for
Hermitian-Einstein metrics.
\begin{thm}
\label{thm:Assume-the-same} Let $R$ be the scalar curvature of $X$ with respect to $\omega$.  If $(\E,H)$ is Hermitian-Einstein.
then 
\begin{equation}
\sup_{X}|\beta|_{H}^{2}\leq C\|\beta\|_{L^{2}}^{2}=2\pi C(\mu(\E)-\mu(\S))\text{rk}\S,\label{eq:-14-1}
\end{equation}
where $C=C(X,\sup_{X}R^-)$ and $R^-=\max\{0,-R\}$.
Moreover, for any $k\in\N$, there exists a constant $C=C(k,X,\|R\|_{C^{k-1}(X)},\mu)$
such that
\begin{equation}
\|\beta\|_{C^{k}}\leq C\|\beta\|_{L^{2}}.\label{eq:-32}
\end{equation}
\end{thm}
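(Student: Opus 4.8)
The plan is to use the fact that in complex dimension one the Hermitian--Einstein condition rigidifies both the ambient curvature and the second fundamental form, reducing everything to a Bochner inequality for a holomorphic object. First I would record two consequences of $\im\Lambda F_H=2\pi\mu(\E)\,\mathrm{Id}_\E$ on a curve. Since the space of $(1,1)$-forms is one-dimensional, this pins down the full curvature pointwise, $F_H=-\im\,2\pi\mu(\E)\,\omega\,\mathrm{Id}_\E$; in particular $|F_H|$ is bounded by a universal multiple of $|\mu(\E)|$ and $\End\E$ is projectively flat. Reading off the off-diagonal blocks of $\im\Lambda F_H$ in \eqref{eq:-2}, the relation $\Lambda\dbar\beta=0$ together with one-dimensionality forces $\dbar\beta=0$; thus $\beta$ is a holomorphic section of $\mathcal{W}:=K_X\otimes\Hom(\S,\Q)$ equipped with the metric induced by $H$.

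Next I would apply the Bochner--Kodaira--Weitzenb\"ock formula to this holomorphic section,
\[\Delta|\beta|_H^2=2|\nabla^{1,0}\beta|_H^2-2\langle\im\Lambda F_{\mathcal{W}}\,\beta,\beta\rangle_H,\]
and split $F_{\mathcal{W}}$ into the contribution of $K_X$, which is controlled by the Gauss (scalar) curvature $R$ of $(X,\omega)$, and that of $\Hom(\S,\Q)$. Substituting the Gauss--Codazzi relations extracted from \eqref{eq:curvatureSES}, namely $\im\Lambda F_\S=2\pi\mu(\E)\,\mathrm{Id}_\S+\im\Lambda(\beta^\dagger\wedge\beta)$ and $\im\Lambda F_\Q=2\pi\mu(\E)\,\mathrm{Id}_\Q+\im\Lambda(\beta\wedge\beta^\dagger)$, the scalar terms cancel and one is left with a curvature term that is quadratic in $\beta$. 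Dropping the nonnegative gradient term, this yields a differential inequality of the shape
\[\Delta|\beta|_H^2\geq -c_1\big(\sup_X R^-\big)\,|\beta|_H^2-c_2\,|\beta|_H^4.\]

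To convert this into the sup bound I would run a De Giorgi--Nash--Moser iteration, feeding in the $L^2$ identity $\|\beta\|_{L^2}^2=2\pi(\mu(\E)-\mu(\S))\,\mathrm{rk}\,\S$ from Lemma~\ref{lem:If--is}. The hard part will be precisely the quartic self-interaction term $c_2|\beta|^4$: it is genuinely nonlinear and is what prevents a naive maximum principle from closing, so this absorption is the technical heart of the proof. Here the real dimension two is essential — by the Gagliardo--Nirenberg/Ladyzhenskaya inequality $\|\beta\|_{L^4}^2\lesssim\|\beta\|_{L^2}\|\beta\|_{W^{1,2}}$, combined with the integrated Weitzenb\"ock identity $\|\nabla\beta\|_{L^2}^2\lesssim\|\beta\|_{L^2}^2+\|\beta\|_{L^4}^4$ (note that $\nabla\beta=\nabla^{1,0}\beta$ since $\dbar\beta=0$), one controls $\int_X|\beta|^4$ and absorbs it, after which Moser iteration gives $\sup_X|\beta|_H^2\leq C\|\beta\|_{L^2}^2$ with $C=C(X,\sup_X R^-)$.

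Finally, for the $C^k$ bounds I would bootstrap. Once $\|\beta\|_{L^\infty}$ is known, the induced metrics on $\S$ and $\Q$, hence the Chern connection on $\mathcal{W}$, acquire uniform control, and $\dbar\beta=0$ becomes a first-order elliptic system with controlled coefficients; iterating interior Schauder and $L^p$ estimates for $\dbar$ upgrades $\beta$ to $C^k$. Because $F_H=-\im\,2\pi\mu(\E)\,\omega\,\mathrm{Id}$, the covariant derivatives of the connection entering these estimates are governed by the derivatives of $\omega$, that is by $\|R\|_{C^{k-1}(X)}$, and by $\mu=\mu(\E)$, which produces the stated constant $C=C(k,X,\|R\|_{C^{k-1}(X)},\mu)$.
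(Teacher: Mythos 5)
Your first step is correct and matches the paper: on a Riemann surface the off-diagonal blocks of \eqref{eq:-2} force $\dbar\beta=0$ (equivalently, the paper's Lemma \ref{lem:If--is-1} states that $\beta^{\dagger}$ is harmonic), and the identity $\|\beta\|_{L^2}^2=2\pi(\mu(\E)-\mu(\S))\mathrm{rk}\,\S$ is Lemma \ref{lem:If--is}. The gap is in the core sup-estimate. By running the Bochner formula in $\mathcal{W}=K_X\otimes\Hom(\S,\Q)$ with the \emph{induced} metrics, the Gauss--Codazzi substitution leaves you, as you correctly note, with the quartic term $-c_2|\beta|_H^4$ of unfavorable sign (one checks $\langle \im\Lambda F_{\Hom(\S,\Q)}\beta,\beta\rangle\geq 0$, so it cannot be dropped). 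Your proposed absorption does not close: the chain $\|\beta\|_{L^4}^4\lesssim\|\beta\|_{L^2}^2\|\beta\|_{W^{1,2}}^2\lesssim\|\beta\|_{L^2}^2\bigl(\|\beta\|_{L^2}^2+\|\beta\|_{L^4}^4\bigr)$ absorbs the right-hand $\|\beta\|_{L^4}^4$ only when $C\|\beta\|_{L^2}^2<1$, i.e.\ when $2\pi(\mu(\E)-\mu(\S))\mathrm{rk}\,\S$ is small --- but the theorem assumes nothing of the sort, and $\mu(\E)-\mu(\S)$ can be arbitrarily large. Even in the regime where it does close, the potential $V=c_1+c_2|\beta|^2$ entering Moser iteration is controlled only through $\|\beta\|_{L^2}$, so the resulting constant depends on $\mu(\E)-\mu(\S)$ and $\mathrm{rk}\,\S$, contradicting the stated $C=C(X,\sup_X R^-)$. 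This uniformity is not cosmetic: the paper later applies the estimate along sequences where ranks go to infinity, and is explicitly careful to keep constants rank-independent.

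The paper's proof avoids the quartic term altogether by working in the \emph{ambient} bundle rather than in $\Hom(\S,\Q)$ with induced metrics: it views $\beta^{\dagger}$ as a harmonic $(0,1)$-form with values in $\End(\E)$ equipped with the Hermitian--Einstein metric $H$. Since $F_H=2\pi\mu(\E)\,\omega\,\mathrm{Id}/\im$ is \emph{central}, all curvature terms in the Weitzenb\"ock remainder \eqref{eq:-16} are commutators with $\Lambda F$ and vanish identically, leaving only the Riemannian term $\mathrm{Ric}(\beta)=\tfrac12 R\beta$. One gets the \emph{linear} inequality $-\Delta|\beta|_H\leq\tfrac12\sup_X(R^-)|\beta|_H$ (Lemma \ref{lem:Suppose-.-Then-1}), and then linear Moser iteration (Lemma \ref{lem:Suppose-that-}) yields \eqref{eq:-14-1} with a constant depending only on $X$ and $\sup_X R^-$. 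The same mechanism, applied to $\nabla\beta$ with the curvature forms $\mathcal{R}_i$ bounded by $C(\|R\|_{C^{k-1}},|\mu|)$, gives \eqref{eq:-32} directly, without the coupled bootstrap through the induced connections that your last paragraph would require. If you want to salvage your route, you would have to either prove the quartic term has a sign you can discard (it does not) or transplant the computation into $\End(\E)$ as the paper does.
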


Theorem \ref{thm:Assume-the-same} is similar to the  $L^\infty$ estimates for $L^2$ integrable holomorphic functions via Bergman kernel.


%
The rest of the section is devoted to the proof of Theorem \ref{thm:Assume-the-same}. By \cite[Lemma 4.3]{Hermitianvectorbundlesandequidistribution}, the following lemma holds.
\begin{lem}
\label{lem:If--is-1}If $H$ is a \HE{}  metric, then 
\begin{equation}
\dbar\beta^{\dagger}=\dbar_{H}^{*}\beta^{\dagger}\equiv0,\label{eq:-1}
\end{equation}
where $\dbar_{H}^{*}$ is the formal adjoint of $\dbar$. 
\end{lem}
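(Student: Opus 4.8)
The plan is to deduce both identities directly from the Hermitian--Einstein condition (\ref{eq:-2}), the vanishing of $(0,2)$-forms on a curve, and the Kähler identities, with no PDE input required. First note that $\beta^{\dagger}\in A^{0,1}(\Hom(\Q,\S))$, so $\dbar\beta^{\dagger}$ is a $(0,2)$-form valued in $\Hom(\Q,\S)$. Since $X$ is a Riemann surface, $\Lambda^{0,2}T^{*}X=0$, and therefore $\dbar\beta^{\dagger}=0$ holds automatically for dimensional reasons. This settles the first equality.

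For the second equality I would exploit the off-diagonal blocks in (\ref{eq:-2}). Because the left-hand side $2\pi\mu(\E)\,\text{Id}_{\E}$ is block-diagonal, comparing the upper-right entries of the two sides forces $\Lambda\pdv_{H}\beta^{\dagger}=0$. I would then invoke the Kähler identity $\dbar_{H}^{*}=-\im\,[\Lambda,\pdv_{H}]$ for the Chern connection on the Hermitian holomorphic bundle $\Hom(\Q,\S)$, applied to the $(0,1)$-form $\beta^{\dagger}$. Since $\Lambda$ lowers bidegree by $(1,1)$, one has $\Lambda\beta^{\dagger}=0$, so the commutator collapses to $\dbar_{H}^{*}\beta^{\dagger}=-\im\,\Lambda\pdv_{H}\beta^{\dagger}$, which vanishes by the previous step. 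Hence $\dbar_{H}^{*}\beta^{\dagger}\equiv0$, and the \emph{a priori} distinct operators $\dbar$ and $\dbar_{H}^{*}$ both annihilate $\beta^{\dagger}$.

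The remaining points are bookkeeping rather than conceptual, and constitute the only place where care is needed. One must check that the operator $\pdv_{H}$ appearing in the upper-right block of the curvature formula (\ref{eq:curvatureSES}) is exactly the $(1,0)$-part of the induced Chern connection on $\Hom(\Q,\S)$, so that the same operator feeds the Kähler identity; this follows by unwinding the block form of $\nabla$ under the smooth splitting given by $\pi$. One must also fix the sign and normalization so that the Kähler identity carries the correct factor of $\im$, which I would calibrate against the convention $\im\Lambda F(A)=\gamma\,\text{Id}_{\E}$ of Definition \ref{def:HEmetric} and the normalization $\text{vol}(X,\omega)=1$. With these conventions in place both identities are immediate, in agreement with \cite[Lemma 4.3]{Hermitianvectorbundlesandequidistribution}.
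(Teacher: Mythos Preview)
Your proof is correct and is exactly the standard derivation: the paper itself gives no argument and simply cites \cite[Lemma 4.3]{Hermitianvectorbundlesandequidistribution}, so your write-up is in fact more informative than the paper's. One small remark: your reason for $\dbar\beta^{\dagger}=0$ (no $(0,2)$-forms on a curve) is sufficient here, but the vanishing holds in any dimension because $\beta^{\dagger}$ represents the extension class in $H^{1}(\Hom(\Q,\S))$, or equivalently because the Chern curvature $F_{H}$ is of type $(1,1)$ so the $(0,2)$-part of the off-diagonal block $-D\beta^{\dagger}$ must vanish; this is why the curvature formula (\ref{eq:curvatureSES}) already records only $-\pdv_{H}\beta^{\dagger}$ in the upper-right corner.
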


We then use the Bochner-Lichnerowicz-Weitzenb\"ock formula to prove the following lemma.
\begin{lem}
\label{lem:Suppose-.-Then-1}
Suppose $H$ is \HE, then 
\begin{equation}
-\Delta|\beta|_{H}\leq\frac{1}{2}\sup_{X}(R^-)|\beta|_{H}\label{eq:-14}
\end{equation}
where $R$ is the scalar curvature on $X$ and $R^-=\max\{0,-R\}$. 
\end{lem}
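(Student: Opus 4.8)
The plan is to convert the pointwise Bochner--Kodaira--Nakano identity for the $\dbar$-harmonic form $\beta^{\dagger}$ into a scalar differential inequality for $|\beta|_{H}$ by means of Kato's inequality. First I would invoke Lemma \ref{lem:If--is-1}, which tells us that $\beta^{\dagger}$ is harmonic in the Dolbeault sense, $\dbar\beta^{\dagger}=\dbar_{H}^{*}\beta^{\dagger}=0$, so that $\Delta_{\dbar}\beta^{\dagger}=0$. Viewing $\beta^{\dagger}$ as a $(0,1)$-form valued in $\Hom(\Q,\S)$ equipped with the Chern connection $\na$ induced by $H$, the Weitzenb\"ock formula on the Riemann surface $(X,\omega)$ rewrites this vanishing as an identity of the schematic form
\[
\langle\na^{*}\na\beta^{\dagger},\beta^{\dagger}\rangle=-\langle\mathcal{R}_{\mathrm{base}}\,\beta^{\dagger},\beta^{\dagger}\rangle-\langle\mathcal{R}_{\mathrm{bundle}}\,\beta^{\dagger},\beta^{\dagger}\rangle,
\]
where $\mathcal{R}_{\mathrm{base}}$ is the action of the Ricci curvature of $\omega$ on the form part and $\mathcal{R}_{\mathrm{bundle}}=\im\Lambda F_{\Hom(\Q,\S)}$ is the bundle curvature contribution. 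The goal is then to bound the right-hand side above by $\tfrac12 R^{-}|\beta^{\dagger}|_{H}^{2}$.

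The key simplification is the bundle term, where the Hermitian--Einstein hypothesis enters. Contracting \eqref{eq:curvatureSES} with $\im\Lambda$ and using $\im\Lambda F_{H}=\gamma\,\mathrm{Id}_{\E}$ from Definition \ref{def:HEmetric} gives, on the diagonal blocks,
\[
\im\Lambda F_{\S}=\gamma\,\mathrm{Id}_{\S}+\im\Lambda(\beta^{\dagger}\wedge\beta),\qquad \im\Lambda F_{\Q}=\gamma\,\mathrm{Id}_{\Q}+\im\Lambda(\beta\wedge\beta^{\dagger}).
\]
Since the curvature of $\Hom(\Q,\S)=\Q^{*}\otimes\S$ acts on $\beta^{\dagger}$ as $\im\Lambda F_{\S}$ on the left minus $\im\Lambda F_{\Q}$ on the right, the two scalar terms $\gamma\,\mathrm{Id}_{\S}$ and $\gamma\,\mathrm{Id}_{\Q}$ cancel exactly, leaving only quartic expressions in $\beta$. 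Recalling the definition $|\xi|_{H}^{2}=-\mathrm{tr}_{E}\im\Lambda(\xi\wedge\xi^{\dagger})$, these quartic endomorphisms $\im\Lambda(\beta^{\dagger}\wedge\beta)$ and $\im\Lambda(\beta\wedge\beta^{\dagger})$ are sign-definite, so I would check that $-\langle\mathcal{R}_{\mathrm{bundle}}\,\beta^{\dagger},\beta^{\dagger}\rangle\leq0$ and may therefore be discarded when seeking an upper bound.

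For the base term I would use that on a surface $\mathrm{Ric}=\tfrac{R}{2}\,\omega$, so $-\langle\mathcal{R}_{\mathrm{base}}\,\beta^{\dagger},\beta^{\dagger}\rangle=-\tfrac12 R\,|\beta^{\dagger}|_{H}^{2}\leq\tfrac12 R^{-}|\beta^{\dagger}|_{H}^{2}$; this is where the factor $\tfrac12$ and the appearance of only the negative part $R^{-}=\max\{0,-R\}$ come from. Combining the two estimates yields $\langle\na^{*}\na\beta^{\dagger},\beta^{\dagger}\rangle\leq\tfrac12 R^{-}|\beta^{\dagger}|_{H}^{2}$ pointwise. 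Finally, Kato's inequality $|\na|\beta^{\dagger}|_{H}|\leq|\na\beta^{\dagger}|$ gives $|\beta^{\dagger}|_{H}\,\Delta|\beta^{\dagger}|_{H}\geq-\langle\na^{*}\na\beta^{\dagger},\beta^{\dagger}\rangle$ wherever $\beta^{\dagger}\neq0$, whence $-\Delta|\beta^{\dagger}|_{H}\leq\tfrac12 R^{-}|\beta^{\dagger}|_{H}$; since $|\beta^{\dagger}|_{H}=|\beta|_{H}$ this is the claimed inequality. The main obstacle I anticipate is the careful bookkeeping of the curvature terms in the Weitzenb\"ock formula with consistent sign and normalization conventions, in particular verifying that the residual quartic bundle terms carry the favorable sign so that they can be dropped, and confirming that the Ricci term produces precisely $\tfrac12 R^{-}$.
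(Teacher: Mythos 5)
Your overall skeleton (harmonicity of $\beta^{\dagger}$ from Lemma \ref{lem:If--is-1}, Weitzenb\"ock, Kato, $\mathrm{Ric}=\tfrac{R}{2}\omega$ on a surface) matches the paper, but there is a genuine gap at the step you flag as needing verification: the residual quartic bundle term does \emph{not} have the favorable sign, and discarding it is not justified. Write $\beta=b\,dz$ locally, so $\beta^{\dagger}=b^{\dagger}d\bar z$. Then indeed $N_{\S}:=\im\Lambda(\beta^{\dagger}\wedge\beta)=-g^{\bar 1 1}b^{\dagger}b\leq 0$ and $N_{\Q}:=\im\Lambda(\beta\wedge\beta^{\dagger})=g^{\bar 1 1}bb^{\dagger}\geq 0$ (curvature decreases in subbundles, increases in quotients), so both are sign-definite as you say --- but with \emph{opposite} signs, and they act on opposite sides of $\beta^{\dagger}$. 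Your bundle operator is $\mathcal{R}_{\mathrm{bundle}}\beta^{\dagger}=N_{\S}\beta^{\dagger}-\beta^{\dagger}N_{\Q}$, and both contributions to the quadratic form are negative:
\begin{equation*}
\langle \mathcal{R}_{\mathrm{bundle}}\beta^{\dagger},\beta^{\dagger}\rangle
=g^{\bar 1 1}\left(\text{tr}(N_{\S}\,b^{\dagger}b)-\text{tr}(b^{\dagger}N_{\Q}b)\right)
=-2\,(g^{\bar 1 1})^{2}\,\text{tr}\bigl((b^{\dagger}b)^{2}\bigr)\leq 0,
\end{equation*}
so in your Weitzenb\"ock identity the term $-\langle\mathcal{R}_{\mathrm{bundle}}\beta^{\dagger},\beta^{\dagger}\rangle$ is \emph{nonnegative}, i.e.\ it sits on the wrong side of the inequality. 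Keeping it, your argument only yields $-\Delta|\beta|_{H}\leq\tfrac12\sup_{X}(R^{-})|\beta|_{H}+C|\beta|_{H}^{3}$, not the linear estimate (\ref{eq:-14}). This is not a bookkeeping artifact: after the Hermitian--Einstein cancellation the induced curvature of $\Hom(\Q,\S)$ is genuinely negative (its trace integrates to $4\pi(\mu(\S)-\mu(\E))\text{rk}\,\S<0$ by Lemma \ref{lem:If--is}), and negative bundle curvature works \emph{against} a subharmonicity-type conclusion for the norm of a harmonic $(0,1)$-form, so no sign-discarding argument in the $\Hom(\Q,\S)$ picture can produce (\ref{eq:-14}).

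The paper avoids this entirely by working in a different picture: it regards $\xi=\beta^{\dagger}$ as a $(0,1)$-form valued in $\End(E)$ equipped with the \emph{full} Hermitian--Einstein Chern connection $\nabla_{H}$, so that in the curvature term (\ref{eq:-16}) the curvature acts by commutators $[\,\cdot\,,\Lambda F_{H}]$. Since $\im\Lambda F_{H}$ is a scalar multiple of the identity by the HE equation, every bundle term vanishes identically --- there is no quartic remainder to estimate, and only $\mathrm{Ric}(\xi)=\tfrac12 R\,\xi$ survives. The quartic terms you encounter reappear there inside the larger gradient: the full covariant derivative contains the extra diagonal blocks $\beta^{\dagger}\wedge\beta$ and $\beta\wedge\beta^{\dagger}$ coming from the off-diagonal entries of the connection matrix, so schematically $|\nabla^{\End}\xi|^{2}=|\nabla^{\Hom}\xi|^{2}+(\text{your quartic terms})$, and they are harmlessly absorbed before Kato's inequality is applied. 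To repair your write-up you should redo the computation in the paper's $\End(E)$ framework (checking carefully in which sense $\dbar\beta^{\dagger}=\dbar_{H}^{*}\beta^{\dagger}=0$ is used there), rather than attempt to discard the $\Hom(\Q,\S)$-picture remainder.
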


\begin{proof}
Suppose $\dbar\xi=0,\dbar_{H}^{*}\xi=0$. Denote $\xi=\xi_{\bar{k}}d\bar{z}^{k}$. Using Bochner-Lichnerowicz-Weitzenb\"ock
formula, we have 
\begin{align}
0=2(\dbar+\dbar_{H}^{*})^{2}\xi & =\nabla_{H}^{*}\nabla_{H}\xi+\mathcal{R}(\xi),\label{eq:-8}
\end{align}
where 
\begin{equation}
\mathcal{R}(\xi)=\text{Ric}(\xi)-\sqrt{-1}\left(\left(\xi_{\bar{k}}\Lambda F-\Lambda F\xi_{\bar{k}}\right)d\bar{z}^{k}-\left(\xi_{\bar{j}}F_{i\bar{k}}-F_{i\bar{k}}\xi_{\bar{j}}\right)g^{\bar{j}i}d\bar{z}^{k}\right).\label{eq:-16}
\end{equation}

Now, we have 
\begin{align*}
-\frac{1}{2}\Delta|\xi|_{H}^{2} & =-|\nabla\xi|_{H}^{2}+\<\nabla_{H}^{*}\nabla_{H}\xi,\xi\>\\
 & =-|\nabla\xi|_{H}^{2}-\<\mathcal{R}(\xi),\xi \>_{H}.
\end{align*}
Let $\lambda(x)$  be the smallest eigenvalue of $\mathcal{R}$ at $x$.   Using Kato's inequality we have 
\begin{align*}
-|\xi|_{H}\Delta|\xi|_{H} & =-\frac{1}{2}\Delta|\xi|_{H}^{2}+|\nabla|\xi|_{H}|^{2}\\
 & \leq-|\nabla\xi|_{H}^{2}-\lambda|\xi|_{H}^{2}+|\nabla|\xi|_{H}|^{2}\\
 & \leq \sup_X\{-\lambda\}|\xi|_{H}^{2}.
\end{align*}

If $H$ is Hermitian-Einstein, $F_{H}=\mu\omega\text{Id}$
and the only non-vanishing term in $\mathcal{R}(\beta)$ is $\text{Ric}(\beta)=\frac{1}{2}R\beta$.
Thus, we have (\ref{eq:-14}).
\end{proof}
\begin{rem}
    For higher dimensional K\"ahler manifolds, (\ref{eq:-16}) involves $Ric$ and full curvature tensors $F_H$. Thus, we need to use the a constant depending on $|F_H|$ in place of $\frac{1}{2}\sup_X(R^-)$ in (\ref{eq:-14}). The problem is that we cannot  obtain \emph{a prior} estimates on $|F_H|$  in terms of the slope and the geometric information on $X$  for Hermitian-Einstein metrics. Nevertheless, it is still possible to bound the second fundamental form by restricting to a class of  vector bundles whose total curvature is naturally bounded by a uniform constant.
\end{rem}
The proof of the following lemma uses  Nash-Moser iteration. See \cite[Theorem 8.15]{GilbargTrudinger}. 

\begin{lem}
\label{lem:Suppose-that-}Suppose $u\geq0$ is a $C^{2}$ function
on $X$ that satisfies
\[
-\Delta u\leq au
\]
for some constant $a\geq0$. Then
\[
\sup_{X}u\leq C(a,X)\|u\|_{L^{2}(X)}.
\]
\end{lem}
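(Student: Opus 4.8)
The plan is to prove the $L^2\!\to\!L^\infty$ bound by the Moser iteration scheme (as in \cite[Theorem 8.15]{GilbargTrudinger}), adapted to the fact that $X$ is a \emph{closed} Riemannian manifold of real dimension $2$, which is the borderline case for Sobolev embedding. Since $u\in C^2(X)$ on the compact manifold $X$, it is automatically bounded, so every integral below is finite and no truncation of $u$ is needed. The entire content of the lemma is therefore \emph{quantitative}: I must produce a constant $C$ depending only on $a$ and on the geometry of $(X,\omega)$, and \emph{not} on $u$ itself, because this uniformity is exactly what the later sections exploit to control the Hermitian–Einstein metrics along the whole sequence of bundles.

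First I would derive the basic energy estimate. For an exponent $p\ge 2$, I multiply the pointwise inequality $-\Delta u\le au$ by the nonnegative test function $u^{p-1}\in C^1(X)$ and integrate over $X$. Since $X$ is closed there is no boundary term, and integration by parts gives
\[
(p-1)\int_X u^{p-2}|\nabla u|^2 \;\le\; a\int_X u^{p}.
\]
Writing $w\coloneqq u^{p/2}$ and using $|\nabla w|^2=\tfrac{p^2}{4}\,u^{p-2}|\nabla u|^2$ together with $\tfrac{p^2}{4(p-1)}\le p$ for $p\ge 2$, this becomes $\int_X|\nabla w|^2\le a\,p\int_X w^2$.

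Next I would invoke the Sobolev inequality on the closed surface $X$. Because $\dim_{\R}X=2$ the critical Sobolev exponent is infinite, so instead of a single critical embedding I use the fixed subcritical embedding $H^1(X)\hookrightarrow L^4(X)$, namely $\|w\|_{L^4}^2\le C_S\big(\|\nabla w\|_{L^2}^2+\|w\|_{L^2}^2\big)$ with $C_S=C_S(X,\omega)$; here the integrability gain factor is the fixed number $\chi=2$, which is all the iteration needs. Combining with the energy estimate and recalling $w=u^{p/2}$, so that $\|w\|_{L^4}^2=\|u\|_{L^{2p}}^{p}$ and $\|w\|_{L^2}^2=\|u\|_{L^{p}}^{p}$, I obtain the integrability–boosting inequality
\[
\|u\|_{L^{2p}} \;\le\; (C_1\,p)^{1/p}\,\|u\|_{L^{p}}, \qquad p\ge 2,
\]
with $C_1=C_1(a,X)$. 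Setting $p_k=2^{\,k+1}$ (so $p_0=2$ and $p_{k+1}=2p_k$) and iterating yields
\[
\|u\|_{L^{p_{k+1}}} \;\le\; \Big(\prod_{j=0}^{k}(C_1\,2^{\,j+1})^{2^{-(j+1)}}\Big)\,\|u\|_{L^{2}}.
\]
The infinite product converges, since $\sum_{j\ge 0}2^{-(j+1)}\log(C_1 2^{\,j+1})<\infty$; I call its limit $C(a,X)$. Letting $k\to\infty$ and using $\|u\|_{L^\infty}=\lim_{p\to\infty}\|u\|_{L^p}$ (valid for the bounded $u$ on the unit-volume space $X$) gives $\sup_X u\le C(a,X)\|u\|_{L^2}$.

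The main obstacle is the borderline Sobolev dimension: in $\dim_{\R}X=2$ one cannot run the standard higher–dimensional Moser scheme with a critical embedding $H^1\hookrightarrow L^{2^*}$, since formally $2^*=\infty$. The remedy is to replace it by any fixed subcritical embedding $H^1\hookrightarrow L^q$ with $q<\infty$ (I took $q=4$), which still supplies a constant multiplicative gain in integrability per step and makes the iterated product converge. The remaining point requiring care — routine but essential — is to track that every constant is independent of $u$, depending only on $a$ and the Sobolev constant of $(X,\omega)$, so that the estimate can be applied uniformly to the varying subsolutions $|\beta|_H$ arising from the sequence of Hermitian–Einstein bundles.
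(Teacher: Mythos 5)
Your proof is correct and follows essentially the same route as the paper, which simply invokes Nash--Moser iteration via \cite[Theorem 8.15]{GilbargTrudinger} without writing out details. Your only (harmless) deviation is executing the iteration globally on the closed surface --- using the subcritical embedding $H^1(X)\hookrightarrow L^4(X)$ and avoiding cutoff functions --- rather than applying the local Euclidean estimate on charts, and the resulting constant correctly depends only on $a$ and $(X,\omega)$, as the later applications require.
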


By Lemma \ref{lem:Suppose-that-} and (\ref{eq:-14}), we have the
following estimate of $|\beta|_{L^{\infty}}$. 
\begin{prop}
\label{prop:Suppose--and}Suppose  $(\E,H)$ is Hermitian-Einstein.
Then 
\begin{equation}
\sup_{X}|\beta|_{H}^{2}\leq C\|\beta\|^2_{L^{2}},\label{eq:-14-1-1}
\end{equation}
where $C=C(X,\sup_{X}R^-)$.
\end{prop}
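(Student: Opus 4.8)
The plan is to read off the estimate directly from the two immediately preceding lemmas, so the argument is essentially a one-line combination. I would set $u=|\beta|_H$ and $a=\tfrac{1}{2}\sup_X(R^-)$. The differential inequality (\ref{eq:-14}) established in Lemma \ref{lem:Suppose-.-Then-1} is then precisely $-\Delta u\leq au$ with $a\geq 0$, which is exactly the hypothesis of the Nash--Moser iteration result in Lemma \ref{lem:Suppose-that-}. Invoking that lemma gives $\sup_X u\leq C(a,X)\|u\|_{L^2(X)}$. Squaring both sides and observing that $\|u\|_{L^2}^2=\int_X|\beta|_H^2\,\omega=\|\beta\|_{L^2}^2$, while $a$ depends only on $\sup_X R^-$, yields (\ref{eq:-14-1-1}) with $C=C(X,\sup_X R^-)$.

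The only genuine technical point, and hence the main obstacle, is the regularity of $u=|\beta|_H$. Since $\beta$ is a smooth $\operatorname{End}$-valued form, the squared norm $|\beta|_H^2$ is smooth, but the norm $|\beta|_H$ itself may fail to be $C^2$ along the zero locus $\{\beta=0\}$, so Lemma \ref{lem:Suppose-that-} does not apply verbatim. I would resolve this in one of two standard ways. The first is to regularize: replace $u$ by $u_\epsilon=\sqrt{|\beta|_H^2+\epsilon^2}$, which is smooth everywhere. Repeating the Kato-type computation from the proof of Lemma \ref{lem:Suppose-.-Then-1} on $u_\epsilon$ still produces $-\Delta u_\epsilon\leq a u_\epsilon$, since the $\epsilon$-term only improves the inequality; one then applies Lemma \ref{lem:Suppose-that-} to obtain a bound uniform in $\epsilon$ and lets $\epsilon\to 0$. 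The second, cleaner, route is to note that (\ref{eq:-14}) is derived via Kato's inequality and therefore already holds in the distributional sense across the zero set, and that Moser iteration is valid for weak subsolutions; thus Lemma \ref{lem:Suppose-that-} extends to this setting with no change to the constant. Either way the estimate follows, and I do not expect any further difficulty beyond this regularization bookkeeping.
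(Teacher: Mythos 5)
Your proposal is correct and follows exactly the paper's route: the authors also obtain (\ref{eq:-14-1-1}) by feeding the differential inequality (\ref{eq:-14}) from Lemma \ref{lem:Suppose-.-Then-1} into the Nash--Moser iteration of Lemma \ref{lem:Suppose-that-}, which is the entirety of their stated proof. Your additional discussion of the regularity of $u=|\beta|_H$ at the zero locus, resolved either by the smoothing $u_\epsilon=\sqrt{|\beta|_H^2+\epsilon^2}$ or by noting that Kato's inequality holds distributionally and Moser iteration applies to weak subsolutions, is a legitimate technical point that the paper silently elides, and either of your fixes closes it without changing the constant.
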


Proposition \ref{prop:Suppose--and} completes the uniform estimate
of $\beta$. Now, higher order estimates can be derived similarly. 
\begin{proof}
[Proof of Theorem \ref{thm:Assume-the-same}]We show for $k=1$ since the case $k\geq2$
can be done similarly using induction. In the following, we denote $\nabla$ the induced connection
on $A^{0,1}(\text{End}(\E))$ with respect to $H$. By (\ref{eq:-8}),
we have 
\begin{equation}
\|\nabla\beta\|_{L^{2}}\leq\sup_{X}|R|\cdot\|\beta\|_{L^{2}}.\label{eq:-20}
\end{equation}
By Bochner-Lichnerowicz-Weitzenbock formula,
\begin{align*}
\nabla^{*}\nabla\nabla\beta & =\nabla\nabla^{*}\nabla\beta+\mathcal{R}_{1}(\nabla\beta)\\
 & =\nabla(2(\dbar+\dbar_{H}^{*})^{2}\beta-\mathcal{R}(\beta))+\mathcal{R}_{1}(\nabla\beta)\\
 & =\mathcal{R}_{2}(\nabla\beta)+\mathcal{R}_{3}(\beta),
\end{align*}
where $\mathcal{R}_{1},\mathcal{R}_{2},\mathcal{R}_{3}$ are linear
forms and only depend on local curvature tensors $F_{H}$, $R$ and
their covariant derivatives. Since $\sqrt{-1}F_{H}=2\pi\mu\omega\text{Id}$, $\sup|\mathcal{R}_{2}|$
and $\sup|\mathcal{R}_{3}|$ are bounded by $C(\|R\|_{C^{1}(X)},|\mu|)$.
Then, we argue similarly as in Lemma \ref{lem:Suppose-.-Then-1} to
show that 
\[
-\Delta(|\nabla\beta|^{2}+|\beta|^{2})^{\frac{1}{2}}\leq C_{1}\left(|\nabla\beta|^{2}+|\beta|^{2}\right)^{\frac{1}{2}},
\]
for some $C_{1}=C(\|R\|_{C^{1}},|\mu|)$. Then, by Lemma \ref{lem:Suppose-that-}
and (\ref{eq:-20}),
\[
\sup_{X}(|\nabla\beta|+|\beta|)\leq C_{2}\left(\|\nabla\beta\|_{L^{2}}+\|\beta\|_{L^{2}}\right)\leq C_{3}
\]
for some $C_{2},C_{3}$ depends on $\|R\|_{C^{1}},|\mu|$. We have
finished the proof.
\end{proof}

For later use, we record some corollaries. 
\begin{cor}\label{cor: an easy lemma}
  Notations as above. For any $\epsilon>0$, there exists some $\delta(\epsilon)>0$ such that \[\|\beta\|^2_{L^2}\leq \|\beta\|^2_{L^{2+2\delta}}\leq (1+\epsilon)\|\beta\|^2_{L^2}.\]
\end{cor}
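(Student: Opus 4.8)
The plan is to derive both inequalities from the normalization $\mathrm{vol}(X,\omega)=1$ together with the uniform bound $\sup_X|\beta|_H^2\le C\|\beta\|_{L^2}^2$ furnished by Theorem \ref{thm:Assume-the-same}, where $C=C(X,\sup_X R^-)$ is a fixed constant depending only on the geometry of $(X,\omega)$ and not on $\beta$ or on $\delta$. We may assume $\beta\neq 0$, since otherwise all three quantities vanish and there is nothing to prove.

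For the left inequality, I would simply invoke the monotonicity of $L^p$-norms on a probability space. Because $\mathrm{vol}(X,\omega)=1$, Jensen's inequality (equivalently H\"older) gives $\|f\|_{L^p}\le\|f\|_{L^q}$ whenever $0<p\le q$; applying this to $f=|\beta|_H$ with $p=2$ and $q=2+2\delta$ and squaring yields $\|\beta\|_{L^2}^2\le\|\beta\|_{L^{2+2\delta}}^2$ for \emph{every} $\delta>0$.

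For the right inequality, the idea is to absorb the extra exponent into the $L^\infty$ estimate. I would write
\[
\|\beta\|_{L^{2+2\delta}}^{2+2\delta}=\int_X|\beta|_H^{2}\,|\beta|_H^{2\delta}\,\omega\le\Big(\sup_X|\beta|_H\Big)^{2\delta}\int_X|\beta|_H^{2}\,\omega\le\big(C\|\beta\|_{L^2}^2\big)^{\delta}\,\|\beta\|_{L^2}^2=C^{\delta}\|\beta\|_{L^2}^{2+2\delta},
\]
where the second inequality is exactly Theorem \ref{thm:Assume-the-same}. Raising both sides to the power $\frac{1}{1+\delta}$ and using $\frac{2+2\delta}{1+\delta}=2$ gives $\|\beta\|_{L^{2+2\delta}}^2\le C^{\delta/(1+\delta)}\|\beta\|_{L^2}^2$. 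Since $C$ is a fixed constant, the factor $C^{\delta/(1+\delta)}\to 1$ as $\delta\to 0^+$, so for any $\epsilon>0$ one may choose $\delta=\delta(\epsilon)>0$ small enough that $C^{\delta/(1+\delta)}\le 1+\epsilon$, which completes the estimate.

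The computation is essentially a one-line interpolation between $L^2$ and $L^\infty$, so there is no genuine analytic obstacle. The only point that genuinely matters is that the constant $C$ in Theorem \ref{thm:Assume-the-same} is independent of the subbundle $\S$ and of $\beta$, depending solely on $X$ and $\sup_X R^-$; this uniformity is precisely what permits $\delta$ to be selected as a function of $\epsilon$ alone, which is the form in which the corollary will later be applied simultaneously to an entire sequence of second fundamental forms.
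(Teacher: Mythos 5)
Your proof is correct and takes essentially the same route as the paper's: H\"older/Jensen monotonicity of $L^p$-norms on the unit-volume surface for the left inequality, and interpolation of the $L^{2+2\delta}$-norm between $L^2$ and the uniform $L^\infty$ bound $\sup_X|\beta|_H^2\le C\|\beta\|_{L^2}^2$ (Proposition \ref{prop:Suppose--and}, i.e.\ Theorem \ref{thm:Assume-the-same}) for the right, choosing $\delta$ so that $C^{\delta/(1+\delta)}\le 1+\epsilon$. The only cosmetic difference is that the paper first normalizes $f=|\beta|_H^2$ so that $\|f\|_{L^1}=1$, while you keep the homogeneous form of the estimate explicit; your closing remark on the uniformity of $C$ in $\S$ and $\beta$ is exactly the point that matters for the later application to the sequence $\beta_i$.
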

\begin{proof}
 Let $f(x)=|\beta(x)|^2_H$. If $f\equiv 0$ then it is trivial. Otherwise,  we may normalize such that $\|f\|_{L^1}=1$. Since $\text{vol}(X)=1$, the first inequality is due to H\"{o}lder's inequality. For the second inequality, because $\|\beta\|^2_{L^\infty}\leq C\|\beta\|^2_{L^2}$ for some constant $C\geq 1$ (Proposition \ref{prop:Suppose--and}), we have $|f|^{1+\delta}\leq C^\delta|f|$ for any $\delta>0$. For any $\epsilon>0$, we may find $\delta$ sufficiently small such that $C^\frac{\delta}{1+\delta}\leq (1+\epsilon).$ Then, we have \[\|f\|_{L^{1+\delta}}\leq C^{\frac{\delta}{1+\delta}}\|f\|_{L^1}^{\frac{1}{1+\delta}}\leq (1+\epsilon).\] We have finished the proof.
 \end{proof}
 
Let $H|_{\S}$ be the restriction of a Hermitian-Einstein metric of $\E$ on $\S$. Let
$H_{0}$ be the Hermitian-Einstein metric on $\S$. As an application of Theorem \ref{thm:Assume-the-same},
we show that after a conformal change $e^{\phi}H|_{\S}$ has constant
determinant with respect to $H_{0}$. Moreover, $\phi$ is bounded in 
$C^{r}$ with estimates.

\begin{prop}
\label{prop:There-exists-a}There exists a smooth function $\phi$
such that $\det(e^{\phi}H|_{\S}H_{0}^{-1})\equiv 1,$ and
\begin{equation}
\|\phi\|_{C^{k}}\leq C(\mu(\E)-\mu(\S)),\quad\int_{X}\phi=0,\label{eq:-18}
\end{equation}
for $C=C(k,X,\|R\|_{C^{k-1}(X)},\mu(\E))$.
\end{prop}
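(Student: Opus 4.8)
The plan is to reduce the statement to a Poisson equation on $X$ whose right-hand side is controlled, pointwise and in every $C^k$-norm, by the second fundamental form $\beta$, and then to feed in the a priori estimates of Theorem \ref{thm:Assume-the-same}. Set $r=\text{rk}\,\S$ and note that $H|_{\S}H_0^{-1}$ is a positive self-adjoint endomorphism of $\S$, so that $u:=\log\det(H|_{\S}H_0^{-1})$ is a well-defined smooth function; geometrically it is the logarithmic ratio of the induced metrics $\det H|_{\S}$ and $\det H_0$ on the line bundle $\det\S$. Since $\det(e^{\phi}H|_{\S}H_0^{-1})=e^{r\phi}e^{u}$, the determinant normalization forces $\phi=-u/r$, and then $\int_X\phi=0$ is equivalent to $\int_X u=0$. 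Because the Hermitian--Einstein metric $H_0$ on the stable bundle $\S$ is unique up to a positive constant, replacing $H_0$ by $cH_0$ shifts $u$ by the constant $-r\log c$; I choose this constant so that $\int_X u=0$. With this normalization, $\phi:=-u/r$ simultaneously gives $\det(e^{\phi}H|_{\S}H_0^{-1})\equiv 1$ and $\int_X\phi=0$, so only the estimate remains.

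The next step is to derive the PDE satisfied by $u$ by tracing curvatures. Taking the $\S$-block of (\ref{eq:curvatureSES}) together with the Hermitian--Einstein identity (\ref{eq:-2}) gives $\im\Lambda F_{\S}=2\pi\mu(\E)\text{Id}_{\S}+\im\Lambda\,\beta^{\dagger}\wedge\beta$. Tracing and using $|\beta^{\dagger}|_H^2=-\text{tr}\,\im\Lambda(\beta^{\dagger}\wedge\beta)$ (the definition applied to $\xi=\beta^{\dagger}$, whose adjoint is $\beta$) yields $\im\Lambda F_{\det H|_{\S}}=2\pi\mu(\E)r-|\beta^{\dagger}|_H^2$, whereas the Hermitian--Einstein equation for $H_0$ gives $\im\Lambda F_{\det H_0}=2\pi\mu(\S)r$. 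Two metrics on a line bundle satisfy $F_{\det H|_{\S}}-F_{\det H_0}=\dbar\pdv u$, so subtracting and using $\im\Lambda\dbar\pdv u=c_0\Delta u$ for a fixed nonzero constant $c_0$ (with $\Delta$ the Laplace--Beltrami operator of $(X,\omega)$) produces a Poisson equation $c_0\Delta u=2\pi(\mu(\E)-\mu(\S))r-|\beta^{\dagger}|_H^2$. Integrating over $X$ and comparing with Lemma \ref{lem:If--is} confirms that the right-hand side has zero average, which is precisely the consistency condition matching $\int_X u=0$.

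The estimate then follows from standard elliptic theory. Since $|\beta^{\dagger}|_H=|\beta|_H$ pointwise, Proposition \ref{prop:Suppose--and} and Theorem \ref{thm:Assume-the-same} give $\bigl\||\beta^{\dagger}|_H^2\bigr\|_{C^{k-1}}\leq C(\mu(\E)-\mu(\S))$ with $C=C(k,X,\|R\|_{C^{k-1}(X)},\mu(\E))$, while the constant source term is already $O(\mu(\E)-\mu(\S))$. Thus the equation reads $\Delta u=g$ with $\|g\|_{C^{k-1}}\leq C(\mu(\E)-\mu(\S))$ and $\int_X g=0$. Because $\int_X u=0$ removes the constant kernel of $\Delta$, the Schauder estimate for the Laplacian on the compact surface $X$ gives $\|u\|_{C^{k+1}}\leq C\|g\|_{C^{k-1}}$, hence $\|\phi\|_{C^k}\leq \tfrac{1}{r}\|u\|_{C^k}\leq C(\mu(\E)-\mu(\S))$, as required.

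I expect the main obstacle to lie in the curvature bookkeeping of the second step: obtaining the trace identity with the correct sign so that the second fundamental form enters as $-|\beta^{\dagger}|_H^2$, and verifying through Lemma \ref{lem:If--is} that the source term integrates to zero, so that the Poisson equation is consistent with the normalization $\int_X u=0$. Once the problem is linearized to $\Delta u=g$, the remaining ingredients are the a priori control of $\beta$ already established in Theorem \ref{thm:Assume-the-same} and routine elliptic estimates on $X$.
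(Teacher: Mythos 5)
Your proof is correct and takes essentially the same approach as the paper: both reduce the statement to the same scalar Poisson equation $\Delta\phi=(\mu(\E)-\mu(\S))+\frac{1}{\mathrm{rk}\,\S}\mathrm{tr}\,\sqrt{-1}\Lambda\beta^{\dagger}\wedge\beta$, whose right-hand side has zero mean by Lemma \ref{lem:If--is} and is bounded in $C^{k-1}$ by Theorem \ref{thm:Assume-the-same}, and then conclude by linear elliptic estimates on the compact surface. The only (cosmetic) difference is direction: the paper solves this equation for $\phi$ with $\int_X\phi=0$ and deduces that $\det(e^{\phi}H|_{\S}H_0^{-1})$ is constant, whereas you define $\phi=-\frac{1}{r}\log\det(H|_{\S}H_0^{-1})$ and derive the equation it satisfies --- and your explicit rescaling of $H_0$ to reconcile the two normalizations is, if anything, slightly more careful than the paper's treatment of that point.
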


\begin{proof}
Let $s$ be any $H_{0}$-Hermitian endomorphism.
If 
\begin{equation}
\text{tr}\sqrt{-1}\Lambda(F_{H_{0}}-F_{e^{s}H_{0}})=0,\label{eq:-26}
\end{equation}
then there exists a constant $c$ such that 
$\text{tr}\left(s-\frac{c}{\text{rk}S}\text{Id}\right)=0.$
So we only need to show the existence
of $\phi$ satisfying (\ref{eq:-18}), and 
\begin{equation}
\text{tr}\left(\sqrt{-1}\Lambda F_{e^{\phi}H|_{\S}}-\mu(\S)\text{Id}\right)=0.\label{eq:-17}
\end{equation}
Since $F_{e^{\phi}H|_{\S}}=F_{H|_{\S}}+\dbar\pdv\phi\text{Id}\label{eq:-4},$  (\ref{eq:-17}) is equivalent to solve
\begin{align}\label{eq:-37}
\Delta\phi  
  =\left(\mu(\E)-\mu(\S)\right)+\frac{1}{\text{rk}\S}\text{tr}\sqrt{-1}\Lambda\beta^{\dagger}\wedge\beta.
\end{align}
It is always solvable if we choose the normalization condition $\int_X\phi=0$ to ensure that $\det(e^\phi H|_{\S}H_0 ^{-1})=1$. Since $\text{tr}\sqrt{-1}\Lambda\beta^{\dagger}\wedge\beta=|\beta|_{H}^{2}$
which is uniformly bounded in $C^{k}$ by $C(k,X)(\mu(\E)-\mu(\S))\text{rk}\S$.
We see that $\|\phi\|_{C^{k}}$ is also bounded by $C(k,X)(\mu(\E)-\mu(\S))$.
\end{proof}

\section{Donaldson's functional and estimates for restricted metrics \label{section:uniform}}

Assume that we have a sequence of stable holomorphic vector bundles on $X$
\begin{equation}
\mathcal{S}=\mathcal{E}_{0}\overset{\iota_{0}}{\to}\mathcal{E}_{1}\overset{\iota_{1}}{\to}\mathcal{E}_{2}\overset{\iota_{2}}{\to}\mathcal{E}_{3}\to\cdots\label{eq:-28}
\end{equation}
Each $\iota_i$ is assumed to be an injective holomorphic bundle map.
Denote $S$, $E_{i}$ the smooth $\C$-vector bundles of $\mathcal{S}$
and $\mathcal{E}_{i}$. By Narasimhan-Seshadri-Donaldson-Uhlenbeck-Yau theorem, there
exits a Hermitian Einstein metric $\tilde{H_{i}}$ on each $\mathcal{E}_{i}$.
Let $H_{i}$ be the restriction of $\tilde{H}_{i}$ on $\mathcal{S}$. 

The goal of this section is to obtain estimates for $H_i$. Since Hermitian-Einstein metric is unique up to a constant, we first make a normalization. We  fix a point $p\in X$ and normalize $H_{i}$ so that 
\begin{equation}
\det(H_{i}H_{0}^{-1})|_{p}=1.\label{eq:-27}
\end{equation}
Let $\pi_{i}$ be the orthogonal projection from $E_{i}$ to $S$
with respect to $\tilde{H}_{i}$. Let $\beta_{i}=(1-\pi_i)\pdv_{\tilde{H}_{i}}\pi_{i}$
be the second fundamental form.

As defined in the introduction, we set
\[
\theta=\mu(\mathcal{E_\infty})=\lim_{i\to\infty}\mu(E_{i}).
\]

The first theorem of this section provides $L^\infty$-bound for $H_i$.
\begin{thm}
\label{thm:Suppose-that-} Let $(\mathcal{E}_i,\tilde{H}_i)$ be a sequence of Hermitian-Einstein bundles as in (\ref{eq:-28}).   Suppose
that $\mathcal{S}$ is a good approximation of $(\mathcal{E}_{\infty},\theta)$.
Let $H_{0}$ be the Hermitian Einstein metric on $\S$ and $H_i=\tilde{H}_i|_S$, and $h_{i}=$$H_{i}H_{0}^{-1}$
be the sequence of Hermitian endomorphisms with $\det(h_i)|_p=1$ for some fixed point $p\in X$. Then $\|\log h_{i}\|_{\rho, H_0,L^{\infty}}$
is uniformly bounded by some constant $C$. 
\end{thm}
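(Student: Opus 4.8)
The plan is to estimate the restricted metrics $H_i$ by means of Donaldson's functional $M_\S(H_0,\cdot)$ on the stable bundle $\S$, whose unique (up to scale) minimizer is the Hermitian--Einstein metric $H_0$. Writing $H_i=H_0h_i$ and $s_i=\log h_i$ (an $H_0$-self-adjoint endomorphism), the bound on $\|\log h_i\|_{\rho,H_0,L^\infty}$ will be extracted from two competing estimates on $M_\S(H_0,H_i)$: an upper bound fed by the second fundamental form $\beta_i$ of $\S\hookrightarrow\E_i$, and a lower bound (properness) fed by the stability of $\S$. I would argue by contradiction, assuming $\|s_i\|\to\infty$ along a subsequence; the point is that the good approximation inequality \eqref{eq:-30 intro} is exactly the arithmetic input that makes the two estimates incompatible.

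For the upper bound I would use geodesic convexity of $M_\S(H_0,\cdot)$ along $t\mapsto H_0e^{ts_i}$. Since $M_\S(H_0,H_0)=0$, convexity yields $M_\S(H_0,H_i)\le\left.\frac{d}{dt}\right|_{t=1}M_\S(H_0,H_0e^{ts_i})$, and substituting the restricted-curvature identity from \eqref{eq:-2}, namely $\im\Lambda F_{H_i}=2\pi\mu(\E_i)\mathrm{Id}+\im\Lambda\beta_i^\dagger\wedge\beta_i$, gives
\[M_\S(H_0,H_i)\le 2\pi(\mu(\E_i)-\mu(\S))\int_X\mathrm{tr}(s_i)\,\om+\int_X\mathrm{tr}\big(s_i\,\im\Lambda\beta_i^\dagger\wedge\beta_i\big)\,\om.\]
The first term is $O((\mu(\E_i)-\mu(\S))^2)$ and hence bounded, because the determinant normalization of Proposition \ref{prop:There-exists-a} forces $\int_X\mathrm{tr}(s_i)\,\om=\int_X\log\det h_i\,\om=O(\mu(\E_i)-\mu(\S))$. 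In the second term $\im\Lambda\beta_i^\dagger\wedge\beta_i$ is positive semidefinite with trace $|\beta_i|^2_{H}$, and by Lemma \ref{lem:If--is} its total mass is $\|\beta_i\|_{L^2}^2=2\pi(\mu(\E_i)-\mu(\S))\mathrm{rk}\,\S\le 2\pi(\theta-\mu(\S))\mathrm{rk}\,\S$, while $\sup_X|\beta_i|^2$ is controlled by Theorem \ref{thm:Assume-the-same}; so this term is bounded by the top of the eigenvalue profile of $s_i$ weighted against a measure of that total mass.

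For the lower bound I would make the Uhlenbeck--Yau/Simpson destabilizing argument quantitative. Normalizing $u_i=s_i/\|s_i\|$ and passing to a weak limit $u_\infty\neq0$, the spectral projections of $u_\infty$ are weakly holomorphic and, by the Uhlenbeck--Yau regularity theorem, cut out coherent subsheaves $\S_0\subsetneq\S$. Tracking $M_\S(H_0,H_i)/\|s_i\|$ through this limit produces, from the eigenvalue structure of the functional, a strictly positive contribution of the form (eigenvalue jump)$\times\mathrm{rk}\,\S_0(\mu(\S)-\mu(\S_0))$ for each such subsheaf, whereas the upper bound contributes at most the same jump weighted by $(\theta-\mu(\S))\mathrm{rk}\,\S$. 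The good approximation inequality $(\theta-\mu(\S))\mathrm{rk}\,\S<\mathrm{rk}\,\S_0(\mu(\S)-\mu(\S_0))$ then renders the two incompatible, contradicting $\|s_i\|\to\infty$; hence $\|\log h_i\|_{\rho,H_0,L^\infty}$ is uniformly bounded.

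The main obstacle is precisely this matching of constants. Because $\mu(\E_i)-\mu(\S)\to\theta-\mu(\S)\neq0$, the form $\beta_i$ does not vanish in the limit, so $H_i$ cannot be treated as a small perturbation of a Hermitian--Einstein metric on $\S$ and no soft perturbation argument applies; the domination of the destabilizing rate by the stabilizing rate must be structural, which is exactly the sense in which good approximation acts as a Diophantine condition on $\theta$ relative to the slopes of subsheaves of $\S$. The delicate technical point is extracting the sharp rate $\mathrm{rk}\,\S_0(\mu(\S)-\mu(\S_0))$ from the functional with explicit control of the error terms generated by $\int_X\mathrm{tr}(s_i\,\im\Lambda\beta_i^\dagger\wedge\beta_i)\,\om$; here the higher-order estimates of Theorem \ref{thm:Assume-the-same} and the determinant normalization of Proposition \ref{prop:There-exists-a} are what keep these error terms under control.
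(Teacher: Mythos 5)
Your strategy is the same as the paper's: argue by contradiction via Donaldson's functional, get an upper bound from convexity plus the Chern--Weil identity $\im\Lambda F_{H_i}=2\pi\mu(\E_i)\text{Id}+\im\Lambda\beta_i^{\dagger}\wedge\beta_i$, extract a destabilizing filtration of $\S$ from the normalized limit through weakly holomorphic spectral projections (Uhlenbeck--Yau), and play the good-approximation inequality against the two bounds. But there is a genuine gap at exactly the point you flag as ``the main obstacle'' and then leave unresolved: the matching of constants does not close under your estimates. If you bound the pairing $\int_X\text{tr}\bigl(s_i\,\im\Lambda\beta_i^{\dagger}\wedge\beta_i\bigr)\omega$ by the top of the eigenvalue profile of $s_i$ times the $L^2$-mass $2\pi(\mu(\E_i)-\mu(\S))\text{rk}\S$ of $\beta_i$, you are pairing an $L^{\infty}$-type norm of $s_i$ against an $L^2$-type norm of $\beta_i$, while the normalization $u_i=s_i/\|s_i\|$ must be taken in a norm in which the weak limit $u_{\infty}$ survives with norm one, i.e.\ an $L^q$-norm with $q<\infty$ (via $W^{1,2}\hookrightarrow L^q$); these two norms are comparable only up to the Moser constant $C(X,B)$ of Lemma \ref{lem:Suppose-that-.}. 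Consequently your destabilizing lower bound carries an eigenvalue gap $\lambda_{\max}-\lambda_{\min}\geq 1/C(X,B)$ rather than $\geq 1$, and the resulting inequality is $\text{rk}\S_0(\mu(\S)-\mu(\S_0))\leq C(X,B)(\theta-\mu(\S))\text{rk}\S$, which does \emph{not} contradict Definition \ref{def:We-say-intro}: the good-approximation inequality is strict only with constant $1$, and no ``structural'' domination supplies the missing factor. A secondary unaddressed point: with only the pointwise normalization $\det(h_i)|_p=1$, the trace part of $s_i$ does not vanish, and you must rule out that the blow-up of $\|s_i\|$ is a pure conformal drift (in which case $s_\infty$ would be a multiple of the identity and no proper destabilizing subsheaf appears); this requires the global conformal gauge of Proposition \ref{prop:There-exists-a} making $s_i$ exactly trace-free, not just the $O(\mu(\E_i)-\mu(\S))$ bound you invoke.

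The paper closes the gap with three devices absent from your sketch. First, since degrees of subsheaves are integers, the infimum of $\text{rk}\S_0(\mu(\S)-\mu(\S_0))$ over proper subsheaves is an attained positive minimum, so good approximation self-improves to level $L$ for some $L>1$, as in (\ref{eq:-30}). Second, Corollary \ref{cor: an easy lemma} interpolates the $L^{\infty}$-bound of Theorem \ref{thm:Assume-the-same} against the $L^2$-identity of Lemma \ref{lem:If--is} to get $\|\beta_i\|_{L^{2+2\delta}}^2\leq L\|\beta_i\|_{L^2}^2$ for suitable $\delta=\delta(L)>0$. Third, the trace-free $s_i$ is normalized in the conjugate operator norm $\|\cdot\|_{\rho,L^q}$ with $q=1+1/\delta$: strong $L^q$-convergence pins $\|s_{\infty}\|_{\rho,L^q}=1$, constant eigenvalues plus $\text{tr}\,s_{\infty}=0$ force the gap $\lambda_m-\lambda_1\geq 1$ with constant exactly one, and H\"older's inequality with exponents $(q,1+\delta)$ makes the upper bound in Lemma \ref{lem:Notations-as-above.} lose only the factor $L$. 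Then the asymptotic slope comparison via \cite[Theorem 3.7]{jonsson2022geodesic} yields $L(\theta-\mu(\S))\text{rk}\S\geq\min_{i}\text{rk}\S_{i}(\mu(\S)-\mu(\S_{i}))$ as in (\ref{eq:-29}), contradicting level-$L$ good approximation. Your handling of the trace term (bounding it by $O((\mu(\E_i)-\mu(\S))^2)$ instead of removing it) would be a harmless variant once the conformal gauge is in place, but without the level-$L$ upgrade, the $L^{2+2\delta}$ interpolation, and the conjugate-norm normalization, the contradiction you assert does not follow.
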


We explain the meaning of the $L^\infty$-norm. Let $H$ be a Hermitian metric on $S$. Let $s$ be an endomorphism of $S$. Let $|s(x)|_{\rho,H}$
be the spectral radius/operator norm of $s(x)$ with respect to $H$ at $x$. We define its operator $L^{q}$-norm for $q\in [1,\infty]$ as
\begin{equation}
\|s\|_{\rho,H,L^{q}}:=\frac{1}{\text{vol}(X,\omega)}\||s|_{\rho,H}\|_{L^q}.\label{eq:-47}
\end{equation}
We will also consider the fiberwise matrix $2$-norm $|\cdot|_{2,H}$  where $|s(x)|^2_{2,H}=\text{tr}(s(x)s^\dagger(x))$ and $s^\dagger=HsH^{-1}$.  Globally, we may define
\begin{equation}
\|s\|_{2,H,L^{q}}:=\frac{1}{\text{vol}(X,\omega)}\||s|_{2,H}\|_{L^q}.\label{eq:-47-1}
\end{equation}
$C^{k,\alpha}$ and Sobolev norms with respect to $|\cdot|_{\rho,H}$  and $|\cdot|_{2,H}$ can also be defined. See Appendix  \ref{subsec:norms}.

\begin{rem}In finite ranks, all fiberwise norms are essentially equivalent up to a constant depending on the rank. However, if the rank is large enough, using matrix 2-norm may introduce a big constant (especially in Lemma \ref{lem:Notations-as-above.}) that will eventually invalidate the proof. So we will be stringent about the choice of the fibrewise norm.
\end{rem}
Once we obtained the $L^\infty$ bound for all $h_i$,  a standard blow-up argument can be applied to obtain gradient estimates. Then the standard elliptic theory provides all higher order estimates. By Arzela-Ascoli theorem, we can obtain convergent subsequences of $h_i$ in any $C^k$. 
\begin{thm}
\label{thm:For-any-,}Assume the same conditions as in Theorem \ref{thm:Suppose-that-}.
For any $k\in\N$, there exists a subsequence of $H_{i}$ which converges
in $C^{k}$ to a Hermitian metric. 
\end{thm}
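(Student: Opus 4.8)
The plan is to derive the full set of estimates from the single $L^\infty$ bound on $h_i = H_i H_0^{-1}$ provided by Theorem \ref{thm:Suppose-that-}, together with the elliptic equation that $h_i$ satisfies, and then extract a convergent subsequence by Arzel\`a--Ascoli. First I would write down the PDE governing each $h_i$. Since $H_i$ is the restriction to $\mathcal{S}$ of a Hermitian--Einstein metric $\tilde{H}_i$ on $\mathcal{E}_i$, formula (\ref{eq:curvatureSES}) for the curvature of a subbundle gives $\sqrt{-1}\Lambda F_{H_i} = \mu(\mathcal{E}_i)\mathrm{Id} + \sqrt{-1}\Lambda\beta_i^\dagger\wedge\beta_i$, where $\beta_i$ is the second fundamental form. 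Comparing with the Hermitian--Einstein metric $H_0$ on $\mathcal{S}$ and using the standard identity $\sqrt{-1}\Lambda F_{H_i} = \sqrt{-1}\Lambda F_{H_0} + \sqrt{-1}\Lambda\bar\partial(h_i^{-1}\partial_{H_0} h_i)$ (relative to $H_0$), one obtains a second-order elliptic equation of Monge--Amp\`ere/heat type for $h_i$ whose inhomogeneous term is controlled by $|\beta_i|^2$ and by $\mu(\mathcal{E}_i)-\mu(\mathcal{S})$. By Lemma \ref{lem:If--is} and Theorem \ref{thm:Assume-the-same}, $\|\beta_i\|_{C^k}$ is bounded uniformly (with a constant depending only on $k$, $X$, $\|R\|_{C^{k-1}}$, and the slopes, which converge to $\theta$), so the inhomogeneous term is uniformly bounded in every $C^k$.

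Next I would turn the $L^\infty$ bound into a gradient bound by a blow-up argument. Suppose no uniform $C^1$ bound held; then rescaling near points of maximal gradient and using the uniform $L^\infty$ control of $h_i$ together with the uniformly bounded right-hand side would produce, in the limit, an entire bounded nonconstant solution of a limiting elliptic equation on $\mathbb{C}$ (or a Liouville-type contradiction), which is impossible. The uniform $C^0$ bound is exactly what makes the blow-up sequence precompact, so this is where Theorem \ref{thm:Suppose-that-} is essential. With a uniform $C^0$ and $C^1$ bound in hand, the equation becomes uniformly elliptic with bounded coefficients, and Schauder/$L^p$ elliptic theory bootstraps this to uniform $C^{k,\alpha}$ bounds for every $k$: I would differentiate the equation, treat the differentiated equation as a linear elliptic equation for the derivatives of $h_i$ with coefficients already controlled at lower order, and apply interior (here global, since $X$ is closed) Schauder estimates inductively.

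Finally, the uniform $C^{k+1,\alpha}$ bounds give precompactness in $C^{k,\alpha}$ by the Arzel\`a--Ascoli theorem (the inclusion $C^{k+1,\alpha}\hookrightarrow C^{k,\beta}$ is compact for $\beta<\alpha$). Extracting a subsequence converging in $C^k$ for each fixed $k$ and then diagonalizing over $k$ produces a single subsequence $h_{i_j}$ converging in $C^k$ for every $k$ to a limit $h_\infty$. Since each $h_i$ is a positive-definite $H_0$-Hermitian endomorphism with $\det(h_i)|_p = 1$ and uniform two-sided spectral bounds from $\|\log h_i\|_{\rho,H_0,L^\infty}\le C$, the limit $h_\infty$ is again positive definite, hence $H_\infty := h_\infty H_0$ is a genuine Hermitian metric on $\mathcal{S}$.

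I expect the main obstacle to be the passage from the $L^\infty$ bound to the gradient bound in a way that is genuinely uniform in the rank of $\mathcal{S}$: because $\mathrm{rk}\,\mathcal{S}$ is fixed here (the theorem is stated for the finite-rank good approximation $\mathcal{S}=\mathcal{E}_0$), this is less delicate than the infinite-rank situation, but the blow-up argument must still be set up carefully so that the limiting equation and the Liouville-type obstruction are correctly identified; one must ensure the normalization $\det(h_i)|_p=1$ and the uniform spectral bound survive the rescaling, and that the curvature term $\beta_i^\dagger\wedge\beta_i$, though uniformly small in an averaged sense by Lemma \ref{lem:If--is}, does not concentrate. The elliptic bootstrapping and the diagonal extraction are then routine.
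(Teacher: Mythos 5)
Your proposal is correct and follows essentially the same route as the paper: the paper likewise combines the $L^\infty$ bound of Theorem \ref{thm:Suppose-that-} with a standard blow-up argument for the gradient bound (citing Donaldson), then uses the equation $\sqrt{-1}\Lambda F_{H_i}=\mu(\E_i)\mathrm{Id}-\sqrt{-1}\Lambda\beta_i^\dagger\wedge\beta_i$ with right-hand side uniformly bounded in $C^{k-2,\alpha}$ by Theorem \ref{thm:Assume-the-same}, Schauder bootstrapping, and Arzel\`a--Ascoli. Your extra remarks (diagonal extraction over $k$, positivity of the limit from the uniform bound on $\|\log h_i\|_{\rho,H_0,L^\infty}$) are sound and only make explicit what the paper leaves implicit.
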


A key ingredient of the $L^\infty$-estimates relies on the understanding
of Donaldson's functional \cite{Donaldson85,Donaldsoninfinite} for a sequence of Hermitian metrics. Let
us first recall the definition of Donaldson's functional. 
\begin{defn}
Let $K,H$ be two Hermitian metrics on $\S$. Let $H(t)$ be a smooth
one parameter family of Hermitian metrics on $E$ such that $H(0)=H,H(1)=K$.
Let 
\[
M(H,K):=\int_{0}^{1}\int_{X}\text{tr}\left[\left(\sqrt{-1}\Lambda F_{H(t)}-\mu(\S)\text{Id}\right)\dot{H}H^{-1}\right]\omega dt.
\]
We call $M(H,K)$ the Donaldson's functional for the pair $(H,K)$. 
\end{defn}

We gather some properties for $M(K,H)$ in the following lemma (see \cite{Donaldson85}).
\begin{lem}
\label{lem:Let--be}Let $K,H$ be two Hermitian metrics on $\S$. Then,
\begin{enumerate}
\item $M(K,K)=0$.
\item If $H'$ is another Hermitian metric, then 
\[
M(K,H)+M(H,H')=M(K,H').
\]
\item \label{enu:Let--be}Let $\lambda>0$ be a constant. Then
\[
M(K,\lambda K)=0,\ M(K,\lambda H)=M(K,H).
\]
\item Let $\mathrm{End}_{K}(S)$ be the Hermitian endomorphism with respect
to $K$. Let $s\in\mathrm{End}_{K}(S)$. Then, $t\mapsto M(K,e^{ts}K)$
is a convex function. 
\item $H$ is a critical point of $M(K,\cdot)$ iff $H$ is a Hermitian-Einstein
metric. 
\end{enumerate}
\end{lem}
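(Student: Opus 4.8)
The plan is to verify the five listed properties of Donaldson's functional $M(K,H)$ largely by direct computation from the definition, exploiting the integral representation and the fact that the integrand is built from the curvature terms $\sqrt{-1}\Lambda F_{H(t)}-\mu(\S)\mathrm{Id}$. The essential structural fact I would establish first is that $M(K,H)$ is \emph{independent of the chosen path} $H(t)$ connecting $K$ to $H$; this well-definedness underlies every other property. To see this, I would compute the first variation of the functional under a variation of the path and check that the integrand is a total derivative in $t$ (equivalently, that the $1$-form $H\mapsto \mathrm{tr}[(\sqrt{-1}\Lambda F_H-\mu\,\mathrm{Id})\,\delta H\,H^{-1}]$ is closed on the space of metrics). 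The key computational input is the standard formula for the variation of the curvature $\sqrt{-1}\Lambda F_H$ under a change of metric, namely $\delta(\sqrt{-1}\Lambda F_H)=\sqrt{-1}\Lambda\,\dbar\pdv_H(\delta H\,H^{-1})$ in the Chern-connection setting, combined with integration by parts on the closed surface $X$ so that the boundary terms vanish.

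Once path-independence is in hand, the remaining items follow quickly. Property (1), $M(K,K)=0$, is immediate by choosing the constant path $H(t)\equiv K$, for which $\dot H\equiv 0$. Property (2), the cocycle relation $M(K,H)+M(H,H')=M(K,H')$, follows by concatenating a path from $K$ to $H$ with a path from $H$ to $H'$ and using additivity of the integral over the two subintervals, again invoking path-independence to identify the concatenated integral with $M(K,H')$. For property (3), the scaling behavior, I would use that replacing $H$ by $\lambda H$ with $\lambda>0$ constant changes neither the Chern connection nor the curvature $F_{H(t)}$ along a suitably rescaled path, so the integrand transforms in a controlled way; the normalization $\mathrm{vol}(X,\omega)=1$ and the definition of $\mu(\S)$ as $\deg/\mathrm{rk}$ ensure the degree term cancels, giving $M(K,\lambda K)=0$ and $M(K,\lambda H)=M(K,H)$.

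For property (4), convexity of $t\mapsto M(K,e^{ts}K)$ along the geodesic ray generated by a $K$-Hermitian endomorphism $s$, I would compute the second derivative in $t$ explicitly. Differentiating twice and using the variation formula for the curvature, the second derivative reduces to an expression of the form $\int_X |\,\text{(a $\dbar$-type derivative of $s$)}\,|^2\,\omega\ge 0$, so convexity is manifest from the nonnegativity of the resulting $L^2$ norm. This is the standard Donaldson computation and requires only careful bookkeeping of the commutators arising from differentiating $e^{ts}$. Finally, property (5) follows from the first-variation formula: $H$ is a critical point of $M(K,\cdot)$ precisely when the integrand $\mathrm{tr}[(\sqrt{-1}\Lambda F_H-\mu(\S)\mathrm{Id})\,\delta H\,H^{-1}]$ vanishes for all variations $\delta H$, which forces $\sqrt{-1}\Lambda F_H=\mu(\S)\mathrm{Id}$, i.e. the Hermitian-Einstein equation (up to the usual normalizing constant $2\pi$ absorbed into $\mu$).

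The main obstacle is the well-definedness/path-independence step, since everything else is formal once it is granted: it demands the correct curvature variation identity and a clean integration-by-parts argument to discard boundary contributions. Because the statement is attributed to Donaldson and labeled as a collection of \emph{properties}, I anticipate the author will simply cite \cite{Donaldson85} rather than reproduce these computations; my proof sketch would accordingly emphasize that these are the classical properties of Donaldson's functional, with the variational identity for $\sqrt{-1}\Lambda F_H$ as the single nontrivial analytic input.
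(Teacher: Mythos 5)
Your proposal is correct and matches the situation exactly: the paper gives no proof of this lemma, citing \cite{Donaldson85} for these classical properties, just as you anticipated. Your sketch — path-independence via closedness of the variational $1$-form using $\delta(\sqrt{-1}\Lambda F_H)=\sqrt{-1}\Lambda\dbar\pdv_H(\delta H\,H^{-1})$ and integration by parts, then deducing (1)--(5) with the second derivative along $e^{ts}K$ reducing to a nonnegative $L^2$ norm — is the standard Donaldson argument, correct modulo the same $2\pi$ normalization ambiguity that the paper itself carries and which you rightly flag.
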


Let $A$ be a Hermitian metric on $\mathbb{C}^n$. Let $e_{i}$ be a unitary basis for
$A$ so that $A=\sum_{i=1}^{n}\lambda_{i}e_{i}e_{i}^{*}$. Let 
\[
\phi(\lambda_{i},\lambda_{j}):=\begin{cases}
\frac{e^{(\lambda_{i}-\lambda_{j})}-(\lambda_{i}-\lambda_{j})-1}{(\lambda_{i}-\lambda_{j})^{2}}, & \lambda_{i}\not=\lambda_{j},\\
\frac{1}{2}, & \lambda_{i}=\lambda_{j}.
\end{cases}
\]
  We have an operator $\Phi_{A}$ sending a matrix $B=\sum_{i,j}B_{ij}e_{i}e_{j}^{*}$ to the following matrix.
\[
\Phi_{A}[B]_{ij}=\begin{cases}
\frac{e^{(\lambda_{i}-\lambda_{j})}-(\lambda_{i}-\lambda_{j})-1}{(\lambda_{i}-\lambda_{j})^{2}}B_{ij}, & \lambda_{i}\not=\lambda_{j},\\
\frac{1}{2}, & \lambda_{i}=\lambda_{j}.
\end{cases}
\]
Let $s=\log(HK^{-1})$. We can write $M(K,e^{s}K)$ in the following
explicit form (see\cite{simpson88constructVarofhodge}, pp. 882). 
\begin{lem}
\label{lem:Here-if-.}
\[
M(K,e^{s}K)=\int_{X}\<{} \Phi_{s}[\dbar s],\dbar s\>_{K}\omega +\int_{X}\text{tr}(\sqrt{-1}\Lambda F_{K}s)\omega.
\]
Here 
\begin{align*}
\<{}\Phi_{s}[\dbar s],\dbar s\>_{K} & =-\sqrt{-1}\Lambda\text{tr}(\Phi_{s}[\dbar s](\dbar s)^{\dagger}).
\end{align*} 
\end{lem}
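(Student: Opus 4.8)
The plan is to evaluate $M(K,e^{s}K)$ along the explicit family determined by $s$, reduce it to integrals in the time variable, and identify those with $\Phi_s$ via an elementary kernel computation. I take the path given at the level of the relative endomorphism by $h_t:=H(t)K^{-1}=e^{ts}$, so that $H(0)=K$, $H(1)=e^{s}K$, consistent with $s=\log(HK^{-1})$. Since $h_t$ commutes with $\dot h_t=s\,e^{ts}$, one has $\dot H H^{-1}=\dot h_t h_t^{-1}=s$, constant in $t$, so the definition of $M$ collapses to
\[
M(K,e^{s}K)=\int_0^1\!\int_X \text{tr}\big[(\im\Lambda F_{H(t)}-\mu(\S)\text{Id})\,s\big]\,\omega\,dt.
\]
Inserting the standard formula $\im\Lambda F_{H(t)}=\im\Lambda F_K+\im\Lambda\dbar(h_t^{-1}\pdv_K h_t)$ splits this into a $t$-independent linear term, whose time integral is $\int_X\text{tr}[(\im\Lambda F_K-\mu(\S)\text{Id})s]\,\omega$, and a genuinely $t$-dependent quadratic term that must be shown to equal $\int_X\<\Phi_s[\dbar s],\dbar s\>_K\,\omega$.

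For the quadratic term I integrate by parts on the closed surface $X$, using that $\im\Lambda\eta\,\omega=\eta$ for a $(1,1)$-form and that $\int_X\dbar(\cdot)=0$ by Stokes, to move $\dbar$ off $h_t^{-1}\pdv_K h_t$ and onto $s$; the integrand then becomes a pointwise pairing of $h_t^{-1}\pdv_K h_t$ with $\dbar s$. Duhamel's formula gives $h_t^{-1}\pdv_K h_t=e^{-ts}\pdv_K(e^{ts})=\int_0^t e^{-us}(\pdv_K s)e^{us}\,du$; interchanging the $u$- and $t$-integrals produces, in a local $K$-unitary frame diagonalizing $s$ with eigenvalues $\lambda_i$, the weight $\int_0^1(1-u)e^{u(\lambda_i-\lambda_j)}\,du$. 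The heart of the matter is the elementary identity
\[
\int_0^1(1-u)\,e^{u(\lambda_i-\lambda_j)}\,du=\frac{e^{\lambda_i-\lambda_j}-(\lambda_i-\lambda_j)-1}{(\lambda_i-\lambda_j)^2}=\phi(\lambda_i,\lambda_j),
\]
which is exactly the kernel defining $\Phi_s$. Since $s$ is $K$-Hermitian we have $(\dbar s)^{\dagger}=\pdv_K s$, and the anticommutativity of the wedge of the $(1,0)$- and $(0,1)$-factors converts the entrywise weights into $-\im\Lambda\text{tr}\big(\Phi_s[\dbar s](\dbar s)^{\dagger}\big)=\<\Phi_s[\dbar s],\dbar s\>_K$ pointwise, completing the identification.

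The step I expect to be most delicate is this final identification: one must carefully track adjoints, the placement of the inverse in the curvature formula, and the asymmetry $\phi(\lambda_i,\lambda_j)\neq\phi(\lambda_j,\lambda_i)$, where it is precisely the sign coming from the wedge product that reconciles the two orderings of the eigenvalues. I would also flag the $\mu$-bookkeeping: the computation naturally yields the linear term $\int_X\text{tr}[(\im\Lambda F_K-\mu(\S)\text{Id})s]\,\omega$, whereas the statement as written keeps only $\int_X\text{tr}(\im\Lambda F_K\,s)\,\omega$. The discrepancy is $-\mu(\S)\int_X\text{tr}(s)\,\omega=-\mu(\S)\int_X\log\det(HK^{-1})\,\omega$, which vanishes exactly when $H$ and $K$ induce the same metric on $\det\S$, the determinant normalization under which Donaldson's functional is customarily applied in this setting.
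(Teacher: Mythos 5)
Your proof is correct, and since the paper offers no proof of this lemma at all --- the formula is quoted from Simpson (\cite{simpson88constructVarofhodge}, p.~882) --- your computation is essentially the standard derivation from that source: evaluate $M$ along the path $H(t)=e^{ts}K$, where $\dot H H^{-1}=s$ is constant in $t$; split $\im\Lambda F_{H(t)}=\im\Lambda F_{K}+\im\Lambda\dbar(h_t^{-1}\pdv_K h_t)$; integrate by parts via Stokes; and use Duhamel plus Fubini to produce the kernel $\int_0^1(1-u)e^{u\mu}\,du=(e^{\mu}-\mu-1)/\mu^{2}$. I verified the step you flagged as delicate: in a pointwise $K$-unitary eigenframe the entry $(\pdv_K s)_{ij}$ acquires the weight $\phi(\lambda_j,\lambda_i)$, and the sign from anticommuting the $(1,0)$- and $(0,1)$-form factors, together with the relabeling $i\leftrightarrow j$, converts this into exactly $-\im\Lambda\,\text{tr}\left(\Phi_{s}[\dbar s](\dbar s)^{\dagger}\right)$, so the asymmetry $\phi(\lambda_i,\lambda_j)\neq\phi(\lambda_j,\lambda_i)$ is reconciled precisely as you predicted (the diagonal check $\int_0^1(1-u)\,du=\tfrac12=\phi(\lambda,\lambda)$ is also consistent). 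Your observation about the linear term is a genuine catch: with the paper's definition of $M$, which includes $-\mu(\S)\text{Id}$, the right-hand side of the lemma should carry the additional term $-\mu(\S)\int_X\text{tr}(s)\,\omega$; this is harmless in the paper because in every application (the proof of Theorem \ref{thm:Suppose-that-} via Lemma \ref{lem:Notations-as-above.}) one has $\text{tr}\,s_i=0$, though note the term vanishes whenever $\int_X\text{tr}(s)\,\omega=0$, which is slightly weaker than your ``exactly when'' determinant-metric condition.
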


A remarkable observation made by Donaldson relates the slope stability of $\mathcal{S}$ and the properness of $M(K,\cdot)$ \cite{Donaldsoninfinite}.

\begin{prop}
Suppose that $\S$ is stable. Let $B>0$ be a constant. Let
\begin{equation}
\mathcal{T}_{B}:=\{s\in\mathrm{End}_{H}(S):\text{tr}s=0,|\Lambda F_{e^{s}H}|_{\rho,H}<B\}.\label{eq:-68}
\end{equation}
Then there exist constants $C_{1},C_{2}$ which depend on $(X,H,\S,B)$
such that for $\forall s\in\mathcal{T}_B$, we have
\begin{equation}
\|s\|_{\rho,L^{\infty}}\leq C_{1}\left(M(H,e^{s}H)+C_{2}\right).\label{eq:-19}
\end{equation}
\end{prop}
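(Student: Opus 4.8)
The plan is to prove the estimate by combining a purely elliptic bound driven by the curvature constraint defining $\mathcal{T}_B$ with a stability-driven $L^1$ bound, following the strategy of Donaldson and Simpson. That is, I would first show an inequality $\sup_X|s|_{\rho,H}\lesssim \|s\|_{\rho,H,L^1}+1$ that uses only the bound $|\im\La F_{e^sH}|_{\rho,H}<B$, and then a separate inequality $\|s\|_{\rho,H,L^1}\le C_1+C_2 M(H,e^sH)$ that uses only the stability of $\S$; chaining the two yields the assertion.

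For the elliptic step, the input is the Weitzenb\"ock/Kato inequality for a change of metric, which gives a differential inequality of the form $-\Delta|s|_{\rho,H}\le c_0$, where the constant $c_0$ is controlled by $B$ together with $\sup_X|\im\La F_H|$; the point is that $\im\La F_{e^sH}-\im\La F_H$ is precisely the quantity bounded on $\mathcal{T}_B$. Since $|s|_{\rho,H}$ is then a nonnegative subsolution of $-\Delta(\cdot)\le c_0$ on the compact manifold $X$, the local-boundedness (mean-value) estimate of Moser, a variant of Lemma \ref{lem:Suppose-that-}, produces the bound $\sup_X|s|_{\rho,H}\le a_1\|s\|_{\rho,H,L^1}+a_2$ with $a_1,a_2$ depending only on $(X,H,B)$. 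Stability is not needed here.

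Granting this, it remains to prove $\|s\|_{\rho,H,L^1}\le C_1+C_2 M(H,e^sH)$ for $s\in\mathcal{T}_B$, and I would argue by contradiction. If no such constants existed, there would be a sequence $s_i\in\mathcal{T}_B$ with $l_i:=\|s_i\|_{\rho,H,L^1}\to\infty$ and $M(H,e^{s_i}H)/l_i\to 0$. Setting $u_i:=s_i/l_i$, so $\|u_i\|_{L^1}=1$, the elliptic step bounds the $u_i$ uniformly in $L^\infty$, and since $\text{vol}(X,\om)=1$ gives $\|u_i\|_{L^2}\ge\|u_i\|_{L^1}=1$, the limit cannot vanish. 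Using the explicit formula of Lemma \ref{lem:Here-if-.} together with $M(H,e^{s_i}H)/l_i\to 0$, I would extract a uniform $L^2_1$ bound on the $u_i$, pass to a weak $L^2_1$ limit $u_\infty$ after a subsequence, and upgrade to strong $L^2$ convergence by Rellich; thus $u_\infty$ is a nonzero, self-adjoint, trace-free endomorphism in $L^2_1$ with $\|u_\infty\|_{L^2}\ge 1$.

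The crucial and hardest step is to turn $u_\infty$ into a destabilizing object. Rescaling $\Phi_{s_i}=\Phi_{l_iu_i}$ and tracking the limiting behaviour of its entries $l_i\,\phi(l_i\lambda,l_i\mu)$ as $l_i\to\infty$ (which tend to $+\infty$ when $\lambda>\mu$ and to $1/(\mu-\lambda)$ when $\lambda<\mu$), the bound $M(H,e^{s_i}H)/l_i\to 0$ passes in the limit to
\[
\int_X \text{tr}\big(\im\La F_H\, u_\infty\big)\om + \int_X \langle \Psi(u_\infty)[\dbar u_\infty], \dbar u_\infty\rangle_H\,\om \le 0
\]
for every smooth $\Psi\colon\R\times\R\to\R_{>0}$ with $\Psi(\lambda,\mu)<1/(\lambda-\mu)$ whenever $\lambda>\mu$. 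I would then invoke the Uhlenbeck--Yau regularity theorem: a nonzero self-adjoint $L^2_1$ endomorphism satisfying this inequality has constant eigenvalues, and its $H$-orthogonal eigenprojections are weakly holomorphic and hence define coherent subsheaves of $\S$, whose slopes (weighted using $\text{tr}(u_\infty)=0$) contradict the stability of $\S$. This contradiction finishes the stability step, and combining with the elliptic step gives $\|s\|_{\rho,H,L^\infty}\le C_1(M(H,e^sH)+C_2)$. The main obstacle is exactly this last paragraph—extracting the limiting inequality from the rescaled Donaldson functional and converting the weak limit into an honest destabilizing coherent subsheaf via Uhlenbeck--Yau regularity; the elliptic bound and the compactness are comparatively routine. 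Since here $\S=\E_0$ has finite rank, the classical Uhlenbeck--Yau/Simpson machinery applies directly and the fibrewise-norm subtleties flagged elsewhere in the paper do not intervene.
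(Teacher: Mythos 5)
Your proposal is correct in substance, but note how it sits relative to the paper: the paper does not actually prove this proposition. It is quoted as Donaldson's observation and cited to \cite{Donaldsoninfinite}; the only ingredient the paper proves is your ``elliptic step'', namely Lemma \ref{lem:Suppose-that-.} giving $\|s\|_{\rho,H,L^{\infty}}\leq C(X,B)(\|s\|_{\rho,H,L^{1}}+1)$ on $\mathcal{T}_B$. Your reconstruction of the stability step follows Simpson's classical properness argument from \cite{simpson88constructVarofhodge} (rescale $u_i=s_i/l_i$, weak $W^{1,2}$ limit, the limiting inequality tested against all $\Psi$ with $\Psi(\lambda,\mu)<1/(\lambda-\mu)$ for $\lambda>\mu$, then the Uhlenbeck--Yau regularity statement to produce destabilizing subsheaves), and that route is valid. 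Where the paper runs its own analogue of this argument --- the proof of Theorem \ref{thm:Suppose-that-} --- it uses the same contradiction scheme but a different key mechanism: convexity of $t\mapsto M(ts)$ along geodesic rays, lower semicontinuity of $M$ under weak $W^{1,2}$ convergence (\cite[Proposition 4.2]{jonsson2022geodesic}), and the explicit formula for $M(ts_\infty)$ of \cite[Theorem 3.7]{jonsson2022geodesic}, which in one stroke forces constant eigenvalues (from $M(ts_\infty)\leq Ct$) and yields the slope inequality against the filtration built from the weakly holomorphic projections (Lemma \ref{lem:-is-a}). Your Simpson-style route is more self-contained; the paper's route outsources the limit analysis and directly exposes the quantity $\min_i \mathrm{rk}\,\S_i(\mu(\S)-\mu(\S_i))$ that its Diophantine hypothesis must beat. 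Your closing remark is also exactly right: since the proposition fixes a finite-rank $\S$ and allows $C_1,C_2$ to depend on $\S$, the classical finite-rank machinery applies; rank-independence matters only for Lemma \ref{lem:Suppose-that-.} as used later.

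One imprecision worth fixing: the pointwise inequality $-\Delta|s|_{\rho,H}\leq c_0$ is not a direct Weitzenb\"ock/Kato consequence applied to $s$, and $|s|_{\rho,H}=\max_i|\log\lambda_i|$ is only Lipschitz, so the inequality can at best hold distributionally. The correct derivation is the one in Lemma \ref{lem:Suppose-that-.}: work with the smooth approximants $u_p=\frac{1}{p}\log\mathrm{tr}(e^{ps})$, which satisfy $-\Delta u_p\leq 4B$ by a computation involving both $\La F_H$ and $\La F_{e^sH}$ plus Cauchy--Schwarz, let $p\to\infty$, and repeat with $e^{-s}$; you may simply cite that lemma. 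Two small bookkeeping points: the contradiction setup only needs $\limsup_i M(H,e^{s_i}H)/l_i\leq 0$ (taking $C_1=C_2=i$ in the negation gives $M_i/l_i<1/i$, and $M_i$ unbounded below only helps), and the uniform $W^{1,2}$ bound on $u_i$ follows from $\frac{e^{x}-x-1}{x^{2}}\geq\frac{1}{2(|x|+1)}$ exactly as in the paper's Lemma \ref{lem:Notations-as-above.}. These are presentational issues, not gaps.
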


We remark that any $s$ in the  set $\mathcal{T}_{B}$  has $L^{\infty}$-norm bounded by constant multiple of its $L^{1}$-norm. We include the proof of this lemma since we use the operator norm and the constant is independent of the rank of the vector bundle. 

\begin{lem}
\label{lem:Suppose-that-.}Suppose that $s\in\mathcal{T}_{B}$. Then,
there exists a constant $C(X,B)$ such that 
\begin{equation}
\|s\|_{\rho,H,L^{\infty}}\leq C(X,B)(\|s\|_{\rho,H,L^{1}}+1).\label{eq:-70}
\end{equation}
\end{lem}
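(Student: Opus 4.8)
The plan is to show that the scalar function $u := |s|_{\rho,H}$ on $X$ is a weak subsolution of an elliptic inequality whose right-hand side is controlled by the defining constraint of $\mathcal{T}_B$, and then to convert this subsolution property into the $L^\infty$--by--$L^1$ bound \eqref{eq:-70} by the Green's function on the closed surface. The whole point of working with the spectral radius $|\cdot|_{\rho,H}$ rather than a Hilbert--Schmidt or trace norm is that every step will pair against a \emph{single} eigenvector, so no factor growing with $\text{rk}\,\S$ ever enters; this is what makes the constant depend only on $(X,H,B)$.

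\textbf{Step 1: the differential inequality.} Write $K=e^{s}H$, so that by the constraint defining $\mathcal{T}_B$ in \eqref{eq:-68} one has $|\sqrt{-1}\Lambda F_{K}|_{\rho,H}<B$, while $|\sqrt{-1}\Lambda F_{H}|_{\rho,H}\le \sup_X|\sqrt{-1}\Lambda F_H|_{\rho,H}$ is a fixed finite quantity since $H$ is the reference metric. Set $b:=B+\sup_X|\sqrt{-1}\Lambda F_H|_{\rho,H}$, a constant depending only on $(X,H,B)$. I claim that, in the viscosity (equivalently distributional) sense,
\[
-\Delta\, |s|_{\rho,H}\ \le\ |\sqrt{-1}\Lambda F_{H}|_{\rho,H}+|\sqrt{-1}\Lambda F_{K}|_{\rho,H}\ \le\ b .
\]
To prove this I would start from the curvature identity $\sqrt{-1}\Lambda\big(F_{K}-F_{H}\big)=\sqrt{-1}\Lambda\,\dbar\!\left(e^{-s}\pdv_H e^{s}\right)$, which is exactly the infinitesimal content behind the explicit formula for Donaldson's functional in Lemma \ref{lem:Here-if-.}. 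Since the eigenvalues of the $H$-Hermitian endomorphism $s$ are real, $|s|_{\rho,H}=\max(\lambda_{\max}(s),-\lambda_{\min}(s))$, and a maximum of subsolutions is a subsolution, so it suffices to treat $\lambda_{\max}$. At a point $x_0$ I would pick a unit $H$-eigenvector $v_0$ for $\lambda_{\max}(s(x_0))$, extend it $\nabla^H$-parallel to first order, and use $\phi(x)=\langle s(x)v(x),v(x)\rangle_H$ as a lower barrier touching $|s|_{\rho,H}$ from below at $x_0$; computing $-\Delta\phi(x_0)$ and pairing the above curvature identity with $v_0$, the nonlinear correction appears as the manifestly \emph{nonnegative} Simpson-type term $\langle \Phi_s[\dbar s],\dbar s\rangle$ of Lemma \ref{lem:Here-if-.} and is discarded with the favorable sign, leaving the bound by $|\sqrt{-1}\Lambda F_H|_{\rho,H}+|\sqrt{-1}\Lambda F_K|_{\rho,H}$. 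Applying the same computation to the pair $(K,H)$ (so that $-\lambda_{\min}(s)$ is the top eigenvalue of $\log(HK^{-1})$) handles the other half, and since the right-hand side is symmetric in $H,K$ both halves are bounded by the \emph{same} $b$; in particular only $|\Lambda F_{e^sH}|_{\rho}$ and the fixed $|\Lambda F_H|_{\rho}$ occur, never $|\Lambda F_{e^{-s}H}|_{\rho}$.

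\textbf{Step 2: from subsolution to the estimate.} Because $\text{vol}(X,\omega)=1$, let $\bar u=\int_X u\,\omega=\|s\|_{\rho,H,L^{1}}$ and let $G(x,y)$ be the Green's function of $-\Delta$ normalized by $\int_X G(x,\cdot)\,\omega=0$; on a closed surface $G$ is bounded below, say $G\ge -C_0(X)$. Writing $-\Delta u=b-g$ with $g\ge0$ a nonnegative distribution, the representation $u(x)-\bar u=\int_X G(x,y)(-\Delta u)(y)\,\omega$ together with $\int_X G(x,\cdot)\,\omega=0$ gives $u(x)-\bar u=-\int_X G(x,y)g(y)\,\omega\le C_0\int_X g\,\omega=C_0\,b$, since $\int_X g\,\omega=\int_X(b+\Delta u)\,\omega=b$. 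Hence $\sup_X|s|_{\rho,H}\le \|s\|_{\rho,H,L^1}+C_0\,b$, which is precisely \eqref{eq:-70} with $C(X,B)=\max(1,C_0 b)$.

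\textbf{Main obstacle.} The routine endgame is Step 2; the crux is Step 1, namely establishing the differential inequality for the merely Lipschitz function $|s|_{\rho,H}$ in a rigorous weak sense and verifying that the discarded term has the correct sign in \emph{both} orderings of $(H,K)$, so that the right-hand side stays symmetric and is controlled by $B$ alone. One should also confirm that the lower-order contributions from $\Delta_\nabla v_0$ at $x_0$ (curvature of the Chern connection acting on the frame) either cancel or produce at worst a term $a\,|s|_{\rho,H}$ with $a=a(X,H)$; should such a term survive, Step 2 is replaced by the De Giorgi--Nash--Moser local boundedness for weak subsolutions (in the spirit of Lemma \ref{lem:Suppose-that-}), together with the standard reduction of the right-hand norm from $L^2$ down to $L^1$, yielding the same conclusion with $C$ still depending only on $(X,B)$.
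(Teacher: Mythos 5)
Your overall architecture is sound, and your Step 2 coincides with the paper's endgame: the paper likewise converts a differential inequality $-\Delta(\cdot)\le CB$ into a sup-versus-$L^1$ bound via the Green function on the closed surface, and it symmetrizes exactly as you do, rerunning the argument with $(H',h')=(hH,h^{-1})$ to control $\max_i(-\log\lambda_i)$. The genuine divergence is in Step 1, which is also where your proposal is thinnest. The paper never works with the merely Lipschitz function $|s|_{\rho,H}$: it introduces the \emph{smooth} approximations $u_{p}=\frac{1}{p}\log\text{tr}\,h^{p}$, $h=e^{s}$, computes $\frac{1}{2}\Delta u_{p}$ explicitly in a unitary eigenbasis, and obtains $\Delta u_{p}\ge-4B$ for every $p$; since $\max_i\log\lambda_i\le u_{p}\le\max_i\log\lambda_i+\frac{\log\mathrm{rk}\,\S}{p}$, applying the Green representation uniformly in $p$ and letting $p\to\infty$ kills the rank-dependent error. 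This regularization is precisely how the paper secures a constant $C(X,B)$ independent of $\mathrm{rk}\,\S$ while avoiding viscosity solutions and eigenvector barriers altogether; your route promises the same rank-independence, but it rests on the barrier computation you defer.

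Two specific claims in your Step 1 need repair before the argument closes. First, the assertion that the nonlinear correction is a ``manifestly nonnegative Simpson-type term'' that can be discarded is not accurate: in the paper's computation the gradient terms are \emph{not} sign-definite --- there is a genuinely negative contribution $-p\,\bigl|\sum_{i}\lambda_{i}^{p-1}P_{ii}\bigr|^{2}/(\text{tr}\,h^{p})^{2}$ which must be dominated by the positive term $p\sum_{i}\lambda_{i}^{p-2}|P_{ii}|^{2}$ via Cauchy--Schwarz. In your direct computation this cancellation reappears as the second-order nondifferentiability of $\lambda_{\max}(s)$ at points of multiple top eigenvalue, and must be handled inside the viscosity formalism you invoke but do not execute. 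Second, your barrier $\phi=\langle sv,v\rangle_{H}$ pairs the eigenvector with $s$, while the curvature identity $\sqrt{-1}\Lambda(F_{K}-F_{H})=\sqrt{-1}\Lambda\dbar\bigl(h^{-1}\pdv_{H}h\bigr)$ is an identity in $h=e^{s}$; translating one into the other requires the derivative-of-exponential operators (the $\Phi$-type operators of Lemma \ref{lem:Here-if-.}), and this is exactly the bookkeeping that the substitution $u_{p}=\frac{1}{p}\log\text{tr}\,h^{p}$ renders mechanical. Your fallback via the Moser iteration of Lemma \ref{lem:Suppose-that-} for a surviving $a|s|_{\rho,H}$ term is legitimate, but as written the crux of the lemma --- the rigorous differential inequality with rank-free constants --- remains a sketch, whereas the paper's $u_{p}$ device proves it in three lines of linear algebra.
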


\begin{proof}
Let $h=e^{s}$. For all $p\in \N_{>1}$, let 
\[
u_{p}:=\frac{1}{p}\log\text{tr}h^{p}=\frac{1}{p}\log|h^{\frac{p}{2}}|_{2,H}^{2}.
\]
By picking a unitary basis at $x\in X$, we can diagonalize $h$
with respect to $H$. At $x$, we may assume that $H=\text{Id}$, $h=\text{diag}\{\lambda_{1},\cdots,\lambda_{r}\}$,
and locally $\dbar h=Pd\bar{z}$ with $P$ a $r\times r$ complex
matrix valued function. Let 
\[
Q(\lambda_{i},\lambda_{j})=\begin{cases}
\frac{\lambda_{i}^{p-1}-\lambda_{j}^{p-1}}{\lambda_{i}-\lambda_{j}}, & i\not=j,\\
(p-1)\lambda_{i}^{p-2}, & i=j.
\end{cases}
\]
Then, 
\begin{align*}
\frac{1}{2}\Delta u_{p} & =\frac{\text{tr}\left(\sqrt{-1}(\Lambda F_{H}-\Lambda F_{hH})h^{p}\right)}{\text{tr}h^{p}}+\sum_{i,j}\lambda_{i}^{p-1}\lambda_{j}^{-1}|P_{ij}|^{2}.\\
 & +\sum_{i,j}Q(\lambda_{j},\lambda_{i})|P_{ij}|^{2}-p\frac{|\sum_{i}\lambda_{i}^{p-1}P_{ii}|^{2}}{(\text{tr}h^{p})^{2}}\\
 & \geq-2B+p\sum_{i}\lambda_{i}^{p-2}|P_{ii}|^{2}-p\frac{|\sum_{i}\lambda_{i}^{p-1}P_{ii}|^{2}}{(\text{tr}h^{p})^{2}}\\
 & \geq-2B.
\end{align*}
The last inequality is due to Cauchy-Schwarz inequality. Thus, using Green function representation, we have 
\begin{equation}
\sup_{X}u_{p}\leq C(X,B)(\|u_{p}\|_{L^{1}}+1).\label{eq:-69}
\end{equation}
Notice that $u_{p}$ increases to $\max\log\lambda_{i}$ as $p$
goes to $\infty$.   Hence, $\sup_X\max\log\lambda_i$ is bounded by $L^1$-norm.  If we repeat the same procedure for $H'=hH$ and $h'=h^{-1}$, then we also have $\sup_X\max(-\log \lambda_i)$ is bounded by $L^1$-norm. Therefore,
we have (\ref{eq:-70}). 
\end{proof}

If $s$ is a $H$-Hermitian endomorphism, we call $e^{ts}H$ a \emph{geodesic
ray} starting from $H$. We denote Donaldson's functional of a geodesic ray as
\[
M({ts})=M(H,e^{{ts}}H),
\]
whenever the initial metric $H$ is assumed. If $H$ is the Hermitian-Einstein
metric on $\S$, for any $s\in\text{End}_H(S)$, $M({ts})$ is \emph{convex}
in $t$ and achieves its minimum at ${t}=0$.  See \cite{jonsson2022geodesic} for more discussions on the geometry of geodesic rays and their connections to Donaldson's functional.

In the rest of this section,
we write $H=H_{0}$, which is the Hermitian-Einstein metric on $S$.

Now, we return to the proof of Theorem \ref{thm:Suppose-that-}. In general, $\det(h_{i})$ is not constant. However, by Proposition \ref{prop:There-exists-a},
there exists a sequence of conformal weights $e^{\phi_{i}}$ such that
\[
\det(e^{\phi_{i}}h_{i})=\exp\text{tr}\log\left(e^{\phi_{i}}h_{i}\right)\equiv 1.
\]
Moreover, 
\[
|\Lambda F_{e^{\phi_{i}}H_{i}}|_{\rho,H}\leq|\Lambda F_{H_{i}}|_{\rho,H}+|\Delta\phi|<B.
\]
for some $B>0$ depending only on $X,R,\mu(E),\mu(S)$. Thus, for this choice of $B$, $e^{\phi_{i}}h_{i}\in\mathcal{T}_B.$


If $\S$ is a good approximation of $(\E_\infty,\theta))$, we can choose some $L>1$ such that \begin{align}
L(\theta-\mu(\S))\text{rk}\S<\text{rk}\mathcal{S}_{0}(\mu(\S)-\mu(\mathcal{S}_{0}))\label{eq:-30}
\end{align} for any proper coherent subsheaf $\S_0$ with $1\leq \text{rk} \S_0<\text{rk}\S$. In fact, as $\text{deg}(\mathcal{S}_0)$ is an integer for any proper coherent subsheaf $\S_0\subset \S$, for any fixed number $k\leq \text{rk}\mathcal{S}$, we can find a proper coherent subsheaf $\mathcal{S}_k\subset \mathcal{S}$ such that $\text{rk}(\mathcal{S}_k)=k$ and $$\mu(\mathcal{S}_k)=\text{max}\{\mu(\F)|\F \subset \S \text{ and $\text{rk}\F=k$}\}.$$ Hence, we have $$\text{inf}\{\text{rk}\mathcal{S}_{0}(\mu(\S)-\mu(\mathcal{S}_{0}))| \S_0\subset \S \}=\text{min}\{\text{rk}\mathcal{S}_{k}(\mu(\S)-\mu(\mathcal{S}_{k}))|1\leq k<\text{rk}\S\}.$$ Therefore, we can find a number $L>1$ such that (\ref{eq:-30}) holds for any proper coherent subsheaf $\S_0\subset \S$.

Let $\epsilon=L-1$. We can find some $\delta>0$ as in Corollary \ref{cor: an easy lemma} such that for any orthogonal splitting of the Hermitian-Einstein connection with the second fundamental form $\beta$, we have 
\begin{align}
  \label{eq:beta_delta}  \|\beta\|^2_{L^{2+2\delta}}\leq L\|\beta\|^2_{L^2}.
\end{align}
Choose $q=1+\frac{1}{\delta}$ to be the conjugate H\"older index of $1+\delta$.

We argue by contradiction. Suppose that $h_i$ is not uniformly bounded. By Lemma \ref{lem:Suppose-that-.}, the sequence $\|\log h_{i}\|_{\rho,L^{q}}$
is unbounded. Passing to a subsequence, we assume that 
\[
m_{i}=\|\log(e^{\phi_{i}}h_{i})\|_{\rho,L^{q}}\to\infty.
\] Without loss of generality, assume $m_{i}>1$. Let 
\[
s_{i}=\frac{1}{m_{i}}\left(\log h_{i}+\phi_{i}\text{Id}\right).
\]Then $\text{tr}s_i=0$, and $\|s_i\|_{\rho,L^q}=1$. By Lemma \ref{lem:Suppose-that-.}, $\|s_{i}\|_{\rho,L^{\infty}}\leq C(X,B)$.
\begin{lem}
\label{lem:Notations-as-above.}Notations as above. We have
\begin{align}
M(m_{i}s_{i}) & \leq  Lm_{i}2\pi(\mu(\E_{i})-\mu(\S))\text{rk}\S.\label{eq:-23}
\end{align}
Moreover, there exist a subsequence
$\{s_{i_{k}}\}$ and $s_{\infty}\in W^{1,2}\cap L^\infty(X,|\cdot|_\rho)$ such that 
\begin{align*}
s_{i}  \rightharpoonup s_{\infty}\in W^{1,2}\text{ weakly,}\quad
s_{i}  \to s_{\infty}\ \text{in }L^{q}(X,|\cdot|_\rho).
\end{align*}
Here, we use $\|s\|_{W^{1,2}}:=\|\dbar s\|_{2,H,L^{2}}+\|s\|_{2,H,L^{2}}$
to be the Sobolev norm.
\end{lem}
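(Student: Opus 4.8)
The plan is to prove the two assertions in sequence, deriving the compactness from the energy bound. For (\ref{eq:-23}), set $g(t)=M(ts_i)=M(H_0,e^{ts_i}H_0)$ and observe that $e^{m_is_i}H_0=e^{\phi_i}H_i$ by the definition of $s_i$. By Lemma~\ref{lem:Let--be} the functional satisfies $g(0)=0$ and $g$ is convex, so the supporting line at $t=m_i$ evaluated at $t=0$ gives the one-sided bound $g(m_i)\leq m_i\,g'(m_i)$. Hence it suffices to control the first variation
\[
g'(m_i)=\int_X\text{tr}\big[\big(\sqrt{-1}\Lambda F_{e^{\phi_i}H_i}-\mu(\S)\text{Id}\big)s_i\big]\,\omega .
\]

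The key step is to express the Einstein defect of the restricted metric through the second fundamental form. Applying the block curvature formula (\ref{eq:curvatureSES}) to $\S\hookrightarrow\mathcal{E}_i$ equipped with the Hermitian--Einstein metric $\tilde H_i$, and using that $\phi_i$ is chosen so that $\det(e^{\phi_i}h_i)\equiv1$ (which, as in the computation behind Proposition~\ref{prop:There-exists-a}, pins down $\sqrt{-1}\Lambda\dbar\pdv\phi_i$), all scalar contributions cancel and I expect the pointwise identity
\[
\sqrt{-1}\Lambda F_{e^{\phi_i}H_i}-\mu(\S)\text{Id}=\sqrt{-1}\Lambda\beta_i^{\dagger}\wedge\beta_i+\tfrac{1}{\text{rk}\S}|\beta_i|^2\,\text{Id}.
\]
Pairing with $s_i$ and using $\text{tr}\,s_i=0$ removes the scalar term, leaving $g'(m_i)=\int_X\text{tr}\big(\sqrt{-1}\Lambda\beta_i^{\dagger}\wedge\beta_i\cdot s_i\big)\omega$. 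Since $\sqrt{-1}\Lambda\beta_i^{\dagger}\wedge\beta_i$ is negative semidefinite with trace norm $|\beta_i|^2$, this is bounded by $\int_X|\beta_i|^2|s_i|_\rho\,\omega$; Hölder's inequality with conjugate exponents $1+\delta$ and $q$, then (\ref{eq:beta_delta}) and $\|s_i\|_{\rho,L^q}=1$, bound it by $\|\beta_i\|_{L^{2+2\delta}}^2\leq L\|\beta_i\|_{L^2}^2=L\cdot2\pi(\mu(\mathcal{E}_i)-\mu(\S))\text{rk}\S$ (Lemma~\ref{lem:If--is}). Multiplying by $m_i$ yields (\ref{eq:-23}).

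For the compactness statement I would extract a uniform $W^{1,2}$ bound from (\ref{eq:-23}). By the explicit formula of Lemma~\ref{lem:Here-if-.} --- whose linear term again vanishes since $\text{tr}\,s_i=0$ --- one has $M(m_is_i)=m_i^2\int_X\langle\Phi_{m_is_i}[\dbar s_i],\dbar s_i\rangle_{H_0}\,\omega$, a nonnegative quadratic form in the entries of $\dbar s_i$ with weights $\phi(m_i\sigma_\alpha,m_i\sigma_\beta)$, where $\sigma_\alpha$ are the eigenvalues of $s_i$ and $\phi(x)=\frac{e^{x}-x-1}{x^2}$ is strictly increasing on $\mathbb{R}$. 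The $L^\infty$ bound $\|s_i\|_{\rho,L^\infty}\leq C(X,B)$ from Lemma~\ref{lem:Suppose-that-.} forces every eigenvalue difference $m_i(\sigma_\alpha-\sigma_\beta)\geq-2m_iC$, so each weight is at least $\phi(-2m_iC)\geq\frac{1}{4m_iC}$ for $i$ large. This gives $M(m_is_i)\geq\frac{m_i}{4C}\|\dbar s_i\|_{2,H_0,L^2}^2$, which with (\ref{eq:-23}) produces the uniform bound $\|\dbar s_i\|_{2,H_0,L^2}^2\leq 4CL\cdot2\pi(\mu(\mathcal{E}_i)-\mu(\S))\text{rk}\S$. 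Combined with the $L^\infty$ (hence $L^2$) bound on $s_i$, and noting $\text{rk}\,\S$ is fixed so no rank-dependent constants intrude, the sequence is bounded in $W^{1,2}$; a subsequence converges weakly in $W^{1,2}$ and, by the compact embedding $W^{1,2}\hookrightarrow L^q$ on the surface $X$, strongly in $L^q$ to a limit $s_\infty$, which lies in $L^\infty$ by the uniform operator-norm bound.

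The step I expect to be the main obstacle is the lower bound on $\Phi_{m_is_i}$: the weight $\phi$ degenerates like $1/|x|$ as $x\to-\infty$, so its pointwise lower bound is only $O(1/m_i)$. What makes the estimate succeed is the exact accounting of powers of $m_i$ --- the factor $m_i^2$ from $\dbar(m_is_i)=m_i\dbar s_i$ against the $O(1/m_i)$ degeneration leaves a single net $m_i$, matched precisely by the linear-in-$m_i$ energy bound (\ref{eq:-23}). The whole argument therefore rests on the a priori $L^\infty$ control of $s_i$ supplied by Lemma~\ref{lem:Suppose-that-.}, which confines the eigenvalue differences and is exactly what rescues the otherwise degenerate quadratic form.
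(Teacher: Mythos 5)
Your proposal is correct and follows essentially the same route as the paper: convexity of $t\mapsto M(ts_i)$ plus the first-variation formula, reduction of the Einstein defect to $\sqrt{-1}\Lambda\beta_i^{\dagger}\wedge\beta_i$ (all $\mathrm{Id}$-multiples, including your $\tfrac{1}{\mathrm{rk}\,\S}|\beta_i|^2$ term, being killed by $\mathrm{tr}\,s_i=0$), H\"older with the exponent pair $(1+\delta,q)$ and (\ref{eq:beta_delta}) together with Chern--Weil, and then the lower bound on the weights $\phi(m_i(\sigma_\alpha-\sigma_\beta))\gtrsim 1/m_i$ via the $L^\infty$ bound of Lemma \ref{lem:Suppose-that-.}, which is exactly the paper's inequality $\phi(x)\geq\frac{1}{2(|x|+1)}$ and yields the same net $W^{1,2}$ bound and Rellich compactness. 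Your closing observation about the exact bookkeeping of powers of $m_i$ (the $m_i^2$ from $\dbar(m_is_i)$ against the $O(1/m_i)$ degeneration of $\phi$, matched by the linear-in-$m_i$ bound (\ref{eq:-23})) is precisely the mechanism the paper's proof exploits.
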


\begin{proof}
Let $H=H_{0}$ be the Hermitian-Einstein metric on $\S$ and $s\in \mathrm{End}_{H}S$. Since the functional $M(ts)$ is convex in $t$ and $M(0)=0$ is the global minimum,
we have 
\begin{align*}
M(m_{i}s_{i}) & \leq\frac{d}{dt}M(tm_{i}s_{i})|_{t=1}\\
 & =\int_{X}\text{tr}\left[\sqrt{-1}\Lambda F_{H_{i}}m_{i}s_{i}\right]\omega\\
 & =\int_{X}\text{tr}\left[\left(\sqrt{-1}\Lambda\beta_{i}^{\dagger}\wedge\beta_{i}\right)m_{i}s_{i}\right]\omega.
\end{align*}
We have used the fact that $\text{tr}(s_{i})=0$.  Since
$\sqrt{-1}\Lambda\beta_{i}^{\dagger}\wedge\beta_{i}\leq0$, we have
\[
|\text{tr}\left[\left(\sqrt{-1}\Lambda\beta_{i}^{\dagger}\wedge\beta_{i}\right)s_{i}\right]|\leq|\beta_{i}|_{2,H_{i}}^{2}\cdot|s_{i}|_{\rho,H}.
\]
Thus, by H\"older's inequality and (\ref{eq:beta_delta}),
\begin{align}
M(m_{i}s_{i})  \leq m_{i}\|s_{i}\|_{\rho,L^{q}}\|\beta_{i}\|_{2,H_{i},L^{2+2\delta}}^{2}\label{eq:-23-1}
  \leq Lm_{i}2\pi(\mu(\E_{i})-\mu(\S))\text{rk}\S.
\end{align}
Notice that $\frac{e^{x}-x-1}{x^{2}}\geq\frac{1}{2(|x|+1)}$, we have
\begin{align*}
\<\Phi_{m_{i}s_{i}}[m_{i}\dbar s_{i}],m_{i}\dbar s_{i}\>_{H}  \geq\frac{m_{i}^{2}|\dbar s_{i}|_{2,H}^{2}}{2(m_{i}|s_{i}|_{\rho,H}+1)}  \geq m_{i}\frac{|\dbar s_{i}|_{2,H}^{2}}{C(X,B)},
\end{align*}
where $C(X,B)$ is the constant in Lemma \ref{lem:Suppose-that-.}.
Thus, by Lemma \ref{lem:Here-if-.}
\begin{align}
\frac{m_{i}}{C}\|\dbar s_{i}\|_{2,L^{2}}^{2} & \leq M(H,e^{m_{i}s_{i}}H).\label{eq:-22} 
\end{align}
From (\ref{eq:-22}), we have
\begin{equation}
\|s_{i}\|_{W^{1,2}}\leq C_{2}(C+(\mu(\E_{i})-\mu(\S))\text{rk}\S).\label{eq:-21}
\end{equation}
Thus, $s_{i}$ is bounded in $W^{1,2}$. After replacing by a subsequence,
we may assume that there exists $s_{\infty}\in W^{1,2}\cap L^{\infty}(X,|\cdot|_{\rho})$
so that
\begin{align*}
s_{i}  \rightharpoonup s_{\infty}\in W^{1,2}\text{ weakly,}\quad
s_{i}  \to s_{\infty}\ \text{in }L^{q}(X,|\cdot|_{\rho}).
\end{align*}
\end{proof}
We claim that $s_{\infty}$ actually has constant eigenvalues almost everywhere.
\begin{lem}
Notations as above. $s_{\infty}$ has constant eigenvalue a.e.
\end{lem}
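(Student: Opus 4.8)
The plan is to extract the diagonal part of Donaldson's functional and to show that all symmetric power sums of the eigenvalues of $s_\infty$ are $\dbar$-closed, hence constant. First, since $H=H_0$ is Hermitian--Einstein and $\text{tr}\,s_i=0$, the linear term in Lemma~\ref{lem:Here-if-.} vanishes, so that
\[
M(m_i s_i)=\int_X \langle \Phi_{m_i s_i}[\dbar(m_i s_i)],\dbar(m_i s_i)\rangle_H\,\omega .
\]
Combined with (\ref{eq:-23}) and $\mu(\mathcal{E}_i)\to\theta$, this gives $M(m_i s_i)\le C m_i$ for a constant $C$ independent of $i$. The operator $\Phi_A$ is nonnegative, because $\phi(\lambda,\eta)=\frac{e^{\lambda-\eta}-(\lambda-\eta)-1}{(\lambda-\eta)^2}\ge 0$ for all real arguments; this positivity is precisely what lets me retain only the diagonal contribution below.

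Next I would diagonalize $s_i$ pointwise with respect to $H$, writing its eigenvalues as $\lambda^{(i)}_1,\dots,\lambda^{(i)}_r$, where $r=\text{rk}\,\mathcal{S}$ is fixed throughout (this is important, since all the endomorphisms here live on the single fixed bundle $\mathcal{S}$). Because $\phi(\lambda,\lambda)=\tfrac12$ and every off-diagonal term of $\langle\Phi_{m_i s_i}[\dbar(m_i s_i)],\dbar(m_i s_i)\rangle_H$ is nonnegative, discarding the off-diagonal terms yields the pointwise inequality
\[
\tfrac{m_i^2}{2}\sum_k |(\dbar s_i)_{kk}|^2 \le \langle \Phi_{m_i s_i}[\dbar(m_i s_i)],\dbar(m_i s_i)\rangle_H ,
\]
where $(\dbar s_i)_{kk}$ denotes the diagonal entries of $\dbar s_i$ in the eigenframe. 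Integrating and using $M(m_i s_i)\le Cm_i$ gives $\int_X\sum_k|(\dbar s_i)_{kk}|^2\,\omega\le 2C/m_i\to 0$.

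Finally I would transfer this to $s_\infty$ through the smooth symmetric functions $P_{i,m}:=\text{tr}(s_i^m)$, which sidesteps the non-smoothness of the individual eigenvalue functions at their crossings. For each $m$ one has the pointwise identity $\dbar P_{i,m}=m\,\text{tr}(s_i^{m-1}\dbar s_i)=m\sum_k(\lambda_k^{(i)})^{m-1}(\dbar s_i)_{kk}$, so the uniform bound $\|s_i\|_{\rho,H,L^\infty}\le C(X,B)$ from Lemma~\ref{lem:Suppose-that-.} yields $\|\dbar P_{i,m}\|_{L^2}\le C_m\big(\int_X\sum_k|(\dbar s_i)_{kk}|^2\big)^{1/2}\to 0$. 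Interpolating the strong $L^q$-convergence $s_i\to s_\infty$ against the uniform $L^\infty$-bound shows $P_{i,m}\to\text{tr}(s_\infty^m)$ in every $L^p$ with $p<\infty$, whence $\dbar\,\text{tr}(s_\infty^m)=0$ in the sense of distributions. Since $\text{tr}(s_\infty^m)$ is real-valued and lies in $W^{1,2}$, this forces $d\,\text{tr}(s_\infty^m)=0$, so each power sum is a.e. constant on the connected surface $X$. By Newton's identities the coefficients of the characteristic polynomial of $s_\infty$ are then constant a.e., which is exactly the statement that $s_\infty$ has constant eigenvalues a.e.

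The step I expect to demand the most care is this last passage to the limit: I must work with the power sums $\text{tr}(s_i^m)$ rather than the individual eigenvalue functions (which are only Lipschitz and fail to be smooth where eigenvalues collide), and I must ensure that the $L^\infty$-bound of Lemma~\ref{lem:Suppose-that-.} is genuinely independent of $i$, so that the products $s_i^{m-1}\dbar s_i$ are controlled uniformly. The positivity of $\Phi_A$, which permits isolating the diagonal term without discarding anything essential, is the structural fact that makes the whole argument work.
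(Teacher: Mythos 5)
Your proof is correct, but it follows a genuinely different route from the paper's. The paper passes to the limit in Donaldson's functional first: it invokes lower semicontinuity of $M$ under weak $W^{1,2}$-convergence (\cite[Proposition 4.2]{jonsson2022geodesic}), together with convexity of $t\mapsto M(ts)$ and the bound (\ref{eq:-23}), to obtain the linear-in-$t$ estimate $M(ts_\infty)\leq 2\pi t L(\theta-\mu(\S))\text{rk}\S$ for all $t>0$, and then plays this against the quadratic lower bound $e^{x}-x-1\geq\tfrac12 x^{2}$ for $x\geq 0$ applied directly to $s_\infty$ as $t\to\infty$; this forces $(\dbar s_{\infty})_{j}^{i}=0$ a.e.\ whenever $\lambda_{j}\leq\lambda_{i}$, and constancy of the eigenvalues then follows from $\dbar\,\text{tr}\,\psi(s_\infty)=0$. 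You instead stay on the sequence: positivity of $\Phi$ and $\phi(\lambda,\lambda)=\tfrac12$ convert $M(m_i s_i)\leq Cm_i$ into the quantitative decay $\int_X\sum_k|(\dbar s_i)_{kk}|^2=O(1/m_i)$, and you pass to the limit only in the scalar power sums $\text{tr}(s_i^m)$, whose $\dbar$-derivative is the frame-free quantity $m\,\text{tr}(s_i^{m-1}\dbar s_i)$, so that $\dbar\,\text{tr}(s_\infty^m)=0$ holds distributionally and Newton's identities finish. Your route is more elementary (it avoids the semicontinuity of $M$ under weak convergence entirely) and is quantitative, but the paper's argument buys strictly more: the vanishing of \emph{all} components $(\dbar s_\infty)_{j}^{i}$ with $\lambda_j\leq\lambda_i$, not merely the block-diagonal part, which is precisely the ingredient behind the weak holomorphicity of the spectral projections $\pi_i=\chi_i(s_\infty)$ invoked immediately afterwards; your argument alone would not supply that. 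Two details are worth making explicit in your write-up: the quantity $\sum_k|(\dbar s_i)_{kk}|^2$ is frame-dependent inside blocks of coinciding eigenvalues, so you should either fix a measurable eigenframe or work with the frame-independent block-diagonal part of $\dbar s_i$, for which your inequalities hold verbatim since $\phi(\lambda,\lambda)=\tfrac12$ on entire blocks; and the uniform bound $\|s_i\|_{\rho,H,L^\infty}\leq C(X,B)$ you rely on is indeed independent of $i$, because the constant $B$ is fixed once and for all by the normalization $e^{\phi_i}h_i\in\mathcal{T}_B$ established before Lemma \ref{lem:Notations-as-above.}.
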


\begin{proof}
Donaldson's functional is lower semicontinuous under weak convergence
in $W^{1,2}$ (\cite[Proposition 4.2]{jonsson2022geodesic}). Thus, for any $t>0$, we have 
\begin{equation}
M(ts_{\infty})\leq\liminf_{i\to\infty}M(ts_{i}).\label{eq:-24}
\end{equation}
Since $t\mapsto M(ts)$ is increasing and convex in $t\in(0,m_{i}]$ ,
\begin{equation}
M(\frac{t}{m_{i}}m_{i}s_{i})\leq\frac{t}{m_{i}}M(m_{i}s_{i})\leq t2\pi L(\theta-\mu(\S))\text{rk}\S,\label{eq:-25}
\end{equation}
where the second inequality comes from (\ref{eq:-23}). Hence, for $t\in(0,\infty)$,
\begin{equation}
M(ts_{\infty})\leq tL2\pi(\theta-\mu(\S))\text{rk}\S.\label{eq:-38}
\end{equation}

Let $\lambda_{1},\cdots,\lambda_{n}$ be the local eigenvalues of
$s\in\mathrm{End}_{H}(S)$ with respect to $H$. Since $e^{x}-x-1\geq\frac{1}{2}x^{2}$
for $x\geq0$, we have
\begin{align*}
M(ts) & =\int_{X}\sum_{i,j}|(\dbar s)_{j}^{i}|^{2}\frac{e^{t(\lambda_{i}-\lambda_{j})}-t(\lambda_{i}-\lambda_{j})-1}{(\lambda_{i}-\lambda_{j})^{2}}\omega
 \geq\int_{X}\sum_{j\leq i}|(\dbar s)_{j}^{i}|^{2}\frac{t^{2}}{2}\omega.
\end{align*}
Since $M(ts_{\infty})\leq Ct$, we have $(\dbar s_{\infty})_{j}^{i}=0$
a.e. for $\lambda_{j}\leq\lambda_{i}$. Then for any $C^{1}$ real
function $\psi,$ it holds that 
\[
\dbar\text{tr}\psi(s_{\infty})=\text{tr}(d\psi(s_{\infty})[\dbar s_{\infty}])=0,\
\]
almost every where. Thus, we see that $s_{\infty}$ has constant eigenvalue a.e.
\end{proof}
Since eigenvalues of $s_\infty$ are constant a.e., we denote $\lambda_{1}<\lambda_{2}<\cdots<\lambda_{m}$  the distinct
eigenvalues of $s_{\infty}$. Let $\chi_{m}\equiv1$.  For $i\in\{1,\cdots,m-1\}$, denote
\[
\chi_{i}(t)=\begin{cases}
1, & t<\frac{\lambda_{i}+\lambda_{i+1}}{2},\\
0, & t>\frac{\lambda_{i}+\lambda_{i+1}}{2}.
\end{cases}
\]
Let 
\begin{equation}
\pi_{i}=\chi_{i}(s_{\infty}).\label{eq:-31}
\end{equation}
Then $\pi_{i}$ is the orthogonal projection to the direct sum of
eigenspaces of $s_{\infty}$ with eigenvalue smaller than $\lambda_{i+1}$.
Let $\mathcal{S}_{i}=\text{Im}\pi_{i}$.

The concept of
weakly holomorphic projection was introduced in Uhlenbeck-Yau \cite{UhlenbeckYau86} and the lemma below was proved in \cite{UhlenbeckYau89}. 
\begin{lem}
\label{lem:-is-a}$\pi_{i}$ is a weakly holomorphic projection and
$\text{Im}\pi_{i}$ defines a reflexive coherent subsheaf of $S$. 
\end{lem}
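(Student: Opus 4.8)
The plan is to realize $\pi_i$ as a fixed polynomial in $s_\infty$, deduce its $W^{1,2}\cap L^\infty$ regularity and its pointwise orthogonal-projection property from those of $s_\infty$, verify the weak holomorphicity condition $(1-\pi_i)\dbar\pi_i=0$ directly from the eigenvalue structure already established, and finally invoke the regularity theorem of Uhlenbeck--Yau to conclude that $\pi_i$ cuts out a reflexive coherent subsheaf. Recall that a \emph{weakly holomorphic projection} is an endomorphism $\pi\in W^{1,2}\cap L^\infty(\End(S))$ with $\pi=\pi^\dagger=\pi^2$ almost everywhere and $(1-\pi)\dbar\pi=0$ almost everywhere.

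First I would produce $\pi_i$ in this class. Let $\lambda_1<\cdots<\lambda_m$ be the (constant a.e.) distinct eigenvalues of $s_\infty$, and choose a real interpolating polynomial $P_i$ with $P_i(\lambda_j)=1$ for $j\leq i$ and $P_i(\lambda_j)=0$ for $j>i$. Then $\pi_i=\chi_i(s_\infty)=P_i(s_\infty)$. Since $s_\infty\in W^{1,2}\cap L^\infty$ and $P_i$ is a fixed polynomial, functional calculus gives $\pi_i\in W^{1,2}\cap L^\infty$; and because $s_\infty$ is pointwise Hermitian with real eigenvalues, $\pi_i$ is a pointwise orthogonal projection, so $\pi_i=\pi_i^\dagger=\pi_i^2$ a.e.

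The central step is the weak holomorphicity. Differentiating yields $\dbar\pi_i=dP_i(s_\infty)[\dbar s_\infty]$, whose components in an eigenbasis are $(\dbar\pi_i)^a_b=\frac{P_i(\lambda_a)-P_i(\lambda_b)}{\lambda_a-\lambda_b}(\dbar s_\infty)^a_b$ when $\lambda_a\neq\lambda_b$. By the vanishing $(\dbar s_\infty)^a_b=0$ a.e. whenever $\lambda_b\leq\lambda_a$, established above, the surviving entries have $\lambda_a<\lambda_b$ strictly; combined with $P_i(\lambda_a)\neq P_i(\lambda_b)$, these force $\lambda_a\leq\lambda_i<\lambda_{i+1}\leq\lambda_b$, so the row index $a$ lies in the range of $\pi_i$. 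Since $1-\pi_i$ is diagonal in the eigenbasis with vanishing entries precisely on eigenvalues $\leq\lambda_i$, left multiplication by $1-\pi_i$ annihilates every such row, giving $(1-\pi_i)\dbar\pi_i=0$ almost everywhere. Note that the coincident-eigenvalue case $\lambda_a=\lambda_b$ never contributes, because $(\dbar s_\infty)^a_b=0$ there as well ($\lambda_b\leq\lambda_a$), so no delicate handling of $P_i'$ at the nodes is required.

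With $\pi_i$ now a genuine weakly holomorphic projection, I would appeal to the regularity theorem for weakly holomorphic subbundles of Uhlenbeck--Yau \cite{UhlenbeckYau86,UhlenbeckYau89}: such a $\pi_i$ agrees, away from an analytic subset of complex codimension at least two, with the orthogonal projection onto a reflexive coherent subsheaf $\Im\pi_i\subset S$. I expect the main obstacle to be exactly this last step, since the regularity theorem is the deep analytic input; but as it is quoted as a black box, the substantive work I must carry out is the elementary functional calculus and the bookkeeping of the weak-holomorphicity computation above, together with a brief justification that the chain rule for $\dbar$ of a polynomial in a $W^{1,2}$ endomorphism holds in the weak sense.
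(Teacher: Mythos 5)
Your proof is correct, but note that the paper does not actually prove this lemma: it states it with the remark that the notion of weakly holomorphic projection was introduced in \cite{UhlenbeckYau86} and that the lemma itself was proved in \cite{UhlenbeckYau89}, so the paper's own argument is a pure citation. What you supply is the verification that this citation applies, and your verification is sound and is the standard one in the Uhlenbeck--Yau/Simpson tradition: since the eigenvalues of $s_\infty$ are constant a.e., $\chi_i(s_\infty)=P_i(s_\infty)$ for a fixed interpolating polynomial, so membership in $W^{1,2}\cap L^\infty(\End(S))$ follows from the Leibniz rule for products of $W^{1,2}\cap L^\infty$ endomorphisms (no singular functional calculus is needed), the identities $\pi_i=\pi_i^\dagger=\pi_i^2$ a.e.\ are immediate from pointwise Hermitian diagonalization, and your bookkeeping for $(1-\pi_i)\dbar\pi_i=0$ correctly exploits the vanishing $(\dbar s_\infty)^a_b=0$ a.e.\ for $\lambda_b\le\lambda_a$ established in the lemma immediately preceding this one in the paper: the surviving entries satisfy $\lambda_a\le\lambda_i<\lambda_{i+1}\le\lambda_b$, so their rows lie in $\mathrm{Im}\,\pi_i$ and are annihilated by $1-\pi_i$, and the equal-eigenvalue blocks vanish outright so the nodes of $P_i$ cause no trouble. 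The one ingredient you quote as a black box --- that a weakly holomorphic $W^{1,2}$ projection represents a reflexive coherent subsheaf --- is exactly the deep input the paper also outsources to \cite{UhlenbeckYau89}. In short, your route is a strict refinement of the paper's: the paper cites the entire lemma, while you cite only the regularity theorem and explicitly check its hypotheses for the specific $\pi_i=\chi_i(s_\infty)$ at hand, which is the argument the citation implicitly presupposes.
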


We now finish the proof Theorem \ref{thm:Suppose-that-}.

\begin{proof}
[Proof of Theorem \ref{thm:Suppose-that-}] Suppose that the sequence $\{\|\log h_{i}\|_{\rho,L^{q}}\}$
is unbounded. From the construction of $\pi_{i}$ and Lemma \ref{lem:-is-a},
we obtain a filtration of $\mathcal{S}$:
\[
0=\mathcal{S}_{0}\subset\mathcal{S}_{1}\subset\cdots\subset\mathcal{S}_{m}=\mathcal{S}.
\]
Let $\mathcal{F}_{i}=\mathcal{S}_{i}/\mathcal{S}_{i-1}$ for $i=1,\cdots,m$.
Then by \cite[Theorem 3.7]{jonsson2022geodesic}, we have

\[
M(ts_{\infty})=2\pi\sum_{i=1}^{m}\lambda_{i}\text{rk}\mathcal{F}_{i}(\mu(\mathcal{F}_{i})-\mu(S))t-\sum_{1\leq i<j\leq m}B_{ij}(1-e^{-t(\lambda_{j}-\lambda_{i})})
\]
where $B_{ij}$ are non-negative integers. Let $a_{i}=-(\deg\mathcal{S}_{i}-\mu(S)\text{rk}\mathcal{S}_{i})$.
Since $\mathcal{S}$ is stable, $a_{i}>0$ for $1\leq i\leq m-1$
and $a_{0}=a_{m}=0$. We have 
\[
\text{rk}\mathcal{F}_{i}(\mu(\mathcal{F}_{i})-\mu(\S))=-(a_{i}-a_{i-1}).
\]
Let $a=\min\{a_{i}\}_{i=1}^{m-1}$. Then
\begin{align*}
\sum_{i=1}^{m}\lambda_{i}\text{rk}\mathcal{F}_{i}(\mu(\mathcal{F}_{i})-\mu(\S)) & =-\sum_{i=1}^{m}\lambda_{i}(a_{i}-a_{i-1}) 
 =\sum_{i=1}^{m-1}(\lambda_{i+1}-\lambda_{i})a_{i}\\
 & \geq\sum_{i=1}^{m-1}(\lambda_{i+1}-\lambda_{i})a =(\lambda_{m}-\lambda_{1})a.
\end{align*}
Since $\|s_{\infty}\|_{\rho,L^{q}}=1$ and $\text{tr}s_{\infty}=0$ a.e., we have $m>1$ and $\lambda_{m}-\lambda_{1}\geq1$.
On the other hand, we have the following. 
\[
M(ts_{\infty})\leq t2\pi L(\theta-\mu(\S))\text{rk}\S,
\]
from (\ref{eq:-38}). Hence, 
\begin{align}
L(\theta-\mu(\S))\text{rk}\S & \geq a=\min_{1\leq i\leq m-1}\{\text{rk}\mathcal{S}_{i}(\mu(\S)-\mu(\mathcal{S}_{i}))\}.\label{eq:-29}
\end{align}
This violates the assumption that $\S$ is a
good approximation of $(\E_{\infty},\theta)$ at level $L$. Therefore, $\|\log h_{i}\|_{\rho,L^{q}}$
is bounded, and $\|\log h_{i}\|_{\rho,L^{\infty}}$ is bounded by
Lemma \ref{lem:Suppose-that-.}. 
\end{proof}

Next, we discuss higher order estimates.  We fix the Hermitian-Einstein metric $H_{0}$ on $\S$. For a smooth
Hermitian metric $H$ on $S$, we denote $s=\log(HH_{0}^{-1})$. As the reference metric $H_0$ is fixed, to keep the notation simple, we define
\[
\|s\|_{C^{k}}=\sup_{X}\sum_{l=0}^{k}|\nabla^{l}s|_{2,H_{0}}.
\]
Here $\nabla$ is the induced connection from $H_{0}$. Fix a finite
open ball covering $\{U_{\gamma}\}_{\gamma\in I}$ of $X$ where $S$ is trivial
on each $U_{\gamma}$. We may define the Holder semi-norm by 
\[
[s]_{C^{k,\alpha}}:=\sup_{\gamma\in I;x,y\in U_{\gamma}}\frac{|\nabla^{k}s(x)-\nabla^{k}s(y)|_{2,H_{0}}}{|x-y|^{\alpha}}.
\]
We define $\|s\|_{C^{k,\alpha}}=\|s\|_{2,H_0,C^{k}}+[s]_{2,H_0,C^{k,\alpha}}$. 
For simplicity, we will abuse the notation and denote $\|H\|_{C^{k,\alpha}}:=\|s\|_{C^{k,\alpha}}$.
Technically speaking, the space of Hermitian metrics on $X$ is not
a vector space. However, we notice that if $C^{-1}H_{0}<H<CH_{0}$
for some uniform constant $C>0$, then let $h=HH_0^{-1}$ and $s=log(h)$, we have
\[
(C')^{-1}\sum_{l=1}^{k}|\nabla^{l}h|<\sum_{l=1}^{k}|\nabla^{l}s|<C'\sum_{l=1}^{k}|\nabla^{l}h|
\]
for some uniform constant $C'$ depending only on $C$ and $X$. Thus,
we call $H$ bounded in $C^{k,\alpha}$ if $\|H\|_{C^{k,\alpha}}=\|s\|_{C^{k,\alpha}}<\infty$.
If $K_{i}$ is a sequence of Hermitian metrics and $k_{i}=\log(K_{i}H_{0}^{-1})$,
we say $K_{i}$ convergent to $K_{\infty}$ in $C^{k,\alpha}$ if
$\lim_{i\to\infty}\|k_{i}-k_{\infty}\|_{C^{k,\alpha}}=0$.

Given notation as above, $\{h_{i}\}$ in Theorem
\ref{thm:Suppose-that-} has a convergent subsequence in $C^{0}$. See \cite[Proposition 2.1]{simpson88constructVarofhodge} for the proof.
\begin{lem}
\label{lem:Assume-the-same}Assume the same conditions as in Theorem
\ref{thm:Suppose-that-}. Then $\{h_{i}\}$ has a convergent subsequence
in $C^{0}$.
\end{lem}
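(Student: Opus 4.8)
The plan is to promote the uniform $L^\infty$ bound of Theorem~\ref{thm:Suppose-that-} to precompactness in $C^0$, following Simpson~\cite{simpson88constructVarofhodge}. First I would record the two inputs. By Theorem~\ref{thm:Suppose-that-} we have $\|\log h_i\|_{\rho,H_0,L^\infty}\le C$, i.e. $C^{-1}H_0\le H_i\le CH_0$ uniformly, so that $h_i$ and $h_i^{-1}$ are uniformly bounded; replacing $h_i$ by $e^{\phi_i}h_i$ with the conformal weights of Proposition~\ref{prop:There-exists-a} (which are uniformly bounded in every $C^k$) we may assume $\det h_i\equiv1$, so that $s_i:=\log h_i$ satisfies $\text{tr}\,s_i=0$ and $\|s_i\|_{\rho,L^\infty}\le C$. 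Second, the mean curvatures are uniformly bounded: reading off the $\S$-block of the curvature identity \eqref{eq:curvatureSES} for $0\to\S\to\E_i\to\Q_i\to0$ and using that $\tilde H_i$ is Hermitian--Einstein gives
\[
\im\Lambda F_{H_i}=2\pi\mu(\E_i)\text{Id}_{\S}+\im\Lambda\beta_i^{\dagger}\wedge\beta_i,
\]
and $\beta_i$ is uniformly bounded in every $C^k$ by Theorem~\ref{thm:Assume-the-same}, since $\|\beta_i\|_{2,L^2}^2=2\pi(\mu(\E_i)-\mu(\S))\text{rk}\,\S$ stays bounded as $\mu(\E_i)\to\theta$.

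\textbf{Energy estimate.} Next I would bound $s_i$ in $W^{1,2}$ by the same Donaldson-functional computation as in Lemma~\ref{lem:Notations-as-above.}, now without the scaling factor $m_i$. By Lemma~\ref{lem:Here-if-.} and $\text{tr}\,s_i=0$ we have $M(H_0,e^{s_i}H_0)=\int_X\langle\Phi_{s_i}[\dbar s_i],\dbar s_i\rangle_{H_0}\,\omega$; the lower bound $\frac{e^{x}-x-1}{x^{2}}\ge\frac{1}{2(|x|+1)}$ together with the eigenvalue bound $\|s_i\|_{\rho,L^\infty}\le C$ gives $M(H_0,e^{s_i}H_0)\ge c\,\|\dbar s_i\|_{2,L^2}^2$. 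On the other hand, convexity of $M$ (Lemma~\ref{lem:Let--be}) and $\text{tr}\,s_i=0$ yield
\[
M(H_0,e^{s_i}H_0)\le\int_X\text{tr}\!\left[\im\Lambda F_{H_i}\,s_i\right]\omega=\int_X\text{tr}\!\left[(\im\Lambda\beta_i^{\dagger}\wedge\beta_i)\,s_i\right]\omega\le\|s_i\|_{\rho,L^\infty}\,\|\beta_i\|_{2,L^2}^2,
\]
which is uniformly bounded by Lemma~\ref{lem:If--is}. Hence $\|\dbar s_i\|_{2,L^2}$, and therefore $\|s_i\|_{W^{1,2}}$, is uniformly bounded.

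\textbf{Bootstrap and obstacle.} With the uniform $C^0$ and $W^{1,2}$ bounds and the uniformly bounded mean curvature in hand, the conclusion is precisely Simpson's Proposition~2.1: writing the mean-curvature difference as a second order elliptic operator in $h_i$,
\[
\im\Lambda\dbar\bigl(h_i^{-1}\pdv_{H_0}h_i\bigr)=2\pi(\mu(\E_i)-\mu(\S))\text{Id}+\im\Lambda\beta_i^{\dagger}\wedge\beta_i,
\]
with uniformly bounded, uniformly elliptic coefficients and uniformly bounded right-hand side, one bootstraps to a uniform $L^p_2$ bound and extracts, via the compact Sobolev embedding $L^p_2\hookrightarrow\hookrightarrow C^0$ on the surface $X$, a subsequence converging in $C^0$ (indeed weakly in $L^p_2$); undoing the conformal normalization, the original $h_i$ converge in $C^0$ along a subsequence. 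The delicate point, which I expect to be the main obstacle, is the quasilinear nature of the equation: expanding it gives $\Delta h_i=h_i\cdot(\text{bounded})+O(|\nabla h_i|^2)$, and the gradient-square term is only controlled in $L^1$ by the $W^{1,2}$ bound, which is borderline for closing an $L^p_2$ estimate. What saves the argument is that this term carries the antisymmetric (Jacobian/null-form) structure $\im\Lambda(\dbar h_i)h_i^{-1}\wedge\pdv_{H_0}h_i$ special to complex dimension one, which, combined with the $C^0$ bound on $h_i$, yields the needed compactness; this is exactly where the restriction to Riemann surfaces is used, and where one must keep every constant independent of $\text{rk}\,\S$, in line with the operator-norm estimates of Lemma~\ref{lem:Suppose-that-.}.
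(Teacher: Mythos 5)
Your proposal is correct and takes essentially the same route as the paper, whose entire proof of this lemma is the citation to Simpson's Proposition 2.1: you verify precisely its hypotheses (the uniform $C^0$ bound on $h_i$ from Theorem \ref{thm:Suppose-that-} and the uniform mean-curvature bound coming from the Hermitian--Einstein identity together with the second-fundamental-form estimates of Theorem \ref{thm:Assume-the-same}) and then invoke that result, with the Donaldson-functional energy estimate as a consistent auxiliary step. The only quibble is your closing speculation: Simpson closes the quasilinear estimate by a blow-up/maximum-principle argument (the route of Donaldson's Lemma 19, which the paper cites for the ensuing $C^1$ bound), not by a null-form/compensated-compactness structure, but since you ultimately defer to his Proposition 2.1 this does not affect correctness.
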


The proof for $C^1$ bound is also standard. See for instance \cite[Lemma 19]{Donaldson85}.

\begin{prop}
Assume the same conditions as in Theorem \ref{thm:Suppose-that-}.
$|\dbar h_{i}|_{H_{0}}$ is uniformly bounded. 
\end{prop}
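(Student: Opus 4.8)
The plan is to establish a $C^1$ (equivalently $\dbar$) bound for the sequence $\{h_i\}$ by a blow-up/contradiction argument combined with the curvature identities from Section \ref{Section:Estimates for second fundamental forms}. First I would record what we already control: by Theorem \ref{thm:Suppose-that-} we have the uniform $L^\infty$ bound $\|\log h_i\|_{\rho,H_0,L^\infty}\leq C$, hence $C^{-1}H_0< H_i< C H_0$ uniformly, and by Lemma \ref{lem:Assume-the-same} a subsequence converges in $C^0$. The key structural input is the Hermitian--Einstein equation together with the curvature formula \eqref{eq:curvatureSES}: since $\tilde H_i$ is Hermitian--Einstein on $\E_i$, the restricted metric $H_i$ on $\S$ satisfies $\sqrt{-1}\Lambda F_{H_i}=2\pi\mu(\E_i)\text{Id}_\S-\sqrt{-1}\Lambda\beta_i^\dagger\wedge\beta_i$, and by Theorem \ref{thm:Assume-the-same} the second fundamental form obeys the \emph{uniform} bound $\|\beta_i\|_{C^k}\leq C\|\beta_i\|_{L^2}\leq C'$, with constants independent of $i$ because $\mu(\E_i)\to\theta$ is bounded and $R$ is fixed.

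The substance of the argument is then to convert the elliptic equation satisfied by $h_i=H_iH_0^{-1}$ into a gradient estimate. Writing the Hermitian--Einstein equations for $H_i$ and for the fixed metric $H_0$ and subtracting, one obtains a second-order elliptic equation for $h_i$ of Simpson's type,
\begin{equation}
\sqrt{-1}\Lambda\dbar(\pdv_{H_0}h_i\cdot h_i^{-1})=\big(2\pi\mu(\E_i)-2\pi\mu(\S)\big)h_i-\sqrt{-1}\Lambda\,h_i\,(\beta_i^\dagger\wedge\beta_i),
\end{equation}
whose right-hand side is uniformly bounded in $C^0$ using the $L^\infty$ bound on $h_i$ and the uniform estimate on $\beta_i$. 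Next I would suppose for contradiction that $\sup_X|\dbar h_i|_{H_0}\to\infty$ along some subsequence, pick points $x_i$ where a maximum of the gradient is attained, rescale the metric in normal coordinates around $x_i$ so that the blown-up gradient has norm $1$ at the origin, and extract (via the uniform $C^0$ bound and elliptic estimates on the rescaled equation) a limit $h_\infty$ defined on $\C$. Because the rescaling sends $\mu(\E_i)-\mu(\S)$ and the $\beta_i$ term to zero in the limit, $h_\infty$ is a bounded positive Hermitian endomorphism on $\C$ whose associated $\pdv_{H_0}h_\infty\cdot h_\infty^{-1}$ is $\dbar$-closed, forcing $h_\infty$ to be constant by a Liouville-type argument; this contradicts $|\dbar h_\infty|(0)=1$.

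I would present this through the standard machinery: the uniform $C^0$ bound guarantees uniform ellipticity of the operator, so Schauder and $L^p$ interior estimates applied to the equation above upgrade $C^0$ control to $C^1$ control with constants depending only on $X$, $\|R\|_{C^0}$, $|\mu(\E_i)|$ (uniformly bounded by $|\theta|+1$), and the uniform bound on $\|\beta_i\|_{C^0}$ from Theorem \ref{thm:Assume-the-same}; the blow-up is really just the clean way to see that no constant blows up. The main obstacle I anticipate is the rank-independence of the constants: the operator and matrix norms must be chosen so that the elliptic estimates do not accrue a factor growing with $\text{rk}\,\S$ (exactly the subtlety flagged in the remark after \eqref{eq:-47-1} about matrix $2$-norm versus spectral norm), since the eventual passage to infinite rank in Theorem \ref{thm:main2} requires all these bounds to be uniform along the whole sequence. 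Once the gradient estimate is rank-uniform, the higher-order estimates in the subsequent sections follow by differentiating the equation and bootstrapping, exactly as in the finite-rank theory of \cite{Donaldson85,simpson88constructVarofhodge}.
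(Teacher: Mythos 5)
Your proposal is correct and follows essentially the same route as the paper: the paper disposes of this proposition by citing \cite[Lemma 19]{Donaldson85}, whose proof is exactly the blow-up/contradiction argument you describe, with the uniform $C^0$ bound coming from Theorem \ref{thm:Suppose-that-} and the bounded right-hand side of the elliptic equation for $h_i$ coming from the Hermitian--Einstein equation together with the uniform second-fundamental-form estimate of Theorem \ref{thm:Assume-the-same}. (Your worry about rank-uniform constants, while in the spirit of the paper's norm conventions, is not actually needed at this step, since $\S$ is a fixed finite-rank bundle here.)
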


Finally we finish the proof of  Theorem \ref{thm:For-any-,}.
\begin{proof}[Proof of Theorem \ref{thm:For-any-,}]
The matrix valued function $h_{i}$ is uniformly bounded in $C^{1}$ and solves
\begin{equation}
\sqrt{-1}\Lambda F_{H_{i}}=\mu(E_{i})I-\sqrt{-1}\Lambda\beta_{i}^{\dagger}\wedge\beta_{i}.\label{eq:-35}
\end{equation}
The left hand side is now a uniformly elliptic operator. Denote the
right hand side of (\ref{eq:-35}) by $g_{i}$. By  standard Schauder
estimates, for $\alpha\in(0,1),k\geq2$, we have 
\begin{equation}
\|H_{i}\|_{C^{k,\alpha}}\leq C(k,\alpha,X)(\|H_{i}\|_{C^{0}}+\|g_{i}\|_{C^{k-2,\alpha}}).\label{eq:-36}
\end{equation}
By Lemma \ref{thm:Assume-the-same}, $\|g_{i}\|_{C^{k-2,\alpha}}$
is uniformly bounded. Hence, $\|H_{i}\|_{C^{k,\alpha}}$ is uniformly
bounded. Therefore, by Arzela-Ascoli theorem, $\{H_{i}\}$ has a convergent
subsequence in $C^{k}$. 
\end{proof}

\section{Convergence of metrics}\label{Section:Convergence of metrics}

Consider a sequence of stable holomorphic vector bundles and injective holomorphic bundle
morphisms  as in (\ref {eq:-28}):
\begin{equation}
\mathcal{E}_{0}\overset{\iota_{0}}{\to}\mathcal{E}_{1}\overset{\iota_{1}}{\to}\mathcal{E}_{2}\to\cdots\to\mathcal{E}_{k}\overset{\iota_{k}}{\to}\cdots.\label{eq:-72}
\end{equation}

Recall that $\mathcal{E}_\infty=\varinjlim \mathcal{E}_i$  is a quasi-coherent sheaf and $\theta:=\sup_{\mathcal{F}} \mu(\mathcal{F})$ where the supremum is taken over all the coherent subsheaf $\mathcal{F}$ in $\mathcal{E}_\infty$. In this section, we assume that $\theta$ is a finite number and $\{\E_{i}\}_i$ is a
well-approximating sequence of $(\E_{\infty},\theta)$ (Definition \ref{def:We-say-intro}).

By Donaldson-Uhlenbeck-Yau theorem, $\E_{i}$
admits a Hermitian-Einstein metric $H_{i}$. In the beginning of Section \ref{section:uniform}, we considered a normalization such that
$\det((H_{i}|_{E_{0}})H_{0}^{-1})=1$
at a given point $p\in X$.
Then Theorem \ref{thm:For-any-,} provides a convergent
subsequence of metrics. However, this normalization is suitable
only for $E_0$. Our goal is to obtain a suitable uniform normalization
that after choosing a subsequence, $H_{i}|_{E_{k}}$ is a convergent
sequence for each $k$. 
\begin{thm}
\label{thm:For-each-,}For   any  well-approximated sequence as in (\ref{eq:-72}) and any fixed $r\in\N$, there exists a subsequence
$(E_{k_{i}},H_{k_{i}})$, a sequence of smooth Hermitian metrics $H_{k_{i},\infty}$
on $E_{k_{i}}$, and a sequence of constants $b_{i}\in\mathbb{R}$
such that for any fixed $l\in\N$, $e^{b_{i}}H_{k_{i}}|_{E_{k_{l}}}$ converges
in $C^{r}$ to $H_{k_{l},\infty}$ with respect to $H_{k_l}$ as $i\to \infty$. Moreover, $H_{k_{l},\infty}|_{E_{k_{m}}}=H_{k_{m},\infty}$
for any $l\geq m$. 
\end{thm}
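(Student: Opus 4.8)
The plan is to upgrade the single-bundle convergence of Theorem~\ref{thm:For-any-,} to simultaneous convergence on all the $E_{k_l}$ by a Cantor diagonal extraction, the one real difficulty being to reconcile the mutually independent scalings that Theorem~\ref{thm:For-any-,} imposes on each restriction. First I would fix a threshold $i_0$ so that $\mathcal{E}_i$ is a good approximation of $(\mathcal{E}_\infty,\theta)$ for every $i\ge i_0$ (available since $\{\mathcal{E}_i\}$ is well-approximating), and work only with the tail $\{\mathcal{E}_i\}_{i\ge i_0}$, so that Theorem~\ref{thm:For-any-,} may be applied to $\mathcal{S}=\mathcal{E}_m$ for each $m\ge i_0$. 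Writing $H_i$ for a Hermitian--Einstein metric on $\mathcal{E}_i$, I would anchor one \emph{global} scale at the base bundle: fix $p\in X$ and choose $b_i$ so that $\det\!\big(e^{b_i}H_i|_{E_{i_0}}H_{i_0}^{-1}\big)\big|_p=1$, where $H_{i_0}$ is the Hermitian--Einstein metric of $\mathcal{E}_{i_0}$.

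Next I would run the diagonal argument. For each $m\ge i_0$, Theorem~\ref{thm:For-any-,} applied to $\mathcal{S}=\mathcal{E}_m$ (each $\mathcal{E}_m$ stays a good approximation and the injections persist for any subsequence) yields, along a further subsequence, a scaling $e^{c_{i,m}}$ normalized by $\det(e^{c_{i,m}}H_i|_{E_m}H_m^{-1})|_p=1$ for which $e^{c_{i,m}}H_i|_{E_m}$ converges in $C^r$ to a positive-definite metric; extracting successively for $m=i_0,i_0+1,\dots$ and passing to the diagonal produces a single subsequence $\{k_i\}$ that works for every $m$ at once. The subtlety is that convergence is first obtained for $e^{c_{i,m}}$, not for the global scale $e^{b_i}$, and $e^{b_i}H_i|_{E_m}=e^{b_i-c_{i,m}}\big(e^{c_{i,m}}H_i|_{E_m}\big)$, so it suffices to prove that $b_i-c_{i,m}$ converges. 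Restricting both scalings to $E_{i_0}\subset E_m$ and comparing determinants at $p$ gives, using the global normalization,
\[
e^{(c_{i,m}-b_i)\,\text{rk}(E_{i_0})}=\det\!\big(e^{c_{i,m}}H_i|_{E_{i_0}}H_{i_0}^{-1}\big)\big|_p .
\]
Since $e^{c_{i,m}}H_i|_{E_m}$ converges in $C^r$ to a nondegenerate limit, its restriction to $E_{i_0}$ does too, so the right-hand side converges to a strictly positive number; hence $c_{i,m}-b_i$ converges and $e^{b_i}H_{k_i}|_{E_{k_m}}$ converges in $C^r$. I would then define $H_{k_m,\infty}:=\lim_i e^{b_i}H_{k_i}|_{E_{k_m}}$.

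Finally, the restriction compatibility is automatic. For $l\ge m$ the inclusion $E_{k_m}\hookrightarrow E_{k_l}$ induces a continuous restriction map for the $C^r$ topology, so
\[
H_{k_l,\infty}\big|_{E_{k_m}}=\lim_i\big(e^{b_i}H_{k_i}|_{E_{k_l}}\big)\big|_{E_{k_m}}=\lim_i e^{b_i}H_{k_i}|_{E_{k_m}}=H_{k_m,\infty}.
\]
I expect the scaling reconciliation of the second paragraph to be the crux: Theorem~\ref{thm:For-any-,} controls each restriction only up to an \emph{a priori} independent positive scalar, and the content of the theorem is that a single normalization at the base bundle simultaneously normalizes all of them. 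The determinant comparison settles this, but it rests on the limiting metrics being nondegenerate, which is exactly what the uniform estimates behind Theorem~\ref{thm:Suppose-that-} and Theorem~\ref{thm:For-any-,} provide.
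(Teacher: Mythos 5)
Your proof is correct, and the way you reconcile the scalings is genuinely different from (and slicker than) the paper's. The paper also runs a diagonal extraction over the levels $m$, but it handles the normalization problem by an inductive bookkeeping scheme: at stage $m$ it renormalizes by a new constant $c'_{i,m}$, shows that $c'_{i,m}$ converges by comparing with the already-converged restriction to $E_{m-1}$, recenters to $c_{i,m}=c'_{i,m}-c_\infty$, and passes to a further subsequence forcing the geometric decay $|c_{i,m}|<2^{-i-m}$; the single scaling is then the telescoped sum $b_m=\sum_{j=0}^{m-1}c_{m,j}$, and the estimate $\bigl|\sum_{l=k+1}^{m}c_{i,l}\bigr|\leq 2^{-(i+k)}$ shows this one constant works at every level simultaneously. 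You instead anchor a single determinant normalization at the base bundle once and for all, and observe that for each $m$ the discrepancy $c_{i,m}-b_i$ is recovered exactly by the determinant identity $e^{(c_{i,m}-b_i)\,\mathrm{rk}(E_{i_0})}=\det\bigl(e^{c_{i,m}}H_i|_{E_{i_0}}H_{i_0}^{-1}\bigr)\big|_p$, whose right-hand side converges to a positive number because the level-$m$ limit is nondegenerate and restriction is $C^r$-continuous; this eliminates the recentering, the decay condition, and the extra subsequence extractions it requires, at the price of nothing. Your threshold $i_0$ also correctly handles the ``good approximation only for $i\gg 0$'' hypothesis (convergence on $E_{i_0}$ restricts to the finitely many earlier bundles, so the statement for all $l\in\N$ still follows), a point the paper glosses over. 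The only loose end is that the theorem asks for \emph{smooth} limit metrics while your fixed-$r$ extraction produces a priori only $C^r$ limits; the paper closes this with one line --- the uniform $C^{k,\alpha}$ bounds behind Theorem \ref{thm:For-any-,} hold for every $k$, so the $C^0$ limit must coincide with subsequential $C^k$ limits for all $k$ and is therefore smooth --- and you should append the same remark.
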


\begin{proof}
We prove the case $r=0$. The proof for $r\geq1$ is essentially the same
by Theorem \ref{thm:For-any-,}. We argue by induction. In the first
step, we choose for each $H_{k}$ a real constant $c_{k,0}$ such
that at $p$, $\det((e^{c_{k,0}}H_{k}|_{E_{0}})H_{0}^{-1})=1$. Theorem
\ref{thm:For-any-,} produces a subsequence $(E_{k_{i}},e^{c_{k_{i},0}}H_{k_{i}})$
such that $\log(e^{c_{k_{i},0}}H_{k_{i}}|_{E_{0}}H_{0}^{-1})$ converges
in $C^{0}$ to some $s_{0,\infty}$ on $E_{0}$ and $H_{0,\infty}=e^{s_{0,\infty}}H_{0}$
is a Hermitian metric on $E_{0}$. Denote $b_{0}=0$. We then replace
$\{(E_{k},H_{k})\}$ by $\{(E_{0},H_{0}),(E_{k_{1}},H_{k_{1}}),\cdots\}$.

In the second step, we choose $c'_{k,1}$ such that at $p$, $\det(e^{c'_{k,1}}e^{c_{k,0}}H_{k}|_{E_{1}}H_{1}^{-1})=1$.
Similar procedure produces a subsequence $(E_{k_{i}},e^{c_{k_{i},0}+c'_{k_{i},1}}H_{k_{i}})$
such that $\log(e^{c_{k_{i},0}+c'_{k_{i},1}}H_{k_{i}}|_{E_{1}}H_{1}^{-1})$
converges in $L^{\infty}$ to some $\tilde{s}_{1,\infty}$ on $E_{1}$
and $\tilde{H}_{1,\infty}=e^{\tilde{s}_{1,\infty}}H_{1}$ is a Hermitian
metric on $E_{1}$. Since $e^{c_{k_{i},0}+c'_{k_{i,1}}}H_{k_{i}}|_{E_{0}}$
also converges. There exists a constant $c_{0,\infty}$ such that
\[
\lim_{i\to\infty}e^{c'_{k_{i}}+c_{k_{i,1}}}H_{k_{i}}|_{E_{0}}=e^{c_{0,\infty}}H_{0,\infty}.
\]
The convergence is in $C^0$ with respect to $H_1$. Thus, $c'_{k_{i},1}\to c_{0,\infty}$. So we may choose $c_{k_{i},1}:=c'_{k_{i},1}-c_{0,\infty}$
and $H_{1,\infty}=e^{-c_{0,\infty}}\tilde{H}_{1,\infty}$. Further
replacing by a subsequence, we may assume that $|c_{k_{i},1}|<2^{-i}$
for all $i\geq1$. Then, 
\[
\lim_{i\to\infty}e^{c_{k_{i},1}+c_{k_{i},0}}H_{k_{i}}|_{E_{0}}=H_{0,\infty},\ \lim_{i\to\infty}e^{c_{k_{i},1}+c_{k_{i},0}}H_{k_{i}}|_{E_{1}}=H_{1,\infty}.
\]
Denote $b_{1}=c_{1,0}$. We then replace $\{(E_{k},H_{k})\}$ by $\{(E_{0},H_{0}),(E_{1},H_{1}),(E_{k_{2}},H_{k_{2}}),\cdots\}$.

Suppose for $m\geq2$, we have found a sequence $\{(E_{i},e^{\sum_{j=0}^{m-1}c_{i,j}}H_{i})\}_{i=0}^\infty$
and $\{(E_{j},H_{j,\infty})\}_{j=0}^{m-1}$ such that
\begin{equation}
\lim_{i\geq l,\ i\to+\infty}e^{\sum_{j=0}^{m-1}c_{i,j}}H_{i}|_{E_{l}}=H_{l,\infty},\label{eq:-5}
\end{equation}
for all $l=0,1,\cdots,m$. Moreover, $H_{l,\infty}|_{E_{k}}=H_{k,\infty}$
for $l\geq k$. Denote $d_{i,m-1}=\sum_{j=0}^{m-1}c_{i,j}$. We first
pick $c'_{i}$ such that $\det(e^{c'_{i}+d_{i,m-1}}H_{i}|_{E_{m}}H_{m}^{-1})=1$.
Then, replacing $\{(E_{i},H_{i})\}_{i\geq m+1}$ by a subsequence
for $i>m$, $e^{c'_{i}+d_{i,m-1}}H_{i}|_{E_{m}}$ converges to some
Hermitian metric $\tilde{H}_{m,\infty}$ on $E_{m}$. Then there exists
a constant $c_{\infty}$ such that 
\[
\lim_{i\geq m}e^{c_{i}'+d_{i,m-1}}H_{i}|_{E_{m-1}}=e^{c_{\infty}}H_{m-1,\infty}.
\]
By (\ref{eq:-5}), $\lim_{i\to\infty}c_{i}'=c_{\infty}$. We denote
$c_{i,m}:=c_{i}'-c_{\infty}$. Replacing $\{(E_{i},H_{i})\}_{i\geq m+1}$
by a subsequence, we may further assume that 
\begin{equation}
|c_{i,m}|<2^{-i-m},\label{eq:-7}
\end{equation}
 for all $i\geq m$. Let $H_{m,\infty}:=e^{-c_{\infty}}\tilde{H}_{m,\infty}$.
Then, 
\[
\lim_{i\geq m,i\to\infty}e^{c_{i}+d_{i,m-1}}H_{i}|_{E_{m}}=H_{m,\infty}.
\]
Denote $b_{m}=d_{m,m-1}$. 

By induction, we find infinite sequences $\{(E_{i},H_{i})\}_{i=0}^{\infty}$,
$\{H_{i,\infty}\}_{i=0}^{\infty}$, $\{d_{i,m}\}_{0\leq m\leq i}$,
and $\{b_{i}\}_{i=0}^{\infty}$. The following diagram shows the convergence
of the normalized metrics. 

\[
\begin{array}{ccccccc}
H_{0} & e^{d_{1,0}}H_{1} & e^{d_{2,0}}H_{2} & e^{d_{3,0}}H_{3} & \cdots & \to & H_{0,\infty}\\
 & e^{d_{1,0}}H_{1} & e^{d_{2,1}}H_{2} & e^{d_{3,1}}H_{3} & \cdots & \to & H_{1,\infty}\\
 &  & e^{d_{2,1}}H_{2} & e^{d_{3,2}}H_{3} & \cdots & \to & H_{2,\infty}\\
 &  &  & e^{d_{3,2}}H_{1} & \cdots & \to & H_{3,\infty}\\
 &  &  &  & \ddots & \vdots & \vdots
\end{array}
\]
Notice that by the construction, we have for $m>k$,
\[
\left|\sum_{l=k+1}^{m}c_{i,l}\right|\leq\sum_{l=k+1}^{m}2^{-i-l}\leq2^{-(i+k)}.
\]
For any $0\leq k<m$,
\begin{equation}
e^{d_{m,k}-\frac{1}{2^{m+k}}}H_{m}|_{E_{k}}\leq e^{d_{m,m-1}}H_{m}|_{E_{k}}\leq e^{d_{m,k}+\frac{1}{2^{m+k}}}H_{m}|_{E_{k}}.\label{eq:-6}
\end{equation}
Since for fixed $k\geq0$, $e^{d_{m,k}}H_{m}|_{E_{k}}$ converges
to $H_{k,\infty}$, $e^{d_{m,m-1}}H_{m}|_{E_{k}}$ converges to $H_{k,\infty}$
as $m$ goes to $\infty$. 

Finally, we show the smoothness of $H_{k,\infty}$. Let $\{(E_{k},H_{k,\infty})\}$
be constructed as above. By Theorem \ref{thm:For-any-,}, for each
$l\in\N$, $\{H_{k}|_{E_{l}}\}_{k\in\N}$ has a convergent subsequence
in each $C^{r}$ for all $r\in\N$. As $H_{k}|_{E_{l}}$ already converges
to $H_{k,\infty}$ in $C^{0}$, $H_{k,\infty}$ must be smooth. We
have completed the proof.
\end{proof}
\begin{rem}
For our main application, we choose $r=2$ in Theorem \ref{thm:For-each-,}. 
\end{rem}


The following proposition shows the existence of well-approximating sequences, which gives the main examples of such sequences used in Theorem \ref{thm:main thm in intro}.
 
\begin{prop}\label{prop:examples on curves}
    Let $X$ be a smooth projective curve over $\C$ with genus $g(X)>0$, $\theta$ be an irrational number, and $\E(\theta^{-},\{f_{2i}\})$ be the colimit of the sequence \begin{equation}
         \E_0\xrightarrow{f_0} \E_2\xrightarrow{f_2} \E_4\rightarrow \cdots \label{eq:-76}\end{equation} as in Definition \ref{definition: colimit and limit objects}.  Then there exists a well-approximated subsequence in sequence (\ref{eq:-76}).  \end{prop}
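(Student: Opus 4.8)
The plan is to verify that the good-approximation inequality (\ref{eq:-30 intro}) of Definition \ref{def:We-say-intro} holds verbatim for $\S=\E_{2i}$ for every $i$; since Definition \ref{def:We-say-intro} only asks for this at $i\gg 0$, this already exhibits (\ref{eq:-76}) itself as well-approximating and a fortiori produces the desired subsequence. The heart of the matter is a single estimate that pits the Diophantine quality of the even convergents against the integrality forced by stability. I expect no serious obstacle: the proof is a clean comparison of two explicit bounds.

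First I would pin down the geometry. Each $\E_{2i}$ is stable of rank $q_{2i}$ and degree $p_{2i}$, with slope the $2i$-th convergent $p_{2i}/q_{2i}$. Since the stability condition $(\Coh X,-\deg+\sqrt{-1}\,\text{rk})$ on a curve is genuine (Proposition \ref{prop:arithmetic stability condition on curves}), the nontrivial maps $f_{2i}$ are injective with stable---hence locally free---cokernel (Proposition \ref{prop:morphisms in farey triangles}, Remark \ref{Rmk:stable case}), so the canonical map $\E_{2i}\to\E_\infty$ realizes $\E_{2i}$ as a locally free subsheaf of $\E_\infty$ of the stated rank and slope. As $p_{2i}/q_{2i}$ is an even convergent it approaches $\theta$ strictly from below, so $\theta-\mu(\E_{2i})>0$ and both sides of (\ref{eq:-30 intro}) carry their intended meaning.

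Next I would reduce (\ref{eq:-30 intro}) to a numerical comparison. Fix a proper subsheaf $\S_0\subset\E_{2i}$ with $1\le r_0:=\text{rk}\S_0<q_{2i}$; being a subsheaf of a bundle on a curve it is locally free of some degree $d_0$. Stability of $\E_{2i}$ gives $d_0/r_0<p_{2i}/q_{2i}$, so the integer $p_{2i}r_0-d_0q_{2i}$ is $\ge 1$, whence
\[
\text{rk}\S_0\,(\mu(\E_{2i})-\mu(\S_0))=\frac{p_{2i}r_0-d_0q_{2i}}{q_{2i}}\ge\frac{1}{q_{2i}},
\]
a lower bound for the right-hand side independent of $\S_0$. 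For the left-hand side I would invoke the classical estimate for convergents, $0<\theta-p_{2i}/q_{2i}<1/(q_{2i}q_{2i+1})$, to get
\[
(\theta-\mu(\E_{2i}))\,\text{rk}\,\E_{2i}=\Big(\theta-\frac{p_{2i}}{q_{2i}}\Big)q_{2i}<\frac{1}{q_{2i+1}}.
\]
Since $q_{2i+1}\ge q_{2i}$, the left-hand side is $<1/q_{2i+1}\le 1/q_{2i}$, and the strictness of the first inequality makes it dominated by the lower bound for the right-hand side. Thus (\ref{eq:-30 intro}) holds for every such $\S_0$, so $\E_{2i}$ is a good approximation of $(\E_\infty,\theta)$ (the rank-one stage $\E_0$ being vacuous).

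As this holds for all $i$, the whole sequence (\ref{eq:-76}) is well-approximating, which certainly yields a well-approximating subsequence. The only delicate point---and what makes the estimate close---is that stability upgrades the mere strict slope inequality $\mu(\S_0)<\mu(\E_{2i})$ to the integral gap $p_{2i}r_0-d_0q_{2i}\ge 1$; this is exactly the slack needed for the right-hand side to beat the Diophantine left-hand side. I would therefore present this integrality input, together with the convergent estimate $\theta-p_{2i}/q_{2i}<1/(q_{2i}q_{2i+1})$, as the two pillars of the argument.
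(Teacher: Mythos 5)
Your proposal is correct and follows essentially the same route as the paper: both establish via Proposition \ref{prop:morphisms in farey triangles} and Remark \ref{Rmk:stable case} that each $\E_{2i}$ is a locally free subsheaf of $\E_\infty$, then pit the integrality gap from stability, $\mathrm{rk}\,\S_0(\mu(\E_{2i})-\mu(\S_0))=(p_{2i}r_0-d_0q_{2i})/q_{2i}\geq 1/q_{2i}$, against the convergent estimate $(\theta-p_{2i}/q_{2i})q_{2i}^2<1$ (your version via $1/(q_{2i}q_{2i+1})$ is the same classical fact, stated slightly more sharply). Like the paper, you in fact show the full sequence is well-approximating, from which the subsequence claim is immediate.
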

\begin{proof}
    By Proposition \ref{prop:arithmetic stability condition on curves}, we know that the sequence (\ref{eq:-76}) exists for any smooth projective curve $X$ over $\C$ with genus $g(X)>0$. 
    
     By Proposition \ref{prop:morphisms in farey triangles} and Remark \ref{Rmk:stable case}, any nontrivial morphism $f_{2i}$ in (\ref{eq:-76}) is injective and $\coker(f_{2i})$ is a stable coherent sheaf with positive rank. Hence $\coker(f_{2i})$ is torsion free. Moreover, it is locally free as $\mathrm{dim}(X)=1$. This shows that $f_{2i}$ is an injective holomorphic bundle map for any $i\in\N$. So are any compositions of them, since any extension of locally free sheaves is still a locally free sheaf.   
     
     It suffices to show that for any $\E_{2i}$ in (\ref{eq:-76}) and any coherent subsheaf $\mathcal{F}\subset \E_{2i}$ with strictly smaller rank, we have $$(\theta-\mu(\E_{2i}))\mathrm{rk}\E_{2i}<\mathrm{rk}\mathcal{F}(\mu(\E_{2i})-\mu(\mathcal{F})).$$ By Definition \ref{definition: colimit and limit objects}, we have $\mathrm{rk}\E_{2i}=q_{2i},\  \deg\E_{2i}=p_{2i}$ where $\frac{p_{2i}}{q_{2i}}$ is the $2i$-th convergent of $\theta$. As $\E_{2i}$ is stable and $\mu(\mathcal{F})\in\mathbb{Q}$ with a denominator smaller than $q_{2i}$, we have $$\mathrm{rk}\mathcal{F}(\mu(\E_{2i})-\mu(\mathcal{F}))\geq \frac{\mathrm{rk}\mathcal{F}}{q_{2i}\mathrm{rk}\mathcal{F}}=\frac{1}{q_{2i}}.$$ Thus, it suffices to show that \begin{equation} (\theta-\frac{p_{2i}}{q_{2i}})q_{2i}^2<1,\label{eq:-77}\end{equation} which can be proved by the classic Dirichlet's approximation theorem.
\end{proof}


\section{A smooth topological vector bundle structure\label{sec:smoothtopo}}
Given a sequence of holomorphic vector bundles as in (\ref{eq:-28 intro}), the colimit object exists in the category of quasi-coherent sheaves.  In this section, we study the colimit objects in different categories. In particular, there exists a natural smooth topological vector bundle as the colimit of (\ref{eq:-28 intro}) in the category of smooth vector bundles. Throughout this section, we consider the usual analytic topology on $X$.

In the category of sheaves of abelian groups on $X$, we denote  $\mathcal{C}^{\infty}(E_{i})$ to be the sheaf of smooth sections
of $E_{i}$ and denote
\[
\mathcal{F}_{\infty}:={\varinjlim}\ \mathcal{C}^{\infty} (E_{i}).
\]
Note the stalk 
$
(\mathcal{F}_{\infty})_{x}={\varinjlim}\ \mathcal{C}^{\infty}(E_{i})_{x}.
$

The main theorem is the following. 
\begin{thm}
\label{thm:6.1tVS}    Given an increasing sequence of holomorphic vector bundles as in (\ref{eq:-28 intro}), there exists a smooth vector bundle $E_\infty$ with the following properties.
    \begin{enumerate}
        \item The total space of $E_\infty$ is the colimit of the total space of $E_k$ in the category of topological space. 
        \item \label{enu:smoothcat}There exists a canonical isomorphism $q:\mathcal{F}_\infty \to \mathcal{C}^\infty (E_\infty)$ in the category of sheaves of abelian groups. 
        \item If $\{\mathcal{E}_i\}_i$ is a well-approximating sequence, then $\{H_{k,\infty}\}$ constructed in Theorem \ref{thm:For-each-,} defines a positive non-degenerate Hermitian metric $H_\infty$ on $E_\infty$.
        \item  Given $E_\infty, H_\infty$ as in (3),  let $Q_i=E_i/E_{i-1}$. Then $H_\infty$ further induces an orthogonal splitting such that $
        E_\infty\simeq \hat{\oplus}_{i=0}^\infty Q_i.
        $ 
             \end{enumerate}
\end{thm}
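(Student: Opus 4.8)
My plan is to realize $E_\infty$ as a smooth \emph{convenient} vector bundle in the sense of \cite{kriegl1997convenient}, and then to transport the metrics $H_{k,\infty}$ and orthogonalize the filtration stage by stage. First I would use that each $\iota_i$ is a fibrewise injective holomorphic bundle map, so that $Q_i=E_i/E_{i-1}$ is a smooth bundle and the sequence $0\to E_{i-1}\to E_i\to Q_i\to 0$ of smooth $\C$-bundles splits. Fixing once and for all smooth splittings $s_i\colon Q_i\to E_i$ (with $\tilde Q_0:=E_0$) yields smooth bundle isomorphisms $\Phi_i\colon\bigoplus_{j=0}^iQ_j\xrightarrow{\sim}E_i$ compatible with the $\iota_i$, under which $\iota_i$ becomes the standard inclusion $\bigoplus_{j\le i}Q_j\hookrightarrow\bigoplus_{j\le i+1}Q_j$. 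Setting $E_\infty:=\varinjlim_iE_i$ as a colimit of total spaces in $\mathrm{Top}$ (so $(E_\infty)_x=\bigcup_i(E_i)_x$), these $\Phi_i$ identify $E_\infty$ with the locally convex direct sum $\bigoplus_{j\ge0}Q_j$, whose fibre $\C^{(\infty)}=\varinjlim_n\C^n$ carries the finest locally convex topology and is a convenient vector space. Over a chart trivializing all the $Q_j$ the transition functions are block diagonal with smooth finite-rank blocks, hence smooth into $\mathrm{GL}(\C^{(\infty)})$ in the convenient sense; this makes $E_\infty$ a smooth convenient vector bundle, independent of the chosen splittings up to smooth isomorphism. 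This establishes (1).

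Next I would build $q$ by sending a smooth section $s$ of $E_i$ to $\iota_i\circ s$, a smooth section of $E_\infty$; these are compatible and induce $q\colon\mathcal{F}_\infty\to\mathcal{C}^\infty(E_\infty)$. Injectivity on stalks is immediate from the fibrewise injectivity of the $\iota_i$. For surjectivity, given a smooth section $s\colon U\to E_\infty$ and $x\in U$, I would pick a relatively compact neighbourhood $V\ni x$; since the $E_k\hookrightarrow E_{k+1}$ are closed embeddings of total spaces, a compact subset of the colimit lies in a finite stage, so $s(\overline V)\subset E_k$ for some $k$. As $E_k\hookrightarrow E_\infty$ is a closed smooth embedding, a smooth map into $E_\infty$ with image in $E_k$ is smooth into $E_k$ (\cite{kriegl1997convenient}), so $s|_V$ is a smooth section of $E_k$ and hence lies in the image of $q$. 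Thus $q$ is an isomorphism on stalks, giving (2).

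For (3), I would define $H_\infty$ fibrewise: for $v,w\in(E_\infty)_x$, choose $k$ with $v,w\in(E_k)_x$ and set $H_\infty(v,w):=H_{k,\infty}(v,w)$. This is well defined by the compatibility $H_{l,\infty}|_{E_m}=H_{m,\infty}$ from Theorem \ref{thm:For-each-,}, and it is positive definite because each $H_{k,\infty}$ is; smoothness follows from (2), since any two smooth local sections factor through a common $E_k$ on which $H_\infty=H_{k,\infty}$ is smooth. For (4), I would then replace the splitting summands by the $H_\infty$-orthogonal complements $Q_i':=E_{i-1}^{\perp}\cap E_i$, a smooth subbundle of $E_i$ carried isomorphically onto $Q_i$ by $E_i\to Q_i$, so that $E_i=E_{i-1}\oplus Q_i'=\bigoplus_{j\le i}Q_j'$ with the $Q_j'$ mutually $H_\infty$-orthogonal. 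Passing to the colimit produces the orthogonal decomposition $E_\infty\simeq\hat{\oplus}_{j=0}^\infty Q_j'\simeq\hat{\oplus}_{j=0}^\infty Q_j$.

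The hard part will be (1)--(2): rigorously upgrading the set-theoretic colimit to a smooth convenient vector bundle and proving that local smooth sections factor through finite stages. This rests on two ingredients working together — the colimit-topology fact that compacta sit inside a finite stage $E_k$ (valid because the $E_k\hookrightarrow E_{k+1}$ are closed embeddings), and the convenient-calculus criterion that smoothness is tested on smooth curves, which themselves factor through some $E_k$. Once this infrastructure is in place, (3) and (4) reduce to elementary fibrewise linear algebra (gluing compatible inner products and Gram--Schmidt orthogonalization of the filtration) carried out compatibly across the directed system.
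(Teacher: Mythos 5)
Your proposal is correct and follows essentially the same route as the paper: the Top-colimit of total spaces, convenient-calculus smoothness verified on smooth curves that factor through finite stages (because compacta in the colimit, resp.\ bounded sets in $\C^\infty_0$, lie in a finite stage), gluing of the compatible metrics $H_{k,\infty}$ into $H_\infty$, and an $H_\infty$-orthogonal re-splitting for (4). The only notable packaging difference is that you fix global smooth splittings $s_i\colon Q_i\to E_i$ up front to model $E_\infty$ as $\hat{\oplus}_{j}Q_j$, whereas the paper builds local frames extending stage by stage and identifies $E_\infty\simeq\hat{\oplus}_{j}Q_j$ only at the end via $H_\infty$; note also that the paper explicitly verifies, via the compactly generated category, that the fibre of the Top-colimit carries the inductive locally convex topology (isomorphic to $\C^\infty_0$), a point your identification with $\varinjlim_n\C^n$ uses implicitly and should be spelled out.
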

\begin{rem}
   For a sequence of closed embeddings of complex vector spaces of local convex spaces $V_1\to V_2\to V_3\to\cdots$, the inductive topology in the category of topological spaces does not need to be linear or locally convex (\cite[pp.44-45]{BierstedtAnintroductionlcInductivelimits}). However, if each $V_i$ has a finite rank, the inductive limit is isomorphic to $\C_0^\infty$. Here $\C^\infty
    _0$ is the space of finite sequences of complex numbers equipped with the inductive topology with respect to inclusion maps of its finite rank subspaces. It is well known that $\C^\infty_0$ is a locally convex space. See for instance  \cite[Theorem 6.1.4]{SakaiTopologyinfinite-dimensionalmanifolds}.
\end{rem}
\begin{rem}
    For locally convex topological vector spaces, the operator $\hat{\oplus}$ is the locally convex direct sum. See \cite[II.6]{SchaeferTVS99}. We use $\hat{\oplus}_{i=0}^\infty Q_i$ to denote the smooth topological vector bundle obtained by the same procedure in (1) as the colimit of bundles of finite rank $\oplus_{i=1}^kQ_i$. The notation emphasizes that each fiber $(\hat{\oplus}_{i=0}^\infty Q_i)_x$ is isomorphic to the locally convex direct sum $\hat{\oplus}(Q_i)_x\simeq \C^\infty_0$.
 \end{rem}
 
\begin{proof}
We denote by $r_k$ the rank of $E_k$.  In the category of topological spaces, let $G_{i}$ be the total space of the vector bundle $E_{i}$. The injective holomorphic bundle map $\iota_{i}:\E_{i}\to \E_{i+1}$ gives
the inclusion $G_{i}\to G_{i+1}$ as topological spaces. Let
$\tau_{i}$ be the usual topology of $G_{i}$. Let
\[
(G_{\infty},\tau):={\varinjlim}(G_{i},\tau_{i})
\]be the colimit object in the category of topological space. We briefly review the
construction of $G_{\infty}$. Let $G_{\infty}=\sqcup_{i}G_{i}/_{\sim},$ where $\sim$ is an equivalent relation that identifies $p\in G_{i},q\in G_{j}$ if there exists $k\geq\max\{i,j\}$ such that the images of $p$ and
$q$ are the same in $G_{k}.$ Let $g_{i}$ be the natural inclusion
map from $G_{i}$ to $G_{\infty}$. Put the topology $\tau$
on $G_{\infty}$ to be the finest topology such that $g_{i}:G_{i}\to G$
is continuous. In other words, a set $U$ is open in $G_{\infty}$
if and only if $U|_{G_{i}}$ is open in $G_{i}$ for any $i$. Notice that the projection maps $\{\pi_i:G_{i}\to X\}$ induce a continuous
projection map $\pi_{\infty}:G_{\infty}\to X$. For each $x\in X$, denote by $\pi_\infty^{-1}(x)$ and $\pi_i^{-1}(x)$ the fibers in $G_\infty$ and $G_i$ respectively.  

We claim that \[{\varinjlim}\pi_i^{-1}(x)=\pi_\infty^{-1}(x).\] If we switch to the category of compactly generated topological spaces  (\cite[Def. 1.1.1]{MayParametrizedhomotopytheory}), then fiber products commute with inductive limits (\cite[Remark 1.6.4]{MayParametrizedhomotopytheory}). Since $G_i$ and $G_\infty$ are compactly generated and $X$ is compact and Hausdorff, the fiber product $\pi_\infty^{-1}(x)=(\varinjlim G_i)\times_X \{x\}$ is the same in the compactly generated category (\cite[Prop. 1.6.1]{MayParametrizedhomotopytheory}). Thus \[{\varinjlim}\pi_i^{-1}(x)=\varinjlim (G_i\times_X \{x\})\underset{homeo}{\simeq} (\varinjlim G_i)\times_X \{x\}=\pi_\infty^{-1}(x). \] We have proved the claim. Since ${\varinjlim}\pi_i^{-1}(x)$ with the inductive topology becomes a locally convex topological vector space (isomorphic to $\C^\infty_0$), we can endow $\pi_{\infty}^{-1}(x)$ a natural structure  of locally convex topological vector space.
 
To show that $\pi_\infty :G_\infty \to X$ admits a vector bundle structure, we need to find a local trivialization. Pick a finite open ball cover $\{U_i\}_{i\in I}$ of $X$. Then each $E_j|_{U_i}$ is trivial. Let $U\in \{U_i:i\in I\}$. Let $\{e_{0,\alpha}:1\leq \alpha\leq r_0\}$ be a smooth local basis for $E_0|_U$. 
We may find a set of sections $\{e_{1,\beta }:1\leq \beta \leq r_1-r_0\}$ of $E_1|_U$ such that $\{\iota_0e_{0,\alpha}\}\cup\{e_{1,\beta}\}$ is a smooth basis for $E_1|_U$. By induction, we may find a set of smooth sections $\{e_\alpha\}_{\alpha=1}^{\infty}\subset  \varinjlim\mathcal{C}^{\infty}(E_{j})(U)$ such that
\begin{equation}
   \label{eq:span} \text{span}_\C\{e_\alpha(x):1\leq\alpha\leq r_k\}=(\iota_{k,\infty}E_k)_x,
\end{equation} where $\iota_{k,\infty}$ is the canonical inclusion $E_k\to E_{\infty}$. Then $\{\iota_{k,\infty}^{-1}e_\alpha\}$ defines a smooth local trivialization:
 \[\phi_k:\pi_k^{-1}(U)\to U\times \C^{r_k},\ (x,\sum_{\alpha=1}^{r_k} v_\alpha \iota_{k,\infty}^{-1}e_\alpha(x)) \mapsto (x,(v_1,\cdots ,v_{r_k} )).\]
We denote the inverse map of $\phi_k $ by   $ \psi_k:U\times \C^{r_k}\to \pi_k^{-1}(U).$  Moreover, the inclusion $\iota_k:E_k\to E_{k+1}$ induces the map 
\[  g'_k: U\times \C^{r_k}\hookrightarrow U\times \C^{r_{k+1}}\]
via $(x,(v_1,\cdots ,v_{r_k} ))\mapsto (x,(v_1,\cdots ,v_{r_k },0,\cdots,0))$ which is $\C$-linear in the second variable. The following diagram commutes

\begin{center}
\leavevmode \xymatrix{\pi_{0}^{-1}(U)\ar[r]^{g_{0}}\ar[d]^{\phi_{0}} & \pi_{1}^{-1}(U)\ar[r]^{g_{1}}\ar[d]^{\phi_{1}} & \pi_{2}^{-1}(U)\ar[d]^{\phi_2} \ar[r]^{g_2} & \cdots \\
U\times\C^{r_0}\ar[r]^{g'_{0}}\ar@<1ex>[u]^{\psi_{0}} & U\times\C^{r_1}\ar[r]^{g'_{1}}\ar@<1ex>[u]^{\psi_{1}} &  U\times \C^{r_1}\ar[r] ^{g'_2}\ar@<1ex>[u]^{\psi_{2}}& \cdots 
}
\end{center} 
The direct limit of the first row in the category of topological space is $\pi_{\infty}^{-1}(U)$ and the direct limit of the second row is $U\times \C^\infty_0$. Then the universal property of colimit objects gives continuous maps:
\begin{center}
\leavevmode \xymatrix{\pi_{\infty}^{-1}(U)\ar@<0.5ex>[r]^{\phi_{\infty}} & U\times\C_{0}^{\infty}\ar@<0.5ex>[l]^{\psi_{\infty}}\\
}
\end{center}
Since $\phi_\infty$ and $\psi_\infty$ are continuous and set theoretic inverses to each other, they are homeomorphisms. So we have obtained a local trivialization on $U$.   Suppose $V\in \{U_i\}_{i\in I}$ is another open ball.   We can also construct a trivialization $\phi'_\infty:\pi_\infty^{-1}(V)\to V\times \C^\infty_0$ following the same procedure. In $U\cap V$, the map 
\begin{align}
\label{eq:smoothstru}\phi'_\infty|_{U\cap V} \circ \psi_\infty|_{U\cap V} :V\cap U\times \C^\infty_0\to  V\cap U\times \C^\infty_0    
\end{align}
is  continuous  and   $\C$-linear in the second variable by  construction. Therefore, we have a vector bundle and denote it  $E_\infty$. This concludes the proof of (1).

The construction above also gives a smooth structure on $E_\infty$ that turns $E_\infty$ into a smooth vector bundle with fiber isomorphic to $\C^\infty_0$. We use the language of convenient calculus. For more details, see the monograph \cite{kriegl1997convenient}. Let $W$ be a locally convex vector space. Let $\gamma:\mathbb{R}\to W$ be a continuous map. We call $\gamma$ differentiable at $x$ if
\[
\gamma'(x)=\lim_{t\to 0}\frac{\gamma(x+t)-\gamma(x)}{t} 
\] exists. If $\gamma':\mathbb{R}\to W$ is also a continuous map, then call $\gamma$ a $C^1$ curve. If all higher-order derivatives of $\gamma$ exist and are continuous, we call $\gamma$  a smooth curve in $W$.  
Let $\mathcal{C}_W$ be the set of smooth curves in a locally convex space $W$. Let $c^\infty W$ be the topological vector space with the same underlying vector space $W$ and the final topology with respect to all maps of smooth curves in $W$. 
Following \cite[I.3.11]{kriegl1997convenient}, we give the definition of smooth maps. 
\begin{defn} 
   Let $W,W'$ be locally convex vector spaces. Let $U$ be a $c^\infty$-open set of $W$. We call a map $F:U\to W'$ a \textbf{smooth} map if $F$ maps every smooth curve in $U$ to a smooth curve in $W'$. 
\end{defn}
For general locally convex space $W$, $c^\infty W\not=W$ as topological vector spaces. However, since $\C^\infty_0$ is the strong dual of $\prod_{i\in\N} \C$ which is a Fr\'{e}chet-Schwartz space, $c^\infty\C^\infty_0=\C^\infty_0$. See \cite[Theorem 4.11 (2)]{kriegl1997convenient}.

The bounded subsets of $\C^\infty_0$ are precisely those in some finite dimensional subspaces, \cite[ChII 6.5]{SchaeferTVS99}. In particular, each compact subset is contained in some $\C^N$. Therefore, if $Z$ is a locally compact topological space, for any continuous map $f:Z\to \C^\infty_0$ and $z\in Z$, there is a neighborhood $U_z$ of $z$ and $N\in\N$ such that $f(U_z)\subset \C^N$.  

Now, we show the smoothness of (\ref{eq:smoothstru}). Let $\Phi=\phi'_\infty|_{U\cap V} \circ \psi_\infty|_{U\cap V}$, and $W=\mathbb{\C}\times\C^\infty_0$. $\R$ is locally compact. A map $\gamma:\R\to U\cap V\times \C^\infty_0$ is a smooth curve (hence continuous) if and only if for any $t\in\R$, there exist a neighborhood $(t-\epsilon,t+\ep)$ and some $N\in\N$ such that $\gamma|_{(t-\epsilon,t+\epsilon)}$ is a smooth curve in $\C\times \C^N$. Then we pick any $r_k>N$ such that
\[
\Phi(\gamma|_{(t-\epsilon,t+\ep)})=\phi'_k|_{U\cap V}\circ\psi_k|_{U\cap V} (\gamma|_{(t-\epsilon,t+\ep)}),
 \]
is a smooth curve inside $W$. Thus, $\Phi\circ \gamma$ is a smooth curve in $\C\times \C^\infty_0$. Therefore, by definition,  $\Phi$ is a smooth map.  Apply the above procedure to each $U_i\cap U_j$ to obtain a smooth structure of $E_\infty$. 

We then show that $\mathcal{F}_\infty$ is canonically isomorphic to $\mathcal{C}^\infty (E_\infty)$. Any $U$ open subset of $X$ is locally compact. The same argument above shows that any smooth sections of $E_\infty$ are also locally contained in some finite dimensional subbundles. Notice $s\in \mathcal{F}_\infty (U)$  if for any $x\in U$ there exists a neighborhood $V_x$ of $x$ and $k\in\N$ such that $s|_{V_x}\in \mathcal{C}^\infty(E_k)(V_x)$. So $s$ gives a map from $U$ to $E_\infty$ that is a smooth section of $E_\infty$ in $U$. So we obtain a canonical morphism $q$ from $\mathcal{F}_\infty$ to the sheaf of smooth sections of $E_\infty$. Apparently, $q$ is stalkwise injective and surjective. Thus, $q$ defines an isomorphism. We have proved (2).

Finally, we prove (3) and (4). If $\{\mathcal{E}_i\}_i$ is a well-approximating sequence, then $H_{k,\infty}$ constructed in Theorem \ref{thm:For-any-,} defines a positive Hermitian form $H_\infty$ on $E_\infty$. It is non-degenerate as it restricts to a Hermitian metric on any $E_k$. Now $H_\infty|_{E_k}=H_{k,\infty}$ gives a smooth orthogonal splitting $
\Pi_k:E_k\xrightarrow[]{\simeq}\oplus_{i=0}^k Q_i,$
where $Q_i$  is the quotient bundle $E_k/E_{k-1}$ and $Q_0=E_0$. Moreover since $H_\infty|_{E_k}=H_{l,\infty}|_{E_k}
$  is compatible for any $k\leq l$, we have 
\begin{align}
\leavevmode \label{eq:EQcommutdiag}\xymatrix{E_0\ar[r]\ar[d]^{\Pi_{0}} & E_{1}\ar[r]\ar[d]^{\Pi_{1}} & E_{2}\ar[d]^{\Pi_2} \ar[r] & \cdots \\
Q_0\ar[r] & \oplus^1_{i=0} Q_i \ar[r] &   \oplus^2_{i=0} Q_i\ar[r]& \cdots 
}
\end{align} 
Using the same construction in (1), (2), we obtain a smooth topological vector bundle of infinite rank $Q_\infty=\hat{\oplus}_i Q_i$ which is the inductive limit of $\oplus_{i=1}^k Q_i$ as $k$ goes to infinity. $(Q_\infty)_x$ is the locally convex direct sum of $(Q_i)_x$  that is also isomorphic to $\C^\infty_0$. Then the diagram (\ref{eq:EQcommutdiag}) again gives a smooth isomorphism $\Pi_\infty$ between $E_\infty$ and $Q_\infty$. The smoothness of $\Pi_\infty$ can be proved similarly as the smoothness of (\ref{eq:smoothstru}). We have finished the proof.
\end{proof}

 \section{Convergence of curvature \label{Section:Curvature convergence and gauge fixing}}
The purpose of this section is to show that $H_\infty$ given in Theorem \ref{thm:6.1tVS} induces formally a connection $\textbf{A}$ which can be viewed as a Hermitian-Einstein connection. 

Fix $k\in\N$. For each $i\in\N$ and $i<k$, we denote $\Q_{i}=\text{coker}(\iota_{i}:\E_{i}\to \E_{i+1})$
and $q_{i+1}$ the quotient map. 
\[
0\to \E_{i}\xrightarrow[]{\iota_i} \E_{i+1}\xrightarrow[]{q_{i+1}} \Q_{i+1}\to0.
\]
The inner product
$H_{k}|_{E_{i+1}}$ induces an adjoint map $\iota_{i}^{\dagger}:E_{i+1}\to E_{i}$
such that $\< \iota_{i}e,e'\>_{H_{i+1}}=\< e,\iota_{i}^{\dagger}e'\>_{H_{i}}.$
Since $\iota_{i}$ is isometric, $\iota_{i}^{\dagger}\circ\iota_{i}=\text{id}$.
For $j>i$, let $\iota_{j,i}=\iota_{j-1}\circ\cdots\circ\iota_{i}:E_{i}\to E_{j}.$
Let $\iota_{i,i}=\text{id}_{E_{i}}$. We denote the adjoint of $q_{i}$
by $q_{i}^{\dagger}$. Let $\pi_{0}^ {}=\iota_{k,0}^{\dagger}$, and
for $i\leq k$, $\pi_{i}:=q_{i}\circ\iota_{k,i}^{\dagger}$. Set $Q_{0}=E_{0}$.
Then, 
\[
\oplus_{i=0}^{k}\pi_{i}:E_{k}\to\bigoplus_{i=0}^{k}Q_{i}
\]
is a smooth isomorphism between smooth $\C$-vector bundles. Let $A_{H_{k}}$
be the Chern connection of $H_{k}$. We may write ${}_k{\beta}_{i,j}^{\dagger}=-\pi_{i}\dbar\pi_{j}^{\dagger}\in A^{0,1}(\text{Hom}(Q_{j},Q_{i}))$.
Similar to Lemma \ref{lem:If--is-1}, we have the following lemma.
\begin{lem}
\label{lem:Let--be-1}Let $H_{k}$ be a Hermitian-Einstein metric
on $\E_{k}$. Then, ${}_k{\beta}_{i,j}^{\dagger}=0$ if $i>j$, and $\dbar({}_k{\beta}_{i,j}^{\dagger})=\dbar_{H_{k}}^{*}{}_k{\beta}_{i,j}^{\dagger}=0$.
\end{lem}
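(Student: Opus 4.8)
The plan is to prove the three assertions by separate mechanisms: the vanishing for $i>j$ encodes holomorphicity of the flag, the $\dbar$-closedness is a dimension count, and the $\dbar^{*}$-closedness is the Hermitian--Einstein input, handled exactly as in Lemma~\ref{lem:If--is-1}. First I would dispose of the vanishing ${}_k\beta_{i,j}^{\dagger}=0$ for $i>j$. Since $\pi_{j}^{\dagger}=\iota_{k,j}q_{j}^{\dagger}$ factors through the inclusion of $\E_{j}$, for any local section $t$ of $Q_{j}$ the section $\pi_{j}^{\dagger}t$ takes values in the \emph{holomorphic} subbundle $\E_{j}\subset\E_{k}$, so $\dbar(\pi_{j}^{\dagger}t)$ is a $(0,1)$-form with values in $\E_{j}$. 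On the other hand, for $i>j$ the projection $\pi_{i}=q_{i}\circ\iota_{k,i}^{\dagger}$ annihilates $\E_{j}$: if $v\in\E_{j}\subset\E_{i}$ then $\iota_{k,i}^{\dagger}v=v$ and $q_{i}v=0$, because $\E_{j}\subset\E_{i-1}=\ker q_{i}$. Hence $\pi_{i}\dbar\pi_{j}^{\dagger}=0$, i.e. ${}_k\beta_{i,j}^{\dagger}=0$.

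The $\dbar$-closedness $\dbar({}_k\beta_{i,j}^{\dagger})=0$ is then automatic: it is a $(0,2)$-form, and there are no nonzero $(0,2)$-forms on a Riemann surface.

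For the substantive claim $\dbar_{H_{k}}^{*}({}_k\beta_{i,j}^{\dagger})=0$ I would run the curvature argument of Lemma~\ref{lem:If--is-1}. Writing the Chern connection of $(\E_{k},H_{k})$ in the smooth orthogonal splitting $E_{k}\simeq\bigoplus_{l}Q_{l}$ as $\nabla=\nabla^{\mathrm{diag}}+\Psi$, the total second fundamental form has $(0,1)$-part $B^{\dagger}=\sum_{i<j}{}_k\beta_{i,j}^{\dagger}$, the strictly upper-triangular piece of $\nabla^{0,1}$. I would establish the flag analogue of (\ref{eq:curvatureSES}): the strictly-upper part of the $(1,1)$-curvature equals $-\pdv_{H_{k}}B^{\dagger}$, where $\pdv_{H_{k}}$ is the full $(1,0)$ Chern covariant derivative on $\End(E_{k})$. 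Since $H_{k}$ is Hermitian--Einstein, $\sqrt{-1}\Lambda F_{H_{k}}=2\pi\mu(\E_{k})\text{Id}$ is block-scalar, so every off-diagonal block of $\sqrt{-1}\Lambda F_{H_{k}}$ vanishes; reading off the $(i,j)$-block with $i<j$ gives $\sqrt{-1}\Lambda\pdv_{H_{k}}({}_k\beta_{i,j}^{\dagger})=0$. By the Kähler identity $\dbar_{H_{k}}^{*}\xi=\sqrt{-1}\Lambda\pdv_{H_{k}}\xi$ for a $(0,1)$-form $\xi$ (the $\pdv_{H_{k}}\Lambda\xi$ term dropping out by type), this is exactly $\dbar_{H_{k}}^{*}({}_k\beta_{i,j}^{\dagger})=0$.

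The hard part will be the block identity for the curvature. For the two-term splitting of (\ref{eq:curvatureSES}) there are no intermediate graded pieces, so the quadratic term $\Psi\wedge\Psi$ in $F_{H_{k}}=F^{\mathrm{diag}}+\nabla^{\mathrm{diag}}\Psi+\Psi\wedge\Psi$ is block-diagonal and the off-diagonal curvature is the pure term $-\pdv_{H}\beta^{\dagger}$. For a genuine multi-step flag the off-diagonal $(i,j)$-block of $\Psi\wedge\Psi$ acquires nonzero contributions of the form $\beta_{i,l}\wedge{}_k\beta_{l,j}^{\dagger}$ and ${}_k\beta_{i,l}^{\dagger}\wedge\beta_{l,j}$ coming from the other graded pieces ($l<i$ or $l>j$), and these do not vanish pointwise. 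The bookkeeping amounts to checking that they are precisely the commutator terms distinguishing the full covariant derivative $\pdv_{H_{k}}$ on $\End(E_{k})$ from the block-diagonal one; equivalently, one must use the connection induced from the ambient Hermitian--Einstein bundle $E_{k}$ (rather than the naive product connection on $\Hom(Q_{j},Q_{i})$), so that $\dbar_{H_{k}}^{*}$ of each off-diagonal block is identified with $-\sqrt{-1}\Lambda$ of the corresponding curvature block. Once this identification is in place, the Hermitian--Einstein equation closes the argument as above.
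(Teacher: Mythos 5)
Your first two assertions are handled correctly, and by the right mechanisms. The vanishing of ${}_k\beta_{i,j}^{\dagger}$ for $i>j$ does follow from holomorphicity of the filtration exactly as you argue: $\pi_j^{\dagger}=\iota_{k,j}\circ q_j^{\dagger}$ takes values in the $\dbar$-stable subbundle $\E_j$, and $\pi_i$ annihilates $\E_j\subset\E_{i-1}=\ker q_i$ when $i>j$ (with the routine care you implicitly take that $(\dbar\pi_j^{\dagger})(t)=\dbar(\pi_j^{\dagger}t)-\pi_j^{\dagger}\dbar_{Q_j}t$, both terms valued in $\E_j$). Likewise $\dbar({}_k\beta_{i,j}^{\dagger})=0$ is indeed automatic on a Riemann surface for type reasons. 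The paper gives no written argument beyond the pointer ``similar to Lemma \ref{lem:If--is-1}'', and these two steps are consistent with that.

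The genuine gap is in the $\dbar^{*}$ part, at precisely the point you flagged and then asserted away. The quadratic cross-terms in the $(i,j)$-curvature block are \emph{not} commutator terms involving the block ${}_k\beta_{i,j}^{\dagger}$ itself. Writing $\nabla=D+\Psi$ in the orthogonal splitting $E_k\simeq\oplus_l Q_l$ and expanding $F=F_D+d_D\Psi+\Psi\wedge\Psi$, the $(1,1)$-part of the $(i,j)$-block for $i<j$ is
\begin{equation*}
-\pdv_{D}\,{}_k\beta_{i,j}^{\dagger}\;-\;\sum_{l<i}{}_k\beta_{i,l}\wedge{}_k\beta_{l,j}^{\dagger}\;-\;\sum_{l>j}{}_k\beta_{i,l}^{\dagger}\wedge{}_k\beta_{l,j},
\end{equation*}
where ${}_k\beta_{a,b}=({}_k\beta_{b,a}^{\dagger})^{\dagger}$. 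These quadratic terms involve the \emph{other} blocks ${}_k\beta_{l,j}^{\dagger}$ and ${}_k\beta_{i,l}^{\dagger}$, not ${}_k\beta_{i,j}^{\dagger}$, so no choice of connection on $\Hom(Q_j,Q_i)$ --- neither the block-diagonal one nor ``the one induced from the ambient bundle'', whose commutator $[\Psi^{1,0},\cdot]$ applied to the single block never lands in the $(i,j)$-slot --- can identify $-\sqrt{-1}\Lambda F_{ij}$ with $\dbar^{*}$ of the single block: the left-hand side depends on data other than that block. Concretely, in a three-step flag the Hermitian--Einstein equation yields $\sqrt{-1}\Lambda\pdv_{D}\,{}_k\beta_{0,1}^{\dagger}=-\sqrt{-1}\Lambda({}_k\beta_{0,2}^{\dagger}\wedge{}_k\beta_{2,1})$, and the right side has no pointwise reason to vanish; so the blockwise curvature identity your scheme requires is false for flags of length $\geq 3$. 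The correct repair --- and the reading the paper's pointer to Lemma \ref{lem:If--is-1} intends --- is a reduction to two-step sequences: for any $m$ with $i\leq m<j$, apply Lemma \ref{lem:If--is-1} to $0\to\E_m\to\E_k\to\E_k/\E_m\to0$; the \emph{total} second fundamental form $\beta_{(m)}^{\dagger}=\pi_{E_m}\dbar(1-\pi_{E_m})$, whose $(i,j)$-block is ${}_k\beta_{i,j}^{\dagger}$, satisfies $\dbar\beta_{(m)}^{\dagger}=\dbar^{*}\beta_{(m)}^{\dagger}=0$ with respect to the induced (non-block-diagonal) connections on sub- and quotient bundle; equivalently, only the strictly upper-triangular part of $\sqrt{-1}\Lambda\pdv_{\End(E_k)}\Psi^{0,1}$ vanishes, as a statement about all blocks at once. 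This two-step/assembled form is also all that is used downstream: the sup-norm estimates in Theorem \ref{thm:Let--be=000020ci} and its corollaries are obtained via Proposition \ref{prop:Suppose--and} applied to total second fundamental forms of two-step subsequences, with individual blocks then dominated by the totals.
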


The next result indicates that $H_{\infty}$
can be viewed almost as a "Hermitian-Einstein metric'' on $E_{\infty}$.
\begin{thm}
\label{thm:Let--be=000020ci}Let $\{(\E_{i},H_{i,\infty})\}$ be given
as in Theorem \ref{thm:For-each-,} with $r=2$. Then, for any fixed
$k$ and $j\to\infty$, we have 
\begin{equation}
\left\Vert \frac{\sqrt{-1}}{2\pi}F_{H_{j,\infty}}|_{\E_{k}}-\theta\omega\text{id}_{\E_{k}}\right\Vert _{2,H_{k,\infty},C^{0}}\to0.\label{eq:-48}
\end{equation}
\end{thm}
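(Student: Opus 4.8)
The plan is to compare the curvature of the (non--Hermitian--Einstein) metric $H_{j,\infty}$ with the genuine Hermitian--Einstein metrics $\tilde H_i$ out of which it was built, and to show that the entire discrepancy is carried by a second--fundamental--form (``leakage'') term that becomes negligible on the fixed bundle $\E_k$ as $j\to\infty$.

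First I would set up the curvature comparison. Since $\tilde H_i$ is Hermitian--Einstein on $\E_i$, its curvature is scalar, $\im F_{\tilde H_i}=-2\pi\mu(\E_i)\omega\,\mathrm{id}_{\E_i}$, so for the holomorphic subbundle $\E_j\subset\E_i$ the block formula (\ref{eq:curvatureSES}) gives
\begin{equation*}
F_{\tilde H_i|_{\E_j}}= -2\pi\im\,\mu(\E_i)\omega\,\mathrm{id}_{\E_j}+\beta_{j,i}^{\dagger}\wedge\beta_{j,i},
\end{equation*}
where $\beta_{j,i}$ is the second fundamental form of $\E_j$ in $(\E_i,\tilde H_i)$. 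By Theorem \ref{thm:For-each-,} (with $r=2$) we have $\tilde H_i|_{\E_j}\to H_{j,\infty}$ in $C^2$ along the chosen subsequence, hence $F_{\tilde H_i|_{\E_j}}\to F_{H_{j,\infty}}$ in $C^0$; together with $\mu(\E_i)\to\theta$ this yields $\tfrac{\im}{2\pi}F_{H_{j,\infty}}=\theta\omega\,\mathrm{id}_{\E_j}+\Psi_j$, where $\Psi_j:=\tfrac{\im}{2\pi}\lim_{i\to\infty}\beta_{j,i}^{\dagger}\wedge\beta_{j,i}$ in $C^0$. Taking the $\E_k$--block reduces the theorem to the single estimate $\|\Psi_j|_{\E_k}\|_{2,H_{k,\infty},C^0}\to 0$ as $j\to\infty$.

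Next I would analyse $\Psi_j|_{\E_k}$ through the second fundamental form of the fixed--rank bundle $\E_k$. Splitting $\E_k\subset\E_j\subset\E_i$ orthogonally with respect to $\tilde H_i$ decomposes the second fundamental form $\beta_{k,i}$ of $\E_k$ in $\E_i$ into a part $\beta_{k,i}^{(1)}$ tangent to $\E_j$ and a leakage part $\beta_{k,i}^{(2)}=\beta_{j,i}|_{\E_k}$ normal to $\E_j$; since the targets are orthogonal, $\beta_{k,i}^{\dagger}\wedge\beta_{k,i}=(\beta_{k,i}^{(1)})^{\dagger}\wedge\beta_{k,i}^{(1)}+(\beta_{k,i}^{(2)})^{\dagger}\wedge\beta_{k,i}^{(2)}$, and $\Psi_j|_{\E_k}$ is precisely the $C^0$--limit of the leakage term. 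Because $\im\Lambda(\beta^{\dagger}\wedge\beta)\le 0$, the block $\Psi_j|_{\E_k}$ is sign--definite, so its Hilbert--Schmidt norm is controlled by its trace and $\int_X|\Psi_j|_{\E_k}|_{2}\,\omega\lesssim\tfrac{1}{2\pi}\lim_{i\to\infty}\|\beta_{k,i}^{(2)}\|_{L^2}^2=:\tfrac{1}{2\pi}R_j$. The energy identity of Lemma \ref{lem:If--is} gives $\|\beta_{k,i}\|_{L^2}^2=2\pi(\mu(\E_i)-\mu(\E_k))\,\mathrm{rk}\,\E_k$, bounded uniformly in $i$ as $\mathrm{rk}\,\E_k$ is fixed and $\mu(\E_i)\le\theta$; and $\|\beta_{k,i}^{(1)}\|_{L^2}^2\to\|B_{k,j}\|_{L^2}^2$, where $B_{k,j}$ is the second fundamental form of $\E_k$ in $(\E_j,H_{j,\infty})$. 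Hence $R_j=2\pi(\theta-\mu(\E_k))\,\mathrm{rk}\,\E_k-\|B_{k,j}\|_{L^2}^2$, and since $\|B_{k,j}\|_{L^2}^2$ is non--decreasing in $j$ and bounded above by $2\pi(\theta-\mu(\E_k))\,\mathrm{rk}\,\E_k$, the quantity $R_j$ converges.

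The main obstacle is to show this limit is $0$, i.e.\ that no second--fundamental--form energy escapes to infinity along the filtration as $j\to\infty$. I would establish this fiberwise: writing $\beta_{k,i}^{(2)}e_a=(1-\pi_{j,i})\pdv_{\tilde H_i}e_a$ for a local $H_{k,\infty}$--orthonormal frame $\{e_a\}$ of $\E_k$ (with $\pi_{j,i}$ the $\tilde H_i$--orthogonal projection onto $\E_j$), the uniform rank--independent $C^1$ bound on $\beta_{k,i}$ from Theorem \ref{thm:Assume-the-same} (available because $\mathrm{rk}\,\E_k$ is fixed, so $\|\beta_{k,i}\|_{L^2}$ is uniformly bounded) yields equicontinuity and fiberwise weak convergence of $\pdv_{\tilde H_i}e_a$ in the growing fibers, while the $C^0$--convergence of $\beta_{k,i}^{\dagger}\wedge\beta_{k,i}$ forces pointwise convergence of the fiber norms $|\beta_{k,i}(x)|_{2}^2$. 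Weak convergence together with norm convergence upgrades to strong fiberwise convergence, so the limiting section captures the full energy and its component normal to $\E_j$ tends to $0$; thus $R_j\to 0$ and $\|\Psi_j|_{\E_k}\|_{2,H_{k,\infty},L^1}\to 0$. Finally the same uniform $C^1$ bound makes $\{\Psi_j|_{\E_k}\}_j$ precompact in $C^0$, so the $L^1$--convergence upgrades to the asserted $C^0$--convergence, completing the proof. The delicate step is this no--escape claim, which genuinely requires combining the uniform higher--order estimates with the pointwise norm convergence rather than either alone.
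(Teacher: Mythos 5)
Your setup through the energy bookkeeping matches the paper's own proof: the paper likewise fixes $k\leq j\leq i$, splits $E_{i}=E_{k}\oplus(E_{j}/E_{k})\oplus(E_{i}/E_{j})$ orthogonally with respect to the Hermitian--Einstein metric $H_{i}$, and identifies the discrepancy $F_{H_{i}|_{E_{j}}}|_{E_{k}}-F_{H_{i}}|_{E_{k}}={}_i\beta_{0,2}^{\dagger}\wedge{}_i\beta_{2,0}$, which is exactly your leakage term $\beta^{(2)}_{k,i}=\beta_{j,i}|_{\E_{k}}$. The two proofs diverge at how this leakage is killed, and there your argument has a genuine gap. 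The paper is quantitative: Lemma \ref{lem:If--is} (Chern--Weil) gives $\|\beta_{j,i}\|_{L^{2}}^{2}=2\pi(\mu(\E_{i})-\mu(\E_{j}))\,\text{rk}\,\E_{j}$ for the subbundle $\E_{j}\subset\E_{i}$, the rank-independent elliptic estimate of Proposition \ref{prop:Suppose--and} upgrades this to $\sup_{X}|{}_i\beta_{2,0}|^{2}\leq C(\theta-\mu(\E_{j}))\,\text{rk}\,\E_{j}$ \emph{uniformly in $i$}, and the conclusion follows from the Diophantine decay $(\theta-\mu(\E_{j}))\,\text{rk}\,\E_{j}\to0$, which is where the well-approximation hypothesis enters. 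Note the paper applies the $L^{\infty}$ estimate to $\E_{j}\subset\E_{i}$, whose energy decays in $j$, not to $\E_{k}\subset\E_{i}$, whose energy merely stays bounded.

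Your final ``no-escape'' step, by contrast, never uses this decay --- you only invoke $\mu(\E_{i})\leq\theta$ --- and the functional-analytic upgrade you rely on is circular. In a Hilbert space, weak convergence plus convergence of norms implies strong convergence \emph{only if} the limit of the norms equals the norm of the weak limit; weak convergence alone gives just lower semicontinuity, $|B_{k,\infty}(x)|\leq\lim_{i}|\beta_{k,i}(x)|$. The equality of these two quantities --- i.e.\ that no fixed fraction of $|\beta_{k,i}|^{2}$ sits in $E_{i}/E_{j}$ for all $i\gg j$ uniformly in $j$ --- is precisely the assertion $R_{j}\to0$ you are trying to prove, so nothing has been gained: your uniform $C^{1}$ bounds give equicontinuity and convergence of each fixed block (this part is correct, as is the monotonicity of $\|B_{k,j}\|_{L^{2}}^{2}$), but they place no constraint on the tail blocks, exactly as an orthonormal sequence satisfies $e_{i}\rightharpoonup0$ with $|e_{i}|\equiv1$. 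A sanity check that no such soft argument can close the gap: your reasoning would apply verbatim to the rational-slope sequences of Section \ref{subsec:final1} (rank $n$, degree $-1$ bundles on an elliptic curve), where $(\theta-\mu(\E_{n}))\,\text{rk}\,\E_{n}\equiv1$ and the paper shows the intended Hermitian--Einstein Hilbert bundle cannot exist; a proof of the theorem that never distinguishes these sequences from well-approximating ones proves too much. To repair the argument you must reinstate the well-approximation hypothesis at this point, most directly via the paper's uniform-in-$i$ sup bound on the leakage term.
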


\begin{proof}
Suppose $i\geq j\geq k$. Let $H_{i}$ be the Hermitian Einstein metric
on $\E_{i}$. Using $H_{i}$,  
\[
E_{i}=Q_{0}\oplus Q_{1}\oplus Q_{2},
\]
where $Q_{0}=E_{k}$, $Q_{1}=E_{j}/E_{k}$ and $Q_{2}=E_{i}/E_{j}$.
Let $H=H_{i}$. Then we may write the Chern connection of $H$ as
\[
A=\left[\begin{array}{ccc}
A_{0} & -{}_i{\beta}_{0,1}^{\dagger} & -{}_i{\beta}_{0,2}^{\dagger}\\
{}_i{\beta}_{1,0} & A_{1} & -{}_i{\beta}_{1,2}^{\dagger}\\
{}_i{\beta}_{2,0} & {}_i{\beta}_{2,1} & A_{2}
\end{array}\right].
\]
For simplicity, we denote $P_{0}=E_{k},P_{1}=E_{j},P_{2}=E_{i}$.
Since $H$ is Hermitian-Einstein, the curvature matrix can be written
as 
\[
F=\left[\begin{array}{ccc}
F_{H|_{P_{0}}}-{}_i{\beta}_{0,1}^{\dagger}\wedge{}_i{\beta}_{1,0}-{}_i{\beta}_{0,2}^{\dagger}\wedge{}_i{\beta}_{2,0} & 0 & 0\\
0 & * & 0\\
0 & 0 & *
\end{array}\right].
\]
Notice that we have 
\[
F_{H|_{P_{1}}}|_{P_{0}}=F_{H}|_{P_{0}}+{}_i{\beta}_{0,2}^{\dagger}\wedge{}_i{\beta}_{2,0}.
\]
From Chern-Weil formula, we see that 
\begin{equation}
\frac{\sqrt{-1}}{2\pi}\int_{X}-{}_i{\beta}_{0,1}^{\dagger}\wedge{}_i{\beta}_{1,0}-{}_i{\beta}_{0,2}^{\dagger}\wedge{}_i{\beta}_{2,0}\leq(\mu(P_{2})-\mu(P_{1}))\text{rk}P_{1}.\label{eq:-46}
\end{equation}
From Proposition \ref{prop:Suppose--and}, we have 
\[
\|{}_i{\beta}_{2,0}\|_{2,H,L^{\infty}}^{2}\leq C(\mu(P_{2})-\mu(P_{1}))\text{rk}P_{1}\leq C(\theta-\mu(P_{1}))\text{rk}P_{1}.
\]
Here $C$ only depends on $X$. As $\|{}_i{\beta}_{2,0}\|_{\rho,H,L^{\infty}}\leq\|{}_i{\beta}_{2,0}\|_{2,H,L^{\infty}}$,
and $\theta$ is well-approximated, we have 
\begin{equation}
\|\sqrt{-1}F_{H|_{P_{1}}}|_{P_{0}}-2\pi\mu(E_{i})\omega\text{id}_{P_{0}}\|_{2,H|_{P_{0}},L^{\infty}}=O((\theta-\mu(P_{1}))\text{rk}P_{1}).\label{eq:-67}
\end{equation}
Because $\lim_{i\to\infty}H_{i}|_{E_{j}}=H_{j,\infty}$ in $C^{2}$ for fixed $j$, we may let $i$ goes to $\infty$ to have 
\begin{align}
\left\Vert \frac{\sqrt{-1}}{2\pi}F_{H_{j,\infty}}|_{P_{0}}-\theta\omega\text{id}_{P_{0}}\right\Vert _{2,H_{k,\infty},L^{\infty}} & =\left\Vert \lim_{i\to\infty}\left(\frac{\sqrt{-1}}{2\pi}F_{H_{i}|_{E_{j}}}|_{P_{0}}-\mu_{i}\omega\text{id}_{P_{0}}\right)\right\Vert _{2,H_{i}|_{P_{0}},L^{\infty}}\label{eq:-51}\\
 & \leq C((\theta-\mu(E_{j}))\text{rk}E_{j}).\nonumber 
\end{align}
Let $j\to\infty$. We then conclude the proof as $\lim_{j\to\infty}(\theta-\mu(E_{j}))\text{rk}E_{j}=0$.
\end{proof}

Using $H_\infty$, we also obtain a splitting of $E_\infty=\hat{\oplus}_{l=0}^\infty Q_l$ by Theorem \ref{thm:6.1tVS}. Let $\pi_{j}^{m}:E_{m}\to Q_{j}$ be the projection map. Note $\pi_{j}^m=q_{j}\circ\iota_{m,j}^{\dagger H_{m,\infty}}$ where $\iota_{m,j}^{\dagger H_{m,\infty}}$ is the adjoint of $\iota_{m,j}$ with respect to $H_{m,\infty}$. Denote $\pi_{j}^{\dagger {H_m,\infty}}$  the  adjoint of $\pi_{j}^m$ with respect to $H_{m,\infty}$. Let $i,j\leq m$ and \[ \gamma_{i,j}^{\dagger}:=-\pi_{i}^m\dbar \pi_{j}^{\dagger {H_m,\infty}}.\]
Then $\gamma_{i,j}^{\dagger }$ is independent of $m$. In fact, since $H_{m,\infty}|_{E_j}=H_{j,\infty}$, \[\dbar\pi_{j}^{\dagger {H_m,\infty}}=\dbar(\iota_{m,j}q_{j}^{\dagger {H_j,\infty}})=\iota_{m,j}\dbar q_{j}^{\dagger {H_j,\infty}}.\] Thus, $\pi_{i}^m\dbar \pi_{j}^{\dagger {H_m,\infty}}=q_{i}\iota_{i,j}^{\dagger {H_{j,\infty}}}\dbar q_{j}^{\dagger {H_j,\infty}}$ which is independent of $m$. Moreover, $\gamma_{i,j}^{\dagger }=0$ if $i>j$. We denote $\gamma_{j,i}$ the adjoint of $\gamma_{i,j}^\dagger$ with respect to $H_{m,\infty}$ and again $\gamma_{j,i}$ is independent of $m$.

Then, we may write formally the connection induced
by $H_{\infty}$ as

\begin{equation}
d+\mathbf{A}=\left[\begin{array}{cccc}
d+A_0 & -\gamma_{0,1}^{\dagger} & -\gamma_{0,2}^{\dagger} & \cdots \\
\gamma_{1,0} & d+A_{1} & \gamma_{1,2}^\dagger& \cdots\\
\gamma_{2,0} & \gamma_{2,1} & d+A_{2} & \cdots\\
\vdots&\vdots&\vdots&\ddots
\end{array}\right],\label{eq:-41}
\end{equation}
where $A_i$ is the Chern connection on $Q_i$ induced by $H_\infty|_{Q_i}$. Note, the restriction of  $d+\textbf{A}$ on each $E_k$ is clearly the induced Chern connection of $H_\infty|_{E_k}$.  However, for $s$ a smooth section of $E_\infty$ defined on an open set $V$, $(d+\mathbf{A})s$ does not necessarily belong to $C^{\infty}(V,T^*X\otimes E_{\infty})$
since it might have infinite many non-vanishing terms. On the other hand,  using $H_{\infty}$,
we may view $d+\mathbf{A}$ as a $\C$-linear map from $C^{\infty}(V,E_{\infty})$
to $C^{\infty}(V,T^{*}X\otimes\check{E}_{\infty}),$ where $\check{E}_{\infty}$ is the $\C$ anti-linear dual bundle of $E_{\infty}.$ More explicitly, $\< (d+\mathbf{A})s_{1}, s_{2}\>_{H_{\infty}}
$
is a smooth 1-form on $V$ for any $s_{1},s_{2}\in C^{\infty}(V,E_{\infty})$ and the pairing is $\C$-linear in the first variable and anti-linear in the second.

Now, we may formally write down the curvature  
\[
\mathbf{F}=d\mathbf{A}+\mathbf{A}\wedge\mathbf{A}.
\]
Notice that $d\mathbf{A}$ is well defined in $A^{2}(\text{Hom}(E_{\infty},\check{E}_{\infty}))$,
while $\mathbf{A}\wedge\mathbf{A}$ is possibly not since it involves
infinite matrix multiplication. However, we will show that $\mathbf{F}$ is well defined in $\text{Hom}(E_{\infty},A^{2}(E_{\infty}))$.  

For each $0\leq k\leq l\leq j$, we denote 
\[
\ \Gamma_{k,l:j}^{\dagger}=\left[\begin{array}{cccc}
\gamma_{0,l+1}^{\dagger} & \gamma_{0,l+2}^{\dagger} & \cdots & \gamma_{0,j}^{\dagger}\\
\vdots & \vdots & \vdots & \vdots\\
\gamma_{k,l+1}^{\dagger} & \gamma_{k,l+2}^{\dagger} & \cdots & \gamma_{k,j}^{\dagger}
\end{array}\right],\Gamma_{k,j:\infty}^{\dagger}=\left[\begin{array}{ccc}
\gamma_{0,j+1}^{\dagger} & \gamma_{0,j+2}^{\dagger} & \cdots\\
\vdots & \vdots & \vdots\\
\gamma_{k,j+1}^{\dagger} & \gamma_{k,j+2}^{\dagger} & \cdots
\end{array}\right].
\]
We may write
\begin{equation}
d+\mathbf{A}=\left[\begin{array}{ccc}
d+A_{H_{k,\infty}} & -\Gamma_{k,k:j}^{\dagger} & -\Gamma_{k,j:\infty}^{\dagger}\\
\Gamma_{k:j,k} & d+A_{E_{j}/E_{k}} & \cdots\\
\Gamma_{j:\infty,k} & \vdots & \ddots
\end{array}\right].\label{eq:-41-1}
\end{equation}
Then, \begin{equation}
\mathbf{F}=\left[\begin{array}{cc}
F_{H_{k,\infty}}-\Gamma_{k,k:j}^{\dagger}\wedge\Gamma_{k:j,k}-\Gamma_{k,j:\infty}^{\dagger}\wedge\Gamma_{j:\infty,k} & *\\
* & *
\end{array}\right].\label{eq:-64}
\end{equation}
Here $\Gamma_{k,j:\infty}^{\dagger}\wedge\Gamma_{j:\infty,k}=\lim_{l\to\infty}\Gamma_{k,j:l}^{\dagger}\wedge\Gamma_{j:l,k}$
if the limit exists. As a result of Theorem \ref{thm:Let--be=000020ci},
we have 
\begin{cor}
\label{cor:For-any-fixed}For any fixed $k,$ 
\begin{equation}
F_{H_{k,\infty}}-\lim_{j\to\infty}\Gamma_{k,k:j}^{\dagger}\wedge\Gamma_{k:j,k}=\frac{2\pi}{\sqrt{-1}}\theta\omega\text{Id}_{E_{k}},\label{eq:-43}
\end{equation}
under $C^{0}$-norm. In particular, for any $j\geq k$, $\Gamma_{k,j:\infty}^{\dagger}\wedge\Gamma_{j:\infty,k}$
defines a smooth section of $A^{1,1}(\text{End}(E_{k}))$ with $L^{\infty}$-estimate
\begin{equation}
\sup_{X}|\Gamma_{k,j:\infty}|_{2,L^{\infty}}^{2}\leq C(\theta-\mu(E_{j}))\text{rk}E_{j},\label{eq:-49}
\end{equation}
for some $C=C(X)$. Furthermore, $\mathbf{F}=2\pi\sqrt{-1}\theta\omega\text{Id}\in \text{Hom}(E_{\infty},A^{2}({E}_{\infty}))$ is a well-defined operator.
\end{cor}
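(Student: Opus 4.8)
The plan is to read the entire statement off the blockwise curvature formula \eqref{eq:curvatureSES} combined with Theorem \ref{thm:Let--be=000020ci}, using the decay estimate \eqref{eq:-49} as the device that makes every infinite matrix product convergent. First I would prove \eqref{eq:-43}. Fix $k$ and let $j\geq k$. Applying \eqref{eq:curvatureSES} to the holomorphic short exact sequence $0\to E_{k}\to E_{j}\to E_{j}/E_{k}\to 0$ equipped with the metric $H_{j,\infty}$ (which restricts to $H_{k,\infty}$ on $E_{k}$), and noting that the second fundamental form of $E_{k}\subset E_{j}$ is precisely the block $\Gamma_{k,k:j}$, the $(0,0)$-block of the subbundle curvature is
\[
F_{H_{j,\infty}}|_{E_{k}}=F_{H_{k,\infty}}-\Gamma_{k,k:j}^{\dagger}\wedge\Gamma_{k:j,k}.
\]
By Theorem \ref{thm:Let--be=000020ci} the left-hand side converges in $C^{0}$ to $\tfrac{2\pi}{\sqrt{-1}}\theta\omega\,\mathrm{id}_{E_{k}}$ as $j\to\infty$; since $F_{H_{k,\infty}}$ is independent of $j$, the limit $\lim_{j\to\infty}\Gamma_{k,k:j}^{\dagger}\wedge\Gamma_{k:j,k}$ exists in $C^{0}$ and \eqref{eq:-43} follows immediately.

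Next I would establish the decay estimate \eqref{eq:-49}, which at the same time shows that the tail $\Gamma_{k,j:\infty}^{\dagger}\wedge\Gamma_{j:\infty,k}$ is a convergent, smooth section. For $i\geq l>j\geq k$, decompose the Hermitian--Einstein bundle $(E_{i},H_{i})$ as $E_{i}=E_{k}\oplus(E_{j}/E_{k})\oplus(E_{i}/E_{j})$ using $H_{i}$. The second fundamental form ${}_{i}\beta_{2,0}$ from $E_{k}$ to $E_{i}/E_{j}$ is controlled exactly as in the proof of Theorem \ref{thm:Let--be=000020ci} via Proposition \ref{prop:Suppose--and}:
\[
\|{}_{i}\beta_{2,0}\|_{2,H_{i},L^{\infty}}^{2}\leq C\bigl(\mu(E_{i})-\mu(E_{j})\bigr)\mathrm{rk}E_{j}\leq C(\theta-\mu(E_{j}))\mathrm{rk}E_{j}.
\]
The block components of ${}_{i}\beta_{2,0}$ are the ${}_{i}\gamma_{b,a}$ with $a\leq k$ and $j<b\leq i$, and since $H_{i}|_{E_{m}}\to H_{m,\infty}$ in $C^{2}$ for each fixed $m$, each fixed component satisfies ${}_{i}\gamma_{b,a}\to\gamma_{b,a}$. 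Hence for any truncation level $l$ the partial sum $\sum_{a\leq k,\,j<b\leq l}|\gamma_{b,a}|_{2}^{2}$ is the limit of $\sum_{a\leq k,\,j<b\leq l}|{}_{i}\gamma_{b,a}|_{2}^{2}\leq\|{}_{i}\beta_{2,0}\|_{2,H_{i},L^{\infty}}^{2}$, and is therefore bounded by $C(\theta-\mu(E_{j}))\mathrm{rk}E_{j}$ uniformly in $l$. Letting $l\to\infty$ yields \eqref{eq:-49}; the uniform bound forces the series to converge, while the higher-order bounds of Theorem \ref{thm:Assume-the-same} give smoothness of $\Gamma_{k,j:\infty}^{\dagger}\wedge\Gamma_{j:\infty,k}$.

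Finally I would assemble $\mathbf{F}$. The $(0,0)$-block of $\mathbf{F}$ in \eqref{eq:-64} equals $F_{H_{j,\infty}}|_{E_{k}}-\Gamma_{k,j:\infty}^{\dagger}\wedge\Gamma_{j:\infty,k}$; letting $j\to\infty$ the first term tends to $\tfrac{2\pi}{\sqrt{-1}}\theta\omega\,\mathrm{id}_{E_{k}}$ while the tail tends to $0$ by \eqref{eq:-49}, and since this block does not depend on $j$ it equals $\tfrac{2\pi}{\sqrt{-1}}\theta\omega\,\mathrm{id}_{E_{k}}$. For the off-diagonal blocks mapping $E_{k}$ into $E_{\infty}/E_{k}$, I would use that each $H_{i}$ is Hermitian--Einstein, so $F_{H_{i}}=\mu(E_{i})\omega\,\mathrm{Id}_{E_{i}}$ is scalar and every off-diagonal block in its $H_{i}$-orthogonal decomposition vanishes identically; passing to the limit $i\to\infty$ componentwise, with the tails dominated uniformly by \eqref{eq:-49}, each off-diagonal block of $\mathbf{F}$ is a limit of vanishing blocks, hence zero. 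Combining the two computations, $\mathbf{F}$ restricts to the scalar $\tfrac{2\pi}{\sqrt{-1}}\theta\omega\,\mathrm{Id}_{E_{k}}$ on every $E_{k}$, and \eqref{eq:-49} guarantees that the infinite products in $\mathbf{A}\wedge\mathbf{A}$ converge, so that $\mathbf{F}$ is a well-defined operator in $\mathrm{Hom}(E_{\infty},A^{2}(E_{\infty}))$ equal to the stated scalar multiple of the identity.

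I expect the main obstacle to be the interchange of the two limiting processes in the last step, namely the metric limit $i\to\infty$ and the summation of infinitely many blocks, compounded by the fact that the $\gamma$'s are defined through the limit metrics $H_{m,\infty}$ rather than the Hermitian--Einstein metrics $H_{i}$. The estimate \eqref{eq:-49} is exactly what dominates the tails uniformly and legitimizes this interchange; without such a uniform decay bound the scalar identity for $\mathbf{F}$ would not even be well-posed.
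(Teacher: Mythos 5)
Your proposal is correct and takes essentially the same approach as the paper's proof: (\ref{eq:-43}) is read off Theorem \ref{thm:Let--be=000020ci} through the block curvature formula (\ref{eq:curvatureSES}), the estimate (\ref{eq:-49}) is obtained from the Chern--Weil/$L^\infty$ bound on ${}_i\beta_{2,0}$ by entry-wise limits plus a Fatou-type truncation argument, and the same uniform tail bound legitimizes the infinite sums defining $\mathbf{F}$. The only (harmless) divergences are that the paper gets smoothness of the tail $\Gamma_{k,j:\infty}^{\dagger}\wedge\Gamma_{j:\infty,k}$ by writing it as the difference of the smooth section $F_{H_{k,\infty}}-\frac{2\pi}{\sqrt{-1}}\theta\omega\text{Id}_{E_{k}}$ and a finite smooth sum rather than via the $C^{k}$ bounds of Theorem \ref{thm:Assume-the-same}, and that your explicit verification that the off-diagonal blocks of $\mathbf{F}$ vanish (as limits of the vanishing off-diagonal blocks of the scalar curvatures $F_{H_{i}}$, with tails dominated uniformly by (\ref{eq:-49})) spells out a point the paper's proof leaves implicit.
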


\begin{proof}
(\ref{eq:-43}) is just a rephrasing of (\ref{eq:-48}). For fixed
$i,j$, $\gamma_{i,j}^{\dagger}$ is a smooth limit as $m\to\infty$ of the second fundamental
forms ${}_m\beta^{\dagger}_{i,j}$. Hence, $\Gamma_{k,j:l}^{\dagger}$ is smooth
for finite $j,l$. By Theorem \ref{thm:Let--be=000020ci}, 
\[
F_{H_{k,\infty}}-\Gamma_{k,k:\infty}^{\dagger}\wedge\Gamma_{k:\infty,k}=\frac{2\pi}{\sqrt{-1}}\theta\omega\text{Id}_{E_{k}}.
\]
Note that
\[
\Gamma_{k,j:\infty}^{\dagger}\wedge\Gamma_{j:\infty,k}=\lim_{l\to\infty}\Gamma_{k,j:l}^{\dagger}\wedge\Gamma_{j:l,k}
\]
is locally viewed as a finite dimensional matrix. Since $H_{k,\infty}$ is smooth,
we see that $\Gamma_{k,j:\infty}^{\dagger}\wedge\Gamma_{j:\infty,k}$
is also smooth. Since $\gamma_{i,j}^\dagger$ is entry-wise the limit of ${}_k\beta^\dagger_{i,j}$ , (\ref{eq:-49}) follows from (\ref{eq:-51}) and Fatou's lemma.  
\end{proof}

The next corollary shows that the curvature of $Q_l$ with respect to $H_\infty$ is bounded by a uniform constant. It will be used in Section \ref{section:Holomorphic Hilbert bundles}. 
\begin{cor}
$F_{H_{\infty}|_{Q_{l}}}\in A^{1,1}(End(Q_{l}))$ satisfies 
\[
\|F_{H_{\infty}|_{Q_{l}}}\|_{\rho,H_\infty,L^{\infty}}\leq C(\theta,X)
\]
for some uniform constant $C(\theta,X)$ independent of $l$ with
fiberwise operator norm induced by $H_{\infty}|_{Q_{l}}$. \label{cor:bound curvature}
\end{cor}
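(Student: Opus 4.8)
The plan is to realize $Q_l$ as the holomorphic quotient in the short exact sequence $0\to E_{l-1}\to E_l\to Q_l\to 0$, equipped with the metric $H_{l,\infty}=H_\infty|_{E_l}$, and to read off $F_{H_\infty|_{Q_l}}$ from the general curvature decomposition (\ref{eq:curvatureSES}), which is valid for any Hermitian metric. Writing $\beta_l$ for the second fundamental form of $E_{l-1}\subset E_l$ with respect to $H_{l,\infty}$, the $(Q_l,Q_l)$-block of (\ref{eq:curvatureSES}) gives
\[
F_{H_\infty|_{Q_l}}=(F_{H_{l,\infty}})_{Q_l,Q_l}+\beta_l\wedge\beta_l^{\dagger}.
\]
Since $|\cdot|_{\rho}\le|\cdot|_{2}$, it suffices to bound each of the two summands in $\|\cdot\|_{2,H_\infty,L^{\infty}}$ by a constant depending only on $\theta$ and $X$, and then apply the triangle inequality.

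For the first summand I would invoke Corollary \ref{cor:For-any-fixed} with $k=l$, which yields $F_{H_{l,\infty}}=\tfrac{2\pi}{\sqrt{-1}}\theta\omega\,\mathrm{Id}_{E_l}+\Gamma_{l,l:\infty}^{\dagger}\wedge\Gamma_{l:\infty,l}$. Restricting to the $(Q_l,Q_l)$-block, the scalar part contributes $\tfrac{2\pi}{\sqrt{-1}}\theta\omega\,\mathrm{Id}_{Q_l}$, of operator norm $2\pi|\theta|$, while the correction is controlled by (\ref{eq:-49}), namely $\sup_X|\Gamma_{l,l:\infty}|^2_{2,L^\infty}\le C(\theta-\mu(E_l))\mathrm{rk}E_l$. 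As $\{E_i\}$ is well-approximating, $(\theta-\mu(E_l))\mathrm{rk}E_l\to 0$, so this quantity is bounded uniformly in $l$; hence $\|(F_{H_{l,\infty}})_{Q_l,Q_l}\|_{2,H_\infty,L^{\infty}}\le C(\theta,X)$.

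The heart of the argument is the uniform bound on $\sup_X|\beta_l|^2$, and here the main obstacle is that $H_{l,\infty}$ is only a limit of restrictions of Hermitian--Einstein metrics and is itself not Hermitian--Einstein, so Theorem \ref{thm:Assume-the-same} does not apply directly to the pair $(E_l,H_{l,\infty})$. To circumvent this I would pass back to finite level: for each $i\ge l$ let ${}_i\beta$ denote the second fundamental form of $E_{l-1}\subset E_i$ with respect to the genuine Hermitian--Einstein metric $H_i$ on $E_i$. The $Q_l$-component of ${}_i\beta$ is exactly the second fundamental form $\beta_l^{(i)}$ of $E_{l-1}\subset E_l$ computed from $H_i|_{E_l}$, so $|\beta_l^{(i)}|\le|{}_i\beta|$ pointwise. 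Theorem \ref{thm:Assume-the-same} together with Lemma \ref{lem:If--is} then gives
\[
\sup_X|{}_i\beta|^2\le C\|{}_i\beta\|_{L^2}^2=2\pi C(\mu(E_i)-\mu(E_{l-1}))\mathrm{rk}E_{l-1},
\]
with $C=C(X,\sup_X R^-)$ independent of $i$ and $l$. Since $H_i|_{E_l}\to H_{l,\infty}$ in $C^2$ by Theorem \ref{thm:For-each-,}, the orthogonal projections, and hence $\beta_l^{(i)}\to\beta_l$, in $C^0$, and letting $i\to\infty$ yields $\sup_X|\beta_l|^2\le 2\pi C(\theta-\mu(E_{l-1}))\mathrm{rk}E_{l-1}$. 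Again this is bounded uniformly in $l$ because $(\theta-\mu(E_{l-1}))\mathrm{rk}E_{l-1}\to 0$.

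Combining the two estimates via the triangle inequality gives $\|F_{H_\infty|_{Q_l}}\|_{\rho,H_\infty,L^{\infty}}\le 2\pi|\theta|+C(\theta,X)$, a bound independent of $l$, which is the assertion. I expect the only delicate points to be bookkeeping ones: correctly identifying the $(Q_l,Q_l)$-block of (\ref{eq:curvatureSES}) with $F_{H_\infty|_{Q_l}}$ under the orthogonal splitting furnished by Theorem \ref{thm:6.1tVS}, and justifying the interchange of the limit $i\to\infty$ with the supremum (legitimate by the $C^2$, hence uniform, convergence of $H_i|_{E_l}$ and of the induced projections). No analytic input beyond Section \ref{Section:Estimates for second fundamental forms} and Corollary \ref{cor:For-any-fixed} should be required.
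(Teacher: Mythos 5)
Your proof is correct, but it reorganizes the argument rather than reproducing the paper's. The paper stays at finite level throughout: it expresses the quotient curvature with respect to the genuine Hermitian--Einstein metric $H_k$ via the full block Gauss--Codazzi identity
\[
F_{H_{k}|_{Q_{l}}}=F_{H_{k}}|_{Q_{l}}-\sum_{m\leq l-1}{}_k\beta_{l,m}\wedge{}_k\beta_{m,l}^{\dagger}-\sum_{m=l+1}^{k}{}_k\beta_{l,m}^{\dagger}\wedge{}_k\beta_{m,l},
\]
bounds the diagonal block by $2\pi|\mu(E_k)|$ (Hermitian--Einstein), controls \emph{all} cross terms at once by Chern--Weil double sums together with the $L^{2}$-to-$L^{\infty}$ estimate of Proposition \ref{prop:Suppose--and}, and only then lets $k\to\infty$, where $C^{2}$ convergence of $H_k|_{E_l}$ gives $C^{0}$ convergence of the quotient curvatures directly --- so the paper never needs convergence of second fundamental forms. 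You instead decompose at the limit level, $F_{H_\infty|_{Q_l}}=(F_{H_{l,\infty}})_{Q_l,Q_l}+\beta_l\wedge\beta_l^{\dagger}$, outsource the diagonal block to Corollary \ref{cor:For-any-fixed} (legitimate: it is established before this corollary and independently of it, so there is no circularity), and estimate the single remaining form $\beta_l$ by Theorem \ref{thm:Assume-the-same} at finite level followed by a limit; your identification of $\beta_l^{(i)}$ with the $Q_l$-component of ${}_i\beta$ is correct, since the orthogonal projection onto $E_{l-1}$ computed inside $E_l$ is the restriction of the ambient one and the Chern connection of $H_i|_{E_l}$ is the $E_l$-projection of that of $H_i$, and the passage $\beta_l^{(i)}\to\beta_l$ is justified by the $C^2$ metric convergence of Theorem \ref{thm:For-each-,} (the normalizing constants $e^{b_i}$ do not affect projections). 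What each route buys: yours recycles Corollary \ref{cor:For-any-fixed} so only one second fundamental form must be bounded; the paper's keeps every estimate on honest Hermitian--Einstein data and avoids limits of second fundamental forms. Two harmless slips in yours: the opening claim that both summands can be bounded in $\|\cdot\|_{2,H_\infty,L^{\infty}}$ fails literally for the term $\tfrac{2\pi}{\sqrt{-1}}\theta\omega\,\mathrm{Id}_{Q_l}$, whose fiberwise $2$-norm grows like $(\mathrm{rk}\,Q_l)^{1/2}$ --- but you immediately and correctly switch to the operator norm there; and you only need \emph{boundedness}, not vanishing, of $(\theta-\mu(E_l))\mathrm{rk}\,E_l$, which already follows from the well-approximating inequality applied to $\mathcal{E}_{l-1}\subset\mathcal{E}_{l}$, making that sequence decreasing.
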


\begin{proof}
By $C^{2}$ convergence, 
\[
F_{H_{\infty}|_{Q_{j}}}=\lim_{k\to\infty}F_{H_{k}|_{Q_{j}}}.
\]
Notice that 
\[
F_{H_{k}|_{Q_{j}}}=F_{H_{k}}|_{Q_{j}}-\sum_{l\leq j-1}{}_k\beta_{j,l}\wedge{}_k\beta_{l,j}^{\dagger}-\sum_{l=j+1}^{k}{}_k\beta_{j,l}^{\dagger}\wedge {}_k\beta_{l,j}.
\]
Then, by Chern-Weil formula, 
\begin{align*}
\sum_{l\leq j-1}\| {}_k\beta_{j,l}\|_{H_k,2,L^{2}}^{2} & \leq\sum_{l\leq j-1}\sum_{i\geq j}\| {}_k\beta_{i,l}\|_{H_k,2,L^{2}}^{2}=(\mu(E_{k})-\mu(E_{j-1}))\text{rk}E_{j-1},\\
\sum_{l=j+1}^{k}\| {}_k\beta_{j,l}\|_{H_k,2,L^{2}}^{2} & \leq\sum_{l\geq j+1}\sum_{i\leq j}\| {}_k\beta_{l,i}\|_{H_k,2,L^{2}}^{2}=(\mu(E_{k})-\mu(E_{j}))\text{rk}E_{j}.
\end{align*}
Together with Lemma \ref{lem:Let--be-1} and \ref{prop:Suppose--and},
we have 
\begin{align}
|F_{H_{k}|_{Q_{j}}}|_{\rho,H_k} & \leq2\pi|\mu(E_{k})|+C(\mu(E_{k})-\mu(E_{j}))\text{rk}E_{j}+C(\mu(E_{k})-\mu(E_{j-1}))\text{rk}E_{j-1}\label{eq:-48-1}\\
 & \leq C(\theta,X).\nonumber 
\end{align}
Thus, $F_{H_{k}|_{Q_{j}}}$ is uniformly bounded independent of both
$j,k$. Hence, $|F_{H_{\infty}|Q_{j}}|_{\rho,H_\infty}\leq C(\theta,X)$
is also uniformly bounded, independent of $j$.
\end{proof}

\section{Holomorphic Hilbert bundles}\label{section:Holomorphic Hilbert bundles}

In this section, we will take the metric completion of $(E_{\infty},H_{\infty})$
and prove that the completion space has a projectively flat holomorphic Hilbert bundle structure. This will conclude the proof of Theorem \ref{thm:main thm in intro} and Theorem \ref{thm:main2}. 
 
Let $W$ be a separable complex Hilbert space. We denote $L(W)$ the bounded linear operators on $W$ equipped with the operator norm $|\cdot|_\rho$. Let $\textbf{GL}(W)$ be the general linear group of $W$ which consists of invertible bounded linear operators. Let $\textbf{U}(W)$ be the unitary group of $W$ that consists of unitary operators in $\textbf{GL}(W)$. The definition of a Hilbert bundle is a mimic of the ordinary vector bundle using local trivializations or local transition groups. For example, we can define it as follows.
\begin{defn}
\label{def:hilbertbund} A continuous Hilbert bundle structure with fiber isomorphic to a Hilbert
space $W$ over a finite dimensional compact complex manifold $Y$ is given by an
open cover $\{U_{\alpha}\}_{\alpha\in I}$ of $Y$ and a collection
of transition map $\{g_{\alpha\beta}\}_{\alpha,\beta\in I}$ such
that 
\begin{enumerate}
\item $g_{\alpha\beta}$ is a continuous map from $U_{\alpha}\cap U_{\beta}$
to the structure group  $\mathbf{U}(W)$;
\item If $x\in U_{\alpha}\cap U_{\beta}\cap U_{\gamma}$, the cocycle condition
holds:
\[
g_{\alpha\beta}\circ g_{\beta\gamma}\circ g_{\gamma\alpha}|_{x}=\text{Id}_{W}.
\]
\end{enumerate}
We then take $\mathbf{W}=\left(\sqcup_{\alpha}U_{\alpha}\times W\right)/(x,\xi)\sim(x,g_{\beta\alpha}(x)\xi)$
with the projection $\pi_{\mathbf{W}}([(x,\xi)])=x$ to be the Hilbert
bundle over $Y$. If $g_{\alpha\beta}$ are smooth maps, we call $\mathbf{W}$ a smooth Hilbert bundle. 
\end{defn}
Similarly, holomorphic Hilbert bundles can be defined.
\begin{defn}
    Suppose that $\textbf{W}$ is a smooth Hilbert bundle over $Y$ with characteristic fiber isomorphic to $W$.  An
open cover $\{U_{\alpha}\}_{\alpha\in I}$ of $Y$ and a collection
of transition map $\{g_{\alpha\beta}\}_{\alpha,\beta\in I}$  defines a holomorphic structure on $\textbf{W}$ if each $g_{\alpha\beta}$ is a holomorphic map from $U_{\alpha}\cap U_{\beta}$
to the structure group  $\mathbf{GL}(W)$ and satisfies the cocycle condition as in Definition \ref{def:hilbertbund}. A smooth Hilbert bundle with a given holomorphic structure is called a holomorphic Hilbert bundle.
\end{defn}
Given 2 continuous (resp. smooth/holomorphic) Hilbert bundles $\textbf{W}_1$ and $\textbf{W}_2$,  continuous (resp. smooth/holomorphic) bundle maps between $\textbf{W}_1$ and $\textbf{W}_2$ are defined in the usual way using local trivializations. A bundle map $f:\textbf{W}_1\to\textbf{W}_2$ is a continuous (resp. smooth/holomorphic) isomorphism if $f$ admits a continuous (resp. smooth/holomorphic) inverse.

From the discussion in Section \ref{sec:smoothtopo}, for each $x\in X$, the fiber $E_{\infty}$ at $x$ is 
isomorphic to $\C^\infty_0$. Then under $H_{\infty}$,
the fiberwise completion of $E_{\infty}$ can be given a holomorphic Hilbert bundle structure. The following theorem rephrases Theorem \ref{thm:main2} in a slightly different way.

\begin{thm}
\label{thm:-can-be} Assume the same conditions as in Theorem \ref{thm:main2}. Let $(E_{\infty},H_{\infty})$ be given as in Theorem \ref{thm:6.1tVS}, (3). Then $(E_\infty,H_\infty)$ can be completed to
an indecomposable holomorphic Hilbert bundle $(\mathbf{E},H_{\infty})$ and the induced unitary connection $\mathbf{A}$ is a Hermitian-Einstein connection. Furthermore, the standard analytification $\mathcal{E}^{an}_\infty$ of $\mathcal{E}_\infty$ holomorphically embeds into the sheaf of holomorphic sections of $\textbf{E}$. 
\end{thm}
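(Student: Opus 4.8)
The plan is to assemble the Hilbert bundle structure on top of the fiberwise completion of $(E_\infty,H_\infty)$ in three successive layers---continuous, smooth, and holomorphic---and then to read off the Hermitian--Einstein condition, indecomposability, and the embedding from the data already produced in Sections \ref{sec:smoothtopo} and \ref{Section:Curvature convergence and gauge fixing}. First I would fix, for each $x\in X$, the pre-Hilbert space $(E_\infty)_x\cong\C^\infty_0$ equipped with the inner product $H_\infty(x)$, and let $\mathbf{E}_x$ be its Hilbert completion, a separable space isomorphic to the model $W=\ell^2$. The collection $\{\mathbf{E}_x\}$ is the underlying field of Hilbert spaces; the substance of the theorem is to manufacture local trivializations of the right regularity with unitary (resp. $\mathbf{GL}(W)$-valued) transition functions. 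The decisive input is Corollary \ref{cor:For-any-fixed}: the unitary connection $\mathbf{A}$ has curvature $\mathbf{F}=2\pi\sqrt{-1}\,\theta\omega\,\mathrm{Id}$, so that $\|\mathbf{F}\|_{\rho,H_\infty,L^\infty}=2\pi|\theta|$ is finite and, crucially, independent of the rank of the approximating bundles $E_k$.

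To produce continuous trivializations I would invoke the rank-independent version of Uhlenbeck's gauge-fixing theorem proved in the appendix. On a ball $U_\alpha$ of controlled radius, gauge fixing yields a unitary frame in which the connection matrix $A_\alpha$ is in Coulomb gauge with an estimate $\|A_\alpha\|_{W^{1,p}}\leq C\|\mathbf{F}\|_{L^p}$ in which $C$ does not depend on the rank; since every estimate is phrased in the operator norm $|\cdot|_\rho$, the bounds survive the passage to the completion and give continuous trivializations $\mathbf{E}|_{U_\alpha}\cong U_\alpha\times W$. On an overlap, two Coulomb frames for the same unitary connection differ by a gauge transformation $g_{\alpha\beta}\colon U_\alpha\cap U_\beta\to\mathbf{U}(W)$ solving an elliptic equation, and elliptic regularity with rank-independent constants shows $g_{\alpha\beta}$ is continuous and, after bootstrapping, smooth; hence $\mathbf{E}$ is a smooth Hilbert bundle. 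Because $\mathbf{F}$ is of type $(1,1)$, the operator $\bar\partial_{\mathbf{A}}=\mathbf{A}^{0,1}$ is integrable, and I would apply the Hilbert-bundle form of the Koszul--Malgrange theorem to obtain local holomorphic frames; the resulting transition functions are holomorphic into $\mathbf{GL}(W)$, giving the holomorphic structure. For this structure $\mathbf{A}$ is unitary for $H_\infty$ with the prescribed $(0,1)$-part, hence it is the Chern connection, and the Hermitian--Einstein condition is immediate from $\sqrt{-1}\Lambda\mathbf{F}=\gamma\,\mathrm{Id}$ with $\gamma$ the constant $\pm 2\pi\theta$ (Definition \ref{def:HEmetric}).

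For indecomposability I would argue that a holomorphic splitting $\mathbf{E}=\mathbf{E}_1\oplus\mathbf{E}_2$ furnishes a bounded holomorphic idempotent $p\in\mathrm{End}(\mathbf{E})$ which, being orthogonal for the Hermitian--Einstein metric, has vanishing second fundamental form and is therefore parallel for $\mathbf{A}$; a parallel idempotent is compatible with the dense holomorphic filtration $\{\mathcal{E}_k\}$, so it descends to an idempotent of the colimit $\mathcal{E}_\infty$, which is simple by Theorem \ref{thm:first result in intro}, forcing $p\in\{0,\mathrm{Id}\}$. The embedding of $\mathcal{E}^{an}_\infty$ is then essentially structural: each $\mathcal{E}_k$ is by construction a holomorphic subbundle of $\mathbf{E}$, the inclusions are compatible with the $f_k$, and the analytification of the filtered colimit is computed as $\mathcal{E}^{an}_\infty=\varinjlim\mathcal{E}^{an}_k$, so these glue to a holomorphic injection of $\mathcal{E}^{an}_\infty$ into the sheaf of holomorphic sections of $\mathbf{E}$.

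The main obstacle throughout is the infinite-rank gauge theory: one must run Uhlenbeck's gauge fixing and the Koszul--Malgrange construction with estimates that do not degenerate as the rank of the approximants tends to infinity, which is precisely why the operator-norm formulation of all the fibrewise norms and the rank-independent appendix version of Uhlenbeck's theorem are indispensable. A secondary technical point, needed to make the indecomposability argument rigorous, is to justify that a bounded holomorphic orthogonal idempotent of the completed bundle is genuinely parallel and genuinely preserves the dense finite-rank subsheaves; this requires that the degree/Chern--Weil computation underlying the vanishing of the second fundamental form be carried out in the operator-norm setting with the uniform curvature bounds of Corollary \ref{cor:bound curvature}.
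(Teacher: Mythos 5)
Your plan gets the right toolbox (rank-independent Uhlenbeck, Koszul--Malgrange, the curvature identity of Corollary \ref{cor:For-any-fixed}, density of $E_\infty$), and your treatment of the Hermitian--Einstein property and of the embedding of $\mathcal{E}^{an}_\infty$ matches the paper (cf.\ Remark \ref{rem:holomorphicembedding}). But the central construction step is circular as proposed. You want to produce the continuous trivializations by applying the appendix gauge-fixing theorem directly to the infinite-rank connection $\mathbf{A}$; however, Proposition \ref{prop:coulombgaugeuniform} takes as input a \emph{smooth} connection on an \emph{already trivialized} Hilbert bundle $Q\times W$, and at this stage $\mathbf{E}$ has no bundle structure and $\mathbf{A}$ is only a formal infinite matrix: it is not a priori continuous (let alone $W^{1,p}$) as an $L(l^{2})$-valued $1$-form, because its off-diagonal part is an infinite sum of second fundamental forms $\gamma_{i,j}$. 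The paper instead gauge-fixes each \emph{finite-rank quotient} $Q_j$ (Lemma \ref{lem:Let--be-2}, using the uniform bound on $F_{H_\infty|_{Q_j}}$ from Corollary \ref{cor:bound curvature}, not the identity $\mathbf{F}=2\pi\sqrt{-1}\theta\omega\,\mathrm{Id}$), assembles block-diagonal unitary transition maps $u_{ij}=\mathrm{diag}(u_0,u_1,\cdots)$ (Proposition \ref{prop:There-exists-a-1}), and only \emph{then} proves that $\mathbf{A}$ is continuous in these frames (Lemma \ref{lem:Let--be-3}). That continuity requires passing to a further subsequence so that $\sum_{j}\{(\theta-\mu(E_{j}))\text{rk}E_{j}\}^{1/2}<\infty$ as in (\ref{eq:l2summable}) --- a step entirely missing from your proposal, and without it $\sum_i|\mathbf{B}_i|_\rho$ may diverge. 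Smoothness of the bundle then comes from the elliptic Hodge system for $\mathbf{A}$ itself (Proposition \ref{prop:smoothconnection}, exploiting $d^*A_j=0$ on the diagonal blocks and the Hermitian--Einstein equation), after which the transition maps are smooth because they intertwine two smooth connections; your order of reasoning (regularity of $g_{\alpha\beta}$ first, presuming regularity of $\mathbf{A}$) cannot be run.

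Your indecomposability argument also has a gap at its first step: indecomposability here means there is no \emph{holomorphic} isomorphism $\phi:\mathbf{E}\to\mathbf{E}_1\oplus\mathbf{E}_2$, and $\phi$ need not be unitary, so the induced idempotent on $\mathbf{E}$ is not orthogonal with respect to $H_\infty$ and the chain ``orthogonal $\Rightarrow$ vanishing second fundamental form $\Rightarrow$ parallel'' never starts; moreover you invoke simplicity of $\mathcal{E}_\infty$, which is proved for the particular sequences of Theorem \ref{thm:first result in intro} but is not a hypothesis of Theorem \ref{thm:main2}. The paper's argument needs neither: it composes the holomorphic embeddings $f_i:\mathcal{E}_i\to\mathbf{E}$ with $\pi_l\circ\phi$, uses stability (hence irreducibility) of each finite-rank $\mathcal{E}_i$ to force $\mathcal{E}_i$ into a single summand, propagates this consistently along the chain, and concludes by fiberwise density of $\phi(E_\infty)$ in $\tilde{\mathbf{E}}$. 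You should replace both the gauge-fixing step and the idempotent argument accordingly.
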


\begin{rem}
\label{rem:holomorphicembedding}    Here we briefly recall the definition of $\mathcal{E}^{an}_\infty$. Let $\iota:(X,\tau)\to (X,\tau_{Z})$ , $x\mapsto x$, where $\tau$ is the ordinary topology of $X$ and $\tau_{Z}$ is the Zariski topology of $X$.  For any quasi-coherent sheaf $\mathcal{G}$,  \[\mathcal{G}^{an}:=\iota^*\mathcal{G}=\mathcal{O}_X^{an}\otimes_{\iota^{-1}\mathcal{O}_X}\iota^{-1}\mathcal{G},\]
is the standard analytification of $\mathcal{G}$. The sheaf of holomorphic sections of $\textbf{E}$ naturally contains each $\mathcal{E}_i$ as an analytic subsheaf. In fact, for any holomorphic local section $t$ of $\mathcal{E}_k$, we have $\dbar_\textbf{A} t=0$ by the construction of $\textbf{A}$ in (\ref{eq:-41}).   Let $s$ be a holomorphic section of $\mathcal{E}_\infty^{an}(U)$ for some open set $U\subset X$. By construction, for any $x\in U$, there exists an open neighborhood $ V_x\subset U$  and $k\in\N$ such that $s|_{V_x}$ is a holomorphic section in some $\mathcal{E}_k(V_x)$.  Then $s|_{V_x}$ is embedded into a local holomorphic section of $\textbf{E}$ . Thus $\mathcal{E}_\infty^{an}$ is naturally injected to the sheaf of holomorphic sections of $\textbf{E}$.   
\end{rem}

\begin{rem}
$\textbf{E}$ is called indecomposable if it is not holomorphically isomorphic to $\textbf{E}_1\oplus\textbf{E}_2$ where $\textbf{E}_i\not=0$ are holomorphic Hilbert bundles for $i=1,2$. A famous theorem of Kuiper \cite{Kuiper1965} indicates that any continuous/smooth infinite dimensional separable Hilbert bundle is trivial. Hence, the holomorphic structure is essential here.
\end{rem}

Our proof of Theorem \ref{thm:-can-be} consists of 3 steps. Firstly, we use Uhlenbeck's theorem to uniformly control the norm of connections of finite rank vector bundles. 
Next, with suitable estimates, we prove that the fiberwise completion with respect to $H_\infty$ has a continuous Hilbert bundle
structure. In the last step, we show that the connection $\textbf{A}$ is smooth using elliptic regularity theory to put a compatible smooth structure
on $\mathbf{E}$. The holomorphic structure is a direct consequence of the Koszul-Malgrange theorem \cite{KoszulMalgrange58}. 

\subsection{Gauge fixing lemma}

We need a gauge fixing theorem proved by Uhlenbeck \cite{UhlenbeckLp}. Let $Z$ be
a trivial vector bundle of rank $\text{rk}Z$ with a smooth Hermitian
metric $h$ over $Q=[-1,1]\times[-1,1]\subset\C$. Let $A$ be a unitary connection
compatible with $h$. Let $F(A)=F_{xy}(A)dx\wedge dy$ be the curvature
of $A$. We call a connection $A=A_{x}dx+A_{y}dy$ on $Z$ a Coulomb
gauge if $\iota_\nu A|_{\pdv Q}=0$ , and
\[
d^{*}A=0,
\]
where $d^{*}A=\frac{\pdv}{\pdv x}A_{x}+\frac{\pdv}{\pdv y}A_{y}$.
\begin{lem}
\label{lem:Let--be-4}Let $Z$ be a vector bundle of rank $\text{rk}Z$
with a smooth Hermitian metric $h$ over $Q\subset\C$. Let
$A$ be a unitary connection of $Z$. Let $F(A)$ be the curvature
of $A$. Suppose that $\|F(A)\|_{\rho,L^{2}(Q)}\leq\epsilon$.
Then, there exists $\epsilon_0>0$ such that  if $\epsilon<\epsilon_{0}$ , $A$ is gauge equivalent to Coulomb gauge $\tilde{A}$  that satisfies
\begin{equation}
\|\tilde{A}\|_{\rho,W^{1,m}(Q)}\leq C\|F(A)\|_{\rho,L^{m}(Q)},\label{eq:-53}
\end{equation}
for $m\geq2$. If $m>2$, then for some $\alpha\in(0,1)$ (if $m=\infty$, then for any $\alpha\in (0,1)$),
we have 
\begin{equation}
\|\tilde{A}\|_{\rho,C^{\alpha}(Q)}\leq C(\alpha)\|F(A)\|_{\rho,L^{m}(Q)}.\label{eq:-54}
\end{equation}
Moreover, the constant $\epsilon_{0}$ and $C$ are independent of
$\text{rk}Z$.
\end{lem}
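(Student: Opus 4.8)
The plan is to adapt Uhlenbeck's continuity-method proof of the gauge-fixing theorem \cite{UhlenbeckLp} to the operator norm $|\cdot|_\rho$, taking care that every constant produced is independent of $\mathrm{rk}\,Z$. Fix the trivialization $Z\cong Q\times\C^{\mathrm{rk}\,Z}$ in which $h$ is standard, so that $A=A_x\,dx+A_y\,dy$ is a $\mathfrak{u}(\mathrm{rk}\,Z)$-valued $1$-form and a gauge transformation $u:Q\to\mathbf U(\mathrm{rk}\,Z)$ acts by $A\mapsto u\cdot A=uAu^{-1}-(du)u^{-1}$. We seek $u$ so that $\tilde A=u\cdot A$ satisfies $d^*\tilde A=0$ and $\iota_\nu\tilde A|_{\partial Q}=0$. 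The one structural fact that renders all constants rank-independent is the submultiplicativity $|ST|_\rho\le|S|_\rho|T|_\rho$ of the operator norm, together with the observation that the Hodge--de Rham operator $d+d^*$ and the Neumann Laplacian act entrywise on $\mathfrak{u}(\mathrm{rk}\,Z)$-valued forms, so their elliptic estimates on the two-dimensional domain $Q$ reduce to the scalar ones with the same constants, now read off in $|\cdot|_\rho$; likewise the Sobolev embeddings $W^{1,m}(Q)\hookrightarrow C^\alpha(Q)$ ($m>2$, $\alpha=1-2/m$) and $W^{1,2}(Q)\hookrightarrow L^p(Q)$ ($p<\infty$) hold with rank-independent constants in the operator norm.

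The analytic heart is the a priori estimate. Assume $\tilde A$ is already in Coulomb gauge. Writing $d\tilde A=F(\tilde A)-\tilde A\wedge\tilde A$ and combining with $d^*\tilde A=0$ and the boundary condition, the elliptic estimate for $(d,d^*)$ with Neumann boundary data gives, for $m\ge2$,
\[
\|\tilde A\|_{\rho,W^{1,m}(Q)}\le C_0\big(\|F(\tilde A)\|_{\rho,L^m(Q)}+\|\tilde A\wedge\tilde A\|_{\rho,L^m(Q)}\big).
\]
By submultiplicativity $|\tilde A\wedge\tilde A|_\rho\le|\tilde A|_\rho^2$ pointwise, and by Sobolev embedding $\|\tilde A\|_{\rho,L^{2m}(Q)}^2\le C_1\|\tilde A\|_{\rho,W^{1,m}(Q)}^2$, whence
\[
\|\tilde A\|_{\rho,W^{1,m}(Q)}\le C_0\|F(\tilde A)\|_{\rho,L^m(Q)}+C_0C_1\|\tilde A\|_{\rho,W^{1,m}(Q)}^2.
\]
Thus, as long as an a priori bound $\|\tilde A\|_{\rho,W^{1,m}(Q)}\le(2C_0C_1)^{-1}$ is available, the quadratic term is absorbed and $\|\tilde A\|_{\rho,W^{1,m}(Q)}\le 2C_0\|F(A)\|_{\rho,L^m(Q)}$, using the gauge invariance $\|F(\tilde A)\|_\rho=\|F(A)\|_\rho$; Morrey's embedding for $m>2$ then yields the $C^\alpha$ bound (\ref{eq:-54}). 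All constants here depend only on $Q$ and $m$.

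To produce the Coulomb representative together with the smallness that powers the a priori estimate, I would run the method of continuity along the family $\tau\mapsto\tau A$, $\tau\in[0,1]$. Let $S\subset[0,1]$ consist of those $\tau$ for which $\tau A$ is gauge equivalent to some $\tilde A_\tau$ with $d^*\tilde A_\tau=0$, $\iota_\nu\tilde A_\tau|_{\partial Q}=0$, and $\|\tilde A_\tau\|_{\rho,W^{1,2}(Q)}$ below the absorption threshold of the a priori estimate. Then $0\in S$. Openness is the implicit function theorem: the linearization of $u\mapsto d^*(u\cdot\tau A)$ at a Coulomb point equals $d^*d_{\tilde A_\tau}=\Delta+\mathcal O(|\tilde A_\tau|_\rho)$ acting entrywise on $\mathfrak{u}(\mathrm{rk}\,Z)$-valued functions with Neumann data, a small perturbation of the scalar Neumann Laplacian that is invertible on the mean-zero subspace with a rank-independent inverse bound once $\epsilon$ is small. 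Closedness uses the a priori estimate together with $\|F(\tilde A_\tau)\|_{\rho,L^m}=\|F(\tau A)\|_{\rho,L^m}$: these give uniform $W^{1,m}$, hence $C^\alpha$, bounds on $\tilde A_{\tau_j}$ and uniform bounds on the gauge transformations, so Rellich compactness (available since each $Z$ is of finite rank) produces a Coulomb limit for $\tau_*A$ still satisfying the bound, whence $\tau_*\in S$. Thus $S=[0,1]$, and at $\tau=1$ we obtain $\tilde A$ with $\|\tilde A\|_{\rho,W^{1,2}}\le 2C_0\|F(A)\|_{\rho,L^2}$; bootstrapping through the a priori estimate upgrades this to (\ref{eq:-53}) and (\ref{eq:-54}). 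Taking $\epsilon_0$ to be the least of the thresholds needed for the absorption and openness steps completes the argument.

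\textbf{The main obstacle} is keeping every constant independent of $\mathrm{rk}\,Z$, which is not automatic in Uhlenbeck's original argument because its natural trace (Hilbert--Schmidt) pairings introduce powers of $\mathrm{rk}\,Z$. Two places demand care: (i) the elliptic and Sobolev estimates, where one must verify that they are genuinely entrywise and hence insensitive to rank when measured in $|\cdot|_\rho$; and (ii) the invertibility of the linearized gauge operator in the openness step, where rank-independence holds because the leading term is the scalar Neumann Laplacian tensored with the identity on $\mathfrak{u}(\mathrm{rk}\,Z)$ and the perturbation is controlled purely in operator norm. Handling the boundary condition $\iota_\nu\tilde A|_{\partial Q}=0$ within these estimates is a secondary technical point. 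These are precisely the features that let the same estimates survive the passage to the Hilbert bundle limit used later.
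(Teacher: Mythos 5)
Your overall architecture---an a priori estimate with absorption of the quadratic term, a continuity method, openness via the implicit function theorem for the linearized Neumann problem, closedness via uniform bounds on the gauge transformations plus compactness, and Morrey embedding for (\ref{eq:-54})---is the same as the paper's proof, which is carried out in Proposition \ref{prop:coulombgaugeuniform} in the appendix with the bootstrap step isolated as Lemma \ref{lem:bootstrap}. However, your choice of continuity family $\tau\mapsto\tau A$ contains a genuine gap: smallness of curvature is not preserved along this path. Indeed $F(\tau A)=\tau\,dA+\tau^{2}A\wedge A=\tau F(A)+(\tau^{2}-\tau)\,A\wedge A$, and the term $(\tau^{2}-\tau)A\wedge A$ is controlled by $A$ itself, not by $\|F(A)\|$. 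Take $A=-du\,u^{-1}$ flat with $u:Q\to SU(2)$ highly oscillatory, so that $\|F(A)\|=0\leq\epsilon$ while $[A_x,A_y]$ is large: then $\|F(\tau A)\|_{\rho,L^{m}}=\tau(1-\tau)\|A\wedge A\|_{\rho,L^{m}}$ is arbitrarily large at $\tau=\tfrac{1}{2}$. Since $\|F\|_{\rho}$ is gauge invariant and any Coulomb representative below your $W^{1,2}$ threshold forces $\|F(\tau A)\|_{L^{2}}$ to be small, such $\tau$ cannot belong to your set $S$; hence $S\subsetneq[0,1]$ and the method never reaches $\tau=1$. Both your absorption step and your closedness step tacitly assume $\|F(\tau A)\|\leq\epsilon_{0}$ at every intermediate $\tau$, which is precisely what fails. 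The paper avoids this with Uhlenbeck's dilation family $A^{t}(z):=tA(tz)$, for which $F(A^{t})(z)=t^{2}F(A)(tz)$, hence $\|F(A^{t})\|_{L^{p}(Q)}\leq t^{2-2/p}\|F(A)\|_{L^{p}(Q)}\leq\epsilon$ for all $t\in[0,1]$ and $p\geq1$; with this family substituted for $\tau A$, your openness/closedness scheme goes through.

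A secondary soft spot is your justification of rank-independence of the elliptic input: asserting that the Hodge--de Rham operator ``acts entrywise'' and that the scalar constants therefore survive when ``read off in $|\cdot|_{\rho}$'' is too quick. Entrywise scalar estimates control the Hilbert--Schmidt norm, and converting to the operator norm costs a factor growing with $\text{rk}Z$---exactly the loss the lemma must avoid; cancellation-based Calder\'on--Zygmund bounds do not simply commute with the passage to $|\cdot|_{\rho}$. The paper's route is different: the $W^{1,p}$ estimate for the Hodge system with the boundary condition $\iota_{\nu}\tilde{A}|_{\pdv Q}=0$ (Propositions \ref{prop:regularityofhodgesystem} and \ref{prop:ellipticestimate}) is derived from an explicit representation by a \emph{scalar} Green kernel, after a reflection reducing to a Dirichlet problem on $S^{1}\times[0,1]$; a scalar-kernel potential estimate applies verbatim to functions valued in any Banach space $(V,|\cdot|_{\rho})$, and the Banach-valued Sobolev and Morrey embeddings are quoted from \cite{Arendt2016MappingTF} with the scalar constants. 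With the dilation family in place of $\tau A$ and the elliptic estimates sourced through the kernel representation rather than ``entrywise,'' your argument matches the paper's, and the constants become independent not only of $\text{rk}Z$ but of the fiber Hilbert space altogether, which is what Proposition \ref{prop:coulombgaugeuniform} actually delivers.
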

In fact, this lemma can be generalized to Hilbert bundles. We include a proof in Proposition \ref{prop:coulombgaugeuniform} in the appendix. (\ref{eq:-54}) is a direct consequence of (\ref{eq:-53}) by Morrey's embedding theorem where the embedding constant is also independent of $Z$, \cite[Theorem 6.2]{Arendt2016MappingTF}.  

\begin{lem}
\label{lem:Let--be-2} Let $A_{j}$ be the Chern connection of $H_{\infty}|_{Q_{j}}$.
Then there exists $r>0$ such that 
\begin{enumerate}
\item $X$ can be covered by finite many geodesic balls $\{B_{r}(x_{i})\}_{i=1}^{N}$.
\item For any $j\in\N$, there exists a unitary frame $\mathbf{s}_{j}=\{s_{j,l}\}_{l=1}^{\text{rk}Q_{j}}$
for on each $B_{r}(x_{i})$ such that under this frame
\begin{equation}
\|A_{j}\|_{\rho,C^{\alpha}(B_{r}(x_{i}))}\leq1.\label{eq:-61}
\end{equation}
\end{enumerate}
\end{lem}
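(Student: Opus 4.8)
The plan is to combine the uniform curvature bound of Corollary \ref{cor:bound curvature} with the rank-independent gauge fixing of Lemma \ref{lem:Let--be-4}, choosing the radius $r$ only at the very end so that the hypotheses of Lemma \ref{lem:Let--be-4} hold simultaneously for every $Q_j$. Since $X$ is a compact Riemann surface, for any fixed $r$ it is covered by finitely many geodesic balls $\{B_r(x_i)\}_{i=1}^N$, which is part (1). First I would fix once and for all a finite atlas of isothermal (holomorphic) coordinate charts, so that on each chart the K\"ahler metric is conformal to the Euclidean one with conformal factors bounded above and below by constants depending only on $(X,\omega)$. For $r$ small enough each ball $B_r(x_i)$ lies in one such chart and, after translation and dilation, can be identified with the standard square $Q=[-1,1]^2$; by compactness the comparison constants between the intrinsic norms on $B_r(x_i)$ and the Euclidean norms on $Q$ are uniform in $i$.

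The essential input is that the Chern connection $A_j$ of $(Q_j,H_\infty|_{Q_j})$ has curvature $F(A_j)=F_{H_\infty|_{Q_j}}$ with $\|F(A_j)\|_{\rho,H_\infty,L^\infty}\le C(\theta,X)$ \emph{uniformly in $j$}, by Corollary \ref{cor:bound curvature}. Passing to the rescaled picture on $Q$, the rescaled curvature $\hat F$ satisfies $\|\hat F\|_{\rho,L^2(Q)}\le C r^2$ and $\|\hat F\|_{\rho,L^m(Q)}\le C r^2$ for a fixed $m>2$, with $C=C(\theta,X,m)$ independent of $j$ and $i$; the gain of $r^2$ is precisely the dilation scaling of the $L^p$ norm of a curvature $2$-form on a surface. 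Hence for $r$ small enough $\|\hat F\|_{\rho,L^2(Q)}<\epsilon_0$, the threshold in Lemma \ref{lem:Let--be-4}. Starting from any $H_\infty|_{Q_j}$-unitary frame over $B_r(x_i)$, which exists by Gram--Schmidt since $Q_j$ is trivial over the contractible ball, Lemma \ref{lem:Let--be-4} produces a unitary gauge transformation carrying the rescaled connection into a Coulomb gauge $\tilde A_j$ with $\|\tilde A_j\|_{\rho,C^\alpha(Q)}\le C(\alpha)\|\hat F\|_{\rho,L^m(Q)}\le C(\alpha)C(\theta,X,m)r^2$. Because the transformation is unitary, composing it with the initial unitary frame yields the desired unitary frame $\mathbf{s}_j$ realizing this Coulomb gauge.

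Finally, pulling the estimate back from $Q$ to $B_r(x_i)$ rescales the connection $1$-form and multiplies its $C^\alpha$ norm by at most $C r^{-1-\alpha}$, up to the uniform conformal comparison constants, giving $\|A_j\|_{\rho,C^\alpha(B_r(x_i))}\le C r^{1-\alpha}$. Since $\alpha<1$ the exponent $1-\alpha$ is positive, so the smallness of the curvature wins against the blow-up from rescaling, and one may fix $r$ once and for all, depending only on $(X,\omega,\theta,\alpha)$ and not on $j$, so that both $\|\hat F\|_{\rho,L^2(Q)}<\epsilon_0$ and $\|A_j\|_{\rho,C^\alpha(B_r(x_i))}\le 1$ hold for all $i$ and all $j$. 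This is exactly (2).

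The hard part will be the uniformity in $j$: since $\operatorname{rk}Q_j\to\infty$ along the sequence, the classical Uhlenbeck constants, which degenerate with the rank, would be useless here, and the whole argument hinges on the rank-independent version recorded in Lemma \ref{lem:Let--be-4} (established in the appendix) together with the rank-uniform curvature bound of Corollary \ref{cor:bound curvature}. Everything else, the finite cover, the isothermal comparison constants, and the dilation bookkeeping, is routine once these two uniform inputs are in hand.
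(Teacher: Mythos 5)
Your proposal is correct and takes essentially the same route as the paper's proof: both rely on the rank-uniform $L^\infty$ curvature bound of Corollary \ref{cor:bound curvature} and the rank-independent gauge fixing of Lemma \ref{lem:Let--be-4}, rescale to the unit square to make the curvature fall below $\epsilon_0$, and then exploit that scaling back costs only a positive power of $r$ so that a final smallness choice of $r$ (uniform in $j$ and $i$) gives the bound by $1$. Your bookkeeping of the Hölder rescaling exponent $r^{1-\alpha}$ is in fact slightly more careful than the factor $r$ stated in the paper, but the argument is the same.
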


\begin{proof}
By Corollary \ref{cor:bound curvature}, we know that $\|F_{H_{\infty}|Q_{j}}\|_{\rho(Q_{j}),L^{\infty}(X)}\leq C(\theta,X)$.
Fix some $r<$injectivity radius of $X$ to be chosen later. We may
identify $B_{2r}(x)$ with $B_{2r}(0)\subset\C^{2}$. Then $Q_{j}|_{B_{2r}(x)}$
is a trivial bundle. By rescaling $x\mapsto rx$, we have 
\[
\|\tilde{F}_{H_{\infty}|Q_{j}}\|_{\rho,L^{\infty}(B_{2}(0))}\leq C(\theta,X)r^{2}.
\]
Now choose $r$ small such that $C(\theta,X)r^{2}\leq\epsilon_{0}$. Apply Lemma \ref{lem:Let--be-4} to $B_1(0)\subset Q$ to obtain a unitary frame
$\mathbf{s}_{j}$ such that 
\[
\|\tilde{A}_{j}\|_{\rho,C^{\alpha}(B_{1}(0))}\leq\|\tilde{A}_{j}\|_{\rho,C^{\alpha}(B_{1}(0))}\leq C(\alpha)C(\theta,X)r^{2}.
\]
Scale back to have $\|A_{j}\|_{\rho,C^{\alpha}(B_{r}(x_{i}))}\leq C(\alpha)C(\theta,X)r$.
Then choose a smaller $r$ to obtain (\ref{eq:-61}). The above constants are all independent of $Q_{j}$. By the compactness of $X$, we can cover $X$
by finite many balls of radius $r$. 
\end{proof}

\subsection{Continuous structure}
The topological space of the Hilbert bundle $\textbf{E}$ is simply the fiberwise metric completion of $E_\infty$ with respect to $H_\infty$. Therefore, fibers of $\textbf{E}$ are isomorphic to $l^2$, the space of square summable complex number sequences. To give local trivializations, it suffices to choose a good local unitary basis of $E_\infty$ which has been given in Lemma \ref{lem:Let--be-2}. So we have the following proposition.
\begin{prop}
\label{prop:There-exists-a-1}There exists a continuous Hilbert bundle
$\pi_{\mathbf{E}}:\mathbf{E}\to X$ such that each fiber $\mathbf{E}_{x}=\pi_{\mathbf{E}}^{-1}(x)$
is a separable Hilbert space equal to the metric completion of $(E_\infty)_{x}$
with respect to $H_{\infty}$.
\end{prop}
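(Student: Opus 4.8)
The plan is to build the local trivializations of $\mathbf{E}$ out of the orthogonal splitting $E_\infty \simeq \hat\oplus_{j} Q_j$ from Theorem \ref{thm:6.1tVS}(4) together with the uniform Coulomb frames produced in Lemma \ref{lem:Let--be-2}. First I would fix the finite cover $\{B_r(x_i)\}_{i=1}^N$ from that lemma. On each ball $U = B_r(x_i)$ the frames $\mathbf{s}_j^U = \{s_{j,l}^U\}_{l=1}^{\operatorname{rk}Q_j}$ are $H_\infty$-unitary frames for the $Q_j$, and since the splitting $\hat\oplus_j Q_j$ is $H_\infty$-orthogonal, their union $\{s_{j,l}^U\}_{j,l}$ is a countable $H_\infty$-orthonormal frame of $E_\infty|_U$. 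At each $x\in U$ this frame identifies $(E_\infty)_x = \C^\infty_0$ isometrically with the space of finite sequences, whose $\ell^2$-completion is $W := \ell^2$; this gives the fiberwise trivialization $\Psi_U(x): \mathbf{E}_x \xrightarrow{\sim} W$, and it realizes $\mathbf{E}_x$ as the metric completion of $(E_\infty)_x$ demanded in the statement.

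It then remains to check that the induced transition maps satisfy the continuity and cocycle requirements of Definition \ref{def:hilbertbund}. For two balls $U, V$ set $g_{UV}(x) := \Psi_U(x)\circ\Psi_V(x)^{-1}$. Because the decomposition $\hat\oplus_j Q_j$ is intrinsic (independent of the ball), $g_{UV}(x)$ is block diagonal with respect to the $Q_j$, with blocks the finite-dimensional unitary change-of-frame matrices $g^j_{UV}(x) \in \mathbf{U}(\operatorname{rk}Q_j)$; hence $g_{UV}(x) = \bigoplus_j g^j_{UV}(x)$ is a unitary operator on $W$, and the cocycle condition holds because it holds in each block. The nontrivial point is continuity in the operator norm, i.e.\ that $\sup_j \|g^j_{UV}(x) - g^j_{UV}(y)\|_\rho$ is small for $x$ near $y$, \emph{uniformly in the block index $j$}.

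This uniformity is exactly what Lemma \ref{lem:Let--be-2} was engineered to provide. Writing $A_j^U, A_j^V$ for the connection matrices of the Chern connection of $H_\infty|_{Q_j}$ in the two frames, the gauge transformation law reads
\[
d\,g^j_{UV} = g^j_{UV}\, A_j^V - A_j^U\, g^j_{UV}.
\]
Since each $g^j_{UV}$ is unitary, so $\|g^j_{UV}\|_\rho = 1$, and $\|A_j^U\|_{\rho,C^\alpha}\le 1$, $\|A_j^V\|_{\rho,C^\alpha}\le 1$ with constants independent of $j$, I read off $\|d\,g^j_{UV}\|_{\rho,C^0}\le 2$ for all $j$. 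Integrating along minimizing geodesics inside the small overlap then yields the uniform Lipschitz bound $\|g^j_{UV}(x) - g^j_{UV}(y)\|_\rho \le 2\,d(x,y)$, whence $\|g_{UV}(x) - g_{UV}(y)\|_\rho = \sup_j \|g^j_{UV}(x) - g^j_{UV}(y)\|_\rho \le 2\,d(x,y)$. Thus each $g_{UV}$ is (Lipschitz) continuous as a map $U\cap V \to \mathbf{U}(W)$, and Definition \ref{def:hilbertbund} produces the continuous Hilbert bundle $\pi_{\mathbf{E}}:\mathbf{E}\to X$ whose fiber over $x$ is the completion of $(E_\infty)_x$.

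I expect the main obstacle to be precisely the uniformity in $j$: the operator norm of a block-diagonal operator is the supremum of the block norms, so block-by-block continuity is worthless unless the modulus of continuity is uniform over all the (infinitely many, arbitrarily high-rank) blocks $Q_j$. This is why the rank-independent Uhlenbeck gauge-fixing estimate and the $j$-independent curvature bound of Corollary \ref{cor:bound curvature} feeding into Lemma \ref{lem:Let--be-2} are indispensable; without rank-independent constants the naive bound on $\|g^j_{UV}\|_{\rho,C^1}$ would deteriorate with $\operatorname{rk}Q_j$, and the candidate transition map would fail to be norm-continuous.
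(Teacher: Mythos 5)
Your proposal is correct and follows essentially the same route as the paper's own proof: trivialize fiberwise via the uniform $H_\infty$-unitary Coulomb frames of Lemma \ref{lem:Let--be-2}, observe the transition maps are block-diagonal unitaries $\mathrm{diag}(u_0,u_1,\dots)$, and deduce norm-continuity from the gauge transformation law $du_l = A'_l u_l - u_l A_l$ together with the $l$-independent bound $\|A_l\|_{\rho,C^\alpha}\le 1$. The uniformity-in-$j$ point you flag as the crux is exactly the point the paper relies on (and sharpens to a uniform $C^{1,\alpha}$ bound in Corollary \ref{cor:GaugeC_alpha}).
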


\begin{proof}
For each $l\in\N$, let $\{s_{l,\alpha}(x)\}_{\alpha=1}^{\text{rk}Q_{l}}$
be a unitary frame in some geodesic ball $B_{r}(x_{i})$ given as
in Lemma \ref{lem:Let--be-2}. Let $\mathbf{E}_{x}$ be the completion
of $(E_\infty)_x\simeq\hat{\oplus}_{l=0}^{\infty}(Q_{l})_{x}$ under $H_{\infty}$. Note $\mathbf{E}_{x}$
is a separable Hilbert space which is isomorphic to $l^{2}$ by choosing
a unitary basis. With the unitary frame $\mathbf{s}(x)=\{s_{l,\alpha}(x)\}_{l,\alpha}$
at $x$, we have an isomorphism of Hilbert spaces:
\[
\phi_{\mathbf{s}}(x):l^{2}\to\mathbf{E}_{x},\ (a_{k,\alpha})_{k,\alpha}\mapsto\sum_{i,\alpha}a_{k,\alpha}s_{k,\alpha}(x),
\]
Let $\mathbf{E}=\sqcup_{x\in X}\mathbf{E}_{x}$. Denote $\pi_{\mathbf{E}}:\mathbf{E}\to X,\mathbf{E}_{x}\mapsto x$
the projection.

For $i\not=j$, let $\{s'_{l,\alpha}(y)\}_{\alpha=1}^{\text{rk}Q_{l}}$
be the unitary frame for $Q_{l}$ in $B_{r}(x_{j})$. Then, in $B_{r}(x_{i})\cap B_{r}(x_{j})$,
$\{s'_{l,\alpha}(y)\}_{\alpha=1}^{\text{rk}Q_{l}}$ differs from $\{s_{l,\alpha}(y)\}_{\alpha=1}^{\text{rk}Q_{l}}$
by a smooth unitary transform $u_{l}(y)$. Note
\begin{equation}
du_{l}=A'_{l}u_{l}-u_{l}A_{l},\label{eq:-63}
\end{equation}
where $A_{l},A'_{l}$ are connection forms under $\{s'_{l,\alpha}\}$
and $\{s_{l,\alpha}\}$. Since $A_{l}'$ and $A_{l}$ are uniformly
bounded independent of $l$ (by Lemma \ref{lem:Let--be-2}), we see
that $u_{l}(y)$ is smooth in the unitary group $U(\text{rk}Q_{l})$
with bounded Lipschitz constant independent of $l$. Thus,
\begin{equation}
y\mapsto u_{ij}(y):=\text{diag}(u_{0}(y),u_{1}(y),\cdots)\label{eq:-50}
\end{equation}
defines a continuous map from $B_{r}(x_{i})\cap B_{r}(x_{j})$ to
the unitary group of $l^{2}$.

With the local trivialization $\phi_{\mathbf{s}}:l^{2}\times B_{r}(x_{i})\to\pi_{\mathbf{E}}^{-1}(B_{r}(x_{i}))$
and transition groups defined in (\ref{eq:-50}), we see that $\pi_{\mathbf{E}}:(\mathbf{E},H_{\infty})\to X$
becomes a continuous Hilbert bundle.
\end{proof}
We can further show that transition maps are $C^{1,\alpha}$ regular.
\begin{cor} \label{cor:GaugeC_alpha}
Let $u_{ij}:B_{r}(x_{i})\cap B_{r}(x_{j})\to \textbf{U}(l^{2})$ be the transition
map constructed in Proposition \ref{prop:There-exists-a-1}. Then
$u_{ij}$ lies in $C^{1,\alpha}(B_{r}(x_{i})\cap B_{r}(x_{j}),\textbf{U}(l^{2}))$ for any $\alpha\in(0,1)$.
\end{cor}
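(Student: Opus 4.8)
The plan is to push the rank-independent estimates of Lemma \ref{lem:Let--be-2} through the structure equation (\ref{eq:-63}) and then reassemble the diagonal blocks. Working in the geodesic normal coordinates of Lemma \ref{lem:Let--be-2}, I first record that on the overlap $B_r(x_i)\cap B_r(x_j)$ both connection forms satisfy $\|A_l\|_{\rho,C^\alpha}\leq 1$ and $\|A'_l\|_{\rho,C^\alpha}\leq 1$ with constants independent of $l$. Each $u_l$ is smooth, being the transition between two smooth unitary frames of the finite-rank bundle $Q_l$, so all per-block calculus is legitimate and the only issue is uniformity in $l$. Since $u_l$ is unitary, $|u_l|_\rho\equiv 1$, and (\ref{eq:-63}) gives $|du_l|_\rho\leq |A'_l|_\rho+|A_l|_\rho\leq 2$. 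Hence each $u_l$ is Lipschitz with constant $\leq 2$, and on the bounded domain this yields a uniform bound $\|u_l\|_{\rho,C^\alpha}\leq C_0$ independent of $l$.

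Next I would bootstrap once. Rewriting $du_l=A'_lu_l-u_lA_l$ and using that the product of two $C^\alpha$ functions is $C^\alpha$ with the submultiplicative estimate,
\[ \|du_l\|_{\rho,C^\alpha}\leq \|A'_l\|_{\rho,C^\alpha}\|u_l\|_{\rho,C^\alpha}+\|u_l\|_{\rho,C^\alpha}\|A_l\|_{\rho,C^\alpha}\leq C_1, \]
with $C_1$ independent of $l$. Thus $\|u_l\|_{\rho,C^{1,\alpha}}\leq C$ uniformly in $l$, which is the crucial input for the assembly step.

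Finally I would reassemble the block-diagonal map $u_{ij}=\mathrm{diag}(u_0,u_1,\dots)$. Because the blocks act on mutually orthogonal summands of $l^2$, the operator norm of any block-diagonal operator equals the supremum of the block norms, and the same holds for operator-valued derivatives. To verify differentiability in operator norm I would estimate, for a fixed direction $v$ and the candidate derivative $\mathrm{diag}(du_l[v])$,
\[ \Big|\tfrac{1}{t}\big(u_{ij}(y+tv)-u_{ij}(y)\big)-\mathrm{diag}(du_l(y)[v])\Big|_\rho =\sup_l\Big|\tfrac{1}{t}\!\int_0^t\!\big(du_l(y+sv)-du_l(y)\big)[v]\,ds\Big|_\rho \leq C_1\,(t|v|)^\alpha|v|, \]
where the final bound uses the uniform Hölder estimate for $du_l$ and is independent of $l$; letting $t\to 0$ shows $u_{ij}$ is differentiable with $du_{ij}=\mathrm{diag}(du_l)$. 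Taking the supremum over blocks of the uniform estimate $[du_l]_{C^\alpha}\leq C_1$ then gives $[du_{ij}]_{\rho,C^\alpha}\leq C_1$, so $u_{ij}\in C^{1,\alpha}(B_r(x_i)\cap B_r(x_j),\mathbf{U}(l^2))$.

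I expect the only genuine obstacle to be this last paragraph, namely establishing true differentiability in the operator norm of the infinite block-diagonal map. This is precisely where the independence of every constant from $l$ — traced back to the rank-independent gauge-fixing Lemma \ref{lem:Let--be-4} — is indispensable, since it is exactly what forces the difference quotients to converge uniformly across all blocks. Were the Hölder bounds on $du_l$ allowed to degenerate as $l\to\infty$, each block would still be smooth yet the assembled map could fail to be differentiable in operator norm.
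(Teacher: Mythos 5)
Your proof is correct and follows essentially the same route as the paper: both derive the uniform Lipschitz bound $|du_l|_\rho\leq 2$ from (\ref{eq:-63}) and unitarity, upgrade it to a uniform $C^{1,\alpha}$ bound on each block via the H\"older product estimate with the rank-independent bounds of Lemma \ref{lem:Let--be-2}, and then pass to the block-diagonal map by taking the supremum over blocks. Your final paragraph verifying differentiability of $u_{ij}$ in operator norm via uniform difference quotients is a detail the paper compresses into the identity $\|u_{ij}\|_{\rho,C^{1,\alpha}}=\sup_l\|u_l\|_{\rho,C^{1,\alpha}}$, and your argument for it is valid.
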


\begin{proof}
By (\ref{eq:-63}), for each $l$,
$\|du_{l}\|_{\rho,C^{0}}\leq2$
is bounded as $\|A_{l}\|_{\rho,C^{0}}\leq1$ and $|u_{l}|_{\rho}=1$.
Hence, $u_{l}$ is Lipschitz continuous with Lipschitz constant $\leq2$.
Since $\|A_{l}\|_{\rho,C^{\alpha}}\leq1$, we have $\|du_{l}\|_{\rho,C^{\alpha}}\le6$.
Hence, we have $\|u_{l}\|_{\rho,C^{1,\alpha}}\leq7$. Then
\[
\|u_{ij}\|_{\rho,C^{1,\alpha}}=\sup_{l}\|u_{l}\|_{\rho,C^{1,\alpha}}\leq7.
\]
We have finished the proof.
\end{proof}

\subsection{Smooth and holomorphic structures}

Next, we prove that the transition maps defined in Proposition
\ref{prop:There-exists-a-1} give a smooth structure of $\mathbf{E}$. More importantly, we show that the smooth structure is compatible with the
connection $\mathbf{A}$, i.e. 
$d+\mathbf{A}$ acts on the smooth sections of $\mathbf{E}$. 

Notice that replacing by a subsequence $(E
_k,H_{k,\infty})$ does not change the metric completion, So passing to a subsequence,  we assume 

\begin{equation}
\label{eq:l2summable}
\sum_{j=0}^{\infty}\{(\theta-\mu(E_{j}))\text{rk}E_{j}\}^{\frac{1}{2}}<C'.
\end{equation}
We express the connection under a local trivialization $\phi_{\mathbf{s}}$ in $B_r(x_i)$ as an infinite matrix $\mathbf{A}(x)$ as in (\ref{eq:-41}). The next lemma shows
that $\mathbf{A}$ defines a bounded linear
operator. 
\begin{lem}
\label{lem:Let--be-3}Let $\xi$ be smooth vector field on $X$. Let
$\mathbf{A}_{\xi}(x):=\iota_{\xi(x)}\mathbf{A}(x)$. Then, $\mathbf{A}_{\xi}(x)\in C(B_{r}(x_{i}),L(l^{2}))$.
\end{lem}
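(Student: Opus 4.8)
The plan is to fix the local unitary frame $\mathbf{s}=\{s_{j,l}\}$ on $B_{r}(x_{i})$ produced in Lemma \ref{lem:Let--be-2} and to read off $\mathbf{A}_{\xi}(x)=\iota_{\xi(x)}\mathbf{A}(x)$ as an infinite block matrix acting on $l^{2}\cong\hat{\oplus}_{j}(Q_{j})_{x}$ via the expression \eqref{eq:-41}. I split it as $\mathbf{A}_{\xi}=D_{\xi}+\Gamma_{\xi}$, where $D_{\xi}(x)=\mathrm{diag}(\iota_{\xi}A_{0}(x),\iota_{\xi}A_{1}(x),\dots)$ is the block-diagonal part and $\Gamma_{\xi}(x)$ collects the off-diagonal blocks $\mp\,\iota_{\xi}\gamma_{i,j}^{\dagger}$ (for $i<j$) and $\iota_{\xi}\gamma_{i,j}$ (for $i>j$). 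I bound each summand in $L(l^{2})$ separately: the operator norm of $D_{\xi}(x)$ is the supremum of the operator norms of its diagonal blocks, while the operator norm of $\Gamma_{\xi}(x)$ will be dominated by its Hilbert--Schmidt norm. For $D_{\xi}$, Lemma \ref{lem:Let--be-2} gives $\|A_{j}\|_{\rho,C^{\alpha}(B_{r}(x_{i}))}\le 1$ uniformly in $j$ in this frame, so $|\iota_{\xi}A_{j}(x)|_{\rho}\le|\xi(x)|\,|A_{j}(x)|_{\rho}\le|\xi(x)|$; taking the supremum over $j$ shows $|D_{\xi}(x)|_{\rho}\le|\xi(x)|$. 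For continuity I write $\iota_{\xi}A_{j}(x)-\iota_{\xi}A_{j}(x')=\iota_{\xi(x)}(A_{j}(x)-A_{j}(x'))+\iota_{\xi(x)-\xi(x')}A_{j}(x')$ and use the uniform H\"older bound together with the smoothness of $\xi$ to get $|D_{\xi}(x)-D_{\xi}(x')|_{\rho}\le C(\xi)|x-x'|^{\alpha}$, uniformly in $j$, so $D_{\xi}$ is H\"older continuous into $L(l^{2})$.

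The off-diagonal part is the crux. Since $\iota_{\xi}\gamma_{i,j}^{\dagger}$ contracts a $1$-form, $|\iota_{\xi}\gamma_{i,j}^{\dagger}|_{2}\le|\xi|\,|\gamma_{i,j}^{\dagger}|_{2}$, and the fibrewise $2$-norm $|\gamma_{i,j}^{\dagger}|_{2}$ is unchanged under the unitary frame. The key input is \eqref{eq:-49}: because its right-hand side $C\varepsilon_{j}$, with $\varepsilon_{j}:=(\theta-\mu(E_{j}))\text{rk}E_{j}$, is independent of $k$, letting $k\to\infty$ and using monotone convergence gives $\sum_{i}\sum_{j'>j}|\gamma_{i,j'}^{\dagger}|_{2}^{2}\le C\varepsilon_{j}$ for every $j$, uniformly in $x$. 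Taking $j=j'-1$ bounds the mass of each column by $\sum_{i<j'}|\gamma_{i,j'}^{\dagger}|_{2}^{2}\le C\varepsilon_{j'-1}$, and summing over columns yields
\[
\sum_{i<j}|\gamma_{i,j}^{\dagger}|_{2}^{2}\le C\sum_{j\ge 0}\varepsilon_{j}<\infty,
\]
where finiteness is exactly \eqref{eq:l2summable} (square-root summability there forces $\sum_{j}\varepsilon_{j}<\infty$). Adding the symmetric lower-triangular contribution $\gamma_{i,j}$ (adjoints of the $\gamma_{j,i}^{\dagger}$, with equal $2$-norm), I obtain the uniform-in-$x$ estimate $\|\Gamma_{\xi}(x)\|_{\mathrm{HS}}^{2}\le 2C|\xi(x)|^{2}\sum_{j}\varepsilon_{j}$. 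Hence $\Gamma_{\xi}(x)$ is Hilbert--Schmidt, a fortiori in $L(l^{2})$, with $|\Gamma_{\xi}(x)|_{\rho}\le\|\Gamma_{\xi}(x)\|_{\mathrm{HS}}$ uniformly bounded on the compact ball.

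For continuity of $\Gamma_{\xi}$ I approximate by the finite truncations $\Gamma_{\xi}^{(N)}(x)$ keeping only blocks with $i,j\le N$. Each $\Gamma_{\xi}^{(N)}(\cdot)$ is a finite-rank operator with entries $\iota_{\xi}\gamma_{i,j}^{\dagger}(x)$ that are smooth by Corollary \ref{cor:For-any-fixed}, hence continuous into $L(l^{2})$. The same column-wise estimate controls the tail uniformly in $x$: $|\Gamma_{\xi}(x)-\Gamma_{\xi}^{(N)}(x)|_{\rho}^{2}\le\|\Gamma_{\xi}(x)-\Gamma_{\xi}^{(N)}(x)\|_{\mathrm{HS}}^{2}\le C|\xi(x)|^{2}\sum_{j\ge N}\varepsilon_{j}\to 0$ as $N\to\infty$, uniformly on $B_{r}(x_{i})$. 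A uniform limit of continuous $L(l^{2})$-valued maps is continuous, so $\Gamma_{\xi}$, and therefore $\mathbf{A}_{\xi}=D_{\xi}+\Gamma_{\xi}$, lies in $C(B_{r}(x_{i}),L(l^{2}))$.

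The main obstacle is the off-diagonal operator-norm estimate: the off-diagonal connection blocks are not individually small, and only their cumulative mass is controlled. The whole argument hinges on upgrading the $k$-uniform block estimate \eqref{eq:-49} to a genuine Hilbert--Schmidt bound by summing columns against the summability \eqref{eq:l2summable}. The Hilbert--Schmidt norm is the natural quantity here precisely because it simultaneously dominates the operator norm and supplies the uniform tail control needed for continuity; replacing it by a direct operator-norm (e.g. Schur-test) estimate would require more delicate bookkeeping of the individual $|\gamma_{i,j}^{\dagger}|_{\rho}$.
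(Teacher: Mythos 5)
Your proof is correct, and while it shares the paper's skeleton (split $\mathbf{A}_{\xi}$ into the block-diagonal part, controlled uniformly via Lemma \ref{lem:Let--be-2}, plus an off-diagonal part controlled through $|\cdot|_{\rho}\leq|\cdot|_{2}$ and (\ref{eq:-49})), your treatment of the off-diagonal part is genuinely different from the paper's. The paper decomposes the off-diagonal part into the ``hook'' operators $\mathbf{B}_{i}$ (the $i$-th row and $i$-th column beyond the diagonal), bounds each one by $|\mathbf{B}_{i}|_{\rho}\leq|\mathbf{B}_{i}|_{2}\leq C\varepsilon_{i}^{1/2}$ with $\varepsilon_{i}=(\theta-\mu(E_{i}))\text{rk}E_{i}$, and then sums \emph{operator norms} via the Weierstrass M-test; this is exactly why the subsequence normalization (\ref{eq:l2summable}) is taken in square-root-summable form $\sum_{i}\varepsilon_{i}^{1/2}<\infty$. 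You instead exploit the fact that distinct columns occupy disjoint matrix entries, so their Hilbert--Schmidt masses add in \emph{squares}: summing the column bounds $\sum_{i<j'}|\gamma_{i,j'}^{\dagger}|_{2}^{2}\leq C\varepsilon_{j'-1}$ gives a global Hilbert--Schmidt bound needing only $\sum_{j}\varepsilon_{j}<\infty$, a strictly weaker hypothesis than (\ref{eq:l2summable}), and your tail estimate over truncations replaces the M-test. This buys two things: the normalization (\ref{eq:l2summable}) could be relaxed, and you get the stronger conclusion that the off-diagonal part of $\mathbf{A}_{\xi}$ is Hilbert--Schmidt (the paper's hook decomposition implies this too, since the hooks have disjoint supports, but the paper does not use or state it); the paper's route is in exchange slightly shorter in bookkeeping.

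One inaccuracy to flag, though it does not affect the argument: your intermediate claim that ``letting $k\to\infty$'' in (\ref{eq:-49}) yields $\sum_{i}\sum_{j'>j}|\gamma_{i,j'}^{\dagger}|_{2}^{2}\leq C\varepsilon_{j}$ with the \emph{full} row sum oversteps the range of Corollary \ref{cor:For-any-fixed}, which is stated (and derived from the Chern--Weil argument for $E_{k}\subset E_{j}\subset E_{i}$) only for $k\leq j$; rows $i$ with $j<i<j'$ belong to second fundamental forms of larger subbundles and are not controlled by $\varepsilon_{j}$ but only by the tail $\sum_{l\geq j}\varepsilon_{l}$. Fortunately the only estimate you actually use downstream is the single-column bound, which follows from (\ref{eq:-49}) with $k=j=j'-1$ (within the stated range, since $\gamma_{i,j'}^{\dagger}=0$ for $i>j'$), so no limit in $k$ is needed and the rest of your proof stands as written.
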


\begin{proof}
Let $\mathbf{A}'=\text{diag}\{A_{0},A_{1},\cdots\}$. Since
 $A_{j}$ are uniformly bounded in $C^{\alpha}(B_{r}(x_{i}),|\cdot|_{\rho})$ by $1$ (Lemma \ref{lem:Let--be-2}),
\begin{align*}
|\mathbf{A}'(x)-\mathbf{A}'(y)|_{\rho} \leq\sup_{j\in\N}|A_{j}(x)-A_{j}(y)|_{\rho}\leq |x-y|^\alpha
\end{align*}
Hence, $\iota_\xi\mathbf{A}'\in C^{\alpha}(B_{r}(x_{i}),L(l^{2}))$.
Let 
\begin{align*}
\mathbf{B}_{i}= & \left[\begin{array}{ccccc}
0 & \cdots & 0 & 0 & \cdots\\
\vdots & 0 & -\gamma_{i,i+1}^{\dagger} & -\gamma_{i,i+2}^{\dagger} & \cdots\\
0 & \gamma_{i+1,i} & 0 & 0 & \ldots\\
0 & \gamma_{i+2,i} & 0 & 0 & \cdots\\
\vdots & \vdots & \vdots & \vdots & \ddots
\end{array}\right]\begin{array}{c}
\\i\text{}\\
\\\\\\\end{array}\\
 & \begin{array}{ccccc}
\  &  &  & i\end{array}
\end{align*}
Then, $\mathbf{A}_{\xi}(x)=\iota_{\xi}\mathbf{A}'+\sum_{i=0}^{\infty}\iota_{\xi}\mathbf{B}_{i}.$
By Corollary \ref{cor:For-any-fixed}, 
\begin{align*}
|\mathbf{B}_{i}|_{2}^{2} 
  \leq-\text{tr}\sqrt{-1}\Lambda\Gamma_{i,i:\infty}^{\dagger}\wedge\Gamma_{i,i:\infty}
  \leq C(X)(\theta-\mu(E_{i}))\text{rk}E_{i}.
\end{align*}
Here, the last inequality comes from (\ref{eq:-49}). Hence, 
\[
|\mathbf{B}_{i}|_{\rho}\leq|\mathbf{B}_{i}|_{2}\leq C(X)\{(\theta-\mu(E_{i}))\text{rk}E_{i}\}^{\frac{1}{2}}.
\]
$\mathbf{B}_{i}$ are continuous operators on $l^{2}$. By (\ref{eq:l2summable}), $\sum_i|\textbf{B}_i|_\rho<C'$.
Hence, $\mathbf{A}_{\xi}(x)$ is also continuous by Weierstrass M-test. We
have finished the proof.
\end{proof}
Corollary \ref{cor:For-any-fixed}
justify the definition of the formal curvature form: 
\begin{align}
\label{lem:projectively flat connection}
\mathbf{F}=d\mathbf{A}+\mathbf{A}\wedge\mathbf{A}=\frac{2\pi}{\sqrt{-1}}\theta\omega\text{Id}
\end{align}
is well defined as a continuous 2-form with values locally in $L(l^2)$. 
\begin{prop}
   \label{prop:smoothconnection}    The connection matrix \textbf{A} is smooth in each $B_r(x_i)$. \end{prop}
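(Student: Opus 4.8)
The plan is to prove smoothness by showing that the matrix expansion $\mathbf{A}=\mathbf{A}'+\sum_{i\ge 0}\mathbf{B}_{i}$ from the proof of Lemma \ref{lem:Let--be-3} converges not merely in $C^{0}(B_{r}(x_{i}),L(l^{2}))$ but in $C^{k}(B_{r}(x_{i}),L(l^{2}))$ for every $k\in\N$. Since a map into the Banach space $L(l^{2})$ that is a $C^{k}$-limit for all $k$ is smooth, it suffices to produce \emph{rank-independent} $C^{k}$ bounds on each summand and then run the Weierstrass $M$-test in each fixed $C^{k}$. I would deliberately avoid a direct $L^{p}$ elliptic bootstrap applied to the infinite matrix $\mathbf{A}$ through the curvature equation (\ref{lem:projectively flat connection}): because $L(l^{2})$ is not a UMD space, Calder\'on--Zygmund type $L^{p}$ estimates are unavailable for $L(l^{2})$-valued forms, whereas working blockwise with uniform Schauder-type constants circumvents this difficulty entirely.

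For the off-diagonal blocks, recall from Corollary \ref{cor:For-any-fixed} that each $\gamma_{i,j}^{\dagger}$ (and its adjoint $\gamma_{j,i}$) is the $C^{2}$-limit, as $m\to\infty$, of the second fundamental forms ${}_m\beta_{i,j}^{\dagger}$ of the subbundle $E_{i}\subset E_{m}$ inside the Hermitian--Einstein bundle $(E_{m},H_{m})$. I would apply the higher-order estimate (\ref{eq:-32}) of Theorem \ref{thm:Assume-the-same} to these: for each $k$, $\|{}_m\beta\|_{2,H_{m},C^{k}}\le C_{k}\|{}_m\beta\|_{2,H_{m},L^{2}}$ with $C_{k}=C(k,X,\|R\|_{C^{k-1}},\mu(E_{m}))$. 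Since $\mu(E_{m})\to\theta$ is bounded and $X$ is fixed, the $C_{k}$ are uniform in $i$ and $m$. Passing to the limit, using Lemma \ref{lem:If--is} together with (\ref{eq:-49}) to bound the $L^{2}$ norms, and using $|\cdot|_{\rho}\le|\cdot|_{2}$, this yields
\[
\|\mathbf{B}_{i}\|_{\rho,C^{k}(B_{r}(x_{i}))}\le C_{k}\big\{(\theta-\mu(E_{i}))\,\text{rk}E_{i}\big\}^{\frac12}.
\]

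For the diagonal block $\mathbf{A}'=\mathrm{diag}\{A_{0},A_{1},\dots\}$, I would first upgrade Corollary \ref{cor:bound curvature} to uniform $C^{k}$ control of the curvatures $F_{H_{\infty}|_{Q_{j}}}$. Using the $C^{2}$-convergence $F_{H_{\infty}|_{Q_{j}}}=\lim_{k}F_{H_{k}|_{Q_{j}}}$ and the Gauss--Codazzi type identity $F_{H_{k}|_{Q_{j}}}=F_{H_{k}}|_{Q_{j}}-\sum {}_k\beta\wedge{}_k\beta^{\dagger}$ from the proof of Corollary \ref{cor:bound curvature}, the uniform $C^{k}$ bounds on the ${}_k\beta$'s from Theorem \ref{thm:Assume-the-same} give $\sup_{j}\|F_{H_{\infty}|_{Q_{j}}}\|_{2,C^{k}}<\infty$. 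Then, in the Uhlenbeck--Coulomb frames of Lemma \ref{lem:Let--be-2}, the rank-independent gauge and elliptic estimates (Lemma \ref{lem:Let--be-4} and the appendix) bootstrap these curvature bounds to $\sup_{j}\|A_{j}\|_{C^{k}(B_{r}(x_{i}))}<\infty$ for every $k$; as $\mathbf{A}'$ is diagonal, this makes it a smooth $L(l^{2})$-valued form. Finally, by the summability (\ref{eq:l2summable}) we have $\sum_{i}\|\mathbf{B}_{i}\|_{\rho,C^{k}}\le C_{k}C'<\infty$ for each $k$, so the Weierstrass $M$-test shows that $\mathbf{A}'+\sum_{i}\mathbf{B}_{i}$ converges in $C^{k}(B_{r}(x_{i}),L(l^{2}))$ for all $k$, hence $\mathbf{A}$ is smooth.

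The main obstacle is the \emph{uniformity} of every constant in the block index: the $M$-test controls the infinite assembly in operator norm only because the higher-order second-fundamental-form estimates of Theorem \ref{thm:Assume-the-same} and the Uhlenbeck/elliptic constants are all independent of the ranks of $E_{i}$ and $Q_{j}$. The two technical points demanding the most care are verifying that the rank-independent Uhlenbeck lemma of the appendix genuinely propagates to all $C^{k}$ orders (not only the $C^{\alpha}$ bound recorded in Lemma \ref{lem:Let--be-2}), and that the interchange $F_{H_{\infty}|_{Q_{j}}}=\lim_{k}F_{H_{k}|_{Q_{j}}}$ remains valid in every $C^{k}$ rather than merely in $C^{0}$ or $C^{2}$.
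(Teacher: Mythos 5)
Your proposal is correct in outline but takes a genuinely different route from the paper. The paper does \emph{not} estimate the blocks to all orders and sum; instead it treats $\mathbf{A}$ as a single $L(l^{2})$-valued $1$-form and observes that it satisfies an elliptic Hodge system: $d\mathbf{A}=\mathbf{F}-\mathbf{A}\wedge\mathbf{A}$ together with a divergence equation $d^{*}\mathbf{A}=\sqrt{-1}\Lambda([\mathbf{D},\mathbf{A}^{0,1}]+[\mathbf{D}^{\dagger},\mathbf{A}^{1,0}])-\langle d\log\sigma,\mathbf{D}-\mathbf{D}^{\dagger}\rangle_{\omega_{0}}$, where $\mathbf{D}=\mathbf{A}^{1,0}-(\mathbf{A}')^{1,0}$; the divergence equation is extracted from the Hermitian--Einstein condition $\Lambda\dbar_{\mathbf{A}}\mathbf{D}=0$ and the fact that each $A_{j}$ is already in Coulomb gauge ($d^{*}\mathbf{A}'=0$). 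Starting only from the continuity of $\mathbf{A}$ (Lemma \ref{lem:Let--be-3}) and of $\mathbf{F}$ and $\mathbf{A}\wedge\mathbf{A}$, the paper bootstraps via the Banach-valued Schauder regularity of Proposition \ref{prop:regularityofhodgesystem}: continuous data give $C^{\alpha}$, H\"older data give $C^{1,\alpha}$, and so on. This buys economy: one never needs higher-order uniform estimates on second fundamental forms, nor uniform $C^{k}$ gauge bounds for every $k$, nor any upgrade of the $r=2$ convergence in Theorem \ref{thm:For-each-,}. Your route, by contrast, pushes the rank-independence philosophy of Theorem \ref{thm:Assume-the-same} and Lemma \ref{lem:Let--be-4} to all orders and then closes with the M-test via (\ref{eq:l2summable}); it buys explicit quantitative $C^{k}$ decay $\|\mathbf{B}_{i}\|_{\rho,C^{k}}\lesssim C_{k}\{(\theta-\mu(E_{i}))\,\text{rk}E_{i}\}^{1/2}$, which the paper's soft bootstrap does not record, at the price of substantially heavier bookkeeping.

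Two caveats. First, your stated reason for avoiding the elliptic route is misplaced: the paper's bootstrap rests on Proposition \ref{prop:regularityofhodgesystem}, a H\"older (Schauder-type) estimate proved by a Green-function representation, which is valid for arbitrary Banach-space-valued forms; no UMD hypothesis or vector-valued Calder\'on--Zygmund theory is invoked at this step, so the obstruction you cite does not arise in the paper's argument. Second, the estimates (\ref{eq:-32}) of Theorem \ref{thm:Assume-the-same} are \emph{covariant} $C^{k}$ bounds with respect to the Hermitian--Einstein connection, whereas your M-test needs coordinate-derivative bounds of the blocks in the fixed Coulomb frames of Lemma \ref{lem:Let--be-2}. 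Converting one into the other requires the very connection bounds you are constructing, so your two halves (uniform $C^{k}$ bounds on the diagonal $A_{j}$, uniform $C^{k}$ bounds with smallness on the $\mathbf{B}_{i}$) must be run as a simultaneous induction on $k$: at order $k$ the commutator corrections involve connection blocks controlled at order $k-1$, and only the leading covariant term carries the small factor from (\ref{eq:-49}) and Corollary \ref{cor:For-any-fixed}. Likewise, you do not actually need convergence $F_{H_{k}|_{Q_{j}}}\to F_{H_{\infty}|_{Q_{j}}}$ in every $C^{k}$: uniform $C^{k}$ bounds on the approximants together with the $C^{0}$ (indeed $C^{2}$) convergence already furnished by Theorem \ref{thm:For-each-,} imply that the limit satisfies the same $C^{k}$ bounds, which is all the Uhlenbeck bootstrap of Corollary \ref{cor:bound curvature} and Lemma \ref{lem:Let--be-4} requires. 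With these repairs your plan closes, but as written these are gaps in rigor rather than mere polish.
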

\begin{proof}
Identify $B_r(x_i)$ with a ball in $\C$. Let $\omega_0$ be the standard metric on  $\C$. Denote $\sigma=\omega/\omega_0$. Let $\delta$  be the divergence operator with respect to $\omega$ and $d^*$ the standard divergence operator. Then
 \[
 \delta \textbf{A}=d^*\textbf{A}+\< d\log\sigma,\textbf{A}\>_{\omega_0},
 \]
 where the pairing is entry-wise with respect to the standard metric.
 Then  we have $d^*\textbf{A}'=0$ as $d^*A_j=0$ .   Write $\textbf{D}=\textbf{A}^{1,0}-(\textbf{A}')^{1,0}$. Since $\Lambda\dbar_\textbf{A}D=0$ by the Hermitian-Einstein equation, we have
\begin{align}
    \delta\textbf{D}=-\sqrt{-1}\Lambda(\dbar_\textbf{A} \textbf{D}-[\textbf{D},\textbf{A}^{0,1}])\nonumber 
     =\sqrt{-1}\Lambda[\textbf{D},\textbf{A}^{0,1}].
\end{align}
Hence, $\textbf{A}$ satisfies the following elliptic system
\[
\begin{cases} 
        d\textbf{A}=\textbf{F}-\textbf{A}\wedge\textbf{A},\\
        d^*\textbf{A}=\sqrt{-1}\Lambda([\textbf{D},\textbf{A}^{0,1}]+[\textbf{D}^\dagger ,\textbf{A}^{1,0}])-\< d\log\sigma,\textbf{D}-\textbf{D}^\dagger\>_{\omega_0}.
\end{cases}
\]
By Lemma \ref{lem:Let--be-3} and (\ref{lem:projectively flat connection}), $\mathbf{A}\wedge\mathbf{A}$ is
continuous and $d\mathbf{A}\in C(B_{r}(x_{i}),\Omega^{2}(L(l^{2})))$. Apply Proposition \ref{prop:regularityofhodgesystem} with continuous right hand side. We see that $\textbf{A}$ is H\"older continuous. Then we may apply Proposition \ref{prop:regularityofhodgesystem} again with H\"older continuous right hand side to obtain $C^{1,\alpha}$ regularity. By the bootstrapping argument, $\textbf{A}$ is smooth.
 
\end{proof}
We now finish the proof of Theorem \ref{thm:-can-be} as well as Theorem \ref{thm:main2}. 
\begin{proof}[Proof of Theorem \ref{thm:-can-be} and Theorem \ref{thm:main2}]
    By Proposition \ref{prop:smoothconnection}, $\textbf{A}$ is smooth in $B_r(x_i)$. Let $\tilde{\textbf{A}}$ be the connection matrix in $B_r(x_j)$. Let $u_{ij}$ be the unitary transform in Corollary \ref{cor:GaugeC_alpha}. Recall that $u_{ij}$ is entry-wise smooth.  We have 
    \[
    du_{ij}=\tilde{\textbf{A}}u_{ij}-u_{ij}\textbf{A}.
    \]
Since both $\tilde{\textbf{A}}$ and $\textbf{A}$ are smooth, we see that $u_{ij}$ is a smooth section of $\textbf{U}(l^2)$.  The transition maps together give a smooth structure on $\textbf{E}$.

As the curvature matrix is of $(1,1)$-type, $\dbar^2_\textbf{A}=(\dbar+\textbf{A}^{0,1})^2=0$. The existence of a holomorphic structure is then given by  Koszul-Malgrange theorem \cite{KoszulMalgrange58}. See \cite[Chapter X, Theorem 1]{malgrange1958lectures} (the proof utilizes Cauchy's integral formula which is valid with values in a Banach space).

 Now we show that $\textbf{E}$ is indecomposable. If not, there exits a holomorphic isomorphism $\phi:\textbf{E}\to\tilde{\textbf{E}}$ and $\tilde{\textbf{E}}$ admits non-trivial holomorphic orthogonal projection maps $\pi_1$ and $\pi_2$ such that $\tilde{\textbf{E}}=\textbf{E}_1\oplus \textbf{E}_2$
where $\textbf{E}_1=\pi_1(\tilde{\textbf{E}})$ and $\textbf{E}_2=\pi_2(\tilde{\textbf{E}})$ are proper holomorphic 
Hilbert subbundles. Denote $f_i:\mathcal{E}_i\to \textbf{E}$ the holomorphic embedding of $\mathcal{E}_i$. Then \[\mathcal{E}_i\simeq \pi_1 \phi f_i(\mathcal{E}_i)\oplus \pi_2\phi f_i(\mathcal{E}_i)\]is a splitting of $\mathcal{E}_i$. However, since $\mathcal{E}_i$ is stable and irreducible, either $\pi_1\phi f_i(\mathcal{E}_i)=0$ or $\pi_2\phi f_i(\mathcal{E}_i)=0$ . Moreover, if for $l\in\{1,2\}$ , $\pi_l\phi f_i(\mathcal{E}_i)=0$, then $\pi_l\phi f_j(\mathcal{E}_j)=0$  for all $j\leq i$. Hence, $\phi(E_\infty)$ either injects to $\textbf{E}_1$ or $\textbf{E}_2$ as a smooth topological bundle. However, as $\textbf{E}$ is the metric completion of $E_\infty$, for each $x$, $\phi(E_\infty)_ x$ is a dense subspace of $\tilde{\textbf{E}}_x$. It implies that either $\textbf{E}_1\simeq 0$ or $\textbf{E}_2\simeq 0$, a contradiction to the assumption that each $\pi_l$  is not trivial. We have finished the proof.

\end{proof}

Finally, we prove Theorem \ref{thm:main thm in intro}.
\begin{proof}[Proof of Theorem \ref{thm:main thm in intro}]
    By Proposition \ref{prop:examples on curves}, the sequence constructed in Theorem \ref{thm:first result in intro} has a  well-approximated subsequence. Then we can conclude the proof using Theorem \ref{thm:main2}. 
\end{proof}

\section{Final remarks}\label{section:final remarks}
\subsection{Another type of infinite dimensional vector bundles\label{subsec:final1}}
In this subsection, we will discuss another construction of infinite dimensional vector bundles which serve as examples that are \textit{not} expected to admit Hermitian--Einstein metrics.

For the arithmetic stability condition $(\Coh \mathrm{X}, -\deg+\sqrt{-1}\cdot \mathrm{rk})$ on a complex elliptic curve $X$, and any two primitive integral vectors $(p_1,q_1),(p_2,q_2)$ connected by a Farey geodesic, we can construct a vector bundle of infinite rank on the elliptic curve $X$.
Without loss of generality, we focus on the case $(p_1,q_1)=(0,1),(p_2,q_2)=(-1,1)$. Consider the following infinite sequence of holomorphic vector bundles on $X$. $$\E_1\xrightarrow[]{f_1} \E_2\xrightarrow[]{f_2} \E_3\xrightarrow[]{f_3}\cdots \E_n\xrightarrow[]{f_n}\cdots,$$ where $\E_n$ is a vector bundle of rank $n$ and degree $-1$, and $f_i$ are nonzero morphisms, hence injections by Proposition \ref{prop:morphisms in farey triangles}, and we have $\coker(f_i)\in \mathrm{Pic}^0(X)$ by the same proposition. Moreover, if $\coker(f_i)$ is not isomorphic to $\coker(f_j)$ for any $i,j\gg 0$, we call such a sequence in the \textit{generic} case.

By the proof of \cite[Corollary 5.5]{Continuumenvelops}, one can show that the colimit of such a sequence, denoted by $\E_{\infty}$, is an infinite dimensional vector bundle in the sense of Drinfeld (see \cite{Infinitedimensionalvectorbundles}). And if the sequence is in the generic case, one can show that the colimit vector bundle is simple by the proof of \cite[Theorem 5.3]{Continuumenvelops}.

However, one can show that this sequence does not have a well-approximated subsequence in Definition \ref{def:We-say-intro}. Moreover, one should not expect such a simple vector bundle to admit any Hermitian-Einstein metrics. As if it did, the associated curvature should be $\lim\limits_{n\to \infty}\frac{1}{n}=0.$ Then, after metric completion, it would provide us with an irreducible unitary representation of $\pi_1(X)\simeq \mathbb{Z}^2$ on an infinite dimensional Hilbert space. However, it is well known that such representations are always 1 dimensional (see \cite[Theorem B]{Theoriedeladuality}).

Though such an infinite dimensional vector bundle should not admit Hermitian-Einstein metrics, it is still worthy of studying. For example, a similar vector bundle on Fargues--Fontaine curve plays an important role in the fundamental lemma of p-adic Hodge theory (see \cite[Section 2.2]{farguesfontaine-courbes}).

On the other hand, the projective monodromy representations of our stable Hilbert bundles are very interesting. The general case will be studied in a separate paper; we briefly illustrate the special case of complex elliptic curves in the following subsection.

\subsection{A maze to noncommutative tori}\label{subsection: a maze to noncommutative tori}
In this subsection, we briefly illustrate three different ways to construct noncommutative tori. 

\noindent\textbf{Categorical way.} Firstly, let us recall Alain Connes' construction of noncommutative tori. Let $T$ be a two dimensional torus $T=\mathbb{R}^2/\mathbb{Z}^2,$ with a Kronecker foliation associated to an irrational number $\theta'$, i.e. given by the differential equation $$dy=\theta'dx.$$ 

In the theory of noncommutative geometry, we can associate a $C^*$-algebra to such a foliation (see \cite[Chapter 2.9]{connes1994noncommutative}). One can show that the $C^*$-algebra $A_{\theta}$ associated with the Kronecker foliation is generated by two unitaries $U$ and $V$ such that $$VU=exp(2\pi i\theta)UV,$$ where $\theta$ is another irrational number which is equivalent to $\theta'$ under $SL(2,\Z)$ action (the Morita equivalence of noncommutative tori). Moreover, one can equip $A_{\theta}$ with a complex structure given by some $\tau\in\mathbb{C}\backslash \mathbb{R}$, such a $\tau$ also provides us with a complex elliptic curve $X_{\tau}$. Polishchuk studied the category of holomorphic vector bundles with respect to such a holomorphic structure in \cite{Classficationofholomorphicvectorbundles}. It is related to Bridgeland stability condition on $D^b(X_{\tau})$ in the following way.
\begin{thm}[{\cite[Corollary 1.2]{Classficationofholomorphicvectorbundles}}]
    Let $X_{\tau}$ be the associated complex elliptic curve and $\sigma=(\mathcal{P},Z)$ be the standard Mumford stability condition on $D^b(X_{\tau})$, then the category of holomorphic vector bundles with respect to $A_{\theta,\tau}$ is equivalent to $\mathcal{P}(\phi,\phi+1]$, where $\theta=-\cot(\pi\phi)$.
\end{thm}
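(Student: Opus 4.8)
The plan is to realize the category of holomorphic bundles on $A_{\theta,\tau}$ as a tilted heart inside $D^b(X_\tau)$ by matching both the numerical invariants and the full morphism/extension data on the two sides, following Polishchuk--Schwarz. First I would recall the concrete description of a holomorphic bundle on $A_{\theta,\tau}$ as a pair $(E,\bar{\nabla})$ consisting of a finitely generated projective right $A_\theta$-module $E$ together with a $\C$-linear operator $\bar{\nabla}$ satisfying the Leibniz rule with respect to the derivation $\delta_\tau=\delta_1+\tau\delta_2$ of the noncommutative torus, morphisms being module homomorphisms commuting with $\bar{\nabla}$. The basic (``standard'') modules $E_{n,m}$ carry canonical holomorphic structures built from theta functions, and to each $(E,\bar{\nabla})$ one attaches a rank and a degree taking values in $\Z+\theta\Z$, hence a central charge $Z$ whose argument defines a phase. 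The normalization is arranged so that the slope of a standard module equals $-\cot(\pi\psi)$ for its phase $\psi$, and the window of admissible phases is anchored at $\phi$ with $\theta=-\cot(\pi\phi)$, exactly as in the statement.

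Next I would develop the internal structure theory of this category. The key structural facts are that every holomorphic bundle admits a Harder--Narasimhan filtration with respect to $Z$, that the category is hereditary (so only $\Hom$ and $\Ext^1$ are nonzero), and that the stable objects are precisely the standard modules $E_{n,m}$ of positive rank together with the finite-dimensional ``torsion'' modules. This is the noncommutative analogue of Atiyah's classification on the elliptic curve, and the argument runs parallel to the finite-rank case using positivity of the holomorphic structure.

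The core step is the construction of the comparison functor $\Phi:\mathcal{C}(A_{\theta,\tau})\to\mathcal{P}(\phi,\phi+1]$. On stable objects the assignment is forced: a standard module of phase $\psi\in(\phi,\phi+1]$ must map to the unique stable object of $D^b(X_\tau)$ with matching central charge in that window, which exists and is unique because on an elliptic curve the stable objects of the Mumford stability condition are exactly the shifts of stable bundles and of skyscraper sheaves. To promote this to an equivalence I would compute the morphism spaces $\Hom(E_{n,m},E_{n',m'})$ and $\Ext^1(E_{n,m},E_{n',m'})$ on the noncommutative side explicitly via theta functions, and compare them with the corresponding $\Hom$ and $\Ext^1$ between the matching stable objects on $X_\tau$, computed by Riemann--Roch and Serre duality. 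Agreement of these spaces, compatibly with composition, shows $\Phi$ is fully faithful on stable objects; since both categories are hereditary, of finite length in the appropriate sense, and generated under extensions by their stable objects, an exact fully faithful functor that agrees on stable objects and on their $\Ext^1$-quiver is automatically an equivalence of abelian categories, and essential surjectivity then follows because every object of $\mathcal{P}(\phi,\phi+1]$ is assembled from stable objects of phase in $(\phi,\phi+1]$, all of which lie in the image.

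The main obstacle is precisely this explicit morphism computation together with the exactness of $\Phi$. The theta-function evaluation of $\Hom$ and $\Ext^1$ between standard modules must be shown to reproduce, dimension by dimension and compatibly with composition and with the Yoneda product, the corresponding groups on $X_\tau$; because $\theta$ is irrational the category contains infinitely many pairwise non-isomorphic stable objects linked by a rich web of extensions, so one must control the entire $\Ext^1$-quiver rather than finitely many cases, and one must check that the phase-window constraint $(\phi,\phi+1]$ is preserved under forming extensions. Once the morphism dictionary and exactness are secured, the reconstruction argument above closes the proof.
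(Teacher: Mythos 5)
First, a point of comparison: the paper you are checking against does not prove this statement at all --- it is quoted directly from Polishchuk (the cited [Corollary 1.2]), so the only meaningful benchmark is the actual proof in Polishchuk and Polishchuk--Schwarz. Measured against that, your outline has the right scaffolding (pairs $(E,\bar\nabla)$ over $A_\theta$, charges in $\Z+\theta\Z$, HN theory, matching of stable objects), but it has genuine gaps exactly where the real work happens.

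The first gap is your claim that the functor is ``forced'' on stable objects because a stable object of $D^b(X_\tau)$ with matching central charge ``exists and is unique.'' Uniqueness fails badly: for a primitive charge $(r,d)$ the stable bundles on $X_\tau$ form a moduli space isomorphic to $X_\tau$ itself (already $\mathrm{Pic}^0(X_\tau)$ for charge $(1,0)$), and likewise the gauge-equivalence classes of standard holomorphic structures on a fixed basic module form a positive-dimensional family. So charge-matching cannot even define $\Phi$ on objects, let alone on morphisms; Polishchuk--Schwarz instead construct $\Phi$ \emph{globally} by explicit theta-function kernels and verify compatibility with composition via theta addition formulae. This global construction is also what makes your final step meaningful: matching dimensions of $\Hom$ and $\Ext^1$ between stable objects does not determine compositions, so ``agreement on the $\Ext^1$-quiver'' cannot by itself produce an exact functor to which the generated-by-stables argument could be applied. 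Second, your structure theory on the noncommutative side is incorrect and skips the hardest step of the cited paper. For irrational $\theta$ the algebra $A_\theta$ is simple, so it has no nonzero finite-dimensional ``torsion'' modules; the skyscrapers $\mathcal{O}_x$ correspond to holomorphic structures $\delta_\tau+z$ on the free module $A_\theta$, and part of Polishchuk's theorem is precisely that kernels and cokernels of morphisms of holomorphic bundles remain finitely generated projective, which is what makes the category abelian. Finally, the statement that \emph{every} holomorphic structure on a basic module is gauge-equivalent to a standard one --- the actual content of ``classification'' in the cited title, and an analytic statement about the operator $\bar\nabla$ with no finite-rank algebro-geometric analogue --- is waved through in your sketch as ``parallel to the finite-rank case''; without it you would at best recover the Polishchuk--Schwarz equivalence onto the subcategory of standard structures, not the full statement about all holomorphic bundles.
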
  
This is the categorical way to noncommutative tori.

\noindent\textbf{Monodromic way.}
The second way is through the monodromy representation of the stable Hilbert bundle we constructed on the complex elliptic curve $X_{\tau}$. 

For a given irrational number $\theta,$ let $\E_{\infty}$ be the infinite dimensional vector bundle we constructed and $\textbf{E}$ be its completion. By Lemma \ref{lem:projectively flat connection}, we know that $\textbf{E}$ admits a projectively flat connection. Hence, its monodromy representation provides a group homomorphism. $$\pi_1(X_{\tau})\simeq \Z^2\rightarrow \textbf{U}(W)/S^1,$$ where the separable Hilbert space $W$ is the fiber of $\textbf{E}$, $\textbf{U}(W)$ denotes the space of unitary operators on $W$.

Let $U,V$ be the images of two generators of $\Z^2$, and $\tilde{U},\tilde{V}\in\textbf{U}(W)$ be the lifts of $U,V$. By the commutativity of $U,V$, we have $$\tilde{V}\tilde{U}=exp(2\pi i\theta')\tilde{U}\tilde{V},$$ where $\theta'\in \R/\Z$ is independent of the lifts and the choice of generators up to sign, and one can choose $\theta'\equiv\theta
 \ (\text{mod} \ \Z)$. Hence, again, we arrive at the $C^*$ algebra of noncommutative tori.

\noindent\textbf{Homological mirror symmetric way.}
The third way is through homological mirror symmetry. We will present it in an intuitive and very inaccurate way (we use the complex elliptic curve rather than the Tate curve over Novikov field and ignore the local systems on Lagrangians). Roughly speaking, for a stable vector bundle $\E$ with rank $r$ and degree $d$ on an elliptic curve $X$, it corresponds to a closed Langrangian in the symplectic manifold $(\R^2/\Z^2,dx\wedge dy)$, in fact, it is Hamiltonian isotopic to the image of a straight line with slope $-\frac{d}{r}$ in $\R^2/\Z^2$. 

Hence, intuitively, the infinite dimensional vector bundle $\E_{\infty}$ should correspond to the image of the straight line with slope $-\theta$, which is a nonclosed Lagrangian in $\R^2/\Z^2$. This gives the Kronecker foliation back, note that $-\theta$ and $\theta$ are equivalent under $SL(2,\Z)$. Hence we find the same noncommutative tori the third time. 
\subsection{Hilbert space of holomorphic states}

Let $X$ be a complex projective curve with positive genus $g$, $\omega$ be its volume form, and $\theta$ be an irrational number. By Theorem \ref{thm:main thm in intro}, the colimit object $E(\theta^{-},\{f_{2i}\})$ is a simple vector bundle of infinite rank and admits the Hermitian-Einstein metric. 

If we take $\theta>g-1$, the dimension of the space of the global holomorphic sections $\mathrm{h}^0(X,E(\theta^{-},\{f_{2i}\}))$ can be calculated in the following way.

$$\mathrm{h}^0(X,E(\theta^{-},\{f_{2i}\}))=\hom(\mathcal{O}_X,\varinjlim E_{2i})=\varinjlim \mathrm{h}^0(X,E_{2i})=\varinjlim p_{2i}+q_{2i} (1-g),$$ the second equality holds because $\mathcal{O}_X$ is a compact object, the third equality holds by Riemann--Roch thoerem. The sequence $p_{2i}+q_{2i} (1-g)$ diverges to $+\infty$ as $\theta> g-1$ and $\frac{p_{2i}}{q_{2i}}$ is the convergent of $\theta$.

There is a natural Hermitian metric on the space of global holomorphic sections $$\mathrm{H}^0(X, E(\theta^{-},\{f_{2i}\})).$$ Indeed, let $h$ be the Hermitian-Einstein metric on $E(\theta^{-},\{f_{2i}\})$ as in Theorem \ref{thm:main thm in intro}. For any two sections $s_1,s_2\in \mathrm{H}^0(X,E(\theta^{-},\{f_{2i}\}))$, we define the inner product to be $$(s_1,s_2)_{\omega} \coloneqq \int_X h(s_{1,x},s_{2,x})\omega.$$ 

This defines a Hermitian metric on the space $\mathrm{H}^0(X, E(\theta^{-},\{f_{2i}\}))$.

\begin{defn}
    The completion of $\mathrm{H}^0(X, E(\theta^{-},\{f_{2i}\}))$ with respect to the metric $(-,-)_{\omega}$ is a separable Hilbert space, and it is called the Hilbert space of \textit{holomorphic states} associated with $E(\theta^{-},\{f_{2i}\}).$
\end{defn}

Our definition of this Hilbert space is similar to von Neumann's definition of a direct integral of Hilbert spaces but with much more stringent coherent restrictions.

\appendix

\section{Hilbert bundles over planar domains }
\subsection{Norms for operators.\label{subsec:norms}}

Let $W$ be a Hilbert space. Let $L(W)$ denote
the bounded linear operators on $W$. Let $A\in L(W)$.
We denote the operator norm
\begin{equation}
|A|_{\rho}=\sup_{0<|x|\le1}\frac{|Ax|}{|x|}.\label{eq:-42}
\end{equation}
Denote $\text{Tr}_{\mathbf{}}$ be the trace on $L(W)$. $A$ is a trace class operator if $\sqrt{A^{\dagger}A}$ has
finite trace and denote as $T_{1}(W)$. If $A^{\dagger}A$
is a trace class operator, we denote 
\begin{equation}
|A|_{2}=(\text{Tr}_{\mathbf{}}A^{\dagger}A)^{\frac{1}{2}},\label{eq:-44}
\end{equation}
and denote $T_{2}(W)$. Then, we have the inequality for
$A\in T_{2}(W)$, $
|A|_{\rho}\leq|A|_{2}.$
Note $T_{2}(W)$ is a dense subspace of $L(W)$
and is a Hilbert space with norm $|\cdot|_{2}$.


Let $\Omega\subset\mathbb{R}^{m}$ be an open set. Let $V$ be a Banach space with norm  $|\cdot|_{*}$ ( for instance, $V=L(W)$ or $T_2(W)$ and $|\cdot|_*$ is
the norm $|\cdot|_{\rho}$ or $|\cdot|_{2}$).
Suppose that $A(x)\in C_{0}^{\infty}(\Omega,V)$. We denote
\[
|\nabla^{k}A(x)|_{*}:=\sum_{\alpha\in\N^{m},|\alpha|_1=k}|\prod_{i=1}^{m}\pdv_{i}^{\alpha_{i}}A(x)|_{*},\ 
\|A\|_{*,C^{k}(\Omega)}:=\sup_{x\in\Omega}\sum_{l=0}^{k}|\nabla^{l}A(x)|_{*}.
\]
For Holder norm, we defines for $\alpha\in(0,1)$,
\begin{equation}
[A]_{*,C^{\alpha}(\Omega)}:=\sup_{x,y\in\Omega}\frac{|A(x)-A(y)|_{*}}{|x-y|^{\alpha}},\label{eq:-57} \ 
\|A\|_{*,C^{k,\alpha}}:=\|A\|_{*,C^{k}}+[\nabla^{k}A]_{*,C^{\alpha}}.
\end{equation}
We then define the corresponding function spaces by taking the completion
with respect to each norm. Define the $L^{p}$ and Sobolev norm as
follows.

\begin{equation}
\|A\|_{*,L^{p}(\Omega)}:=\int_{\Omega}|A|^p_{*}dx,\label{eq:-55}
\end{equation}
\begin{equation}
\|A\|_{*,W_{0}^{k,p}(\Omega)}:=\int_{\Omega}\sum_{\alpha\in\N^{m},|\alpha|_{1}\leq k}|\prod_{i=1}^{m}\pdv_{i}^{\alpha_{i}}A(x)|_{*}^{p}dx.\label{eq:-58}
\end{equation}
Define the $L^{p}(\Omega,V)$ and $W_{0}^{k,p}(\Omega,V)$ space to be the metric completion
of $C_{0}^{\infty}(\Omega,V)$ with respect to the corresponding norm
modulo almost everywhere vanishing functions. 

We have the following Sobolev embedding theorem. Notice that the
constant is independent of the choice of $|\cdot|_{*}$, \cite[Theorem 6.1]{Arendt2016MappingTF}.
\begin{prop}
[Sobolev embedding]
Suppose that $A\in W_{0}^{k,p}(\Omega,(V,|\cdot|_*))$. Then, for $\frac{1}{q}\geq\frac{1}{p}-\frac{k}{m}$,
\begin{equation}
\|A\|_{*,L^{q}(\Omega)}\leq C(m,q,p,k,\Omega)\|A\|_{*,W_{0}^{k,p}(\Omega)}.\label{eq:-59}
\end{equation}
If $p>\frac{m}{k}$, then for $0\leq\alpha<k-\frac{m}{p}$,
\[
\|A\|_{*,C^{\alpha}(\Omega)}\leq C(\alpha,p,k,m,\Omega)\|A\|_{*,W_{0}^{k,p}(\Omega)}.
\]
\end{prop}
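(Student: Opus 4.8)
The plan is to reduce every vector-valued estimate to the corresponding scalar Sobolev estimate applied to the nonnegative function $f = |A|_*$, so that the constants inherited are exactly the classical scalar ones and are therefore visibly independent of $V$ and of the particular norm $|\cdot|_*$. By the definition of $W_{0}^{k,p}(\Omega,V)$ as the completion of $C_{0}^{\infty}(\Omega,V)$, it suffices to prove both inequalities for $A\in C_{0}^{\infty}(\Omega,V)$ and then pass to the limit; for such $A$ the Bochner integral and the fundamental theorem of calculus are unproblematic, so no Radon--Nikodym hypothesis on $V$ is required.

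First I would establish the first-order Gagliardo--Nirenberg--Sobolev inequality $\|A\|_{*,L^{p^*}(\Omega)}\leq C(m,p)\|\nabla A\|_{*,L^{p}(\Omega)}$ for $1\leq p<m$ and $1/p^* = 1/p - 1/m$. The key point is a Kato-type inequality: since $|\cdot|_*$ is convex and $1$-Lipschitz, the reverse triangle inequality $\big| |A(x+he_i)|_* - |A(x)|_* \big| \leq |A(x+he_i) - A(x)|_*$ gives, after dividing by $|h|$ and letting $h\to 0$, the pointwise bound $|\nabla f|\leq|\nabla A|_*$ a.e.\ for $f=|A|_*$. Applying the scalar GNS inequality to the Lipschitz compactly supported function $f$ then yields the claim with the scalar constant. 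For the higher-order $L^q$ embedding I would iterate: each derivative $\partial^\alpha A$ with $|\alpha|\leq k-1$ lies in $W_{0}^{1,p}$, so the first-order step raises all of them from $L^{p}$ to $L^{p_1}$ with $1/p_1 = 1/p - 1/m$, i.e.\ $A\in W_{0}^{k-1,p_1}$; repeating $k$ times lands in $L^q$ for $1/q\geq 1/p - k/m$, the final constant being a product of scalar constants (and, on the bounded domain $\Omega$, an inclusion $L^{q'}\subset L^{q}$).

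For the Morrey/Hölder embedding when $p>m/k$, I would first treat $k=1$, $p>m$ directly via the vector-valued fundamental theorem of calculus: for $A\in C_{0}^{\infty}(\Omega,V)$,
\[
A(x) - A(y) = \int_0^1 \nabla A\big(y + t(x-y)\big)\cdot(x-y)\,dt,
\]
a Bochner integral, whence $|A(x)-A(y)|_* \leq |x-y|\int_0^1 |\nabla A(y+t(x-y))|_*\,dt$. Averaging this identity over balls and applying Hölder's inequality to the scalar integrand $|\nabla A|_*$ reproduces Morrey's estimate $[A]_{*,C^{1-m/p}}\leq C(m,p)\|\nabla A\|_{*,L^{p}}$ verbatim, with the scalar Morrey constant. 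The general case $p>m/k$ then follows by combining the already-established $L^q$ iteration (to raise the integrability of the top derivatives) with one final Morrey step, giving $A\in C^\alpha$ for $0\leq\alpha<k-m/p$.

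The main obstacle is bookkeeping rather than a genuine analytic difficulty: I must verify that every inequality invoked is a scalar inequality applied to $|A|_*$ or to $|\nabla^{j}A|_*$, so that no constant secretly depends on the dimension of $V$ or on the choice between $|\cdot|_{\rho}$ and $|\cdot|_{2}$ on $L(W)$ or $T_{2}(W)$. The only places where vector-valuedness enters essentially are the Kato inequality $|\nabla|A|_*|\leq|\nabla A|_*$ and the Bochner fundamental theorem of calculus; both hold for the smooth compactly supported representatives used in the density argument, and the uniform constant survives the completion. This reproduces \cite[Theorem 6.1]{Arendt2016MappingTF} and supplies exactly the rank-independence invoked in Lemma \ref{lem:Let--be-4} and in the gauge-fixing estimates.
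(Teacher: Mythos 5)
Your proposal is correct, but it is worth noting that the paper does not actually prove this proposition: it is stated with a bare citation to \cite[Theorem 6.1]{Arendt2016MappingTF}, together with the remark that the constant is independent of the choice of $|\cdot|_{*}$. What you have done is supply the self-contained argument behind that citation, and your route is the standard and right one: reduce everything to the scalar theory through the two points where vector-valuedness could enter, namely the Kato-type inequality $|\nabla|A|_{*}|\leq|\nabla A|_{*}$ a.e.\ (which, with the paper's convention $|\nabla A|_{*}=\sum_{i}|\partial_{i}A|_{*}$, follows from the reverse triangle inequality plus Rademacher applied to the Lipschitz scalar function $|A|_{*}$) and the Bochner-integral fundamental theorem of calculus for the Morrey step. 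You are also right that no Radon--Nikodym hypothesis is needed: that hypothesis matters in \cite{Arendt2016MappingTF} for difference-quotient \emph{characterizations} of vector-valued Sobolev spaces, not for embeddings of the completion of $C_{0}^{\infty}(\Omega,V)$, which is how the paper defines $W_{0}^{k,p}$. Your argument buys exactly what the paper needs but only asserts: constants depending on $(m,p,q,k,\alpha,\Omega)$ alone, uniform over $V$ and over the competing norms $|\cdot|_{\rho}$ and $|\cdot|_{2}$, which is the rank-independence invoked in Lemma \ref{lem:Let--be-4} and Proposition \ref{prop:coulombgaugeuniform}. Two pieces of bookkeeping deserve one explicit sentence each in a final write-up: in the iteration, an intermediate exponent may hit the borderline $p_{j}=m$, where the first-order step gives $L^{q}$ for all finite $q$ (using $\mathrm{vol}(\Omega)<\infty$) rather than a critical exponent, which is consistent with the stated inequality $\frac{1}{q}\geq\frac{1}{p}-\frac{k}{m}$; and since the paper's H\"older seminorm (\ref{eq:-57}) is defined only for $\alpha\in(0,1)$, the conclusion $\alpha<k-\frac{m}{p}$ should be read as $\alpha<\min\{1,k-\frac{m}{p}\}$ on that scale (a blemish of the statement, not of your proof, and harmless since the paper only ever uses $\alpha\in(0,1)$).
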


\subsection{Hodge system with Dirichlet boundary values}
Let $V$ be a Banach space and $\check{V}$ the dual space. We denote $\Omega^p(V)=\Lambda^p T^*\R^k\otimes V$. Fix an orthonormal basis $\{\eta_i\}$ for $\Lambda^p T^*\R^k$. For any $\alpha\in \Omega^p(V)$, we can write $\alpha =\sum_i \eta_i\otimes v_i$. Then, we put a norm on $\Omega^p(V)$ by taking $\|\alpha\|_{\Omega^p(V)}=\sum_i\|v_i\|_V$. Since $\Lambda^pT^*\mathbb{R}^k$  is finite dimensional, the norm $\|\cdot\|_{\Omega^p(V)}$  is equivalent to the injective and projective tensor norm on  $\Lambda^p T^*\R^k\otimes V$. 

Let $\Omega\subset \mathbb R^k$ be a bounded domain with smooth boundary. Suppose that $\varphi(x)=\varphi_{\alpha\beta}(x) dx^\alpha\wedge dx^\beta$ is a $2$-form in $\Omega$ with values in $V$.  Let $\psi(x)$ be a function in $\Omega$ with value in $V$. Let $w(x)$ be a $1$-form on $\overline{\Omega}$  with value in $V$. Consider the following Hodge system for $u:\Omega \to \Omega^1(V)$:
\begin{align}
 \label{eq:hodgesystem1}   \pdv_\beta u_\alpha-\pdv_\alpha u_\beta & =\varphi_{\alpha\beta},\\
    \sum_\alpha \pdv_\alpha u_\alpha &=\psi,\\
    u|_{\pdv\Omega}& =w.
\end{align}

We call $u$ a strong solution if $u$ is in $ C^{1}(\Omega,\Omega^1(V))\cap C^{0}(\overline\Omega,\Omega^1(V))$.  We call $u$ a weak solution to the Hodge system if, for any $a\in\check{V}$,  $v=a(u)$ is a $2$-form in $\Omega$ satisfies

\begin{align}
\label{eq:hodgesystemweak}
    \pdv_\beta v_\alpha-\pdv_\alpha v_\beta & =a(\varphi_{\alpha\beta}),\\
    \sum_\alpha \pdv_\alpha v_\alpha &=a(\psi),\\
    v|_{\pdv\Omega}& =a(w),
\end{align}
in the sense of distribution. 
\begin{prop}
\label{prop:regularityofhodgesystem}Notations as above. There exists a unique weak solution $u$ to the Hodge system.  
Furthermore, if $\varphi\in  C^0(\overline\Omega,\Omega^2(V))$, then $u\in C^{\alpha}(\Omega,\Omega^1(V))\cap C^{0}(\overline\Omega,\Omega^1(V))$  for any $\alpha\in (0,1)$. If for some $\alpha\in (0,1)$ and $k\in \N$,  $\varphi$ is in $C^{k,\alpha}(\overline\Omega,\Omega^2(V))$, then $u\in C^{k+1,\alpha}(\Omega,\Omega^1(V))\cap C^{0}(\overline\Omega,\Omega^1(V)) $  is a strong solution.     
\end{prop}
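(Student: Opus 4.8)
The plan is to reduce the Banach-valued problem to the classical scalar case and then transfer the scalar solution operator to $V$-valued data by Bochner integration, exploiting that every estimate can be made independent of $V$. The definition of weak solution is tailored to exactly this reduction: for each $a\in\check V$, the $2$-form $v=a(u)$ must solve the scalar Hodge system with data $(a(\varphi),a(\psi),a(w))$, so it suffices to understand the scalar operator together with its Hölder mapping properties, and then to carry the construction through a fixed scalar kernel.

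\textbf{Scalar case.} First I would treat $V=\C$. In the planar situation the system collapses to a single inhomogeneous Cauchy--Riemann equation: writing $f:=u_1-\sqrt{-1}\,u_2$ one checks $2\dbar f=\psi+\sqrt{-1}\,\varphi_{12}=:g$, so the interior content of the curl and divergence equations is precisely $\dbar f=g/2$, while the Dirichlet datum prescribes $f|_{\pdv\Om}$. Solvability follows from the Pompeiu/Cauchy-transform representation $Tg(z)=-\tfrac1\pi\int_\Om g(\zeta)(\zeta-z)^{-1}\,dA(\zeta)$ together with a holomorphic correction chosen so that the boundary trace matches $w$; the compatibility this matching requires is automatic in our application, where $w$ is the trace of an already-continuous $1$-form (in the bootstrap of Proposition \ref{prop:smoothconnection} the solution $\mathbf{A}$ is known to exist, so the proposition is used for uniqueness and regularity). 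The two regularity statements are then the standard mapping properties of $T$: it sends $C^0(\overline\Om)$ into $C^\alpha(\Om)$ for every $\alpha\in(0,1)$, and $C^{k,\alpha}(\overline\Om)$ into $C^{k+1,\alpha}(\Om)$, since $\dbar Tg=g$ while $\pdv_z Tg$ is controlled by the Beurling transform through Schauder estimates; the holomorphic correction is smooth in $\Om$ and continuous up to $\overline\Om$. This reproduces interior Hölder regularity together with only $C^0$ control up to the boundary, exactly as claimed. Note that one cannot instead pass to the component-wise Laplacian $\Delta u=\dbar^{*}\varphi+\cdots$, because the merely $C^0$ hypothesis on $\varphi$ forbids differentiating the data; keeping the argument first-order is essential.

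\textbf{Transfer to $V$.} Since the scalar solution operator is integration against a fixed scalar kernel (Cauchy kernel plus harmonic correction), I would define the $V$-valued solution by the same formula read as a Bochner integral. For any $a\in\check V$, the functional $a$ commutes with the Bochner integral, so $a(u)$ is the scalar solution with data $a(\varphi),a(\psi),a(w)$; hence $u$ is a weak solution. The regularity estimates survive verbatim because the kernel is scalar: the $V$-norm of a Hölder difference quotient of $u$ is dominated by the scalar operator applied to the $V$-norm of the data, so the constants are the classical scalar constants, in particular independent of $V$, which is precisely the uniformity already isolated in the appendix (cf.\ \cite[Theorem 6.1]{Arendt2016MappingTF}). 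Uniqueness is then immediate from duality: if a weak solution $u$ has vanishing data, then $a(u)=0$ for every $a\in\check V$ by scalar uniqueness, whence $u\equiv0$ by Hahn--Banach.

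\textbf{Main obstacle.} The genuine difficulty is not the scalar PDE but the insistence on estimates uniform in the target $V$. In the intended application $V=L(W)$ is neither reflexive nor separable, so Hilbert-space variational tools (Lax--Milgram, spectral methods) are unavailable; this is exactly why one must define solutions by duality and assemble them from a scalar kernel rather than solving an operator-valued elliptic system directly. The entire argument therefore hinges on the dimension-freeness of the Calderón--Zygmund and Schauder constants for the Cauchy and Beurling transforms, and secondarily on the boundary compatibility required for existence, since a naively posed full-Dirichlet div--curl system is overdetermined. Pinning down this $V$-independence, and isolating the single scalar compatibility condition that the data from our construction automatically satisfies, is the step demanding the most care.
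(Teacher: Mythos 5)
The decisive gap is dimensional. The proposition, and the subsection it sits in, are stated for a bounded smooth domain $\Omega\subset\R^k$ with arbitrary $k$ (the appendix fixes $\Omega^p(V)=\Lambda^pT^*\R^k\otimes V$; only the \emph{next} subsection specializes to 2D), whereas your entire scalar analysis hinges on $k=2$: the substitution $f=u_1-\sqrt{-1}\,u_2$, the Pompeiu transform, and the Beurling transform have no analogue for the div--curl system in higher dimensions. Your argument therefore proves at most the planar case; that happens to suffice for the one place the proposition is invoked (Proposition \ref{prop:smoothconnection} works on balls in a Riemann surface), but it is not a proof of the statement as written. Moreover, your parenthetical claim that the componentwise-Laplacian route is barred because $C^0$ data cannot be differentiated is exactly backwards: the paper's proof \emph{is} that route. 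A weak solution satisfies $\Delta u_\alpha=\sum_\beta\partial_\beta\varphi_{\alpha\beta}+\partial_\alpha\psi$ distributionally, and the paper writes the candidate solution as
\[
\tilde u_\alpha(x)=\int_\Omega\partial_\beta G(x,y)\,\varphi_{\alpha\beta}(y)\,dv(y)+\int_{\partial\Omega}\bigl(w_\alpha\,\partial_\nu G-\varphi_{\alpha\beta}\,\partial_\beta G\bigr)\,d\sigma,
\]
with $G$ the Dirichlet Green function of $\Omega$: integration by parts has moved every derivative onto the scalar kernel, so merely continuous $\varphi$ yields interior $C^\alpha$ for all $\alpha\in(0,1)$ (a gradient-of-Newtonian-potential estimate), and $C^{k,\alpha}$ data yields $C^{k+1,\alpha}$, in every dimension, with no differentiation of the data.

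What your proposal shares with the paper is genuine: uniqueness by testing against $a\in\check V$, invoking scalar uniqueness, and concluding by Hahn--Banach is verbatim the paper's argument, and your mechanism for $V$-independence --- a fixed scalar kernel read as a Bochner integral, with functionals commuting with the integral --- is also the paper's (its kernel is $\partial G$, yours the Cauchy kernel). But on existence you leave the first sentence of the proposition unproven even in 2D: you correctly observe that the full-Dirichlet div--curl problem is overdetermined, yet your ``holomorphic correction with prescribed boundary trace'' exists only under a compatibility condition you neither state nor verify, and deferring to the application (where $\mathbf{A}$ already solves the system) covers only the uniqueness and regularity uses of the proposition. To be fair, the paper's ``straightforward calculation shows that $\tilde u$ is a weak solution'' tacitly assumes the same compatibility of $(\varphi,\psi,w)$; still, a complete proof must either verify that $\tilde u$ attains the data or record the compatibility hypothesis, and your write-up does neither --- so the existence clause, together with the restriction to $k=2$, constitutes a real gap rather than an alternative proof.
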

\begin{proof}
A solution of  (\ref{eq:hodgesystemweak}) in the sense of distribution is clearly unique by standard elliptic theory. If $u_1,u_2$ are 2 weak solutions to (\ref{eq:hodgesystem1}), for any $a\in \check{V}$, $a(u_1)=a(u_2)$ pointwisely. Hence, by the Hahn-Banach theorem, $u_1\equiv u_2$.  
Let $G(x,y)$  be the Green function of $\Delta$ for the Dirichlet problem on $\Omega$ with $G(x,\cdot)|_{\pdv\Omega}=0$. Note that a weak solution $u$ satisfies \[\Delta u_\alpha =\sum_\beta \pdv_\beta \varphi _{\alpha\beta}+\pdv_\alpha \psi\] in the sense of distribution.  We denote 
 \[ 
 \tilde{u}_\alpha(x)=\int_\Omega \pdv_\beta G(x,y)\varphi_{\alpha \beta }(y)dv(y)+\int_{\pdv\Omega} (w_\alpha\pdv_\nu G -\varphi_{\alpha\beta }\pdv_\beta G)d\sigma.
 \]
Then, a straightforward calculation shows that $\tilde{u}=\tilde{u}_\alpha dx^\alpha$ is a weak solution. The corresponding regularity results follow immediately. 
\end{proof}
 
\subsection{Elliptic regularity for Hodge system in 2D}

As before, we assume that $V$ is a Banach space. Let $Q=[0,1]\times [0,1]\subset \R^2$. Let $L^p(Q,V), W^{1,p}(Q,V)$ the $L^p$ and $W^{1,p}$  space defined using the norm of $V$ as in (\ref{eq:-55}) and (\ref{eq:-58}).

Let $u: Q\to \Omega^1(V)$. We consider the Hodge system for $u=u_xdx+u_ydy$, 
\begin{align}
   \label{eq:hodge2} \pdv_x u_y-\pdv_y u_x & =\varphi\ ,\\
    \label{eq:hodge3}\pdv_x u_x+\pdv_y u_y&=\psi  ,\\
   \label{eq:hodge4} \iota_\nu u|_{\pdv Q}& =0 .
\end{align}
Here $\nu$ is the unit outer normal vector.  

 We have the following $L^p$ regularity theory of 
\begin{prop}
 \label{prop:ellipticestimate}  Let $1<p<\infty$.  Suppose that $u\in W^{1,p}(Q,\Omega^1(V))$ is a weak solution to (\ref{eq:hodge2})-(\ref{eq:hodge4}). Then, there exists a constant $C$ depending only on $p$ such that \[
   \|u\|_{W^{1,p}(Q,\Omega^1{(V)})}\leq C(\|\varphi\|_{L^p}+\|\psi\|_{L^p})
   \]
\end{prop}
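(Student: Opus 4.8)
The plan is to diagonalize the constant-coefficient first-order system by passing to complex notation, identify the solution operator as a singular integral of Calder\'on--Zygmund type, and then invoke the vector-valued Calder\'on--Zygmund theory, the key point being that the resulting $L^p$ bound is insensitive to $V$.

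First I would pass to the complex variable $z = x + \im y$ and set $g = u_x - \im u_y$, a $V_{\C}$-valued function (complexifying $V$). A direct computation gives
\[
\dbar g = \tfrac{1}{2}(\pdv_x + \im\pdv_y)(u_x - \im u_y) = \tfrac{1}{2}\big(\psi - \im\varphi\big),
\]
so the div--curl system (\ref{eq:hodge2})--(\ref{eq:hodge3}) is exactly the inhomogeneous Cauchy--Riemann equation $\dbar g = h$ with $h = \tfrac12(\psi - \im\varphi)\in L^p(Q,V_{\C})$ and $\|h\|_{L^p}\le\tfrac12(\|\varphi\|_{L^p}+\|\psi\|_{L^p})$. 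The normal boundary condition (\ref{eq:hodge4}) becomes a vanishing condition on the appropriate component of $u$ along each side of $Q$; reflecting $u_x$ and $u_y$ with the parities dictated by $\iota_\nu u|_{\pdv Q}=0$ (even across a side where the component is tangential, odd where it is normal) extends the problem periodically to the torus $\T^2=\R^2/(2\Z)^2$, turning the boundary value problem into a convolution problem. Alternatively one may remain on $Q$ and invoke the Green-function representation and the existence and uniqueness of the weak solution already established in Proposition \ref{prop:regularityofhodgesystem}, which I would quote directly.

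Next I would read off the solution operator. Inverting the $2\times2$ Fourier symbol of (\ref{eq:hodge2})--(\ref{eq:hodge3}), whose determinant is $|\xi|^2$, expresses each first derivative $\pdv_i u_j$ as a Fourier multiplier with a homogeneous degree-zero symbol of Riesz/Beurling type; equivalently, $g$ is recovered from $h$ by the solid Cauchy transform (which is smoothing) while $\pdv_z g = Sh$, with $S$ the Beurling transform of kernel $-\pi^{-1}(z-\zeta)^{-2}$. These symbols satisfy the Mikhlin--H\"ormander bounds with purely numerical constants, so the whole estimate reduces to the $L^p(Q,V)$-boundedness, with a $V$-independent constant, of a fixed family of scalar-kernel singular integrals.

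The hard part is precisely this last reduction: for a \emph{general} Banach space $V$ the $L^p$-boundedness of the Beurling transform is equivalent to $V$ being a UMD space, and the UMD constant of $\big(L(W),|\cdot|_{\rho}\big)$ is not controlled by $\dim W$. I would therefore carry out the estimate in the Hilbert-space-valued setting, where it is genuinely harmless: by the Benedek--Calder\'on--Panzone theorem the scalar Calder\'on--Zygmund and Mikhlin estimates extend verbatim to $V$-valued functions whenever $V$ is a Hilbert space, with the \emph{same} constant depending only on $p$ and the numerical kernel bounds, hence independent of $\dim V$. This covers the Hilbert--Schmidt target $V=T_2(W)$, which is the norm in which the gauge-fixing estimates of Lemma \ref{lem:Let--be-4} are actually run, and the passage to the operator norm is then made through the elementary comparison $|\cdot|_{\rho}\le|\cdot|_{2}$ on $T_2(W)$. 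Finally, restricting the periodic solution back to $Q$ and disposing of the single zero-frequency mode, which is pinned down by the boundary condition on the bounded domain, yields $\|u\|_{W^{1,p}(Q,\Omega^1(V))}\le C_p(\|\varphi\|_{L^p}+\|\psi\|_{L^p})$ with $C_p$ depending only on $p$.
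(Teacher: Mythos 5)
Your reduction of the div--curl system to the inhomogeneous Cauchy--Riemann equation $\dbar g=\tfrac12(\psi-\sqrt{-1}\,\varphi)$ with $g=u_x-\sqrt{-1}\,u_y$ is correct, and it is a genuinely different route from the paper: the paper instead reflects across one side of $Q$, identifies the two reflected edges to obtain a cylinder $S^1\times[0,1]$, observes that $\Delta u_y=\pdv_x\varphi+\pdv_y\psi$ with homogeneous Dirichlet data, and estimates $u_y$ through the Green function of the cylinder, quoting the $L^p$ theory of the Newtonian potential; uniqueness is handled by a Hahn--Banach reduction to scalar harmonic functions. Both routes bottom out in the same place -- a scalar-kernel singular integral (second derivatives of the Green function in the paper, the Beurling transform in your version) applied to $V$-valued data -- so your observation that vector-valued Calder\'on--Zygmund boundedness is a UMD issue is pertinent, and in fact it applies equally to the paper's own invocation of ``standard $L^p$ theory,'' which the paper does not address beyond asserting that the singular integral representation ``does not rely on $W$.''

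The genuine gap is in your last step: having correctly diagnosed the UMD obstruction, you resolve it by changing the statement rather than proving it. Proposition \ref{prop:ellipticestimate} is stated for a Banach space $V$, and the way it is consumed in Proposition \ref{prop:coulombgaugeuniform} is with $V=\mathfrak{u}\subset L(W)$ equipped with the \emph{operator} norm: Lemma \ref{lem:Let--be-4}, Lemma \ref{lem:Let--be-2} and Corollary \ref{cor:bound curvature} all carry $|\cdot|_{\rho}$-norm hypotheses, and the paper explicitly warns (remark after Theorem \ref{thm:Suppose-that-}) that fiberwise $2$-norms must be avoided because they introduce rank-dependent constants. So your claim that the gauge-fixing estimates ``are actually run'' in $T_2(W)$ is factually wrong, and the bridge $|\cdot|_{\rho}\leq|\cdot|_{2}$ points the wrong way: Benedek--Calder\'on--Panzone in the Hilbert space $T_2(W)$ yields $\|u\|_{2,W^{1,p}}\leq C_p(\|\varphi\|_{2,L^{p}}+\|\psi\|_{2,L^{p}})$, so to conclude anything you would need the \emph{data} controlled in the Hilbert--Schmidt norm, which is exactly what is unavailable uniformly in rank (one only has $|\cdot|_{2}\leq\sqrt{\operatorname{rank}}\,|\cdot|_{\rho}$, and the curvature bounds feeding into the gauge fixing are $\rho$-norm bounds on bundles $Q_j$ of unbounded rank). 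Consequently your final display, with $V$ the original Banach space, is not established by the proposal; a repair along your lines would have to either assume $V$ is UMD with the UMD constant entering $C$, or exploit structure beyond a blanket multiplier theorem. A minor further point: quoting Proposition \ref{prop:regularityofhodgesystem} for existence and uniqueness is not a direct fit, since that proposition treats full Dirichlet data $u|_{\pdv\Omega}=w$ on a smooth domain, whereas here one has the normal-component condition $\iota_\nu u|_{\pdv Q}=0$ on a square; this is peripheral, however, because Proposition \ref{prop:ellipticestimate} is an a priori estimate for a given weak solution.
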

\begin{proof}
    Notice that by reflection $u,\varphi,\psi$ along the $x$ -axis, we may assume that $u$ is a solution to the Hodge system in $[-1,1]\times [0,1]$ in the sense of distribution (by testing against a compactly supported smooth function with values in $\check{V}$). Contracting (\ref{eq:hodge2}) against $\pdv_x$ and using (\ref{eq:hodge3}), we have 
\[\Delta u_y=\pdv_x\varphi+\pdv_y\psi\]
in the sense of distribution and satisfies $u_y=0$ on $[-1,1]\times\{0,1\}$. Identifying  $\{-1\}\times [0,1]$  and $\{1\}\times [0,1]$ , we obtain $S^1\times [0,1]$ with $S^1$ a circle of length $2$. Then $u_y$ solves a homogeneous Dirichlet problem on $T:=S^1\times [0,1]$ in the sense of distribution. So we let $G_z(w)$ be the Green function for the Dirichlet problem on $T$.   Let 
\[ \tilde  u_y(z)=-\int_{T}\pdv_xG_z(w)\varphi(w)+\pdv_yG_z(w)\psi(w)dv(w). \]
Then, $\Delta_{T} (u_y-\tilde u_y)=0$ in the sense of distribution and $u_y=\tilde{u}_y=0$ on the boundary of $T$. Thus, for any $w\in\check{V}$, $w(u_y-\tilde{u}_y)$ is a harmonic function with zero boundary value which must be identically $0$. By Hahn-Banach theorem, $u_y-\tilde{u}_y\equiv0$.  Since $|\nabla^i G_z(w)|\leq c|z-w|^{-i}$ for $i=1,2$, by standard $L^p$ theory of Newton potential (see for instance \cite[Chapter 9]{GilbargTrudinger}), we have
\[\|u_y\|_{W^{1,p}}\leq C(\|\varphi\|_{L^p}+\|\psi\|_{L^p}),\] where the constant $C$ only depends on $p$.  Similar result holds for $u_x$. Hence, we have finished the proof.
\end{proof}
\subsection{Existence of a Coulomb Gauge for Hilbert bundles }

Let $W$  be a separable complex Hilbert space. Let $L(W)$ be the $C^*$-algebra of bounded operators on $W$ equipped with the operator norm $|\cdot|_\rho$. We denote by $U(W)$ the unitary group in $L(W)$. Then with the norm topology $U(W)$ is a Banach Lie group, and the Lie algebra  $\mathfrak{u}$   consists of skew symmetric operators in $L(W)$ (\cite{schottenloher2018unitary}).  

Let $Q=[0,1]\times [0,1]\subset \R^2$.  Let $A\in C^\infty(Q,\Omega^1(\mathfrak{u}))$ be a smooth operator valued 1-forms which satisfies $A^*=-A$.  Then $d+A$ is a unitary connection for the trivial bundle $Q\times W$ over $Q$. We denote the curvature operator $F(A)=dA+A\wedge A$ which is a $\mathfrak{u}$ valued $2$-form. Let $u\in C^\infty (Q,U(W) $ be a gauge transform. Then $u$ acts on $A$ by \[u(A)=uAu^{-1}-duu^{-1}.\] The curvature transforms by
\[F(u(A))=uF(A)u^{-1}.\]Since $u$ is unitary,  $|F(u(A))|=|F(A)|$. If two unitary connections $A_1$ and $A_2$ are related by a smooth gauge transform, then $A_1$ and $A_2$
 are gauge equivalent. 

\begin{defn}
    We call a connection $A$ a Coulomb gauge on $Q$  if $d^*A=0$ in the interior of  $Q$  and $\iota_\nu A=0$ on the boundary $\pdv Q$. Here if $A=A_xdx+A_ydy$, then \[d^*A=\pdv_x A_x+\pdv_y A_y.\]
\end{defn}

The following proposition proved originally by Uhlenbeck allows us to construct the Coulomb gauge with a small $L^p$ ($p\geq 2$) curvature bound.
\begin{prop}
    \label{prop:coulombgaugeuniform}Let $p\geq 2$. There exists a constant $\epsilon_0=\epsilon_0(p)>0$ that only depends on $p$ with the following properties: If $A$ is a smooth connection with \[\|F(A)\|_{L^p}=(\int_Q |F(A)|^p )^\frac{1}{p}\leq \epsilon_0, \]
then $A$  is gauge equivalent to a Coulomb gauge $A_c$ such that
\[\|A_c\|_{W^{1,p}}\leq C \|F(A)\|_{L^p}.\]
The constant $C$ only depends on $p$.
\end{prop}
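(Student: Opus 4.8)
The goal is to prove Uhlenbeck's gauge fixing theorem (Proposition \ref{prop:coulombgaugeuniform}) in a way that produces a constant independent of the rank, so that it applies uniformly to the finite-rank bundles $Q_j$ and ultimately to Hilbert bundles. The standard strategy is a continuity/openness argument: I would consider the set of connections $A$ with $\|F(A)\|_{L^p}\leq\epsilon_0$ that are gauge equivalent to a Coulomb gauge satisfying the asserted estimate, and show this set is both open and closed, hence (being nonempty, as $A=0$ lies in it) captures every connection in the small-curvature ball. The key point throughout is that every estimate must be traced to the elliptic regularity for the Hodge system, Proposition \ref{prop:ellipticestimate}, whose constant depends only on $p$ and not on the fiber $W$; this is what makes the whole argument rank-independent.

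First I would set up the linearization. Writing a gauge transform as $u=\exp(\xi)$ for $\xi\in C^\infty(Q,\mathfrak{u})$, the Coulomb condition $d^*(u(A))=0$ together with the boundary condition $\iota_\nu(u(A))=0$ becomes, at the linearized level, a Neumann-type Laplace equation $\Delta\xi = -d^*A$ with $\pdv_\nu\xi = -\iota_\nu A$ on $\pdv Q$. Solvability of this linear problem with the $W^{1,p}$-bound supplied by Proposition \ref{prop:ellipticestimate} shows that the set of ``good'' connections is open: near any Coulomb representative one can solve the nonlinear equation $d^*(e^\xi(A))=0$ by the implicit function theorem on the Banach spaces $W^{1,p}(Q,\mathfrak u)$, with the inverse of the linearized operator bounded by a constant depending only on $p$. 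The nonlinear term $A\wedge A$ is controlled by the algebra structure of $L(W)$ and the Sobolev embedding $W^{1,p}\hookrightarrow C^0$ (for $p>2$) or $W^{1,p}\hookrightarrow L^q$ interpolation arguments (for $p=2$), all with $W$-independent constants.

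For closedness, I would take a sequence $A_i$ in the set converging in $L^p$ to some $A_\infty$ with small curvature, apply the uniform estimate $\|A_{c,i}\|_{W^{1,p}}\leq C\|F(A_i)\|_{L^p}\leq C\epsilon_0$ to the Coulomb representatives, and extract a weak $W^{1,p}$ limit; the corresponding gauge transforms $u_i$ are then bounded in $W^{2,p}$ and converge (after passing to a subsequence) to a limiting gauge transform realizing $A_\infty$ as a Coulomb gauge with the same estimate. The smallness threshold $\epsilon_0$ is chosen so that the quadratic term never overwhelms the linear estimate, i.e. so that the fixed-point/continuity map is a contraction on the ball of radius $C\epsilon_0$ in $W^{1,p}$.

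The main obstacle is ensuring that no constant secretly depends on the rank of $W$ (equivalently on $\dim\mathfrak u$). This is delicate precisely in the nonlinear and the Sobolev-multiplication steps: one must use that $L(W)$ is a Banach algebra under the operator norm, so that $|A\wedge A|_\rho\lesssim |A|_\rho^2$ pointwise with an absolute constant, and that the vector-valued Sobolev and Calderón–Zygmund estimates (Proposition \ref{prop:ellipticestimate} and the Sobolev embedding of \cite[Theorem 6.1]{Arendt2016MappingTF}) hold with constants independent of the target Banach space. Scalarizing the Hodge system by pairing against arbitrary $a\in\check V$ and invoking Hahn–Banach, as in the proof of Proposition \ref{prop:regularityofhodgesystem}, is the technical device that transfers the classical scalar elliptic theory to the Banach-valued setting without introducing rank-dependent factors. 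Once these uniform estimates are in place, the continuity argument runs exactly as in Uhlenbeck's original proof \cite{UhlenbeckLp}.
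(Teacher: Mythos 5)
Your proposal follows the same skeleton as the paper's argument, and the rank-independence mechanism you describe is exactly the one the paper uses: the Banach-algebra structure of $L(W)$ for the quadratic term, the Banach-valued elliptic estimate of Proposition \ref{prop:ellipticestimate} (obtained by scalarizing against $a\in\check{V}$ and invoking Hahn--Banach), and the target-independent Sobolev embedding of \cite[Theorem 6.1]{Arendt2016MappingTF}. Your bootstrap choice of $\epsilon_0$ so that the quadratic term cannot overwhelm the linear estimate is the paper's Step 1 (Lemma \ref{lem:bootstrap}); your closedness step (uniform $W^{2,p}$ bounds on the gauge transforms via $du_s=u_sA^s-A^s_cu_s$, compactness, Fatou) and your openness step (implicit function theorem for the linearized Neumann problem) are precisely the paper's Steps 2 and 3.

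There is, however, one genuine gap in how you set up the continuity method. You run the open-closed argument on the set of connections with $\|F(A)\|_{L^p}\le\epsilon_0$ that admit a good Coulomb representative, and conclude from nonemptiness (the trivial connection) that this set exhausts the small-curvature ball; but open, closed and nonempty only exhausts a \emph{connected} space, and you never address connectivity of the small-curvature ball. This is not innocuous: the obvious path $t\mapsto tA$ has curvature $F(tA)=t\,dA+t^{2}A\wedge A=tF(A)+(t^{2}-t)\,A\wedge A$, and the quadratic term need not be small in $L^p$, so the straight line can leave the set. The paper (following Uhlenbeck) circumvents this with the dilation path $A^{t}(z):=tA(tz)$, for which $F(A^{t})(z)=t^{2}F(A)(tz)$ and hence $\|F(A^{t})\|_{L^{p}(Q)}\le t^{2-2/p}\|F(A)\|_{L^{p}(Q)}$ for $p\ge 2$; the entire path therefore stays inside the small-curvature set, and the open/closed dichotomy is applied to the parameter set $I=\{t\in[0,1]:\forall s\le t,\ A^{s}\in V^{*}_{\epsilon}\}$, whose connectivity is automatic. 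Inserting this one device (or some substitute proof of connectivity of the small-curvature set) is necessary to make your continuity argument close; everything else in your outline then runs as in the paper.
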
 
The existence of a Coulomb gauge in higher dimensional cubes can be proved similarly. We should, however, remark that if the base space has dimension larger than 4, instead of $L^p$ and $W^{1,p}$, more natural function spaces to consider are Morrey-Sobolev spaces. The techniques involving Morrey-Sobolev spaces are more sophisticated. See  \cite{TaoTian2002dj,Meyer2003}.

 We should remark that  the constants in  Proposition \ref{prop:coulombgaugeuniform} do not depend on $W$. The reason is twofold: elliptic estimates can be achieved by a singular integral representation and do not rely on $W$;  Sobolev inequalities hold for functions valued in Banach space with same constants for scalar functions. We will only sketch the proof as it is essentially the same in \cite{UhlenbeckLp}. 

\begin{proof}
  Let $K>0$  be a large constant and $\epsilon$  sufficiently small to be chosen later. Let $V_\epsilon$ be the space of smooth unitary connections in $Q$ that satisfies $\|F(A)\|_{L^p}\leq \epsilon$. Denote $V^*_\epsilon$ the space of smooth connections gauge equivalent to a Coulomb gauge $A_c$ such that 
\[\|A_c\|_{W^{1,p}}\leq K\epsilon.\]

   Step1.  We have the following bootstrap estimate. 
   \begin{lem}    
   \label{lem:bootstrap} If $A\in V^*_\epsilon$, then 
  \begin{align}
    \label{eq:bootstrap}   \|A_c\|_{W^{1,p}}\leq \frac{K}{2}\epsilon.
  \end{align}

   \end{lem}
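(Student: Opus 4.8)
The plan is to prove the bootstrap estimate (\ref{eq:bootstrap}) by a standard Coulomb-gauge argument: start from the defining bound $\|A_c\|_{W^{1,p}}\leq K\epsilon$ for a connection $A\in V_\epsilon^*$, feed this into the elliptic regularity for the Hodge system, and show that the quadratic nonlinearity $A_c\wedge A_c$ contributes only a term of size $\sim K\epsilon\cdot\|A_c\|_{W^{1,p}}$, which can be absorbed once $\epsilon$ is taken small. First I would observe that since $A_c$ is a Coulomb gauge, it satisfies $d^*A_c=0$ in the interior and $\iota_\nu A_c=0$ on $\partial Q$, while $dA_c=F(A_c)-A_c\wedge A_c$ with $|F(A_c)|=|F(A)|$ by gauge invariance of the curvature norm. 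Thus $A_c$ solves the Hodge system (\ref{eq:hodge2})--(\ref{eq:hodge4}) with right-hand sides $\varphi=F(A_c)-A_c\wedge A_c$ and $\psi=0$.

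Next I would apply the $L^p$ elliptic estimate of Proposition \ref{prop:ellipticestimate} to obtain
\[
\|A_c\|_{W^{1,p}}\leq C\big(\|F(A_c)-A_c\wedge A_c\|_{L^p}\big)\leq C\|F(A)\|_{L^p}+C\|A_c\wedge A_c\|_{L^p},
\]
with $C$ depending only on $p$ (and crucially not on $W$, since the estimate is proved via a Green-function/singular-integral representation valid for Banach-space-valued forms). The pointwise bound $|A_c\wedge A_c|_\rho\leq |A_c|_\rho^2$ together with the Sobolev embedding $W^{1,p}(Q)\hookrightarrow L^\infty(Q)$ for $p>2$ (or $W^{1,p}\hookrightarrow L^{2p}$ controlling the product in $L^p$ for $p=2$), whose embedding constant is again independent of $W$ by \cite[Theorem 6.1]{Arendt2016MappingTF}, gives
\[
\|A_c\wedge A_c\|_{L^p}\leq C'\|A_c\|_{L^\infty}\|A_c\|_{L^p}\leq C''\|A_c\|_{W^{1,p}}^2\leq C''K\epsilon\,\|A_c\|_{W^{1,p}}.
\]
Substituting this back yields $\|A_c\|_{W^{1,p}}\leq C\|F(A)\|_{L^p}+CC''K\epsilon\,\|A_c\|_{W^{1,p}}$.

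Finally I would fix the constants: choose $\epsilon_0$ small enough that $CC''K\epsilon_0\leq \tfrac12$, so that the nonlinear term can be absorbed into the left-hand side, giving $\tfrac12\|A_c\|_{W^{1,p}}\leq C\|F(A)\|_{L^p}$, i.e.\ $\|A_c\|_{W^{1,p}}\leq 2C\|F(A)\|_{L^p}\leq 2C\epsilon$. Choosing $K\geq 4C$ then forces $\|A_c\|_{W^{1,p}}\leq \tfrac{K}{2}\epsilon$, which is (\ref{eq:bootstrap}). The main obstacle, and the reason this lemma is the crux of Uhlenbeck's method, is the interplay of the constants: the absorption argument only closes because every constant appearing (the elliptic constant $C$, the Sobolev embedding constant, and hence $C''$) is genuinely independent of the dimension of $W$; this is what ultimately permits the continuation argument (connectedness of $V_\epsilon^*$ in $V_\epsilon$ via the openness and closedness packaged in this bootstrap) to run uniformly in the rank and to pass to the Hilbert-bundle setting. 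I would therefore be careful to invoke only the rank-independent Banach-valued versions of the elliptic and Sobolev estimates established earlier in this appendix.
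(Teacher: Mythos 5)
Your proof is correct and takes essentially the same route as the paper's: you recognize $A_c$ as a weak solution of the Hodge system (\ref{eq:hodge2})--(\ref{eq:hodge4}) with $\varphi=F(A_c)-A_c\wedge A_c$ and $\psi=0$, apply the $L^p$ estimate of Proposition \ref{prop:ellipticestimate}, bound the quadratic term via the $W$-independent Sobolev embedding of \cite[Theorem 6.1]{Arendt2016MappingTF}, and absorb with $K$ large (depending on the elliptic and Sobolev constants) and $\epsilon$ small (depending on $K$). The only cosmetic difference is that the paper uses $W^{1,p}\hookrightarrow L^{2p}$ uniformly for all $p\geq 2$, while you split into $p>2$ (via $L^\infty$) and $p=2$ (via $L^4$); both close the estimate identically.
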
 The proof of the lemma is similar to that in \cite[Lemma 9.1]{TaoTian2002dj}. Using Proposition \ref{prop:ellipticestimate}, we have 
    \[
   \|A_c\|_{W^{1,p}}\leq C( \|F(A_c)\|_{L^p}+ \|A_c\wedge A_c\|_{L^p}).
   \]
  Notice that by the Sobolev embedding theorem, $\|A_c\|_{L^{2p}}\leq C_{Sob}\|A_c\|_{W^{1,p}} $. The constant $C_{Sob}$ only depends on $p$  (\cite[Theorem 6.1]{Arendt2016MappingTF}.  Hence, 
  \[ \|A_c\|_{W^{1,p}}\leq C( \|F(A_c)\|_{L^p}+ C_{Sob}\|A_c\|^2_{W^{1,p}} ).\]
By choosing a large $K$ depending on $C,C_{Sob}$, and a sufficiently small $\epsilon$ depending on $K$, (\ref{eq:bootstrap}) holds. 

 Step2.  We initiate the standard continuity method.  Let $A^t(z):=tA(tz)$. Let $I=\{t\in[0,1]: \forall s\in[0,t],A^s\in V^*_\epsilon\}$.   If $t=0$, then $A^t$  is a trivial connection that certainly belongs to $V^*_\epsilon$, hence $I\not =\emptyset$. Let $s\in I.$ Suppose that $A_c^s=u_s(A^s)$. Then \[du_s=u_sA^s-A_c^su_s.\] $u_s$  has bounded pointwise norm since it is unitary.  From the pointwise estimate and Sobolev inequality, we have 
 \[
\|u_s\|_{W^{2,p}}\leq C(\|A^s\|_{W^{1,p}},\| A^s_c\|_{W^{1,p}}). 
 \]
Then Lemma \ref{lem:bootstrap}  shows that $u_s$ is uniformly bounded in $W^{2,p}$. If $s$ increases to $t^*=\sup I$, then $u_s$ is uniformly bounded in $W^{2,p}$ and hence has convergent subsequence in $W^{1,p}$ to some $u_{t^*}$ in $W^{2,p}$. Then $A_c^{t^*}=u_{t^*}(A^{t^*})$ is a Coulomb gauge with $\|A^{t^*}_c\|_{W^{1,p}}\leq K\epsilon$ by Fatou's lemma. Hence $I$ is closed. 

Step3. It remains to show the openness of $I$ to conclude the proof. This is essentially the implicit function theory. The linearized equation is a Neumann boundary value equation and the solvability can be achieved using the Green function representation.  See for example \cite[pp 591-592]{TaoTian2002dj}. 
\end{proof}
\bibliographystyle{alpha}
	\bibliography{bibfile}
\end{document}